\renewcommand{\theta}{\uptheta}
\renewcommand{\alpha}{\upalpha}
\renewcommand{\beta}{\upbeta}
\renewcommand{\gamma}{\upgamma}
\renewcommand{\delta}{\updelta}
\renewcommand{\zeta}{\upzeta}
\renewcommand{\pi}{\uppi\hspace{0.05em}}
\renewcommand{\rho}{\uprho}
\renewcommand{\xi}{\upxi}
\renewcommand{\chi}{\upchi}
\renewcommand{\sigma}{\upsigma}
\renewcommand{\Lambda}{\Uplambda}
\renewcommand{\Gamma}{\Upgamma}
\renewcommand{\phi}{\upphi}
\renewcommand{\psi}{\uppsi}
\renewcommand{\nu}{\upnu}
\renewcommand{\tau}{\uptau}
\renewcommand{\mu}{\upmu}
\renewcommand{\eta}{\upeta}
\newtheorem{theorem}{Theorem}[section]
\newtheorem{thmx}{Theorem}
\newtheorem{definition}[theorem]{Definition}
\newtheorem{assumption}[theorem]{Assumption}
\newtheorem{example}[theorem]{Example}
\newtheorem{warning}[theorem]{Warning}
\newtheorem{proposition}[theorem]{Proposition}
\newtheorem{lemma}[theorem]{Lemma}
\newtheorem{remark}[theorem]{Remark}
\newtheorem{corollary}[theorem]{Corollary}
\newcommand{\ab}{\mathtt{abs}}
\renewcommand{\AA}{\mathbb{A}}
\DeclareMathOperator{\BC}{B\mathbb{C}^*}
\newcommand{\Boxtimes}{\mbox{\larger[3]{$\boxtimes$}}}
\newcommand{\DD}{\mathbb{D}}
\newcommand{\eu}{\mathfrak{eu}}
\DeclareMathOperator{\EXP}{EXP}
\DeclareMathOperator{\Pf}{\mathfrak{P}}
\newcommand{\JJ}{\mathbb{J}}
\newcommand{\ff}{\mathbf{f}}
\newcommand{\GG}{\mathbb{G}}
\DeclareMathOperator{\HN}{HN}
\newcommand{\IS}{\mathcal{P}}
\newcommand{\LL}{\mathbb{L}}
\newcommand{\NN}{\mathbb{N}}
\newcommand{\QQ}{\mathbb{Q}}
\newcommand{\SP}{\mathcal{S}}
\newcommand{\Mu}{\Xi}
\newcommand{\TS}{\mathtt{TS}}
\newcommand{\W}{\mathrm{Tr}(W)}
\newcommand{\WW}{\mathcal{T}r(W)}
\newcommand{\WWW}{\mathfrak{Tr}(W)}
\newcommand{\ZZ}{\mathbb{Z}}
\newcommand{\Mst}{\mathfrak{M}}
\newcommand{\Msp}{\mathcal{M}}
\newcommand{\ICS}{\mathcal{IC}}
\newcommand{\ICSn}{\tilde{\mathcal{IC}}}
\newcommand{\ICSt}{\mathfrak{IC}}
\newcommand{\DTS}{\mathcal{BPS}}
\DeclareMathOperator{\obj}{ob}
\newcommand{\phim}[1]{\phi^{\mon}_{#1}}
\newcommand{\GrW}[1]{\Gr^W_{#1}}
\DeclareMathOperator{\sst}{-ss}
\DeclareMathOperator{\st}{-st}
\DeclareMathOperator{\tw}{tw}
\DeclareMathOperator{\reg}{reg}
\DeclareMathOperator{\crit}{crit}
\DeclareMathOperator{\vect}{vect}
\DeclareMathOperator{\Free}{Free}
\DeclareMathOperator{\FreeComm}{Sym}
\DeclareMathOperator{\Hom}{Hom}
\DeclareMathOperator{\mon}{mon}
\DeclareMathOperator{\MMHM}{MMHM}
\DeclareMathOperator{\MMHS}{MMHS}
\DeclareMathOperator{\Dim}{\mathrm{Dim}}
\DeclareMathOperator{\rank}{rank}
\DeclareMathOperator{\fd}{f.d.}
\DeclareMathOperator{\Fun}{Fun}
\DeclareMathOperator{\Gr}{Gr}
\DeclareMathOperator{\Vect}{Vect}
\DeclareMathOperator{\Ka}{K}
\DeclareMathOperator{\rat}{rat}
\DeclareMathOperator{\ms}{\tilde{\ast}}
\DeclareMathOperator{\mos}{\tilde{\cdot}}
\DeclareMathOperator{\cc}{\mathbf{c}}
\DeclareMathOperator{\dd}{\mathbf{d}}
\DeclareMathOperator{\ee}{\mathbf{e}}
\DeclareMathOperator{\perv}{Perv}
\DeclareMathOperator{\con}{con}
\DeclareMathOperator{\Aut}{Aut}
\DeclareMathOperator{\prop}{Sp}
\DeclareMathOperator{\codim}{codim}
\DeclareMathOperator{\Perv}{Perv}
\DeclareMathOperator{\MHM}{MHM}
\DeclareMathOperator{\IC}{IC}
\DeclareMathOperator{\DT}{BPS}
\DeclareMathOperator{\Sym}{Sym}
\DeclareMathOperator{\SSym}{\mathfrak{S}}
\DeclareMathOperator{\red}{red}
\newcommand{\Sch}[1]{\sch_{#1}}
\DeclareMathOperator{\sch}{Sch}
\DeclareMathOperator{\Spec}{Spec}
\DeclareMathOperator{\Gl}{GL}
\DeclareMathOperator{\id}{id}
\DeclareMathOperator{\Jac}{Jac}
\DeclareMathOperator{\Tr}{Tr}
\DeclareMathOperator{\pt}{pt}
\DeclareMathOperator{\tot}{t}
\DeclareMathOperator{\cone}{cone}
\DeclareMathOperator{\forg}{forg}
\newcommand{\form}[1]{\forg^{\mon}_{#1}}
\DeclareMathOperator{\vir}{vir}
\DeclareMathOperator{\Ob}{Ob}
\DeclareMathOperator{\Ho}{\mathcal{H}}
\DeclareMathOperator{\HO}{H}
\DeclareMathOperator{\pr}{pr}
\DeclareMathOperator{\Coha}{\mathcal{A}}
\DeclareMathOperator{\FCoha}{\mathcal{F}}
\newcommand{\Du}{\mathcal{D}^{\geq}}
\newcommand{\Dl}{\mathcal{D}^{\leq}}
\newcommand{\Dulf}{\mathcal{D}^{\geq, \mathrm{lf}}}
\newcommand{\Db}{\mathcal{D}^{b}}
\newcommand{\Dub}{\mathcal{D}}
\newcommand{\Dp}{{}^{\mathfrak{p}}\mathcal{D}}
\title[CoHAs and quantum enveloping algebras]{Cohomological Donaldson--Thomas theory of a quiver with potential and quantum enveloping algebras}
\author{Ben Davison \and Sven Meinhardt}
\begin{document}

\begin{abstract}
This paper concerns the cohomological aspects of Donaldson--Thomas theory for Jacobi algebras and the associated cohomological Hall algebra, introduced by Kontsevich and Soibelman.  We prove the Hodge-theoretic categorification of the integrality conjecture and the wall crossing formula, and furthermore realise the isomorphism in both of these theorems as Poincar\'e-Birkhoff-Witt isomorphisms for the associated cohomological Hall algebra.  

We do this by defining a perverse filtration on the cohomological Hall algebra, a result of the ``hidden properness'' of the semisimplification map from the moduli stack of semistable representations of the Jacobi algebra to the coarse moduli space of polystable representations.  

This enables us to construct a degeneration of the cohomological Hall algebra, for generic stability condition and fixed slope, to a free supercommutative algebra generated by a mixed Hodge structure categorifying the BPS invariants.  As a corollary of this construction we furthermore obtain a Lie algebra structure on this mixed Hodge structure --- the Lie algebra of BPS invariants --- for which the entire cohomological Hall algebra can be seen as the positive part of a Yangian-type quantum group.
\end{abstract}

\maketitle

\tableofcontents

\section{Introduction}
\subsection{Background}
The critical cohomological Hall algebra $\HO(\mathcal{A}_{B,W})$ associated to a noncommutative smooth algebra $B$ and potential $W\in B/[B,B]_{\vect}$ was introduced by Kontsevich and Soibelman in \cite{KS2} as a way of categorifying the theory of Donaldson--Thomas invariants.  The meaning of ``critical'' here is that the Hall algebra is built out of the hypercohomology of the sheaf of \textit{vanishing cycles} of the superpotential $\Tr(W)$ on the stack of finite-dimensional $B$-modules.  This sheaf is supported on the critical locus of $\Tr(W)$, which is identified with the stack of representations of the Jacobi algebra associated to $B$ and $W$.  The critical cohomological Hall algebra can thus be thought of as the Hall algebra categorifying the Donaldson--Thomas theory of representations of this Jacobi algebra.
\smallbreak
Donaldson--Thomas invariants, first introduced in \cite{Thomas1}, already have an extensive literature.  We refer to the sequence of papers by Dominic Joyce \cite{JoyceI,JoyceII,JoyceIII,JoyceIV,JoyceMF} for a comprehensive account, or the book \cite{JoyceDT} by Joyce and Song, and also \cite{KS1,KS3} for the more general and abstract account by Kontsevich and Soibelman using motivic vanishing cycles.  In all of these treatments of Donaldson--Thomas theory, a key role is played by the integrality conjecture, as proven in \cite{KS2} in the case of quivers with potential.  In this paper we describe and prove the natural categorification of the integrality conjecture in the context of the cohomological Hall algebra for a quiver with potential, and explain how this gives rise to a mathematical definition of the Lie algebra of BPS states, for which the cohomological Hall algebra can be thought of as the positive half of an associated quantum group.  See \cite{HM98} for a Physics-oriented construction of Hall algebras of BPS states.
\smallbreak
From now on we assume that $B=\mathbb{C} Q$ is the free path algebra of a quiver, that we are given a Bridgeland stability condition $\zeta$ on the category of $\mathbb{C} Q$-modules, and that $\mu\in(-\infty,\infty]$ is a slope.  We denote by $\Lambda_{\mu}^{\zeta}\subset\mathbb{N}^{Q_0}$ the union of the zero vector with the subset of dimension vectors of slope $\mu$ with respect to the stability condition $\zeta$.  Let $\Mst^{\zeta\sst}_{\mu}$ denote the stack of $\zeta$-semistable $\mathbb{C}Q$-modules with dimension vector in $\Lambda_{\mu}^{\zeta}$.  For full generality, we consider a locally closed substack $\Mst^{\zeta\sst,\SP}_{\mu}\subset \Mst^{\zeta\sst}_{\mu}$, the closed points of which correspond to $\zeta$-semistable $\mathbb{C}Q$-modules in a Serre subcategory $\mathcal{S}$ of the category of $\mathbb{C}Q$-modules.  When $\mathcal{S}$ is the entire full subcategory of finite dimensional $\mathbb{C}Q$-modules, we will omit all superscripts $\SP$.  
\smallbreak
Via natural correspondences there is an algebra structure on the $\Lambda_{\mu}^{\zeta}$-graded Borel--Moore homology of the normalised vanishing cycle complex on $\Mst^{\zeta\sst}_{\mu}$, restricted to $\Mst^{\zeta\sst,\SP}_{\mu}$, that we denote 
\begin{equation}
\label{ua}
\HO\left(\Coha_{W,\mu}^{\zeta,\SP}\right)\colonequals  \bigoplus_{\dd\in\Lambda_{\mu}^{\zeta}}\HO_{c}\left(\Mst^{\zeta\sst,\SP}_{\dd},\phi_{\WWW^{\zeta}_{\dd}}\QQ_{\Mst^{\zeta\sst}_{\dd}}\otimes\LL^{-\dim(\Mst^{\zeta\sst}_{\dd})/2}\right)^{\vee}
\end{equation}
as in \cite{KS2}.  These notions are properly defined and explained in Section \ref{Qrepsection}.  This algebra categorifies Donaldson--Thomas theory in the sense that Donaldson--Thomas invariants for the category of $\zeta$-semistable modules in $\mathcal{S}$ are obtained by considering the class in the Grothendieck group of the $\Lambda_{\mu}^{\zeta}$-graded mixed Hodge structures (equipped with a monodromy action) of the underlying mixed Hodge structure of (\ref{ua}).
\smallbreak 
The main conclusion of this paper is that the introduction of the cohomological Hall algebra structure associated to a quiver, a potential and a Serre subcategory $\mathcal{S}$ should be seen as a first step towards forging a connection between the theory of refined Donaldson--Thomas invariants and the theory of quantum enveloping algebras, and that the proof of the integrality conjecture is the motivic shadow of a Poincar\'e--Birkhoff--Witt theorem for the associated quantum enveloping algebra.  In fact we reach this conclusion by providing all of the remaining steps, proving this PBW type theorem, and defining the associated Lie algebra of BPS cohomology.  This programme was already articulated in \cite{KS2} but several components had yet to be found.  
\smallbreak
Firstly, a coproduct turning the algebra $\HO(\Coha_{W,\mu}^{\zeta,\SP})$ into a Hopf algebra had to be defined.  A localised coproduct was found in the first author's paper \cite{Da13}, reinforcing the hope that $\HO(\Coha_{W,\mu}^{\zeta,\SP})$ could be turned into a quantum enveloping algebra.  Meanwhile the integrality conjecture in the case of free path algebras of quivers with zero potential was reproven by the second author and Markus Reineke in \cite{Meinhardt14}.  We recall that, loosely speaking, the integrality theorem states that the class of $\HO(\Coha_{0,\mu}^{\zeta,\SP})$ in the Grothendieck group of graded monodromic mixed Hodge structures is equal to that of the free supercommutative algebra generated by $\Sym\left(\HO(\BC)_{\vir}\otimes\Omega\right)$ for \textit{some} element $\Omega$ for which the graded pieces are finite-dimensional.  This is of course much weaker than the claim that we have an isomorphism between these two objects, which in turn is weaker than the existence of a PBW isomorphism.  
\smallbreak
Nevertheless, the proof of \cite{Meinhardt14} has the attractive feature of explicitly constructing $\Omega$, in terms of intersection complexes.  This at least gives us a natural candidate for a choice of $\Omega$ with which to try to upgrade equality in the Grothendieck group to a bona fide isomorphism.  In addition, it is proven in \cite{DaMe4} that due to formal properties of the vanishing cycle functor, the methods of \cite{Meinhardt14} are enough to prove a very general version of the integrality conjecture, even in the presence of a nonzero potential.  Similarly, the proof in \cite{DaMe4} gives explicit formulae for the class of $\Omega$, and the construction lifts to cohomology, or mixed Hodge modules, which is the lift that we develop in this paper.  The claim that there is an isomorphism in the category of $\Lambda_{\mu}^{\zeta}$-graded monodromic mixed Hodge structures between $\HO(\Coha_{W,\mu}^{\zeta,\SP})$ and $\Sym\left(\HO(\BC)_{\vir}\otimes\Omega\right)$ for some $\Omega$ depending on $W,\mu,\zeta,\SP$ is what we call the ``cohomological integrality conjecture''.  The proof of the cohomological integrality conjecture, and the existence of a coproduct, are the main ingredients of our proof of the PBW theorem.
\subsection{Absolute versus relative Donaldson--Thomas theory}
\label{abvsrel}
For the Donaldson--Thomas theory and cohomological Hall algebras associated to quivers, a vital role is always played by the grading by $\mathbb{N}^{Q_0}$, the semigroup of dimension vectors, which should be seen as a weight space decomposition in the language of Lie algebras and quantum groups.  As an illustration, consider the case in which we forget about potentials and stability conditions, set $\mathcal{S}$ to be the category of all finite-dimensional $\mathbb{C}Q$-modules, and consider the $\mathbb{N}^{Q_0}$-graded vector space
\begin{equation}
\label{baby}
\bigoplus_{\dd\in\mathbb{N}^{Q_0}}\HO(\Mst_{\dd},\mathbb{Q}[\dim(\Mst_{\dd})]).
\end{equation}
Considering the resulting cohomology as a $\mathbb{N}^{Q_0}\times\mathbb{Z}$-graded vector space, where the extra $\mathbb{Z}$ factor keeps track of cohomological degree, each graded piece is finite-dimensional.  The initial role of the $\NN^{Q_0}$ grading is to break the above infinite-dimensional vector space into infinitely many manageable, finite-dimensional pieces.  Moreover, the cohomological Hall algebra structure respects the $\mathbb{N}^{Q_0}\times\ZZ$-grading, so that this grading provides a route to understanding this algebra.
\smallbreak
Choosing a bijection between the vertices $Q_0$ and the numbers $\{1,\ldots,n\}$, motivic DT partition functions are expressed as formal power series in variables $q^{1/2},x_1,\ldots,x_{n}$.  For $\dd=(\dd_1,\ldots,\dd_n)$  the coefficient of 
\[
x^{\dd}\colonequals \prod_{i\in Q_0}x_i^{\dd_i}
\]
is a formal Laurent power series in $q^{1/2}$, given by the characteristic function of the $\dd$th graded piece of the right hand side of (\ref{baby}) --- for now $q^{1/2}$ records the cohomological degree.  These power series may have finite order poles at $q^{1/2}=0$, due to the shifts appearing in (\ref{baby}).  In the general case this formal Laurent power series is replaced by a formal Laurent power series recording the finite dimensions of the weight filtration of an infinite-dimensional $\mathbb{N}^{Q_0}$-graded (monodromic) mixed Hodge structure associated to the quiver $Q$, a potential $W$ and a stability condition $\zeta$ --- in the final version of the theory, $q^{1/2}$ records weights.  

\smallbreak
In order to make the presentation in this paper conceptually uniform, we slightly change the perspective on the $\mathbb{N}^{Q_0}$-grading.  We consider $\mathbb{N}^{Q_0}$-graded vector spaces (respectively, mixed Hodge structures) as sheaves of vector spaces (respectively, mixed Hodge modules) on the monoid $\mathbb{N}^{Q_0}$, considered as a scheme with infinitely many components, each isomorphic to $\Spec(\mathbb{C})$.  Then our main object of study (\ref{ua}) can be thought of as the direct image of the restriction to the stack of modules in $\SP$ of
\begin{equation}
\label{ICSTintro}
\ICSt_{W,\mu}^{\zeta}\colonequals \bigoplus_{\dd\in\Lambda_{\mu}^{\zeta}}\LL^{-\dim(\Mst_{\dd}^{\zeta\sst})/2}\otimes\phi_{\WWW^{\zeta}_{\mu}}\QQ_{\Mst^{\zeta\sst,\SP}_{\dd}}
\end{equation}
along the map
\[
\Dim\colon\Mst^{\zeta\sst}_{\mu}\rightarrow \Lambda_{\mu}^{\zeta}
\]
taking each component of the moduli stack to the dimension vector of the modules it parametrises.
Instead of studying this direct image directly, we study the direct image
\begin{equation}
\label{intermCo}
\Ho(\mathcal{A}_{W,\mu}^{\zeta})\colonequals \Ho\left((\Mst^{\zeta\sst}_{\mu}\xrightarrow{p^{\zeta}_{\mu}}\Msp^{\zeta\sst}_{\mu})_*\ICSt_{W,\mu}^{\zeta}\right)
\end{equation}
where the map $p^{\zeta}_{\mu}$ takes a $\mathbb{C}Q$-module to its semisimplification in the category of $\zeta$-semistable $\mathbb{C}Q$-modules, i.e. the associated polystable object.  Since $\Dim$ factors through $p^{\zeta}_{\mu}$, understanding the direct image along $p^{\zeta}_{\mu}$ turns out to be the key to understanding the direct image along $\Dim$.
\smallbreak
We flag at the outset the curious role of the symbol $\Ho$ on the right hand side of \eqref{intermCo}.  Firstly, its purpose is to make our lives simpler: we will not actually give a definition of the pushforward of $\ICSt_{W,\mu}^{\zeta}$, nor indeed of $\ICSt_{W,\mu}^{\zeta}$ itself, and we \textit{only} define the result of applying the total cohomology functor to this pushforward, i.e. we define the right hand side of \eqref{intermCo} without defining its constituent pieces.  This apparent bug turns out to be a feature; because of this extra $\Ho$, taking the dual compactly supported hypercohomology of the right hand side of \eqref{intermCo} does not produce something that is a priori isomorphic to the right hand side of \eqref{ua}, and the comparison between them turns out to be a central aspect of the paper.  In brief: the compactly supported hypercohomology of the right hand side of \eqref{intermCo} is the associated graded of the perverse filtration of $\HO\left(\Coha_{W,\mu}^{\zeta,\SP}\right)$.  This perverse filtration is one of the central structures that we introduce in order to understand the cohomological Hall algebra.

Here and elsewhere, if $\mathcal{L}\in\mathcal{D}$ is an element of a triangulated category with a t-structure (for example the bounded derived category of an Abelian category $\mathcal{A}$, such as the category of monodromic mixed Hodge modules on $\Msp_{\mu}^{\zeta\sst}$) then 
\[
\Ho(\mathcal{L})\colonequals \bigoplus_{n\in\mathbb{Z}}\mathcal{H}^i(\mathcal{L})[-i],
\]
considered as an element in $\mathcal{D}$ with zero differential.  If moreover $\mathcal{D}$ has infinite coproducts (for example, the unbounded derived category of $\mathcal{A}$ as above) then the natural functor from $\mathbb{Z}$-graded objects in $\mathcal{A}$ to the essential image of the functor $\Ho$ is an embedding of categories; it is bijective on isomorphism classes of objects and faithful, but not full in general.

Note that $\Dim^{\zeta}_{\mu}=\dim^{\zeta}_{\mu}p^{\zeta}_{\mu}$, where $\dim^{\zeta}_{\mu}\colon\Msp_{\mu}^{\zeta}\rightarrow\Lambda_{\mu}^{\zeta}$ is the natural map.  We furthermore prove that there is an isomorphism of cohomologically graded monodromic mixed Hodge modules
\begin{align}
\label{fullCo}
\Ho\left(\Dim^{\zeta}_{\mu,*}\ICSt_{W,\mu}^{\zeta}\right)\cong &\Ho\left(\dim_{\mu,*}^{\zeta}\Ho\left((\Mst^{\zeta\sst}_{\mu}\xrightarrow{p^{\zeta}_{\mu}}\Msp^{\zeta\sst}_{\mu})_*\ICSt_{W,\mu}^{\zeta}\right)\right)\\
=&\Ho\left(\dim_{\mu,*}^{\zeta}\Ho\left(\mathcal{A}_{W,\mu}^{\zeta}\right)\right)\nonumber
\end{align}
in Section \ref{CoDTSec}, so that an understanding of (\ref{intermCo}) can indeed be used to gain an understanding of the left hand side of (\ref{fullCo}).  As noted above, because of the second $\Ho$ on the right hand side of \eqref{fullCo}, this isomorphism would not follow directly from the usual six functor formalism for monodromic mixed Hodge modules on stacks, but instead relies essentially on the ``hidden properness'' of the morphism $p^{\zeta}_{\mu}$.    The key observation is that $p^{\zeta}_{\mu}$, while not proper or representable, is in some sense approximated by proper maps of schemes (see Section \ref{cohaDT}).  
\smallbreak
Working over the base $\Msp^{\zeta\sst}_{\mu}$ instead of $\Lambda_{\mu}^{\zeta}$ is what we mean by the relative theory.  The payoff for working in the relative setting is that we obtain more refined results, which are nevertheless easier to prove, since in this relative setting, because of hidden properness, we are able to make extensive use of Morihiko Saito's version \cite{Sai88,Saito1,Saito89} of the famous decomposition theorem of Beilinson, Bernstein, Deligne and Gabber \cite{BBD}, as well as the rest of Saito's theory of mixed Hodge modules.
\smallbreak
The idea of proving local results on $\Msp_{\mu}^{\zeta\sst}$ to deduce global results about Donaldson--Thomas invariants was already present in the work of Joyce and Song \cite{JoyceDT}, and later the proof of the integrality conjecture for path algebras in the absence of a potential by the second author and Markus Reineke \cite{Meinhardt14}, which utilised Efimov's theorem \cite{Efimov} on free supercommutativity for cohomological Hall algebras without potentials to understand the local behaviour of the morphism $p^{\zeta}_{\mu}$. 
\subsection{Cohomological integrality}
Now that we have introduced the relative version of Donaldson--Thomas theory, we can state the first main result of this paper, a cohomological refinement of the integrality conjecture.
\begin{thmx}[Cohomological integrality theorem]
\label{ThmA}
Let $Q,W$ be a quiver with potential, let $\mu\in (-\infty,\infty]$ be a slope, and let $\zeta$ be a $\mu$-generic stability condition.  Let $\Mst^{\zeta\sst,\SP}_{\mu}$ be a locally closed substack of the stack of $\zeta$-semistable $\mathbb{C}Q$-representations, the closed points of which correspond to the representations in a Serre subcategory $\SP$ of the category of $\mathbb{C}Q$-representations.  For $\dd\in\mathbb{N}^{Q_0}\setminus\{0\}$ a nonzero dimension vector of slope $\mu$, define the following element of $\MMHM(\Msp^{\zeta\sst}_{\dd})$, the category of monodromic\footnote{See Section \ref{MMHMs} for an introduction to this category.  It is an enlargement of the ordinary category of mixed Hodge modules, taking account of the monodromy operation on the vanishing cycles functor.} mixed Hodge modules on the coarse moduli space of $\zeta$-semistable $\mathbb{C}Q$-representations:
\[
\DTS_{W,\dd}^{\zeta}\colonequals \begin{cases}\phim{\WW^\zeta_{\dd}}\ICS_{\Msp^{\zeta\sst}_{\dd}}(\mathbb{Q})&\textrm{if  }\Msp^{\zeta\st}_{\dd}\neq\emptyset \\0&\textrm{otherwise}\end{cases}
\]
and let $\DTS^{\zeta}_{W,\mu}$ be the direct sum of all the $\DTS_{W,\dd}^{\zeta}$ for $\dd$ of slope $\mu$ with respect to the stability condition $\zeta$.  Define 
\[
\DT_{W,\dd}^{\zeta,\SP}=\Ho\left(\dim_!(\Msp^{\zeta\sst,\SP}_{\dd}\rightarrow \Msp^{\zeta\sst}_{\dd})^*\DTS_{W,\dd}^{\zeta}\right)^{\vee}\in\Dub(\MMHM(\Lambda_{\mu}^{\zeta})),
\]
where the superscript denotes the dual in the category of monodromic mixed Hodge modules, so in particular,
\[
\DT_{W,\dd}^{\zeta}\cong \Ho\left(\dim_*\DTS_{W,\dd}^{\zeta}\right)
\]
by Verdier self-duality of $\DTS_{W,\dd}^{\zeta}$.  Define $\DT_{W,\mu}^{\zeta,\SP}$ to be the direct sum of all the $\DT_{W,\dd}^{\zeta,\SP}$ for $\dd$ of slope $\mu$.  Then
\begin{align*}
\FreeComm_{\boxtimes_+}\left(\HO(\BC)_{\vir}\otimes \DT_{W,\mu}^{\zeta,\SP}\right)\cong \HO(\Coha^{\zeta,\SP}_{W,\mu}) &&\emph{(absolute integrality theorem)}
\end{align*}
in $\Dub(\MMHM(\Lambda_{\mu}^{\zeta}))$.  Furthermore, working over the coarse moduli space of $\zeta$-semistable representations $\Msp_{\mu}^{\zeta\sst}$, there is an isomorphism 
\begin{align*}
\Sym_{\boxtimes_{\oplus}}\left(\HO(\BC)_{\vir}\otimes\DTS_{W,\mu}^{\zeta}\right)\cong\Ho(\mathcal{A}_{W,\mu}^{\zeta})&&\emph{(relative integrality theorem)}
\end{align*}
in $\Dub(\MMHM(\Msp^{\zeta\sst}_{\mu}))$.
\end{thmx}
The definition of $\HO(\BC)_{\vir}$ is given in Section \ref{MMHMs}.  Requisite notions from the theory of representations of quivers, including the functors $\Sym_{\boxtimes_{\oplus}}$ and $\Sym_{\boxtimes_{+}}$, and the definition of $\Msp^{\zeta\sst}_{\mu}$, are recalled in Section \ref{Qrepsection}.  The functor $\Sym_{\boxtimes_{+}}$, via the correspondence between monodromic mixed Hodge modules on $\mathbb{N}^{Q_0}$ and $\mathbb{N}^{Q_0}$-graded monodromic mixed Hodge structures, is the functor that sends an $\mathbb{N}^{Q_0}$-graded monodromic mixed Hodge structure $\mathcal{F}$ satisfying $\mathcal{F}_0=0$ to the $\mathbb{N}^{Q_0}$-graded monodromic mixed Hodge structure
\[
\Sym_{\boxtimes_+}(\mathcal{F})\colonequals \bigoplus_{i\geq 0}\Sym^i(\mathcal{F})
\]
given by taking the free $\mathbb{N}^{Q_0}$-graded supercommutative algebra generated by $\mathcal{F}$ and then forgetting the algebra structure.
\smallbreak
We briefly explain the relation to earlier, less ``categorical'', incarnations of the integrality conjecture in terms of generating functions of weight polynomials, as that is the language in which the integrality conjecture is perhaps most familiar.  In the literature, this is most commonly referred to as ``refined Donaldson--Thomas theory''.  Given an element $\mathcal{L}\in\Db(\MMHM(\mathbb{N}^{Q_0}))$, we define the formal power series
\[
\mathcal{Z}(\mathcal{L},q^{1/2},x_1,\ldots,x_n)\colonequals \sum_{\dd\in\mathbb{N}^{Q_0}}\chi_q(\mathcal{L}_{\dd},q^{1/2})x^{\dd},
\]
where we have used the weight polynomial
\[
\chi_q(\mathcal{L}_{\dd},q^{1/2})\colonequals \sum_{i,j\in\mathbb{Z}} (-1)^i\dim(\GrW{j}(\HO^i(\mathcal{L}_{\dd})))q^{j/2}.
\]
As in Theorem \ref{ThmA} we fix a quiver $Q$, potential $W$, stability condition $\zeta$, and slope $\mu$.  Although the total cohomology $\HO(\mathcal{A}_{W,\mu}^{\zeta,\SP})$ (defined as in (\ref{ua})) is not in the bounded derived category $\Db(\MMHM(\Lambda_{\mu}^{\zeta}))\subset \Db(\MMHM(\mathbb{N}^{Q_0}))$, since each $\HO(\mathcal{A}_{W,\dd}^{\zeta,\SP})$ will typically be nonzero in arbitrarily high cohomological degree, the partition function
\[
\mathcal{Z}\left(\HO(\mathcal{A}_{W,\mu}^{\zeta,\SP}),q^{1/2},x_1,\ldots,x_n\right)
\]
still makes sense as a formal power series in $x_1,\ldots,x_n$, with coefficients given by Laurent series in $q^{1/2}$.  This is because for each $\dd\in\Lambda_{\mu}^{\zeta}$, and for each $j\in\mathbb{Z}$, the element $\GrW{j}(\HO^i(\mathcal{A}_{W,\dd}^{\zeta,\SP}))$ is nonzero for only finitely many values of $i$; we denote by $\Dulf(\MMHM(\mathbb{N}^{Q_0}))\subset \Dub(\MMHM(\mathbb{N}^{Q_0}))$ the full subcategory of monodromic mixed Hodge modules satisfying this condition and also the condition that for fixed $\dd$ the element $\GrW{j}(\HO(\mathcal{L}_{\dd}))$ vanishes for $j\ll 0$.  Then one way to define the plethystic exponential
\[
\EXP\colon\mathbb{Z}((q^{1/2}))[[x_1,\ldots,x_n]]_+\rightarrow\mathbb{Z}((q^{1/2}))[[x_1,\ldots,x_n]]
\]
where the subscript ``$+$'' on the left hand side signifies that we take the subgroup of elements $\sum_{\dd\in\mathbb{N}^{Q_0}}a_{\dd}(q^{1/2})x^{\dd}$ such that $a_0(q^{1/2})=0$, is via the formula for $\mathcal{L}\in\Dulf(\MMHM(\mathbb{N}^{Q_0}\setminus\{0\}))$:
\begin{equation}
\label{EXPdef}
\EXP\left(\mathcal{Z}(\mathcal{L},q^{1/2},x_1,\ldots,x_n)\right)=\mathcal{Z}\left(\Sym_{\boxtimes_+}(\mathcal{L}),q^{1/2},x_1,\ldots,x_n\right).
\end{equation}

With the above conventions in place, we may finally state the integrality conjecture in refined DT theory.  It is the statement that there exist Laurent polynomials $\Omega^{\zeta,\SP}_{W,\dd}(q^{1/2})\in\mathbb{Z}[q^{\pm 1/2}]$ such that 
\begin{equation}
\label{ics}
\mathcal{Z}(\HO(\mathcal{A}_{W,\mu}^{\zeta,\SP}),q^{1/2},x_1,\ldots,x_n)=\EXP\left(\sum_{\dd\in\Lambda_{\mu}^{\zeta}\setminus\{0\}} \Omega^{\zeta,\SP}_{W,\dd}(q^{-1/2})x^{\dd}/(q^{1/2}-q^{-1/2})\right).
\end{equation}
The formal power series $\Omega_{W,\dd}^{\zeta,\SP}(q^{1/2})$ are, by definition\footnote{The substitution $q^{1/2}\mapsto q^{-1/2}$ arises from the fact that DT invariants are typically defined in terms of compactly supported cohomology, which is the dual to the cohomology theory we consider.}, the refined Donaldson--Thomas/Bogomol'nyi--Prasad--Sommerfield\footnote{Some authors refer to the coefficients in the partition function on the left hand side of (\ref{ics}) as (refined) DT invariants, while some have reserved this term to refer to the $\Omega_{W,\dd}^{\zeta}(q^{1/2})$ appearing on the right hand side.  In order to establish some distinction between the two types of invariants, we will refer to the $\Omega_{W,\dd}^{\zeta}(q^{1/2})$ exclusively as BPS invariants, and to the left hand side of (\ref{ics}) as the DT partition function.} invariants for the slope $\mu$, stability condition $\zeta$, and Serre subcategory $\mathcal{S}$.  Note that 
\[
\chi_q(\HO(\BC,\mathbb{Q})_{\vir},q^{1/2})=-q^{1/2}-q^{3/2}-\ldots=(q^{1/2}-q^{-1/2})^{-1}
\]
and so we deduce from (\ref{EXPdef}), (\ref{ics}) and Theorem \ref{ThmA} the equality
\[
\Omega^{\zeta,\SP}_{W,\dd}(q^{-1/2})=\chi_q(\DT_{W,\dd}^{\zeta,\SP},q^{1/2})
\]
where $\DT_{W,\dd}^{\zeta,\SP}$ are the cohomological invariants defined in Theorem \ref{ThmA}.  Note also that since $\EXP$ is injective, one can always find formal Laurent power series $\Omega_{W,\dd}^{\zeta,\SP}(q^{1/2})$ satisfying (\ref{ics}).  The content of the integrality conjecture, as proved in \cite{KS2}, is that these formal power series are in fact polynomials.

We can deduce the refined integrality conjecture from Theorem \ref{ThmA}; since $\DT_{W,\dd}^{\zeta,\SP}$ is the hypercohomology of a bounded complex of monodromic mixed Hodge modules, its weight polynomial is a genuine Laurent polynomial instead of a formal power series.  However, the cohomological lift provided by Theorem \ref{ThmA} is much stronger than the integrality conjecture.  It upgrades an equality in the Grothendieck (lambda-)ring of $\Dulf(\MMHM(\mathbb{N}^{Q_0}))$ to an isomorphism in that category.  This is a surprising result even before introducing the extra structure of the cohomological Hall algebra, since it shows that there is an entire theory of ``cohomologically refined'' BPS invariants waiting to be explored.  For early applications of this theory, we refer the reader to the first author's reproof of the Kac positivity conjecture \cite{Da13b} (originally proved by Hausel, Letellier and Villegas \cite{HLRV13}) and proof of the quantum cluster positivity conjecture \cite{Da16a}.
\subsection{Cohomological wall-crossing}
Our second main result is a (relative) cohomological lift of the following incarnation of the wall crossing formula, with $\int$ denoting the Kontsevich--Soibelman integration map:
\begin{equation}
\label{WCI}
\int[\Mst^{\SP},\phi_{\Tr(W)}]=\prod_{\infty\xrightarrow{\mu}-\infty}\int[\Mst^{\zeta\sst,\SP}_{\mu},\phi_{\Tr(W)}].
\end{equation}
This is an identity in the completed motivic quantum torus of finite-dimensional modules of the Jacobi algebra for the pair $(Q,W)$.  Instead of defining any of the terms in (\ref{WCI}) we refer the reader to \cite{JoyceMF} and \cite{KS1} for more details on motivic Hall algebras and integration maps in the general theory of motivic Donaldson--Thomas invariants, or \cite{DaMe4} for the specific case of the motivic Hall algebra of representations of a Jacobi algebra.  
  
  \smallbreak
  We define
\begin{align}
\label{relCA}
\Ho(\mathcal{A}_W)\colonequals &\Ho\left(\bigoplus_{\dd\in\mathbb{N}^{Q_0}}(\Mst_{\dd}\rightarrow\Msp_{\dd})_*\phi_{\WWW_{\dd}}\QQ_{\Mst_{\dd}}\otimes\LL^{-\dim(\Mst_{\dd})/2}\right)
\end{align}
where $\Mst_{\dd}$ is the stack of $\dd$-dimensional $\mathbb{C}Q$-representations, and $\Msp_{\dd}$ is the coarse moduli space of $\dd$-dimensional $\mathbb{C}Q$-representations, the closed points of which are in bijection with $\dd$-dimensional semisimple $\mathbb{C}Q$-representations.  
\begin{thmx}[Cohomological wall crossing theorem]
\label{CWCT}
There is an isomorphism
\begin{equation}
\label{gthmd}
\Ho(\mathcal{A}_W)\cong\Boxtimes_{\oplus, \infty\xrightarrow{\mu} -\infty}^{\tw} \Ho(\mathcal{A}_{W,\mu}^{\zeta}).
\end{equation}
inside $\Dulf(\MMHM(\Msp))$.  Here the product $\boxtimes_{\oplus,\infty \xrightarrow{\mu}-\infty}^{\tw}$ is an ordered product, taken over descending slopes.  Similarly, there is an isomorphism
\begin{equation}\label{gthmad}
\HO(\mathcal{A}^{\SP}_W)\cong \Boxtimes_{+,\infty \xrightarrow{\mu}-\infty}^{\tw} \HO(\mathcal{A}^{\zeta,\SP}_{W,\mu})
\end{equation}
inside $\Dulf(\MMHM(\mathbb{N}^{Q_0}))$.
\end{thmx}
Note that in general, the individual terms in the tensor products on the right hand sides of (\ref{gthmd}) and (\ref{gthmad}) will depend on the stability condition, while the terms on the left hand side do not.  The fact that very different sheaves can give rise to the same tensor product is explained by the fact that in contrast with the product $\boxtimes_{\oplus}$, the product $\boxtimes^{\tw}_{\oplus}$ is not symmetric; this is the categorification of the fact that the quantum torus utilized in \cite{KS1} as the target of the integration map from the motivic Hall algebra is generally not commutative.


\subsection{The perverse filtration}
An advantage of considering constructions relative to the base $\Msp_{\mu}^{\zeta\sst}$ instead of $\Lambda_{\mu}^{\zeta}$ is that it leads naturally to the introduction of the perverse filtration on $\HO(\mathcal{A}_{W,\mu}^{\zeta,\SP})$.  

Let $\ff\in\mathbb{N}^{Q_0}$ be a framing vector, and let $\pi^{\zeta}_{\ff,\dd}\colon \Msp^{\zeta}_{\ff,\dd}\rightarrow\Msp^{\zeta\sst}_{\dd}$, be the forgetful map to the coarse moduli space of $\zeta$-semistable $Q$-representations from the fine moduli space of $\ff$-framed $\zeta$-semistable representations, and let $\WW_{\ff,\dd}^{\zeta}\colon \Msp^{\zeta}_{\ff,\dd}\rightarrow \mathbb{C}$ be the induced function.  We show that for fixed $n$, and sufficiently large framing vector $\ff$, there is a natural isomorphism
\begin{equation}
\label{basapp}
\HO^n(\mathcal{A}_{W,\dd}^{\zeta,\SP})\cong\LL^{\ff\cdot\dd-(\dd,\dd)/2}\otimes \HO_c^n(\Msp^{\zeta\sst,\SP}_{\ff,\dd},\phim{\WW_{\ff,\dd}^{\zeta}}\mathbb{Q}_{\Msp^{\zeta}_{\ff,\dd}})^{\vee},
\end{equation}
in words, we can approximate the restricted vanishing cycle cohomology on the \textit{stack} $\Mst(Q)_{\dd}^{\zeta\sst}$ by considering restricted vanishing cycle compactly supported cohomology on the moduli \textit{scheme} $\Msp^{\zeta}_{\ff,\dd}$.  This may remind the reader of Totaro's construction \cite{To99} for calculating equivariant intersection theory, and indeed, in a sense that will become precise in Section \ref{cohaDT}, our setup is a relative compactification of Totaro's construction, over the base $\Msp^{\zeta\sst}_{\dd}$.  In particular, since the maps $\pi^{\zeta}_{\ff,\dd}$ are proper, we can use the decomposition theorem along with fundamental properties of the vanishing cycles functor to deduce that 
\[
\pi^{\zeta}_{\ff,\dd,*}\phim{\WW_{\ff,\dd}^{\zeta}}\mathbb{Q}_{\Msp^{\zeta}_{\ff,\dd}}\cong\Ho\left(\pi^{\zeta}_{\ff,\dd,*}\phim{\WW_{\ff,\dd}^{\zeta}}\mathbb{Q}_{\Msp^{\zeta}_{\ff,\dd}}\right)
\]
which in turn implies that the hypercohomology of the right hand side (and hence the left hand side) of (\ref{basapp}) carries a well-behaved perverse filtration.
\subsection{Main results for cohomological Hall algebras}
Assume that the stability condition $\zeta$ is generic.  The object $\HO(\mathcal{A}^{\zeta,\SP}_{W,\mu})$ is a unital algebra in the category of $\Lambda_{\mu}^{\zeta}$-graded monodromic mixed Hodge structures, which corresponds to a monoid in the category $\Dub(\MMHM(\Lambda_{\mu}^{\zeta}))$.  The monoidal product in this category is defined by 
\[
\mathcal{F}\boxtimes_+\mathcal{G}\colonequals +_*(\mathcal{F}\boxtimes \mathcal{G}),
\]
where $\mathcal{F}\boxtimes\mathcal{G}$ is the usual external tensor product, and $+\colon\mathbb{N}^{Q_0}\times\mathbb{N}^{Q_0}\rightarrow \mathbb{N}^{Q_0}$ is the addition map.  A unital graded algebra structure is determined by a map
\[
\mathcal{F}\boxtimes_+\mathcal{F}\rightarrow \mathcal{F}
\]
satisfying the obvious associativity condition, along with a unit map $\mathbb{Q}_{\{0\}}\rightarrow\mathcal{F}$ from the constant sheaf supported at the monoidal unit $0\in \mathbb{N}^{Q_0}$.  Although we prove theorems regarding this monoid, we do so via theorems regarding the monoid
\begin{equation}
\label{intrRel}
\Ho(\mathcal{A}_{W,\mu}^{\zeta})\in\Dulf(\MMHM(\Msp^{\zeta\sst}_{\mu}))
\end{equation}
in the  category of monodromic mixed Hodge modules on the coarse moduli space of $\zeta$-semistable $\mathbb{C}Q$-modules of slope $\mu$.  After restricting to $\Msp^{\zeta\sst,\SP}_W$ and applying the monoidal functor obtained by taking the direct image along the map 
\[
\dim\colon \Msp_{\mu}^{\zeta\sst}\rightarrow\mathbb{N}^{Q_0},
\]
the monodromic mixed Hodge module (\ref{intrRel}) becomes isomorphic to the perverse associated graded version of the monoid (\ref{ua}).  In symbols, there is an isomorphism of algebras
\begin{equation}
\label{PAG}
\Ho\left(\dim_*\left(\Ho\left((\Msp^{\zeta\sst,\SP}_{\mu}\hookrightarrow \Msp^{\zeta\sst}_{\mu})^!\mathcal{A}_{W,\mu}^{\zeta}\right)\right)\right)\cong \Gr_{\Pf}\left(\HO(\mathcal{A}_{W,\mu}^{\zeta,\SP})\right).
\end{equation}
All of our main results regarding $\HO(\mathcal{A}_{W,\mu}^{\zeta,\SP})$ and $\Ho(\mathcal{A}_{W,\mu}^{\zeta})$ stem from the observation that all product and coproduct structures that we consider on $\HO(\mathcal{A}^{\zeta,\SP}_{W,\mu})$ and $\Ho(\mathcal{A}^{\zeta,\SP}_{W,\mu})$ respect the perverse filtration, giving rise to the algebra structures on the right hand side of (\ref{PAG}) that are much easier to describe --- precisely, we prove that (in an appropriate sense) the algebras appearing in (\ref{PAG}) are supercommutative degenerations of the CoHA $\HO(\mathcal{A}^{\zeta,\SP}_{W,\mu})$.
\smallbreak
To define this ``appropriate sense'', we must take some care over the signs in the symmetrizing morphism 
\[
\textbf{sw}:\mathcal{F}\boxtimes_+ \mathcal{F}\rightarrow \mathcal{F}\boxtimes_+\mathcal{F}.
\]
It was noted already in \cite{KS2} that the cohomological Hall algebra on the graded vector space (\ref{baby}), while not commutative, becomes commutative after twisting by a simple system of signs.  There are two related ways of expressing this idea, which we develop in tandem in this paper.  

Firstly, the algebra \textit{is} commutative, after replacing the usual symmetrizing morphism for the category in which we consider it (involving the Koszul sign rule) by a new one, which depends on $\mathbb{N}^{Q_0}$-degree.  We consider an adapted symmetrizing morphism ${}^{\tau}\textbf{sw}$ for the categories of monodromic mixed Hodge modules on $\Lambda_{\mu}^{\zeta}$ and $\Msp^{\zeta\sst}_{\mu}$, with respect to which Theorems \ref{qea} and \ref{strongPBW} are stated --- see Section \ref{lc_signs} for the provenance of these signs.  At the level of elements, the rule is easy to state.  Consider the form $\tau$ induced by 
\begin{equation}
\label{half_tau}
\tau\colon \dd'\otimes \dd''\mapsto \chi(\dd',\dd'')+\chi(\dd',\dd')\chi(\dd'',\dd'')
\end{equation}
on the $\mathbb{Z}/2\mathbb{Z}$-vector space $V=(\mathbb{Z}/2\mathbb{Z})^{Q_0}$.  Then
\[
{}^{\tau}\textbf{sw}(a\otimes b)=(-1)^{\tau(\dd',\dd'')+\lvert a\lvert \cdot\lvert b\lvert}b\otimes a
\]
where $\dd'$ and $\dd''$ are the $\mathbb{N}^{Q_0}$-degrees of $a$ and $b$ respectively, and $\lvert a\lvert$ and $\lvert b\lvert$ are their cohomological degrees.  In other words, we modify the usual Koszul sign rule by $(-1)^{\tau(\dd',\dd'')}$.
\smallbreak
Now we describe the second approach to signs.  As in \cite[Sec.2.6]{KS2} and \cite{Efimov} it is sometimes more convenient to modify the definition of the product in the CoHA $\HO(\Coha_{W,\mu}^{\zeta,\SP})$, so that the algebra (or in the context of this paper, its perverse associated graded algebra) becomes supercommutative with respect to the usual Koszul sign rule.  The form $\tau$ vanishes on pairs $\dd\otimes \dd$, and so there exists a bilinear form $\psi$ on $V$ such that
\begin{equation}
\label{pcocdef}
\psi(\dd',\dd'')+\psi(\dd'',\dd')=\tau(\dd',\dd'').
\end{equation}
We define the underlying graded monodromic mixed Hodge structure of $\HO(^\psi\!\!\Coha_{W,\mu}^{\zeta,\SP})$ to be the same as that of $\HO(\Coha_{W,\mu}^{\zeta,\SP})$.  We define the multiplication on $\HO(^{\psi}\!\!\Coha_{W,\mu}^{\zeta,\SP})$ via 
\begin{equation}
\label{psitwdef}
a\otimes b\mapsto (-1)^{\psi(\dd',\dd'')}\HO(\ast_{W,\mu}^{\zeta,\SP})(a\otimes b)
\end{equation}
where $a\in \HO(^{\psi}\!\!\Coha_{W,\dd'}^{\zeta,\SP})$ and $b\in \HO(^{\psi}\!\!\Coha_{W,\dd''}^{\zeta,\SP})$.  Associativity of the twisted CoHA follows from associativity for the untwisted CoHA and bilinearity of $\psi$.


\smallbreak
Our first main result concerning CoHAs is an upgrade of Theorem \ref{ThmA}.  It states that the isomorphisms of Theorem \ref{ThmA} can be realised explicitly as PBW-type isomorphisms defined using the structure of the relative, or absolute CoHA, respectively. 
\begin{thmx}\label{qea}
Let $\zeta$ be a $\mu$-generic Bridgeland stability condition.  The localised bialgebra structure on $\HO(\mathcal{A}^{\zeta,\SP}_{W,\mu})$ induces a (non-localised) Hopf algebra structure on $\Gr_{\Pf}(\HO(\mathcal{A}_{W,\mu}^{\zeta,\SP}))$, which is a free commutative algebra with respect to the symmetrizing morphism ${}^{\tau}\!\textbf{sw}$.  There is a canonical PBW isomorphism in $\Dulf(\MMHM(\Msp_{\mu}^{\zeta\sst}))$:
\begin{equation}
\label{PBWiso1}
{}^{\tau}\!\FreeComm_{\boxtimes_{\oplus}}\left(\HO(\BC)_{\vir}\otimes\DTS_{W,\mu}^{\zeta}\right)\rightarrow \Ho(\Coha_{W,\mu}^{\zeta})
\end{equation}
that is realized via a canonical embedding\footnote{Strictly speaking, these maps are embeddings in the category of $\mathbb{Z}$-graded monodromic mixed Hodge-modules, considered as maps in the derived category of monodromic mixed Hodge modules via the natural embedding of categories.} $\HO(\BC)_{\vir}\otimes\DTS_{W,\mu}^{\zeta}\subset \Ho(\Coha_{W,\mu}^{\zeta})$ and the relative cohomological Hall algebra multiplication.  There is a similar canonical PBW isomorphism over the base $\Lambda^{\zeta}_{\mu}$:
\begin{equation}
\label{PBWiso2}
{}^{\tau}\!\Sym_{\boxtimes_+}\left(\HO(\BC)_{\vir}\otimes \DT_{W,\mu}^{\zeta,\SP}\right)\rightarrow\HO(\mathcal{A}_{W,\mu}^{\zeta,\SP}).
\end{equation}
and the map is defined via the cohomological Hall algebra product on $\HO(\Coha_{W,\mu}^{\zeta,\SP})$.  
If $\psi$ is a bilinear form as in (\ref{pcocdef}), then $\Gr_{\Pf}(\HO(^{\psi}\!\!\mathcal{A}_{W,\mu}^{\zeta,\SP}))$ is a free supercommutative algebra, and there are canonically defined PBW isomorphisms
\begin{align}
\label{PBWiso3}
\FreeComm_{\boxtimes_{\oplus}}\left(\HO(\BC)_{\vir}\otimes \DTS_{W,\mu}^{\zeta}\right)\rightarrow &\Ho(^{\psi}\!\!\Coha_{W,\mu}^{\zeta})\\
\label{PBWiso4}
\Sym_{\boxtimes_+}\left(\HO(\BC)_{\vir}\otimes\DT_{W,\mu}^{\zeta,\SP}\right)\rightarrow&\HO(^{\psi}\!\!\mathcal{A}_{W,\mu}^{\zeta,\SP})
\end{align}
to the relative and absolute CoHAs with the $\psi$-twisted multiplication.
\end{thmx}
Note that we do not claim that in the ``absolute'' cases \eqref{PBWiso1} and \eqref{PBWiso3} the canonical isomorphisms of Theorem \ref{qea} are isomorphisms of algebras --- in many interesting cases they will not be, hence the moniker ``PBW theorem''.  We expect that the case of symmetric quivers with zero potential, for which it is proved by Efimov in \cite{Efimov} that the $\psi$-twisted cohomological Hall algebra is free supercommutative, is atypical.
\smallbreak
Our final theorem is a more general PBW type statement for the whole CoHA.  Just as Theorem \ref{qea} states that the integrality isomorphisms of Theorem \ref{ThmA} can be realised in terms of PBW-type isomorphisms in the relative CoHA, Theorem \ref{strongPBW} states that the same is true of the combination of the integrality isomorphisms (for every slope) and the wall crossing isomorphism, for a generic stability condition.
\begin{thmx}\label{strongPBW}
Let $\zeta$ be a generic stability condition for $Q$.  Then there are embeddings\footnote{See previous footnote.} $(\Msp^{\zeta\sst}_{\mu}\rightarrow\Msp)_* \DTS^{\zeta}_{W,\mu}\subset \Ho(\mathcal{A}_{W,\mu})$ and $\HO(\Msp_{\mu}^{\zeta\sst,\SP},\DTS^{\zeta}_{W,\mu})\subset \HO(\mathcal{A}^{\SP}_W)$ such that the resulting morphisms
\begin{align*}
\Boxtimes_{\oplus, \infty\xrightarrow{\mu} -\infty}^{\tw} {}^{\tau}\!\Sym_{\boxtimes_{\oplus}}\left(\HO(\BC)_{\vir}\otimes (\Msp^{\zeta\sst}_{\mu}\rightarrow\Msp)_* \DTS^{\zeta}_{W,\mu}\right)\rightarrow &\Ho(\mathcal{A}_W)\\
\Boxtimes_{+, \infty\xrightarrow{\mu} -\infty}^{\tw}{}^{\tau}\!\Sym_{\boxtimes_+}\left( \HO(\BC)_{\vir}\otimes \HO(\Msp_{\mu}^{\zeta\sst,\SP},\DTS^{\zeta}_{W,\mu})\right)\rightarrow &\HO(\mathcal{A}_W^{\SP})
\end{align*}
induced by the relative and absolute cohomological Hall algebra multiplications, respectively, are isomorphisms.  Similarly, if $\psi$ is a twist defined as in (\ref{pcocdef}), then the morphisms
\begin{align*}
\Boxtimes_{\oplus, \infty\xrightarrow{\mu} -\infty}^{\tw} \Sym_{\boxtimes_{\oplus}}\left(\HO(\BC)_{\vir}\otimes (\Msp^{\zeta\sst}_{\mu}\rightarrow\Msp)_* \DTS^{\zeta}_{W,\mu}\right)\rightarrow &\Ho(\mathcal{A}_W)\\
\Boxtimes_{+, \infty\xrightarrow{\mu} -\infty}^{\tw}\Sym_{\boxtimes_+}\left( \HO(\BC)_{\vir}\otimes \HO(\Msp_{\mu}^{\zeta\sst,\SP},\DTS^{\zeta}_{W,\mu})\right)\rightarrow &\HO(\mathcal{A}_W^{\SP})
\end{align*}
induced by the same embeddings as above and the $\psi$-twisted relative and absolute cohomological Hall algebra multiplications are isomorphisms.
\end{thmx}
Theorem \ref{qea} and \ref{strongPBW} together give a very general structural result on the relative and absolute CoHAs, namely that they are obtained by infinite extensions of algebras (indexed by descending slope) that are themselves noncommutative deformations of free supercommutative algebras, with characteristic functions determined by the cohomological BPS invariants.

\subsection{The BPS Lie algebra}
\label{BPSLA}
One of the main contributions of the current paper, and a direct corollary of the above structural theorems, is the definition of the BPS Lie algebra.  This is a natural Lie algebra, for any choice of $Q,W,\mu,\SP,\psi$ and $\mu$-generic stability condition $\zeta$, the characteristic function of which recovers the (refined) BPS invariants for this data.  Part of Theorem \ref{qea} states that $\Gr_{\Pf}(\HO(^{\psi}\!\!\mathcal{A}_{W,\mu}^{\zeta,\SP}))$ is supercommutative.  On the other hand, the perverse filtration on $\HO(^{\psi}\!\!\Coha_{W,\mu}^{\zeta,\SP})$ begins in degree one, and in degree one is given precisely by a half Tate-twist of the cohomological BPS invariants.  It follows that there is an inclusion of Lie algebras
\begin{equation}
\label{BPSdomain}
\LL^{1/2}\otimes\DT_{W,\mu}^{\zeta,\SP}\hookrightarrow \HO(^{\psi}\!\!\Coha_{W,\mu}^{\zeta,\SP}),
\end{equation}
i.e. that the first perverse piece of the target is closed under the commutator Lie bracket (since this bracket becomes zero in the perverse associated graded algebra).  We call the domain of (\ref{BPSdomain}) the \textit{BPS Lie algebra} associated to the data $(Q,W,\zeta,\mu,\SP,\psi)$.  In a forthcoming paper the first author shows that for a specific choice $\tilde{Q},\tilde{W}$ of quiver with potential depending on a given quiver $Q$,
\[
\HO^0(\LL^{1/2}\otimes \DTS_{\tilde{W}})\cong\mathfrak{g}_{Q_{\mathrm{re}}}^+,
\]
where $\mathfrak{g}_{Q_{\mathrm{re}}}^+$ is the positive half of the Kac--Moody Lie algebra for $Q_{\mathrm{re}}$, the quiver obtained from $Q$ by deleting all vertices for which there are loops, along with all arrows to or from these vertices.  In particular, the BPS Lie bracket can be highly nontrivial.
\subsection{Structure of the paper}
The paper is broken roughly into three parts: there is the introduction above, a second part which goes up to Section \ref{cwcfSec}, and then a third part.  The purpose of the second part is to prove the cohomological integrality theorem (Theorem \ref{ThmA}) and the cohomological wall crossing theorem  (Theorem \ref{CWCT}).  For many applications of cohomological Donaldson--Thomas theory (especially positivity results) these two theorems are all one needs.  In addition, there are a lot of added technicalities (coproducts, perverse associated graded algebras, modified symmetrizer isomorphisms etc.) that go into the proofs, and indeed the statements of Theorems \ref{qea} and \ref{strongPBW}.  Therefore the reader who is less interested in the algebraic structure underlying the integrality isomorphism, or the task of making the integrality isomorphism canonical, can safely treat this paper as ending with Section \ref{cwcfSec}.  We hope that this reduces the technical burden of the current paper for the ``working mathematician''.
\smallbreak
The third part of the paper is dedicated to proving that these isomorphisms can be realised as PBW isomorphisms in the cohomological Hall algebra.  Theorem \ref{qea} can be construed as a second proof of the integrality theorem (Theorem \ref{ThmA}) which is dependent on the first one.  The conceptual payoff for reproving the theorem is that the second time we prove the theorem, we are able to make the isomorphism underlying the integrality theorem \textit{canonical} --- this is a key step towards proving the integrality theorem for the category of coherent sheaves on a 3-Calabi--Yau variety, for example.  That this category locally looks like the category of representations for a quiver with potential is an exercise in deformation theory if one works formally locally, and is proved rigorously by Toda in the analytic topology \cite{Toda17-2}.  The key step in proving the integrality conjecture for coherent sheaves on a 3-Calabi--Yau variety, then, is to prove it locally in a sufficiently canonical manner that the proof glues naturally.  This application will be the subject of a follow-up paper by the authors.

\subsection{Acknowledgements}
During the writing of this paper, Ben Davison was a postdoctoral researcher at EPFL, supported by the Advanced Grant ``Arithmetic and physics of Higgs
moduli spaces'' No. 320593 of the European Research Council.  During the extensive redrafting of the paper he was supported by the University of Glasgow and the University of Edinburgh, partly supported by a Royal Society University Research Fellowship, and by the starter grant ``Categorified Donaldson-Thomas theory'' No. 759967 of the European Research Council.  He would like to sincerely thank Davesh Maulik for the many stimulating conversations that helped this paper along the way. Sven Meinhardt wants to thank Tam\'{a}s Hausel for giving him the opportunity to visit  Lausanne and Markus Reineke for providing a wonderful atmosphere in Wuppertal to complete this research.  We would like to sincerely thank the anonymous referee for their helpful and enlightening suggestions and corrections.
\section{Hodge theory and equivariant vanishing cycles}
\subsection{Monodromic mixed Hodge modules}
\label{MMHMs}
Let $X$ be a complex variety.  Then we define as in \cite{Saito89,Saito90} the category $\MHM(X)$ of mixed Hodge modules on $X$.  See \cite{Saito1} for an overview of the theory.  There is an exact functor $\rat_X\colon \Dub(\MHM(X))\rightarrow\Dub(\perv(X))$ which takes a complex of mixed Hodge modules $\mathcal{F}$ to its underlying complex of perverse sheaves, and commutes with $f_*,f_!,f^*,f^!,\DD_X$ and tensor product.  If no remark is made to the contrary, and a non-derived target or domain category is not specified, all these functors, and indeed all functors for the entirety of this paper, will be considered as derived functors, even if their non-derived versions are well-defined.  
\smallbreak
Set $\mathcal{B}$ to be the full subcategory of mixed Hodge modules on $\mathbb{A}^1$ containing those $\mathcal{F}\in\Ob(\MHM(\mathbb{A}^1))$ such that $(\mathbb{G}_m\rightarrow \mathbb{A}^1)^*\mathcal{F}$ is an admissible variation of mixed Hodge structure.  Set $\mathcal{C}$ to be the full subcategory of $\mathcal{B}$ for which the objects are the admissible variations of mixed Hodge structure on $\mathbb{A}^1$.  Then we define the category of \textit{monodromic mixed Hodge structures} $\MMHS$ to be the Serre quotient $\mathcal{B}/\mathcal{C}$.

\smallbreak
Fix $X$ and let $\mathcal{B}_X$ be the full subcategory containing those $\mathcal{F}\in\MHM(X\times\mathbb{A}^1)$ such that for each $x\in X$, the total cohomology of $(\{x\}\times\GG_m\rightarrow X\times\AA^1)^*\mathcal{F}$ is an admissible variation of mixed Hodge structure on $\GG_m$.  Via Saito's description of $\MHM(X\times\mathbb{A}^1)$, the category $\mathcal{B}_X\subset \MHM(X\times\mathbb{A}^1)$ is the full subcategory, closed under extensions, and containing the following types of objects
\begin{enumerate}
\item
Mixed Hodge modules $\ICSn_{Y\times \mathbb{A}^1}(\mathcal{L})[\dim(Y)+1]$, where $Y\subset X$ is an irreducible closed subvariety, and $\mathcal{L}$ is a pure weight $n$ variation of Hodge structure on $Y'\times\mathbb{G}_m$, for $Y'$ an open dense subvariety of $Y_{\reg}$, and $n\in\mathbb{Z}$.
\item
Mixed Hodge modules $\ICSn_{Y\times \{0\}}(\mathcal{L})[\dim(Y)]$, for $\mathcal{L}$ a pure weight $n$ variation of Hodge structure on $Y'\subset Y_{\reg}\subset X$ as above.
\end{enumerate}
Here $\ICSn_{Y\times \mathbb{A}^1}(\mathcal{L})[\dim(Y)+1]$ and $\ICSn_{Y\times \{0\}}(\mathcal{L})[\dim(Y)]$ are the lifts of intersection complexes to mixed Hodge modules defined in \cite[Thm.3.21]{Saito90}.  By semisimplicity, any object $\mathcal{P}$ of type (1) splits into a direct sum $\mathcal{P}\cong \mathcal{P}_1\oplus\mathcal{P}_{\neq 1}$, where
\[
\mathcal{P}_i=\ICSn_{Y\times \mathbb{A}^1}(\mathcal{L}_i)[\dim(Y)+1]
\]
for $\mathcal{L}_1\subset\mathcal{L}$ the maximal pure variation of Hodge structure to have trivial monodromy around $Y\times\{0\}$, and $\mathcal{L}_{\neq 1}$ its complement.

Inside $\mathcal{B}_X$ there is a Serre subcategory $\mathcal{C}_X$ given by the full subcategory of $\MHM(X\times\AA^1)$ containing those objects $\mathcal{F}$ such that the total cohomology of each $(\{x\}\times\AA^1\rightarrow X\times \AA^1)^*\mathcal{F}$ is an admissible variation of mixed Hodge structure on $\mathbb{A}^1$.    It is generated under extensions by simple objects of the form $\mathcal{P}_1$.  

By Schmid's rigidity theorem, the total cohomology of the restriction to the fibres $\{x\}\times\mathbb{A}^1$ of any such mixed Hodge module is constant, and $\mathcal{C}_X$ is the essential image of the functor $\pi^*[1]\colon \MHM(X)\rightarrow\MHM(X\times\mathbb{A}^1)$, where 
\[
\pi\colon X\times\mathbb{A}^1\rightarrow X
\]
is the projection.  We define following \cite[Sec.7]{KS2} the category of \textit{monodromic mixed Hodge modules} $\MMHM(X)=\mathcal{B}_X/\mathcal{C}_X$.  Since the objects of $\MMHM(X)$ are the objects of $\mathcal{B}_X$, we deduce that all isomorphism classes of objects in $\MMHM(X)$ are obtained from iterated extensions of simple objects of type (1) satisfying $\mathcal{L}=\mathcal{L}_{\neq 1}$, and of type (2).  We identify $\MMHS$ with $\MMHM(\pt)$.
\smallbreak

In \cite{KS2} the category of monodromic mixed Hodge modules is presented as a subcategory of $\MHM(X\times\mathbb{A}^1)$, whereas for us it is a quotient.  We collect together some category theoretic lemmas that clarify this shift in point of view.
\begin{lemma}
\label{adjLem1}
Let $\Psi\colon \mathcal{B}\rightarrow\mathcal{B}$ be a functor, and let $\nu\colon\id_{\mathcal{B}}\rightarrow \Psi$ be a natural transformation satisfying 
\begin{enumerate}
\item
$\Psi(\nu)=\nu_{\Psi}\colon \Psi\rightarrow \Psi^2$, i.e. $\Psi(\nu_{\mathcal{F}})=\nu_{\Psi(\mathcal{F})}$ for all objects $\mathcal{F}$ in $\mathcal{B}$.
\item
$\Psi(\nu)=\nu_{\Psi}$ is an isomorphism of functors.
\end{enumerate}
Let $\mathcal{D}\subset \mathcal{B}$ be the full subcategory containing those objects $\mathcal{F}$ of $\mathcal{B}$ such that $\nu_{\mathcal{F}}\colon\mathcal{F}\rightarrow \Psi(\mathcal{F})$ is an isomorphism.  Then the inclusion functor $\iota\colon\mathcal{D}\rightarrow \mathcal{B}$ is right adjoint to $\psi$
\begin{equation*}
\begin{tikzcd}
\mathcal{B} \arrow[r,shift left=.5ex,"\Psi"]
&
\mathcal{D}. \arrow[l,shift left=.5ex,"\iota"]
\end{tikzcd}
\end{equation*}
\end{lemma}
Condition (2) implies that $\Psi(\mathcal{F})\in\obj(\mathcal{D})$ for all $\mathcal{F}\in\obj(\mathcal{B})$, and so we obtain a factorization $\Psi\colon\mathcal{B}\rightarrow \mathcal{D}\xrightarrow{\iota}\mathcal{B}$.  In the lemma, and from now on, we abuse notation by denoting the resulting functor $\mathcal{B}\rightarrow\mathcal{D}$ also by $\Psi$.
\begin{proof}
Via the above abuse of notation we consider $\nu$ as a natural transformation $\id_{\mathcal{B}}\rightarrow \iota\circ\Psi$.  We claim that this is the unit of the adjunction, while
\[
\eta\colonequals(\nu|_{\mathcal{D}})^{-1}\colon \Psi|_{\mathcal{D}}\rightarrow \id_{\mathcal{D}}=\Psi\circ\iota
\]
will be the unit.  We need to check that the compositions of morphisms
\begin{align}
\label{cua1}
&\Psi\xrightarrow{\Psi(\nu)}\Psi\iota\Psi\xrightarrow{\eta_{\Psi}}\Psi\\
\label{cua2}
&\iota\xrightarrow{\nu_{\iota}}\iota\Psi\iota\xrightarrow{\iota(\eta)}\iota
\end{align}
are the identity.  Pick $\mathcal{F}\in\obj(\mathcal{B})$.  Then 
\begin{align*}
\eta_{\Psi(\mathcal{F})}\circ\Psi(\nu_{\mathcal{F}})=&\nu_{\Psi(\mathcal{F})}^{-1}\circ\Psi(\nu_{\mathcal{F}})\\=&\id_{\Psi(\mathcal{F})}.
\end{align*}
Pick $\mathcal{F}\in\obj(\mathcal{D})$.  Then
\begin{align*}
\iota(\eta_{\mathcal{F}})\circ \nu_{\iota(\mathcal{F})}=&\nu_{\mathcal{F}}^{-1}\circ \nu_{\mathcal{F}}\\=&\id_{\mathcal{F}}.
\end{align*}
\end{proof}
\begin{lemma}
\label{adjLem2}
With the assumptions of Lemma \ref{adjLem1}, assume also that $\mathcal{B}$ is Abelian, and $\Psi$ is exact.  Let $\mathcal{C}$ be the full subcategory of $\mathcal{B}$ containing those objects $\mathcal{F}$ for which $\Psi(\mathcal{F})=0$.  Then $\mathcal{C}$ is a Serre subcategory.  We denote the Serre quotient $\mathcal{B}/\mathcal{C}$.  The functor $\Psi$ induces a functor $\tilde{\Psi}\colon\mathcal{B}/\mathcal{C}\rightarrow \mathcal{B}$, which is right adjoint to the canonical functor $Q\colon \mathcal{B}\rightarrow\mathcal{B}/\mathcal{C}$.
\end{lemma}
\begin{proof}
The category $\mathcal{C}$ is a Serre subcategory by exactness of $\Psi$.  By the universal property of $\mathcal{B}/\mathcal{C}$, $\Psi$ induces a functor $\tilde{\Psi}\colon\mathcal{B}/\mathcal{C}\rightarrow \mathcal{B}$.  We claim that $\nu\colon \id_{\mathcal{B}}\rightarrow \Psi$ provides a unit for the adjunction.  Let $\mathcal{F}\in\mathcal{B}/\mathcal{C}$, and consider the map $\nu_{\mathcal{F}}\colon\mathcal{F}\rightarrow \Psi(\mathcal{F})$.  Since $\Psi(\nu_{\mathcal{F}})$ is an isomorphism, and $\Psi$ is exact, we deduce that $\Psi(\mathrm{ker}(\nu_{\mathcal{F}}))=\Psi(\mathrm{coker}(\nu_{\mathcal{F}}))=0$, and so $\nu_{\mathcal{F}}$ represents an isomorphism $\overline{\nu_{\mathcal{F}}}$ in $\mathcal{B}/\mathcal{C}$.  Arguing as above, $(\overline{\nu_{\mathcal{F}}})^{-1}$ provides the counit to the adjunction.
\end{proof}
The proof of the following lemma is again constructed from the above adjunctions, and is omitted.
\begin{lemma}
With the assumptions of Lemmas (\ref{adjLem1}) and (\ref{adjLem2}), the induced functor $\tilde{\Psi}\colon \mathcal{B}/\mathcal{C}\rightarrow\mathcal{D}$ is an equivalence of categories, with inverse equivalence the composition of canonical functors $\mathcal{D}\rightarrow \mathcal{B}\rightarrow\mathcal{B}/\mathcal{C}$.
\end{lemma}
Now we apply these abstractions to certain categories of mixed Hodge modules.  We define as in \cite{KS2} the exact functor $\Psi_X\colon\mathcal{B}_X\rightarrow\mathcal{B}_X$:

\[
\Psi_X(\mathcal{F})=(X\times\mathbb{A}^2\xrightarrow{\id\times +} X\times \mathbb{A}^1)_*\left(\mathcal{F}\boxtimes (\mathbb{G}_m\rightarrow\mathbb{A}^1)_!\mathbb{Q}_{\mathbb{G}_m}[1]\right).
\] 
The functor is exact since it sends each of the simple mixed Hodge modules identified at the start of this section to mixed Hodge modules.  Consider the distinguished triangle 
\[
 \mathbb{Q}_{\mathbb{A}^1}\rightarrow (\{0\}\hookrightarrow \mathbb{A}^1)_*\mathbb{Q}_{\{0\}} \xrightarrow{s}(\mathbb{G}_m\xrightarrow{j} \mathbb{A}^1)_!\mathbb{Q}_{\mathbb{G}_m}[1]
\]
We define the natural transfromation $\nu$, for $\mathcal{F}\in\mathcal{B}_X$, via the following composition of mixed Hodge modules on $X\times\mathbb{A}^1$
\begin{align*}
\mathcal{F}\cong&(X\times\mathbb{A}^2\xrightarrow{\id\times +} X\times \mathbb{A}^1)_*\left(\mathcal{F}\boxtimes (\{0\}\hookrightarrow\mathbb{A}^1)_*\mathbb{Q}_{\{0\}}\right)\xrightarrow{s'}\\
&(X\times\mathbb{A}^2\xrightarrow{\id\times +} X\times \mathbb{A}^1)_*\left(\mathcal{F}\boxtimes (\mathbb{G}_m\rightarrow\mathbb{A}^1)_!\mathbb{Q}_{\mathbb{G}_m}[1]\right)\cong\Psi(\mathcal{F})
\end{align*}
where 
\[
s'=(X\times\mathbb{A}^2\xrightarrow{\id\times +} X\times \mathbb{A}^1)_*\left(\id_{\mathcal{F}}\boxtimes s\right).
\]
\begin{lemma}
The pair $(\Psi_X,\nu)$ satisfies the conditions of Lemma \ref{adjLem1}.
\end{lemma}
\begin{proof}
Consider the commutative diagram
\begin{equation}
\label{cdr}
\xymatrix{
X\times\AA^1\times\AA^1\times\AA^1\ar[rr]^-{\id_{X\times\AA^1}\times+}\ar[d]^{\id_X\times+\times\id_{\AA^1}}&&X\times\AA^1\times\AA^1\ar[d]^{\id_X\times +}\\
X\times\AA^1\times\AA^1\ar[rr]^-{\id_X\times +}&&X\times \AA^1.
}
\end{equation}

We have 
\begin{align*}
\Psi_X(\nu_{\mathcal{F}})&=(\id_X\times +)_*(\nu_{\mathcal{F}}\boxtimes\id_{j_!\mathbb{Q}_{\mathbb{G}_m}[1]})
\\&=(\id_X\times +)_*(\id_X\times +\times \id_{\AA^1})_*(\id_{\mathcal{F}}\boxtimes s\boxtimes \id_{j_!\mathbb{Q}_{\mathbb{G}_m}[1]})
\\&=(\id_X\times +)_*(\id_{\mathcal{F}}\boxtimes +_*(s\boxtimes \id_{j_!\mathbb{Q}_{\mathbb{G}_m}[1]}))
\end{align*}
where we have used commutativity of (\ref{cdr}) for the final equality.  On the other hand, we have
\begin{align*}
\nu_{\Psi_X(\mathcal{F})}=&(\id_X\times+)_*(\id_{\Psi_X(\mathcal{F})}\boxtimes s)\\
=&(\id_X\times +)_*(\id_X\times +\times \id_{\AA^1})_*(\id_{\mathcal{F}}\boxtimes\id_{j_!\mathbb{Q}_{\mathbb{G}_m}[1]}\boxtimes s)\\
=&(\id_X\times +)_*(\id_{\mathcal{F}}\boxtimes +_*(\id_{j_!\mathbb{Q}_{\mathbb{G}_m}[1]}\boxtimes s))
\end{align*}
and
\[
+_*(s\boxtimes \id_{j_!\mathbb{Q}_{\mathbb{G}_m}[1]})=+_*(\id_{j_!\mathbb{Q}_{\mathbb{G}_m}[1]}\boxtimes s)
\]
by commutativity of $+$.  We compute 
\begin{align*}
\cone(\nu_{\Psi_X(\mathcal{F})})\cong&(\id_X\times +)_*(\mathcal{F}\boxtimes +_*(j_!\mathbb{Q}_{\mathbb{G}_m}[1]\boxtimes \cone(s)))\\
\cong&(\id_X\times +)_*(\mathcal{F}\boxtimes +_*(j_!\mathbb{Q}_{\mathbb{G}_m}[1]\boxtimes \QQ_{\AA^1}[1]))
\end{align*}
but 
\[
+_*(j_!\mathbb{Q}_{\mathbb{G}_m}[1]\boxtimes \QQ_{\AA^1}[1]))\cong \HO^*(\AA^1,j_!\mathbb{Q}_{\mathbb{G}_m})\otimes\QQ_{\AA^2}[2]=0
\]
proving that $\nu_{\Psi_X(\mathcal{F})}$ is an isomorphism.
\end{proof}
\begin{lemma}
\label{wrapup1}
Setting $\Psi=\Psi_X$, $\mathcal{B}=\mathcal{B}_X$ in Lemma (\ref{adjLem2}), we have $\mathcal{C}=\mathcal{C}_X$.
\end{lemma}
\begin{proof}
We denote by $\pi\colon X\times\mathbb{A}^1\rightarrow X$ the projection.  By definition, $\mathcal{C}$ is the full subcategory containing those objects $\mathcal{F}$ such that $\Psi_X(\mathcal{F})=0$.  This is equivalent to the condition that the natural composition
\[
\pi^*\pi_*\mathcal{F}=(X\times\mathbb{A}^2\xrightarrow{\id\times +} X\times \mathbb{A}^1)_*\left(\mathcal{F}\boxtimes \mathbb{Q}_{\mathbb{A}^1}\right)\rightarrow (X\times\mathbb{A}^2\xrightarrow{\id\times +} X\times \mathbb{A}^1)_*\left(\mathcal{F}\boxtimes \mathbb{Q}_{0}[1]\right)=\mathcal{F}
\]
is an isomorphism.  It follows that the objects of $\mathcal{C}$ are contained in the objects of $\mathcal{C}_X$.  On the other hand, the natural transformation $\pi^*\pi_*\pi^*\rightarrow \pi^*$ is an isomorphism and the reverse inclusion holds.
\end{proof}
Via the same arguments, we deduce the following Lemma.
\begin{lemma}
Under the assumptions of Lemma \ref{wrapup1} and with the terminology of Lemma \ref{adjLem1}, $\mathcal{D}$ is the full subcategory of $\mathcal{B}_X$ containing those objects $\mathcal{F}$ satisfying $\pi_*\mathcal{F}=0$.
\end{lemma}

The (sub)category $\mathcal{D}$ is identified as the category of monodromic mixed Hodge modules by Kontsevich and Soibelman; as we have shown, it is equivalent to our (quotient) category of monodromic mixed Hodge modules.

The natural pushforward $(X\times\GG_m\rightarrow X\times\AA^1)_!\colon \MHM(X\times\GG_m)^{\mon}\rightarrow\MMHM(X)$ is an equivalence of categories, where $\MHM(X\times\GG_m)^{\mon}$ is the full subcategory of $\MHM(X\times\GG_m)$ containing those $\mathcal{F}$ such that the total cohomology of each pullback $(\{x\}\times\GG_m\rightarrow X\times\GG_m)^*\mathcal{F}$ is an admissible variation of mixed Hodge structure.  An explicit inverse equivalence $\Theta_X$ is provided by
\begin{equation}
\label{ThetaDef}
\Theta_X\colon\mathcal{F}\mapsto (X\times\GG_m\rightarrow X\times\AA^1)^*\tilde{\Psi}_X\mathcal{F}.
\end{equation}
where $\tilde{\Psi}_X$ is functor induced by $\Psi_X$, as above.  In terms of the two types of simple mixed Hodge modules identified at the start of the section, we have 
\begin{align*}
\Theta_X\ICSn_{Y\times \mathbb{A}^1}(\mathcal{L})[\dim(Y)+1]\cong &\ICSn_{Y\times \mathbb{G}_m}(\mathcal{L}_{\neq 1})[\dim(Y)+1]\\
\Theta_X\ICSn_{Y\times \{0\}}(\mathcal{L})[\dim(Y)]\cong &\ICSn_{Y\times \mathbb{G}_m}(\mathcal{L}\boxtimes \mathbb{Q}_{\mathbb{G}_m})[\dim(Y)+1].
\end{align*}


From the existence of a right adjoint to the functor $\mathcal{B}_X\rightarrow \MMHM(X)$, one can show (see e.g. \cite[06XM]{Stackproject}) that the natural functor
\[
\Dub(\mathcal{B}_X)/\Dub_{\mathcal{C}_X}(\mathcal{B}_X)\rightarrow \Dub(\mathcal{B}_X/\mathcal{C}_X)=\Dub(\MMHM(X))
\]
is an equivalence of categories, where $\Dub_{\mathcal{C}_X}(\mathcal{B}_X)$ is the full subcategory of $\Dub(\mathcal{B}_X)$ consisting of those objects whose cohomology objects lie in $\mathcal{C}_X$.  The subcategory $\Dub_{\mathcal{C}_X}(\mathcal{B}_X)$ is stable under the Verdier duality functor $\DD_{X\times\AA^1}$, and so we obtain a Verdier duality functor on $\Dub(\MMHM(X))$ which we denote $\DD^{\mon}_X$.  The associated graded object $\GrW{\bullet}(\mathcal{F})$ of an object in $\mathcal{C}_X$ with respect to the weight filtration is also in $\mathcal{C}_X$, and so the weight filtration descends to $\MMHM(X)$.  
\begin{definition}
Given an element $\mathcal{G}\in\MMHM(X)$, we say it is pure of weight $i$ if $\GrW{j}\mathcal{G}$ is zero for all $j\neq i$.  Given an element $\mathcal{F}\in\Dub(\MMHM(X))$, we say that $\mathcal{F}$ is pure of weight $i$ if each $\Ho^j(\mathcal{F})$ is pure of weight $i+j$, or we just say that $\mathcal{F}$ is \textit{pure} if it is pure of weight zero.
\end{definition}
\begin{remark}
The equivalence $(X\times\GG_m\rightarrow X\times\AA^1)_!\colon \MHM(X\times\GG_m)^{\mon}\rightarrow\MMHM(X)$ does not preserve the weight filtration; for instance $\mathbb{Q}_{\mathbb{G}_m}[1]$ is a pure variation of Hodge structure of weight 1, while the monodromic mixed Hodge structure $(\mathbb{G}_m\rightarrow\mathbb{A}^1)_!\mathbb{Q}_{\mathbb{G}_m}[1]\cong(\{0\}\hookrightarrow \AA^1)_*\mathbb{Q}_{\{0\}} $ is pure of weight zero.
\end{remark}
If $f\colon X\rightarrow Y$ is a morphism of varieties, we define the functors $f^!,f^*,f_!,f_*$ between the associated derived categories of monodromic mixed Hodge modules to be those induced by the functors $(f\times\id_{\mathbb{A}^1})^!,(f\times\id_{\mathbb{A}^1})^*,(f\times\id_{\mathbb{A}^1})_!,(f\times\id_{\mathbb{A}^1})_*$ between the associated derived categories of mixed Hodge modules.  
\begin{definition}
Assume $X$ is a variety $X\xrightarrow{\tau}\mathbb{N}^{Q_0}$ over a monoid $\mathbb{N}^{Q_0}$ of dimension vectors for a quiver $Q$. 
We define $\HO(X,\mathcal{F})$ to be the total cohomology of $\tau_*\mathcal{F}$.  So $\HO(X,\mathcal{F})$ may be thought of as a cohomologically graded $\mathbb{N}^{Q_0}$-graded monodromic mixed Hodge module on a point, e.g. the $\mathbb{N}^{Q_0}$-graded monodromic mixed Hodge structure underlying the total hypercohomology of $\mathcal{F}$.  Similarly, we define $\HO_c(X,\mathcal{F})$ to be the total cohomology of $\tau_!\mathcal{F}$.
\end{definition}

The monoidal structures $\otimes$ on $\Du(\MMHM(X))$ and $\Dl(\MMHM(X))$ are defined by 
\begin{equation}
\label{simmon}
\mathcal{F}\otimes\mathcal{G}\colonequals (X\times \AA^1\times \AA^1\xrightarrow{\id \times +}X\times \AA^1)_*(\pr_{1,2}^*\mathcal{F}\otimes\pr_{1,3}^*\mathcal{G})
\end{equation}
where $\pr_{i,j}\colon X\times\AA^1\times\AA^1\rightarrow X\times\AA^1$ is the projection onto the $i$th and the $j$th component.  
\begin{remark}
\label{convProdDef}
More generally, let $X$ be a monoid in $\Sch{Y}$, the category of schemes over $Y$, with finite type monoid map $\oplus\colon X\times_Y X\rightarrow X$.  Then define
\[
\mathcal{F}\boxtimes_{\oplus} \mathcal{G}\colonequals (X\times_Y X\times \mathbb{A}^1\times\mathbb{A}^1\xrightarrow{\oplus\times +} X\times\mathbb{A}^1)_*(\pr_{1,3}^*\mathcal{F}\otimes\pr_{2,4}^*\mathcal{G}).
\]
We recover (\ref{simmon}) in the special case in which $(X\xrightarrow{\id_X} X)$ is considered as a monoid in $\Sch{X}$.  In addition, for $\mathcal{F},\mathcal{G}$ objects of $\Du(\MMHM(X)),\Du(\MMHM(Y))$ or $\Dl(\MMHM(X)),\Dl(\MMHM(Y))$ respectively, we define
\[
\mathcal{F}\boxtimes \mathcal{G}\colonequals (X\times Y\times\mathbb{A}^1\times\mathbb{A}^1\xrightarrow{\id_{X\times Y}\times +} X\times Y\times \mathbb{A}^1)_*(\pr_{1,3}^*\mathcal{F}\otimes\pr_{2,4}^*\mathcal{G})\in\Dub(\MMHM(X\times Y)).
\]
This external tensor product is biexact and preserves weight filtrations by \cite[Sec.4]{KS2}, \cite[Prop.3.2]{Da16a}, or the proof of Proposition \ref{symmPure} below.

\end{remark}
There is a fully faithful embedding $\MHM(X)\rightarrow \MMHM(X)$ given by 
\[
i_*=(X\xrightarrow{x\mapsto (x,0)} X\times\AA^1)_*
\]
which is furthermore a monoidal functor, commuting with Verdier duality, as $i$ is proper.

Define $\Theta_X$ as in (\ref{ThetaDef}).  Let $e\colon X\xrightarrow{x\mapsto(x,1)}X\times\mathbb{G}_m$ be the natural inclusion.  Then $e^*\Theta_X[-1]\colon \MMHM(X)\rightarrow \MHM(X)$ is also faithful (again using rigidity for variations of mixed Hodge structure \cite[Thm.4.20]{StZu85}), and so we can define an exact faithful (non-derived) functor
\begin{equation}
\label{formDef}
\form{X}\colon \MMHM(X)\rightarrow \perv(X)
\end{equation}
by setting $\form{X}\colonequals \rat_Xe^*\Theta_X[-1]$.  
\smallbreak

Let $f$ be a regular function on a smooth algebraic variety $X$.  Define $X_{<0}=f^{-1}(\mathbb{R}_{<0})$ and $X_0=f^{-1}(0)$.  We define the functor $\psi_f\colon \Dub(\perv(X))\rightarrow\Dub(\perv(X))$
\[
\psi_f\colonequals (X_0\rightarrow X)_*(X_0\rightarrow X)^*(X_{<0}\rightarrow X)_*(X_{<0}\rightarrow X)^*
\]
and define $\phi_f\colonequals \cone((X_0\rightarrow X)_*(X_0\rightarrow X)^*\rightarrow\psi_f)$; this cone can indeed be made functorial --- see \cite{KSsheaves} for details, or the proof of Proposition \ref{TSeq} below for a functorial definition of $\phi_f$.  The functors $\psi_f$ and $\phi_f$ have lifts to endofunctors of $\Dub(\MHM(X))$, forming a key part of Saito's theory.  


In the sequel we consider vanishing cycles always as a functor $\Dub(\MHM(X))\rightarrow\Dub(\MMHM(X))$ via the definition of \cite[Def.27]{KS2}
\begin{equation}
\label{phimdef}
\phim{f}\colonequals (X\times\GG_m\rightarrow X\times\mathbb{A}^1)_!\phi_{f/u}(X\times\GG_m\rightarrow X)^*.  
\end{equation}
In (\ref{phimdef}), $u$ denotes the coordinate on $\GG_m$.
\begin{example}
Let $f=0$.  Then $\phi_f(\mathcal{F})\cong\mathcal{F}[1]$, since $\psi_f(\mathcal{F})=0$.  Likewise, $\phim{f}\mathcal{F}=(X\times\GG_m\rightarrow X\times\mathbb{A}^1)_!(X\times\GG_m\rightarrow X)^*\mathcal{F}[1]$.  We deduce that $\phim{f}\mathcal{F}=j_!j^*\mathcal{G}[1]$ where $\mathcal{G}=(X\times\mathbb{A}^1\xrightarrow{\pi} X)^*\mathcal{F}$ and $j\colon X\times\GG_m\rightarrow X\times\AA^1$ is the inclusion.  Let $i\colon X\times \{0\}\hookrightarrow X\times\mathbb{A}^1$ be the complement to $j$.  Since $\mathcal{G}$ is by definition trivial in $\MMHM(X)$, we deduce from the distinguished triangle $j_!j^*\rightarrow \id\rightarrow i_*i^*$ that $\phim{f}\mathcal{F}\cong i_*\mathcal{F}$ in $\MMHM(X)$.  In other words, $\phim{0}\mathcal{F}\cong\mathcal{F}$, where we consider $\mathcal{F}$ on the right hand side of the isomorphism as a monodromic mixed Hodge module via pushforward along $X\xrightarrow{i\colon x\mapsto(x,0)} X\times\AA^1$.  
\end{example}
\begin{remark}
Generalising the above example; in fact it follows from exactness of the functors $\phi_{f/u}[-1]$ (see \cite{BBD}), $\pi^*[1]$ and $j_!$ that $\phim{f}$ is exact in general.
\end{remark}

We collect together some useful properties of $\phim{f}$.
\begin{proposition}
\label{basicfacts}
\begin{enumerate}
\item \label{monPD}(Verdier duality): There is a natural isomorphism 
\[
\DD^{\mon}_X\phim{f}\cong \phim{f} \DD_X\colon \Dub(\MHM(X))\rightarrow\Dub(\MMHM(X)), 
\]
where $\DD_X$ is the usual Verdier duality functor.
\item (Interaction with adjunction):\label{adjInt}
If $p\colon X\rightarrow Y$ is a map of varieties, and $f$ is a regular function on $Y$, then there is a natural transformation $\phim{f}p_*\rightarrow p_*\phim{fp}$, which is moreover a natural isomorphism if $p$ is proper.
\item \label{HomInv}(Homotopy invariance): Let $X'\rightarrow X$ be an affine fibration with $d$-dimensional fibres.  Then the natural map $\phim{f} \mathcal{F}\rightarrow p_*\phim{fp}p^*\mathcal{F}$ is an isomorphism, as is the natural map $p_!\phim{fp}p^*\mathcal{F}\rightarrow \phim{f}\mathcal{F}\otimes\HO_c(\mathbb{A}^d)$.
\item\label{exFact}
(Exactness): The functor $\phim{f}\colon \Dub(\MHM(X))\rightarrow\Dub(\MMHM(X))$ is exact, i.e. it restricts to an exact functor $\MHM(X)\rightarrow\MMHM(X)$.
\item (Vanishing cycle decomposition theorem)\label{vanDec}
Let $\mathcal{F}\in\MMHM(X)$ be pure of weight $m$ for some $m\in\mathbb{Z}$, let $p\colon X\rightarrow Y$ be a proper map, and let $f$ be a regular function on $Y$.  Then there is a non-canonical isomorphism
\[
p_*\phim{fp}\mathcal{F}\cong \Ho(p_*\phim{fp}\mathcal{F}).
\]
\item (Thom--Sebastiani isomorphism): Let $f_j\colon X_j\rightarrow\mathbb{C}$ be regular functions, for $j=1,2$.  Then there is a natural ismorphism of bifunctors $\MHM(X)\times\MHM(X)\rightarrow\MMHM(X)$:
\[
\phim{f_1}(\bullet)\boxtimes\phim{f_2}(\bullet)\rightarrow\phim{f_1\boxplus f_2}(\bullet\boxtimes\bullet)|_{f^{-1}_1(0)\times f^{-1}_2(0)}.
\]
\item (Integral identity): Let $V_+\oplus V_-$ be a $\mathbb{C}^*$-equivariant vector bundle on the space $X$, given a $\mathbb{C}^*$-action, where the weights of the action on $V_+$ are all $1$, and the weights of the action on $V_-$ are all $-1$.  Let $f$ be a $\mathbb{C}^*$-invariant function.  Below, for a vector bundle $V'$ we denote the total space of $V'$ also by $V'$.  Then the natural map
\[
(V_+\rightarrow X)_!(V_+\rightarrow V_+\oplus V_-)^*\phim{f}(\mathbb{Q}_{V_+\oplus V_-}\rightarrow (V_+\rightarrow V_+\oplus V_-)_*\mathbb{Q}_{V_+})
\]
is an isomorphism.
\end{enumerate}

\end{proposition}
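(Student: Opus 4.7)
The proposition is a compendium of seven foundational properties of $\phim{f}$, and the plan is to deduce each from the analogous statement for Saito's non-monodromic vanishing cycle functor $\phi_g$ on $\Db(\MHM)$ by means of the definition
\[
\phim{f}=j_!\,\phi_{f/u}\,p^*
\]
from (\ref{phimdef}), where $p\colon X\times\GG_m\to X$ denotes the projection and $j\colon X\times\GG_m\hookrightarrow X\times\AA^1$ the open inclusion. Throughout, one exploits the fact that all three functors on the right are well studied individually, and one only has to keep track of their interactions.

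The soft items are (4), (1), (2) and (3). For exactness (4) one combines t-exactness of $p^*[1]$ (since $p$ is smooth of relative dimension one), t-exactness of $\phi_{f/u}[-1]$ (Gabber, in Saito's Hodge-theoretic lift), and the fact that $j_!$ descends to an exact functor $\MHM(X\times\GG_m)^{\mon}\to\MMHM(X)$ by construction of the latter. For Poincar\'e duality (1) one combines Saito's isomorphism $\DD\phi_g\cong\phi_g\DD$ with $\DD p^*\cong p^![2](1)$ and $\DD j_!\cong j_*\DD$, and matches the result with the construction of $\DD^{\mon}_X$ as the descent of $\DD_{X\times\AA^1}$ to $\mathcal{B}_X/\mathcal{C}_X$. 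For adjunction (2) one inserts $p\times\id_{\AA^1}$ into a Cartesian diagram and invokes the non-monodromic analogue together with base change; properness promotes the natural transformation to an isomorphism. For homotopy invariance (3) one applies the non-monodromic analogue fibrewise along the $\GG_m$-factor and then smooth base change through $p$.

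Item (5) is a direct consequence of Saito's decomposition theorem: by item (4) and Saito's preservation of purity by $\phi_g$, the object $\phim{fp}\mathcal{F}$ is pure whenever $\mathcal{F}$ is pure, and proper pushforward of pure MMHMs splits. For the Thom--Sebastiani isomorphism (6) the strategy is to start from the classical Massey/Saito isomorphism $\phi_{g_1\boxplus g_2}\cong\phi_{g_1}\boxtimes\phi_{g_2}$ on $X_1\times X_2$, then carefully identify the monodromies on both sides via the addition map $\AA^1\times\AA^1\to\AA^1$ entering the definition of $\otimes$ in (\ref{simmon}); since the natural transformation is constructed functorially, it is enough to check that it is an isomorphism on the generating objects listed in the description of $\mathcal{B}_X$, where the statement reduces to the classical Thom--Sebastiani for pure variations of Hodge structure on strata.

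The principal obstacle is the integral identity (7), the cohomological incarnation of the Kontsevich--Soibelman motivic integral identity \cite{KS2}. The plan is to use the $\GG_m$-equivariance of $f$ to localize onto the weight-zero locus $X\subset T_{V_+\oplus V_-}$, then apply homotopy invariance (3) along $T_{V_+}\to X$ and cancel the contribution of $V_-$ using the weight-$(-1)$ character of $\GG_m$ acting on it, so that only the compactly-supported cohomology $\HO_c(\AA^{\rk V_-})$ survives. The only new ingredient relative to the motivic case is the careful tracking of the monodromy of $\phim{\cdot}$ through these manipulations, but this is handled uniformly by working throughout in $\Dub(\MMHM)$; where possible, I would import the final assembly directly from \cite{KS2} and the refinement in \cite{DaMe4} rather than reproduce the argument from scratch.
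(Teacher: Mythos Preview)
Your overall strategy for items (1)--(4)---reduce to the known properties of Saito's $\phi_g$ by unwinding the definition $\phim{f}=j_!\phi_{f/u}p^*$---matches the paper's approach, which simply cites the non-monodromic sources (Saito, Gabber, Massey) and observes that the three constituent functors behave well. Two items, however, are not handled correctly.

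For (5), you claim that $\phim{fp}\mathcal{F}$ is pure because $\phi_g$ preserves purity. It does not: the paper itself exhibits an explicit counterexample later (the three-loop quiver with potential $x^p+y^q+z^r+axyz$) where $\phim{}$ applied to a pure object is impure. The paper's route is different and correct: use (2) with $p$ proper to identify $p_*\phim{fp}\mathcal{F}\cong\phim{f}p_*\mathcal{F}$, apply Saito's decomposition theorem to $p_*\mathcal{F}$ (this is where purity of $\mathcal{F}$ and properness of $p$ enter), and then apply exactness (4) of $\phim{f}$ to the resulting split complex.

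For (6), you propose to start from a ``classical Massey/Saito isomorphism $\phi_{g_1\boxplus g_2}\cong\phi_{g_1}\boxtimes\phi_{g_2}$'' in mixed Hodge modules and then track monodromy. The paper explicitly warns that the Thom--Sebastiani statement is \emph{false} for the non-monodromic $\phi_f$ in $\Db(\MHM)$; the passage to $\MMHM$ is not a bookkeeping step but is essential to the very existence of the isomorphism (roughly, one needs a square root of the Tate twist, which $\MMHM$ provides via $\LL^{1/2}$). The correct reference is Saito's unpublished note on Thom--Sebastiani for mixed Hodge modules, with Sch\"urmann's appendix to Brav--Bussi--Dupont--Joyce--Szendr\H{o}i for compatibility with Massey's perverse-sheaf version; you should cite these rather than attempt a reduction that does not exist.
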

The first five statements follow from the corresponding statements for $\phi_f$.  The first of these is proved at the level of perverse sheaves as the main theorem of \cite{Ma09}, and for mixed Hodge modules as the main theorem of \cite{Sai89duality}, see also Sch\"urmann's appendix to \cite{Br12}.  The second statement is given by combining \cite[Thm.2.14, Thm.4.3]{Saito90}.  Homotopy invariance then follows from the homotopy invariance statement for perverse sheaves.  The hard part of the exactness statement is the statement that the shift of the usual vanishing cycle functor, $\phi_f[-1]$, is exact.  This is a result of Gabber, and can be found in \cite{BBD}.  The version of the decomposition theorem quoted here follows from Saito's version of the decomposition theorem \cite{Saito90}, and statements (\ref{exFact}) and (\ref{adjInt}).  The statement regarding the Thom--Sebastiani isomorphism is in fact false after replacing $\phim{f}$ with $\phi_f[-1]$.  The statement involving $\phim{f}$ is due to Saito \cite{Saito10}; again we refer the reader to Sch\"urmann's appendix to \cite{Br12} for the compatibility between this proof and the proof of Massey \cite{Ma01} at the level of complexes of perverse sheaves.  The integral identity is proved in the above form in \cite{KS2}, and is a central background result in the proof of the integrality conjecture\footnote{The similarity between the names of the ``integral identity'' and the ``integrality conjecture'' is merely coincidencidental.} in \cite{KS2}.  Somewhat surprisingly, given its crucial role in the theory in other treatments of Hall algebras in Donaldson--Thomas theory, it is the only one of the seven facts about vanishing cyclesabove  that we will not use.

For ease of exposition we make the following simplification in what follows.
\begin{assumption}
\label{phicheat}
For all functions $f\colon X\rightarrow \mathbb{C}$ for which we wish to take $\phim{f}$, if $X$ is smooth, we assume that $\crit(f)\subset f^{-1}(0)$ as sets.
\end{assumption}
Under the assumption, the Thom--Sebastiani isomorphism for mixed Hodge modules simplifies to
\[
\phim{f_1}(\mathcal{F})\boxtimes\phim{f_2}(\mathcal{G})\rightarrow\phim{f_1\boxplus f_2}(\mathcal{F}\boxtimes\mathcal{G}).
\]
The assumption can be dropped, with a little care.  If it does not hold, one can instead work with the functor $\phi^{\mon,\textrm{fib}}_f\colonequals \bigoplus_{a\in\mathbb{A}_{\mathbb{C}}^1}\phim{f-a}$, or with exponential mixed Hodge structures --- see \cite{DaMe4}, \cite{KS2}, respectively.

Consider the embedding $\Db(\MHM(\pt))\xrightarrow{i_*}\Db(\MMHM(\pt))$.  The former category contains the element
\[
\mathbb{L}\colonequals \HO_c(\mathbb{A}^1,\mathbb{Q}),
\]
which is pure: it is concentrated entirely in cohomological degree 2, and has weight 2.  There is no square root of $\mathbb{L}$ in $\Db(\MHM(\pt))$, i.e. an element $\mathbb{L}^{1/2}$ such that $(\mathbb{L}^{1/2})^{\otimes 2}\cong \mathbb{L}$, but after embedding $\Db(\MHM(\pt))$ in $\Db(\MMHS)$ there is a square root.  We set 
\[
\mathbb{L}^{1/2}\colonequals \HO_c(\mathbb{A}^1,\phim{x^2}\mathbb{Q}_{\mathbb{A}^1}), 
\]
where $x^2\colon \mathbb{A}^1\rightarrow\mathbb{C}$ is considered as a regular function, and $\mathbb{Q}_{\mathbb{A}^1}$ is the constant (cohomologically shifted) mixed Hodge module on $\mathbb{A}^1$.  Using the Thom--Sebastiani isomorphism one may show that this is indeed a tensor square root of $\LL$.  We fix an isomorphism 
\[
l\colon \mathbb{L}^{1/2}\otimes\mathbb{L}^{1/2}\cong\mathbb{L}.
\]
This amounts to fixing an isomorphism $r\colon\mathbb{Q}[-2]\cong\form{\pt}\LL$, since up to sign there is a canonical isomorphism $\HO_c(\mathbb{A}^1,\phi_{x^2}\mathbb{Q}_{\mathbb{A}^1})\cong \mathbb{Q}[-1]$ given by picking one of the two natural basis elements of $\HO_c(\mathbb{A}^1,\psi_{x^2}\mathbb{Q}_{\mathbb{A}^1})\cong\mathbb{Q}\oplus\mathbb{Q}$.  In particular, since $\mathbb{A}^1$ carries a canonical orientation, there is a canonical choice of $r$ and thus of $l$.  We set
\begin{equation}
\label{tensinv}
\mathbb{L}^{-1/2}\colonequals \mathcal{H}om_{\Db(\MMHM(\pt))}(\mathbb{L}^{1/2},\mathbb{Q}),
\end{equation}
so there is a canonical isomorphism
\[
\mathbb{L}^{1/2}\otimes\mathbb{L}^{-1/2}\cong\mathbb{Q}
\]
with target the monoidal unit in $\MMHM(\pt)$.
\smallbreak

Since $\LL$ is pure, we deduce from Remark \ref{convProdDef} that $\LL^{1/2}$ is pure, concentrated in cohomological degree 1.  Note that due to the definition of the category of monodromic mixed Hodge modules on a point (i.e. monodromic mixed Hodge structures), this cohomological degree is with respect to the perverse t-structure on the underlying derived category of constructible sheaves on $\mathbb{A}^1\cong\mathbb{A}^1\times \pt$.

If $\mathcal{F}\in\MMHM(X)$ and $\mathcal{G}\in\MMHM(\pt)$ we use the abbreviation
\[
\mathcal{F}\otimes\mathcal{G}\colonequals \mathcal{F}\otimes 
\tau^*\mathcal{G},
\]
where $\tau\colon X\rightarrow \pt$ is the map to a point.  

For $X$ a variety and $d\in\mathbb{Z}$ we define 
\[
\LL_X^{d/2}=\LL^{d/2}\otimes \mathbb{Q}_X.
\]
If $X$ is a smooth equidimensional variety, we define 
\[
\ICS_{X}(\mathbb{Q})\colonequals \LL_X^{-\dim(X)/2}.  
\]
This complex of monodromic mixed Hodge modules is pure, and an element of $\MMHM(X)$, again by Remark \ref{convProdDef}.  If $X$ is not smooth, but is irreducible, we define the monodromic mixed Hodge module $\ICS_X(\mathbb{Q})\colonequals \LL^{-\dim(X_{\reg})/2}\otimes \ICSn_X(\mathbb{Q}_{X_{\reg}})$, where $\ICSn_X(\mathbb{Q}_{X_{\reg}})$ is the usual intersection cohomology mixed Hodge module complex of $X$ given by intermediate extension of $\mathbb{Q}_{X_{\reg}}$.   Since $\ICSn_X(\mathbb{Q}_{X_{\reg}})$ is concentrated in cohomological degree $\dim(X_{\reg})$, it follows that $\ICS_X(\mathbb{Q})$ is indeed a monodromic mixed Hodge module.  If $X$ is a finite disjoint union of irreducible varieties, we define 
\[
\ICS_X(\mathbb{Q})\colonequals \bigoplus_{Z\in\pi_0(X)}\ICS_Z(\mathbb{Q}).
\]
\begin{remark}
\label{finSD}
Because of the shift in the definition of $\ICS_X(\mathbb{Q})$, there are natural isomorphisms $\DD^{\mon}_X\ICS_X(\mathbb{Q})\cong\ICS_X(\mathbb{Q})$ and $\DD^{\mon}_X\phim{f}\ICS_X(\mathbb{Q})\cong\phim{f}\ICS_X(\mathbb{Q})$ for $X$ a finite disjoint union of irreducible varieties, and $f$ a regular function on $X$.
\end{remark}

For $X$ a smooth equidimensional variety we define 
\begin{align*}
\HO(X,\mathbb{Q})_{\vir}\colonequals &\HO(X,\ICS_X(\mathbb{Q}))\\\cong&\LL^{-\dim(X)/2}\otimes\HO(X,\mathbb{Q})\in\Db(\MMHM(\pt)),
\end{align*}
and since in the stack-theoretic sense the dimension of  $\dim(\BC)$ is $-1$, we extend this notation by defining 
\[
\HO(\textrm{B}\mathbb{C}^*,\mathbb{Q})_{\vir}\colonequals \LL^{1/2}\otimes \HO(\textrm{B}\mathbb{C}^*,\mathbb{Q})\in \Dub(\MMHM(\pt)).
\]

\subsection{Equivariant vanishing cycles}
\label{TotCon}
We do not propose here to write down a full six functor and vanishing cycles functor formalism for equivariant monodromic mixed Hodge modules via a theory of monodromic mixed Hodge modules on stacks.  Instead we mimic the constructions of \cite{BL94} and Totaro's approximation \cite{To99} of the Chow ring of classifying spaces by finite-dimensional algebraic approximations  to produce only the definitions and constructions we will need for the rest of this paper.
\smallbreak
We will want to be able to work with equivariant monodromic mixed Hodge modules, in the following generality.  We assume that we have a $G$-action on a smooth algebraic variety $X$, for $G$ an affine algebraic group, and a regular function $f$ on the stack $X/G$, i.e. a $G$-invariant regular function on $X$.  Furthermore we assume that we are given a map of stacks $p\colon X/G\rightarrow Y$, where $Y$ is a locally finite type scheme.  We will also assume that $G$ is special, i.e. all \'etale locally trivial principal $G$-bundles are Zariski locally trivial --- see \cite[Def.2.1]{JoyceMF} for a concise discussion of this condition in this context, or \cite{Chev58} for the original references.  As noted in \cite{JoyceMF}, all such $G$ are connected.
\smallbreak
The element $\Ho\left(p_*\phim{f}\ICS_X(\mathbb{Q})\right)$ will be an element of $\Du(\MMHM(Y))$, the derived category of bounded below complexes of monodromic mixed Hodge modules on $Y$.  
\smallbreak
We define $\Ho\left(p_*\phim{f}\ICS_X(\mathbb{Q})\right)$ as follows.  Let $V_1\subset V_2\subset\ldots$ be a chain of inclusions of $G$-representations, and let $U_1\subset U_2\subset\ldots$ be an ascending chain of subvarieties of the underlying vector spaces of $V_1,V_2,\ldots$, considered as $G$-equivariant algebraic varieties.  We suppose that $G$ acts scheme-theoretically freely on each $U_i$, that each principal bundle quotient $U_i\rightarrow U_i/G$ exists in the category of schemes, and that $\codim_{V_i}(V_i\setminus U_i)\rightarrow \infty$ as $i\rightarrow\infty$.  The map $X\times U_i\rightarrow X\times_G U_i$ exists as a principal bundle quotient in the category of schemes by \cite[Prop.23]{EdGr98}.  We define $f_i\colon X\times_G U_i\rightarrow \mathbb{C}$ to be the induced map, and $\iota_i\colon X\times_G U_i\rightarrow X\times_G U_{i+1}$ to be the inclusion.

\smallbreak
To fix notation, we fix an embedding $G\subset \Gl_t(\mathbb{C})$ for some $t$.  We set 
\[
V_i\colonequals \Hom(\mathbb{C}^i,\mathbb{C}^t), 
\]
and define $U_i\subset V_i$ to be the subscheme of surjective maps; if $i\geq t$ then $U_i$ does indeed carry a free $G$-action via the $\Gl_t(\mathbb{C})$-action on $\mathbb{C}^t$.  For each $i$, we define $X_i\colonequals X\times_G U_i$.    We obtain an explicit sequence of maps
\[
p_{i+1,*}\phim{f_{i+1}}\mathbb{Q}_{X_{i+1}}\rightarrow p_{i+1,*}\iota_{i,*}\phim{f_i}\mathbb{Q}_{X_i}= p_{i,*}\phim{f_i}\mathbb{Q}_{X_i}
\]
and so a sequence of mixed Hodge modules 
\[
\mathcal{F}_i\colonequals p_{i,*}\phim{f_i}\QQ_{X_i}
\]  
and morphisms $\mathcal{F}_{i'}\rightarrow \mathcal{F}_{i}$ for $i'>i$.
\begin{proposition}
\label{stabProp}
Fix $n\in\mathbb{N}$.  Then for $i\gg 0$ the map
\begin{equation}
\label{evIs}
\Lambda\colon \mathcal{H}^{n}(Y,\mathcal{F}_{i+1})\rightarrow \mathcal{H}^n(Y,\mathcal{F}_i)
\end{equation}
in $\MMHM(Y)$ is an isomorphism.  
\end{proposition}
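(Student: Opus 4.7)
The plan is to compare $\mathcal{F}_{i+1}$ with $\mathcal{F}_i$ by factoring the closed immersion $\iota_i$ through an intermediate space $W_i' := X \times_G (U_i \times V_1)$ which is both open in $X_{i+1}$ (with closed complement of arbitrarily high codimension) and the total space of an affine vector bundle over $X_i$. The argument then rests on two ingredients: homotopy invariance of vanishing cycles from Proposition \ref{basicfacts}(\ref{HomInv}), and a local-cohomology vanishing along the high-codimension complement.

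First I set up the geometry. Identifying $V_{i+1} = V_i \oplus V_1$ with $V_1 = \mathbb{C}^t$, a map $(f_1,f_2) \in V_{i+1}$ with $f_1 \in U_i$ is automatically in $U_{i+1}$, so $U_i \times V_1 \subset U_{i+1}$ is open; its complement consists of $(f_1,f_2) \in U_{i+1}$ with $f_1$ non-surjective, which has codimension $i - t + 1$ in $V_{i+1}$ by the standard rank stratification, hence codimension $c_i := i - t + 1$ in $U_{i+1}$. Passing to the free $G$-quotient preserves codimension, so $Z := X_{i+1} \setminus W_i'$ has codimension $c_i \to \infty$ in $X_{i+1}$. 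Meanwhile $U_i \hookrightarrow U_i \times V_1$ as the zero section descends to a closed immersion $j_0 \colon X_i \hookrightarrow W_i'$, realising $\pi \colon W_i' \to X_i$ as a rank-$t$ vector bundle. Writing $j \colon W_i' \hookrightarrow X_{i+1}$ for the open inclusion, $\iota_i = j \circ j_0$.

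Next I identify $\Lambda$ explicitly. Since $j$ is an open inclusion, $j^*\phim{f_{i+1}}\mathbb{Q}_{X_{i+1}} \cong \phim{f_{i+1}|_{W_i'}}\mathbb{Q}_{W_i'}$, and because $f_{i+1}|_{W_i'} = f_i \circ \pi$, homotopy invariance yields an isomorphism $\phim{f_i}\mathbb{Q}_{X_i} \xrightarrow{\sim} \pi_* \phim{f_{i+1}|_{W_i'}}\mathbb{Q}_{W_i'}$. The commutativity $p_{i+1}\circ j = p_i \circ \pi$ then gives
\[
\mathcal{F}_i \cong p_{i,*}\pi_* j^*\phim{f_{i+1}}\mathbb{Q}_{X_{i+1}} \cong p_{i+1,*} j_* j^*\phim{f_{i+1}}\mathbb{Q}_{X_{i+1}},
\]
and under this identification $\Lambda$ is $p_{i+1,*}$ applied to the adjunction unit $\phim{f_{i+1}}\mathbb{Q}_{X_{i+1}} \to j_*j^*\phim{f_{i+1}}\mathbb{Q}_{X_{i+1}}$, whose cone is $Z_*Z^! \phim{f_{i+1}}\mathbb{Q}_{X_{i+1}}[1]$.

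It remains to show that $p_{i+1,*} Z_* Z^! \phim{f_{i+1}}\mathbb{Q}_{X_{i+1}}$ has vanishing $\mathcal{H}^n$ and $\mathcal{H}^{n+1}$ in $\MMHM(Y)$ for $i \gg 0$. The monodromic mixed Hodge module $\phim{f_{i+1}}\ICS_{X_{i+1}}(\mathbb{Q})$ is pure and supported on $\crit(f)\times_G U_{i+1}$, which $Z$ meets in codimension $c_i$. Standard $t$-exactness bounds for $Z^!$ applied to pure perverse sheaves along a closed embedding of codimension $c_i$ place this cone in cohomological degrees bounded below by a linear function of $c_i$, and pushing forward along $p_{i+1}$ does not reduce that lower bound below any fixed $n$ once $c_i$ is large enough. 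The main obstacle is precisely this quantitative estimate: because the fibre dimensions of $p_{i+1}$ also grow with $i$, one must verify that the rate of growth of $c_i$ outstrips the fibre-dimensional correction coming from pushforward along $p_{i+1}|_Z$. The free $G$-action and the pure structure of $\phim{f_{i+1}}\ICS_{X_{i+1}}(\mathbb{Q})$ combined with the Totaro-style approximation argument ensure that it does, yielding the claimed stabilization.
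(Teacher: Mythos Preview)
Your geometric setup is correct and coincides with the paper's: your $W_i'$ is exactly the paper's $X_{i,i+1}$, the factorisation $\iota_i=j\circ j_0$ matches theirs, and the identification of $\Lambda$ with $p_{i+1,*}$ applied to the adjunction map $\phim{f_{i+1}}\mathbb{Q}_{X_{i+1}}\to j_*j^*\phim{f_{i+1}}\mathbb{Q}_{X_{i+1}}$ is the right first move.

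The gap is in your final paragraph. There is no ``standard $t$-exactness bound'' saying that $Z^!$ of a pure perverse sheaf along a closed embedding of codimension $c$ lands in perverse degrees $\geq$ a linear function of $c$; already $i_Z^!\ICS_Z$ sits in degree zero regardless of the codimension of $Z$ in the ambient space. Moreover $\phim{f_{i+1}}\ICS_{X_{i+1}}(\mathbb{Q})$ is not known to be pure (vanishing cycles does not preserve purity; the paper itself gives a non-pure example later). And your closing sentence, appealing to ``the Totaro-style approximation argument,'' is exactly the statement you are trying to prove.

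What the paper does to supply the missing estimate is to avoid perverse bounds on $Z^!$ altogether. They first reduce to the \emph{constructible} $t$-structure, then pass to the principal-bundle cover $X\times U_{i+1}$, where the vanishing cycle complex factors as a box product $\phi_f\mathbb{Q}_X\boxtimes\mathbb{Q}_{U_{i+1}}$. The point is that the constructible amplitude of $\phi_f\mathbb{Q}_X$ is fixed and independent of $i$, so the cone of the restriction map becomes $\phi_f\mathbb{Q}_X\boxtimes\mathrm{cone}(\mathbb{Q}_{U_{i+1}}\to j_{i,*}\mathbb{Q}_{U_{i,i+1}})$; the second factor is constructibly $(2c_i-1)$-connected by the codimension hypothesis, and exactness of $\boxtimes$ gives the bound. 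They then descend to the quotient via local triviality. This box-product step is precisely what separates the ``$X$-direction'' (bounded amplitude) from the ``$U$-direction'' (growing codimension) and is the content you are missing.
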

\begin{proof}
Since the functor $\form{Y}\colon \MMHM(Y)\rightarrow\Perv(Y)$ of equation (\ref{formDef}) is faithful, it suffices to show that (\ref{evIs}) induces an isomorphism at the level of perverse sheaves.   So let $\mathcal{F}_i$ denote the complex of perverse sheaves $p_{i,*}\phi_{f_i}\mathbb{Q}_{X_i}$.  Say we can prove the same proposition, but with the map $\Lambda$ replaced by the map
\[
\Lambda_{\con}\colon \mathcal{H}_{\con}^{n}(Y,\mathcal{F}_{i+1})\rightarrow \mathcal{H}_{\con}^n(Y,\mathcal{F}_i)
\]
of constructible sheaves on $Y$ (here and below we use $\Ho_{\con}$ to denote constructible cohomology sheaves).  Then for sufficiently large $i$, 
\[
\cone(\mathcal{F}_{i+1}\rightarrow\mathcal{F}_i)\in \mathcal{D}_{\con}^{\geq n+1}(Y)\subset \Dp^{\geq n+1}(Y)
\]
and the original proposition follows.  

Consider the space $U_{i,i+1}\subset \Hom(\mathbb{C}^{i+1},\mathbb{C}^t)$ of linear maps which are surjective after precomposing with the map 
\[
\pi_{i,i+1}\colon \mathbb{C}^{i+1}\rightarrow \mathbb{C}^{i+1}
\]
given by $(z_1,\ldots,z_{i+1})\mapsto (z_1,\ldots,z_i,0)$.  This subspace is open and dense for $i\geq t$.  We denote by $j_i$ the inclusion
\[
j_i\colon U_{i,i+1}\hookrightarrow U_{i+1}.  
\]
Note that $G$ acts freely on $U_{i,i+1}$, and there is a $G$-equivariant affine fibration 
\begin{align*}
\tau_{i,i+1}\colon &U_{i,i+1}\rightarrow U_i\\
&(\tau_{i,i+1}g)(z)\colonequals  g(z,0).
\end{align*}

Define $X_{i,i+1}\colonequals X\times_G U_{i,i+1}$, denote by $f_{i,i+1}\colon  X_{i,i+1}\rightarrow \mathbb{C}$ the function induced by $f$, and denote by 
\begin{equation}
\label{taffDef}
\iota_{i,i+1}\colon X_{i,i+1}\rightarrow X_{i+1}
\end{equation}
the open embedding induced by $j_{i}$.  The projection $\tau_{i,i+1}$ induces an affine fibration 
\[
t_{i,i+1}\colon X_{i,i+1}\rightarrow X_{i}
\]
and we define the induced morphism
\begin{equation}
p_{i,i+1}\colon X_{i,i+1}\xrightarrow{t_{i,i+1}}X_i\xrightarrow{p_i} Y.
\end{equation}
We factorise $\Lambda_{\con}$ as the composition
\[
\Ho^n_{\con}(p_{i+1,*}\phi_{f_{i+1}}\mathbb{Q}_{X_{i+1}})\xrightarrow{a} \Ho^n_{\con}(p_{i+1,*}\iota_{i,i+1,*}\phi_{f_{i,i+1}}\mathbb{Q}_{X_{i,i+1}})\rightarrow \Ho^n_{\con}(p_{i,*}\phi_{f_i}\mathbb{Q}_{X_i})
\]
where the second map is an isomorphism by homotopy invariance.  So it is enough to show that the map $a$ is an isomorphism for sufficiently large $i$.  

Consider the function $\overline{f}_{i+1}\colon X\times U_{i+1}\rightarrow \mathbb{C}$ given by the composition $X\times U_{i+1}\xrightarrow{\pi } X\xrightarrow{f}\mathbb{C}$, and define $\overline{f}_{i,i+1}\colon X\times U_{i,i+1}\rightarrow \mathbb{C}$ similarly.  Let $\overline{\iota}_{i,i+1}\colon  X\times U_{i,i+1}\rightarrow X\times U_{i+1}$ be the inclusion.  Then 
\[
\phi_{\overline{f}_{i+1}}\mathbb{Q}_{X\times U_{i+1}}\cong\phi_{f}\mathbb{Q}_{X}\boxtimes \mathbb{Q}_{U_{i+1}}
\]
and 
\[
\left(\phi_{\overline{f}_{i+1}}\mathbb{Q}_{X\times U_{i+1}}\rightarrow\overline{\iota}_{i,i+1,*}\phi_{\overline{f}_{i,i+1}}\mathbb{Q}_{X\times U_{i,i+1}}\right)=\id_{\phi_{f}\mathbb{Q}_{X}}\boxtimes (\mathbb{Q}_{U_{i+1}}\rightarrow j_{i,*}\mathbb{Q}_{U_{i,i+1}}).
\]
For fixed $m$, $\Ho_{\con}^m(\mathbb{Q}_{U_{i+1}})\rightarrow \Ho_{\con}^m(\mathbb{Q}_{U_{i,i+1}})$ is an isomorphism for sufficiently large $i$, since the codimension of $U_{i+1}\setminus U_{i,i+1}$ inside $U_{i+1}$ goes to infinity as $i$ goes to infinity.  Since the external tensor product is exact, we deduce that for fixed $m$ and sufficiently large $i$,
\[
\Ho^m_{\con}(\phi_{\overline{f}_{i+1}}\mathbb{Q}_{X\times U_{i+1}}\rightarrow\overline{\iota}_{i,i+1,*}\phi_{\overline{f}_{i,i+1}}\mathbb{Q}_{X\times U_{i,i+1}})
\]
is an isomorphism.  On the other hand, taking a Zariski open subspace $C\subset X_{i+1}$ such that the principal bundle $\overline{C}=C\times_{X_{i+1}}(X\times U_{i+1})\rightarrow C$ is trivial, we have that
\begin{align*}
&\Ho_{\con}(\phi_{\overline{f}_{i+1}}\mathbb{Q}_{X\times U_{i+1}}\rightarrow\overline{\iota}_{i,i+1,*}\phi_{\overline{f}_{i,i+1}}\mathbb{Q}_{X\times U_{i,i+1}})|_{\overline{C}}\cong\\& \Ho_{\con}(\phi_{f_{i+1}}\mathbb{Q}_{X_{i+1}}\rightarrow\iota_{i,i+1,*}\phi_{f_{i,i+1}}\mathbb{Q}_{X\times_{G} U_{i,i+1}}))\boxtimes\id_{\mathbb{Q}_{G}},
\end{align*}
and so using exactness of external tensor product again, we deduce that for fixed $n$ and sufficiently large $i$, the map 
\[
\Ho^n_{\con}(\phi_{f_{i+1}}\mathbb{Q}_{X_{i+1}}\rightarrow\iota_{i,i+1,*}\phi_{f_{i,i+1}}\mathbb{Q}_{X_{i,i+1}}))
\]
is an isomorphism.  The pushforward $p_{i+1,*}$ maps $\mathcal{D}_{\con}^{\geq n+1}(X_{i+1})\rightarrow \mathcal{D}_{\con}^{\geq n+1}(Y)$, and so 
\[
\cone\left(p_{i+1,*}\phi_{f_{i+1}}\mathbb{Q}_{X_{i+1}}\rightarrow p_{i+1,*}\iota_{i,i+1,*}\phi_{f_{i,i+1}}\mathbb{Q}_{X_{i,i+1}}\right)\in\mathcal{D}_{\con}^{\geq n+1}(X)
\]
and the map $a$ is an isomorphism for sufficiently large $i$, as required.
\end{proof}

\begin{proposition}
\label{Hpprop}
The complex
\begin{equation}
\label{Hpdef}
\ldots\xrightarrow{0}\lim_{i\mapsto\infty}\Ho^{n-1}(p_{i,*}\phim{f_i}\mathbb{Q}_{X_i})\xrightarrow{0}\lim_{i\mapsto\infty}\Ho^{n}(p_{i,*}\phim{f_i}\mathbb{Q}_{X_i})\xrightarrow{0}\ldots.
\end{equation}
is well defined up to canonical isomorphism in $\Dub(\MMHM(Y))$.
\end{proposition}
\begin{proof}
The proposition says that if, for $l=1,2$ we have two choices $V^{(l)}_i$, and $U^{(l)}_i$ for the spaces $V_i$ and $U_i$ in the definition of $\Ho^n(p_{i,*}\phim{f_i}\mathbb{Q}_{X_i})$, and we define $X^{(l)}_i$, $p^{(l)}_i$ etc. with reference to them, then there is a canonical isomorphism, for each $n$, and for sufficiently large $i$:
\[
\Ho^n(p^{(1)}_{i,*}\phim{f^{(1)}_i}\mathbb{Q}_{X^{(1}_i})\rightarrow\Ho^n(p^{(2)}_{i,*}\phim{f^{(2)}_i}\mathbb{Q}_{X^{(2)}_i}).
\]
By another application of \cite[Prop.23]{EdGr98}, $X_i\times (U^{(1)}_i\times U^{(2)}_i)$ is a principal $G$-bundle over $X_i^{(12)}\colonequals X_i\times_{G_i}(U^{(1)}_i\times U^{(2)}_i)$.  We denote by $p^{(12)}_i\colon X_i^{(12)}\rightarrow Y$ the projection; note that it factors through both $p^{(1)}_i$ and $p^{(2)}_i$.  Via the same argument as Proposition \ref{stabProp}, each of the morphisms
\begin{align*}
\tau\colon\Ho^n(p^{(1)}_{i,*}\phim{f^{(1)}_i}\mathbb{Q}_{X^{(1)}_i})\rightarrow &\Ho^n(p^{(12)}_{i,*}\phim{f^{(12)}_i}\mathbb{Q}_{X^{(12)}_i})\\
\kappa\colon\Ho^n(p^{(2)}_{i,*}\phim{f^{(2)}_i}\mathbb{Q}_{X^{(2}_i})\rightarrow &\Ho^n(p^{(12)}_{i,*}\phim{f^{(12)}_i}\mathbb{Q}_{X^{(12)}_i})
\end{align*}
is an isomorphism, and $\sigma^{(12)}=\kappa^{-1}\tau$ is the desired isomorphism.  From the obvious commutative diagrams of projections between different principal $G$-bundles, there are equalities $\sigma^{(23)}\sigma^{(12)}=\sigma^{(13)}$ and $\sigma^{(11)}=\id$.
\end{proof}

We denote by $\Ho\left(p_*\phim{f}\mathbb{Q}_X\right)$ the element defined in Proposition \ref{Hpprop}.
\smallbreak
If $\Mst=\coprod_{s\in S} \Mst_s$, with $\Mst_s\cong X_s/G_s$ and each $X_s$ a smooth algebraic variety, and each $G_s$ a special affine algebraic group, and $p\colon \Mst\rightarrow Y=\coprod_{s\in S} Y_s$ is a map to a locally finite type scheme respecting the decompositions of the source and the target, for $c\in\ZZ$ we define 
\begin{align*}
\Ho\left(p_*\phim{f}\bigoplus_{s\in S}\mathbb{Q}_{\Mst_s}\right)\colonequals &\bigoplus_{s\in S} \Ho\left(p_*\phim{f|_{X_s/G_s}}\mathbb{Q}_{X_s/G_s}\right)\\
\Ho\left(p_*\phim{f}\bigoplus_{s\in S}\LL^{c/2}_{\Mst_s}\right)\colonequals &\bigoplus_{s\in S} \Ho\left(p_*\phim{f|_{X_s/G_s}}\LL^{c/2}_{X_s/G_s}\right)
\end{align*}
and
\[
\Ho\left(p_*\phim{f}\bigoplus_{s\in S}\ICS_{\Mst_s}(\mathbb{Q})\right)\colonequals \bigoplus_{s\in S}\Ho\left(p_*\phim{f|_{X_s/G_s}}\mathbb{L}^{(\dim(G_s)-\dim(X_s))/2}_{X_s/G_s}\right).
\]

We define $\Ho\left(p_!\phim{f}\bigoplus_{s\in S}\mathbb{Q}_{\Mst_s}\right)\in \Dub(\MMHM(Y))$ in similar fashion.  We give the definition in the case that $\Mst=X/G$, for $X$ a smooth irreducible variety and $G$ a special affine algebraic group, and extend to the case of a disjoint union of such stacks as above. 
\smallbreak
Consider the isomorphism \[\Ho(\iota_{i,i+1,!}\phim{f_{i,i+1}}\mathbb{Q}_{X_{i,i+1}})\cong \Ho(\phim{f_{i}}\mathbb{Q}_{X_{i}})\otimes\mathbb{L}^{\dim(X_{i+1})-\dim(X_i)}\] where $\iota_{i,i+1}$ is as defined in (\ref{taffDef}).  Arguing as in Proposition \ref{stabProp}, for fixed $n$ the natural map 
\[
\Ho_{\con}^{2\dim(U_{i+1})+n}((X_{i,i+1}\rightarrow X_{i+1})_!\phim{f_{i,i+1}}\mathbb{Q}_{X_{i,i+1}})\rightarrow \Ho_{\con}^{2\dim(U_{i+1})+n}(\phim{f_{i+1}}\mathbb{Q}_{X_{i+1}})
\]
is an isomorphism for sufficiently large $i$, and we can define $\Ho^n\left(p_!\phim{f}\mathbb{Q}_{X/G}\right)$ to be the limit of $\Ho^n\left(p_{i,!}\phim{f_i}\mathbb{L}_{X_i}^{-\dim(U_i)}\right)$ as $i$ tends to infinity, and we set
\begin{align*}
\Ho^n\left(p_!\phim{f}\mathbb{L}^c_{X/G}\right)\colonequals &\LL^c\otimes\Ho^n\left(p_!\phim{f}\QQ_{X/G}\right)\\
\Ho\left(p_!\phim{f}\ICS_{X/G}(\mathbb{Q})\right)\colonequals &\Ho\left(p_!\phim{f}\LL^{(\dim(G)-\dim(X))/2}_{X/G}\right).
\end{align*}

\begin{remark}
Note that we do not offer here a definition of $p_*\phi_{\mon,f}\mathbb{Q}_{X/G}$, or $p_!\phi_{\mon,f}\mathbb{Q}_{X/G}$ as objects of the derived category, but instead limit ourselves to defining the total cohomology of these direct images with respect to the natural t-structure on $\Dub(\MMHM(Y))$.  Of course in case $X/G$ is an actual scheme, $p_*\phi^{\mon}_{f}\mathbb{Q}_{X/G}$ and $p_!\phi^{\mon}_{f}\mathbb{Q}_{X/G}$ are well defined monodromic mixed Hodge modules before passing to cohomology, and our definitions of $\Ho\left(p_*\phim{f}\mathbb{Q}_{X/G}\right)$ and $\Ho\left(p_!\phim{f}\mathbb{Q}_{X/G}\right)$ recover their respective total perverse cohomology.
\end{remark}

\begin{remark}
\label{remAbs}
As a special case of the above construction, if $\tau\colon \rightarrow \mathbb{N}^{Q_0}$ is a stack over a monoid of dimension vectors, with each $\Mst_{\dd}\cong X_{\dd}/G_{\dd}$, we define
\begin{align*}
\HO\left(\Mst,\bigoplus_{\dd\in\mathbb{N}^{Q_0}}\phim{f}\QQ_{\Mst_{\dd}}\right)\colonequals \bigoplus_{\dd\in\mathbb{N}^{Q_0}}\Ho\left(\tau_*\phim{f}\QQ_{X_{\dd}/G_{\dd}}\right)
\end{align*}
and $\HO(\Mst,\bigoplus_{\dd}\phim{f}\ICS_{\Mst_{\dd}}(\mathbb{Q}))$, $\HO_c(\Mst,\bigoplus_{\dd}\phim{f}\ICS_{\Mst_{\dd}}(\mathbb{Q}))$ etc.\! similarly.  This is the definition used in \cite[Sec.7]{KS2} in the definition of the underlying $\mathbb{N}^{Q_0}$-graded monodromic mixed Hodge structure of the critical cohomological Hall algebra.
\end{remark}

\begin{definition}
Let $X/G\rightarrow\mathbb{N}^{Q_0}$ be as in Proposition \ref{Hpprop}, and let $\upsilon\colon X^{\SP}/G\rightarrow X/G$ be an inclusion of stacks.  We denote by 
\[
\upsilon_i\colon X^{\SP}\times_G U_i\rightarrow X\times_G U_i
\]
the induced map, and by $p^{\SP}_i$ the restriction of $p_i$ to $X^{\SP}\times_G U_i$.  We define the restricted absolute critical cohomology as the complex of limits with zero differential
\begin{equation}
\HO\left(X^{\SP}/G,\phim{f}\QQ_{X/G}\right)\colonequals \bigoplus_{n\in\mathbb{Z}}\lim_{i\mapsto\infty}\Ho^{n}\left((Y\rightarrow \pt)_*p^{\SP}_{i,*}\upsilon_i^*\phim{f_i}\mathbb{Q}_{X_i}\right)[-n]
\end{equation}
and likewise we define 
\begin{align*}
\HO_c\left(X^{\SP}/G,\phim{f}\QQ_{X/G}\right)\colonequals \bigoplus_{n\in\mathbb{Z}}\lim_{i\mapsto\infty}\Ho^{n}\left((Y\rightarrow \pt)_!p^{\SP}_{i,!}\upsilon_i^*\phim{f_i}\LL^{-\dim(U_i)}_{X_i}\right)[-n].
\end{align*}

\end{definition}

Let $h\colon X'\rightarrow X$ be a morphism of $G$-equivariant varieties over the $G$-invariant locally finite type scheme $Y$.  Then for each $i$ we obtain maps $h_i\colon X'_i\rightarrow X_i$.  There are natural maps 
\[
\delta_1\colon p_{i,*}\phim{f_i}h_{i,*}\mathbb{Q}_{X'_i}\rightarrow p_{i,*}h_{i,*}\phim{f_i\circ h_i}\mathbb{Q}_{X'_i}
\]
which are isomorphisms if $h$ is proper or an affine fibration (see Proposition \ref{basicfacts}).  We  precompose with the natural map
\[
\delta_2\colon p_{i,*}\phim{f_i}\mathbb{Q}_{X_i}\rightarrow p_{i,*}\phim{f_i}h_{i,*}\mathbb{Q}_{X'_i}
\]
and let $i\mapsto \infty$, defining maps 
\[
\Ho\left(p_*\phim{f}\mathbb{Q}_{X/G}\right)\rightarrow \Ho\left((p h)_*\phim{f\circ h}\mathbb{Q}_{X'/G}\right).  
\]
Now assume that $X'$ and $X$ are smooth, or that $h$ is.  Taking $\Ho(\DD_Y(\delta_1\circ \delta_2))$ and letting $i\mapsto\infty$, we define the map
\[
\Ho\left((p h)_!\phim{f\circ h}\mathbb{Q}_{X'/G}\right)\rightarrow\LL^{\dim(X')-\dim(X)}\otimes \Ho\left(p_!\phim{f}\mathbb{Q}_{X/G}\right).
\]

Finally, let $\upsilon\colon H\rightarrow G$ be an inclusion of groups --- the only examples we will consider are when $\upsilon$ is the inclusion of a parabolic subgroup inside $\Gl_n(\mathbb{C})$, or the inclusion $L\subset P$ of the Levi subgroup of a parabolic subgroup of $\Gl_n(\mathbb{C})$.  Let $G$ act on $X$ as above, with $f$ a $G$-invariant regular function on $X$.  Let $h\colon X/H\rightarrow X/G$ be the associated morphism of stacks.  Then we obtain maps 
\begin{align*}
X\times_H U_i\xrightarrow{h_i} &X\times_G U_i\\
(x,z)\mapsto&(x,z)
\end{align*}
which we use in the same way as above to obtain the map
\[
\Ho\left(p_*\phim{f}\mathbb{Q}_{X/G}\right)\rightarrow\Ho\left((p h)_*\phim{f}\mathbb{Q}_{X/H}\right)
\]
and the map 
\[
\Ho\left((p h)_!\phim{f}\mathbb{Q}_{X/H}\right)\rightarrow \Ho\left(p_!\phim{f}\mathbb{L}^{\dim(G)-\dim(H)}_{X/G}\right).  
\]
\begin{remark}
\label{limKer}
If $H\hookrightarrow G$ is the inclusion of a parabolic subgroup, then the induced maps $h_i$ are proper.  As such, there is a natural isomorphism $\phim{f_i}h_{i,*}\QQ_{X\times_H U_i}\cong h_{i,*}\phim{f_i}\QQ_{X\times_H U_i}$.  From the maps $h_{i,*}\DD_{X\times_H U_i}\mathbb{Q}_{X\times_H U_i}\rightarrow \DD_{X\times_G U_{i}}\mathbb{Q}_{X\times_G U_{i}}$ given by the natural isomorphism $h_{i,*}\cong h_{i,!}$ and Verdier duality, we obtain, in the limit, the map 
\[
\Ho\left((ph)_*\phim{f}\mathbb{Q}_{X/H}\right)\rightarrow \Ho\left(p_*\phim{f}\mathbb{L}^{\dim(G)-\dim(H)}_{X/G}\right).
\]
\end{remark}

\section{Moduli spaces of quiver representations}
\label{Qrepsection}
\subsection{Basic notions}\label{Bassection}We will use the notations and conventions from \cite{DaMe4}, which we briefly recall.  Let $Q=(Q_0,Q_1,s,t)$ denote a quiver, that is, a pair of finite sets $Q_0$ and $Q_1$, and a pair of maps $s\colon Q_1\rightarrow Q_0$ and $t\colon Q_1\rightarrow Q_0$, taking an arrow to its source and target, respectively.  Denote by $\mathbb{C} Q$ the free path category of $Q$ over $\mathbb{C}$.  Alternatively we may think of $\mathbb{C} Q$ as the free path algebra of $Q$, with a distinguished family of mutually orthogonal idempotents $e_i$ in bijection with the vertices $Q_0$, summing to $1_{\mathbb{C} Q}$.
\smallbreak

Let $\Sch{\mathbb{C}}$ be the category of schemes over $\Spec(\mathbb{C})$.  For $S\in\Sch{\mathbb{C}}$ we denote by $\Vect_{S}^{\fd}$ the category of finite rank vector bundles over $S$.  Let $\dd\in\mathbb{N}^{Q_0}$ be a dimension vector.  We denote by $\Mst_{\dd}$ the groupoid valued functor on $\Sch{\mathbb{C}}$ that assigns to $S\in\Sch{\mathbb{C}}$ the groupoid obtained from forgetting the non-invertible morphisms in the full subcategory of functors in $\Fun(\mathbb{C} Q,\Vect_{S}^{\fd})$ such that $i\in Q_0$ is sent to a vector bundle of dimension $\dd_i$.  This prestack is an Artin stack, as it is represented by the following global quotient stack.  First define 
\begin{equation}
\label{Xdef}
X_{\dd}\colonequals \prod_{a\in Q_1}\Hom(\mathbb{C}^{\dd_{s(a)}},\mathbb{C}^{\dd_{t(a)}}).
\end{equation}
This affine space carries the change of basis action of 
\[
G_{\dd}\colonequals \prod_{i\in Q_0}\Aut(\mathbb{C}^{\dd_i}), 
\]
and there is an equivalence of stacks $\Mst_{\dd}\cong X_{\dd}/G_{\dd}$.  We denote by $\Mst$ the union $\coprod_{\dd\in\mathbb{N}^{Q_0}} \Mst_{\dd}$, the stack of finite-dimensional representations of $Q$, which by the equivalences just given is a countable disjoint union of finite type global quotient Artin stacks.
\smallbreak
In numerous instances we wish to take cohomology of sheaves restricted to a substack of $\Mst_{\dd}$.  To that end, we assume that for each $\dd\in\mathbb{N}^{Q_0}$ we are given reduced subschemes 
\begin{equation}
\label{Xspdef}
X^{\SP}_{\dd}\subset X_{\dd}
\end{equation}
which are preserved by the action of $G_{\dd}$, and such that the set $\mathcal{S}$ of $\mathbb{C}Q$-representations that are isomorphic to representations parametrised by points of $X^{\SP}=\coprod X_{\dd}^{\SP}$ are the objects in a Serre subcategory of the category of finite-dimensional $\mathbb{C}Q$ representations.  In other words, if 
\begin{equation}
\label{rhoses}
0\rightarrow \rho'\rightarrow \rho\rightarrow\rho''\rightarrow 0
\end{equation}
is a short exact sequence of $\mathbb{C}Q$-representations, $\rho'$ and $\rho''$ are in $\mathcal{S}$ if and only if $\rho$ is in $\mathcal{S}$.  We set $\Mst^{\SP}\subset \Mst$ to be the reduced substack, the closed points of which correspond to modules in $\mathcal{S}$.  Then $\Mst^{\SP}_{\dd}=X^{\SP}_{\dd}/G_{\dd}$.  We denote by 
\begin{align*}
\omega_{\dd}\colon &X_{\dd}^{\SP}\rightarrow X_{\dd}\\
\overline{\omega}_{\dd}\colon &\Mst_{\dd}^{\SP}\rightarrow\Mst_{\dd}
\end{align*}
the natural inclusions.
\smallbreak
For the rest of the paper, $X_{\dd}$ and $X^{\SP}_{\dd}$ will be as in (\ref{Xdef}) and (\ref{Xspdef}) respectively.  Where we wish to be specific regarding the quiver with respect to which these spaces are defined, we will instead use the notation $X(Q)_{\dd}$ or $X(Q)^{\SP}_{\dd}$.
\smallbreak
For $\dd'+\dd''=\dd\in\mathbb{N}^{Q_0}$ let $\Mst_{\dd',\dd''}(S)$ be the groupoid of triples $(F',F,\iota)$, where $F',F\in\Fun(\mathbb{C}Q,\Vect_S^{\fd})$ and $\iota\colon F'\rightarrow F$ is a natural transformation, such that $\rank(F'(i))=\dd'_i$, $\rank(F(i))=\dd_i$, and $\iota(i)$ is injective, with locally free cokernel, for every $i$.  Again, $\Mst_{\dd',\dd''}$ is a finite type Artin stack, which can be described as follows.  Let $X_{\dd',\dd''}\subset X_{\dd}$ be the subspace of representations such that the flag $\mathbb{C}^{\dd'_i}\subset \mathbb{C}^{\dd_i}$ is preserved for all $i\in Q_0$, and let $G_{\dd',\dd''}\subset G_{\dd}$ be the subgroup preserving these same flags.  Then 
\[
\Mst_{\dd',\dd''}\cong X_{\dd',\dd''}/G_{\dd',\dd''}.
\]
\smallbreak
We likewise define $\Mst_{\dd',\dd''}^{\SP}\subset\Mst_{\dd',\dd''}$ to be the reduced substack of the stack parametrising triples as above, for which the $\dd$-dimensional $\mathbb{C} Q$-representation $F_x$ corresponding to each of the closed points $x$ are elements of $\mathcal{S}$.  For each such $x$ there is a short exact sequence
\[
0\rightarrow F'_x\rightarrow F_x\rightarrow F''_x\rightarrow 0
\]
and our condition is equivalent to asking that $F'_x$ and $F''_x$ are in $\mathcal{S}$, since $\mathcal{S}$ defines a Serre property.  So we have
\[
\Mst^{\SP}_{\dd',\dd''}\cong X^{\SP}_{\dd',\dd''}/G_{\dd',\dd''}
\]
where
\[
X^{\SP}_{\dd',\dd''}\colonequals X_{\dd',\dd''}\cap X^{\SP}_{\dd}.
\]

\smallbreak
A tuple $\zeta=(\zeta_i)_{i\in Q_0}\in \mathbb{H}_+^{Q_0}\colonequals \{r\exp(+\sqrt{-1}\pi\phi)\in \mathbb{C}\mid r>0, 0<\phi\le 1\}^{Q_0}\subset \mathbb{C}^{Q_0}$ provides a \textit{Bridgeland stability condition}, as defined in \cite{Bridgeland02}, with central charge defined on finite-dimensional $\mathbb{C}Q$-modules
\[
Z\colon\rho\mapsto\zeta\cdot \dim (\rho)=\sum_{i\in Q_0}\zeta_i\dim (\rho_i).  
\]
We define the \textit{slope} of a nonzero representation $\rho$ by setting 
\[
\Mu^{\zeta}(\rho)\colonequals\begin{cases}- \Re e (Z(\rho))/ \Im m (Z(\rho))&\textrm{if }\Im m (Z(\rho))\neq 0\\\infty&\textrm{if }\Im m (Z(\rho))=0.\end{cases}
\]
Likewise we define $\Mu^{\zeta}(\dd)$, for $\dd\in\mathbb{N}^{Q_0}\setminus\{0\}$, to be the slope of any representation $\rho$ of dimension $\dd$.  A $\mathbb{C} Q$-representation $\rho$ is called $\zeta$\textit{-semistable} if for all proper submodules $\rho'\subset\rho$ we have $\Mu^{\zeta}(\rho')\leq\Mu^{\zeta}(\rho)$, and is $\zeta$\textit{-stable} if instead we have $\Mu^{\zeta}(\rho')<\Mu^{\zeta}(\rho)$ for every proper submodule.  To simplify notation in what follows, we will assume that all representations satisfy $\Mu^{\zeta}(\rho)<\infty$ in this paper.  In other words, we choose our Bridgeland stability conditions to belong to 
\[
\{r\exp(+\sqrt{-1}\pi\phi)\in \mathbb{C}\mid r>0, 0<\phi< 1\}^{Q_0}\subset\mathbb{H}_+^{Q_0}.
\]
\smallbreak
We define two pairings on $\mathbb{Z}^{Q_0}$:
\begin{equation*}
(\dd,\ee)\colonequals \sum_{i\in Q_0} \dd_i \ee_i-\sum_{a\in Q_1}\dd_{s(a)}\ee_{t(a)}
\end{equation*}
and
\begin{equation*}
\langle \dd,\ee\rangle\colonequals (\dd,\ee)-(\ee,\dd).
\end{equation*}
Note that $(\dd,\dd)=-\dim \Mst_{\dd}$.
\smallbreak
As in the introduction, for every $\mu\in (-\infty,\infty)$ we denote by $\Lambda_{\mu}^{\zeta}\subset\mathbb{N}^{Q_0}$ the submonoid of dimension vectors $\dd$ which are either zero, or have slope $\mu$ with respect to the stability condition $\zeta$.
\begin{definition}
\label{genericityDef}
We say $\zeta$ is $\mu$-generic if $\dd,\ee\in\Lambda_{\mu}^{\zeta}$ implies $\langle \dd,\ee\rangle=0$.  We say $\zeta$ is generic if it is $\mu$-generic for all $\mu$.  
\end{definition}
We say that $\zeta$ is a King stability condition if $\Im m(\zeta_i)=1$ and $\Re e(\zeta_i)\in\mathbb{Q}$ for all $i\in Q_0$.  Given a King stability condition $\zeta$, we can fix $m\in\mathbb{N}$ such that $m\Re e(\zeta_i)\in\mathbb{Z}$ for every $i$.  We linearize the $G_{\dd}$-action on $X_{\dd}$ via the trivial line bundle on $X_{\dd}$ and the character 
\begin{align}
\label{charDef}
\chi_{\dd}\colon &G_{\dd}\rightarrow \mathbb{C}^*\\&(g_i)_{i\in Q_0}\mapsto \prod_{i\in Q_0}\det(g_i)^{m\Re e(\zeta_i)}, \nonumber
\end{align}
and define $X_{\dd}^{\zeta\sst}$ to be the variety of semistable points with respect to this linearization.  By \cite{King}, using the constructions and definitions of \cite{MFK94}, the GIT quotient $X^{\zeta\sst}_{\dd}/\!\!/_{\chi_{\dd}} G_{\dd}$ provides a coarse moduli space of $\zeta$-semistable representations of dimension ${\dd}$, which we denote $\Msp^{\zeta\sst}_{\dd}$.  Similarly we deonote by $\Msp^{\zeta\sst,\SP}_{\dd}$ the scheme $X^{\zeta\sst,\SP}_{\dd}/\!\!/_{\chi_{\dd}} G_{\dd}$, and we denote by 
\begin{equation}
\label{tomdef}
\tilde{\omega}^{\zeta}_{\dd}\colon\Msp^{\zeta\sst,\SP}_{\dd}\rightarrow\Msp^{\zeta\sst}_{\dd}
\end{equation}
the natural inclusion.
\begin{remark}
\label{BtoK}
Fix a dimension vector $\dd\in\mathbb{N}^{Q_0}$ of slope $\mu$, for a $\mu$-generic stability condition $\zeta$.  Then we can always find a $\mu'$-generic King stability condition $\zeta'$ such that a $\dd$-dimensional $\mathbb{C} Q$-module is $\zeta$-stable if and only if it is $\zeta'$-stable, by \cite[Lem.4.21]{DMSS15}, where $\mu'$ is the slope of $\dd$ with respect to $\zeta'$.  By construction, a $\dd$-dimensional $\mathbb{C}Q$-module is $\zeta'$-semistable if and only if it is $\zeta$-semistable.  We deduce that for every $\mu$-generic Bridgeland stability condition $\zeta$ and every dimension vector $\dd$ of slope $\mu$, there is a coarse moduli space $\Msp^{\zeta\sst}_{\dd}$ of $\dd$-dimensional $\zeta$-semistable $\mathbb{C}Q$-modules.  
\end{remark}
\begin{remark}
Pick a rational slope $\mu\in (-\infty,\infty)$.  Then we can define a maximally degenerate stability King condition, for which every $\mathbb{C}Q$-module is automatically semistable, by fixing $\zeta_i=+\sqrt{-1}-\mu$ for all $i\in Q_0$.  In this case we have $\Lambda_{\mu}^{\zeta}=\mathbb{N}^{Q_0}$, and $\Mst=\Mst^{\zeta\sst}_{\mu}$.  As a result, all the results in this paper in which we do not assume that we are working with a generic stability condition apply to the case in which we do not impose any stability condition.  In addition, those results in which we do impose a genericity assumption on $\zeta$ apply to the case in which we impose no stability condition and $Q$ is symmetric, since in this case maximally degenerate stability conditions are still generic in the sense of Definition \ref{genericityDef}.
\end{remark}
We write $\Msp_{\dd}$ for the coarse moduli space, taken with respect to the above degenerate stability condition.  Then explicitly
\[
\Msp_{\dd}=\Spec\left(\Gamma(X(Q)_{\dd})^{G_{\dd}}\right)
\]
and the closed points of $\Msp_{\dd}$ are in bijection with isomorphism classes of $\dd$-dimensional semisimple $\mathbb{C}Q$-representations.
\smallbreak
For a slope $\mu$, we define 
\begin{equation}
\Msp_{\mu}^{\zeta\sst,\SP}=\coprod_{\dd\in\Lambda_{\mu}^{\zeta}}\Msp_{\dd}^{\zeta\sst,\SP}
\end{equation}
and
\begin{equation}
\Mst_{\mu}^{\zeta\sst,\SP}=\coprod_{\dd\in \Lambda_{\mu}^{\zeta}}\Mst_{\dd}^{\zeta\sst,\SP}.
\end{equation}
We denote by 
\begin{equation}
\label{pdef}
p_{\dd}^{\zeta,\SP}\colon \Mst^{\zeta\sst,\SP}_{\dd}\rightarrow \Msp^{\zeta\sst,\SP}_{\dd}
\end{equation}
and
\begin{equation}
p_{\mu}^{\zeta,\SP}\colon \Mst^{\zeta\sst,\SP}_{\mu}\rightarrow \Msp^{\zeta\sst,\SP}_{\mu}
\end{equation}
the maps from the stacks to their respective coarse moduli spaces, and by
\begin{equation}
\label{qdef}
q_{\dd}^{\zeta,\SP}\colon \Msp^{\zeta\sst,\SP}_{\dd}\rightarrow \Msp_{\dd}^{\SP}
\end{equation}
and
\begin{equation}
\label{qmudef}
q_{\mu}^{\zeta,\SP}\colon \Msp^{\zeta\sst,\SP}_{\mu}\rightarrow \Msp_{\mu}^{\SP}
\end{equation}
the maps to the subschemes of points in the union of the affinizations $\Msp_{\mu}$ corresponding to representations in $\mathcal{S}$.  Deleting all superscripts $\SP$ in the above definitions we obtain morphisms $p^{\zeta}_{\dd},q^{\zeta}_{\dd}$ etc.

\smallbreak
If $\dd',\dd''\in\Lambda_{\mu}^{\zeta}$ for some $\mu\in(-\infty,\infty)$, we denote by $\Mst_{\dd',\dd''}^{\zeta\sst,\SP}$ the stack of short exact sequences as in (\ref{rhoses}) such that $\dim(\rho')=\dd'$, $\dim(\rho'')=\dd''$ and $\rho\in\SP$.  There is an isomorphism of stacks
\[
\Mst_{\dd',\dd''}^{\zeta\sst,\SP}\cong \left(X_{\dd',\dd''}\cap X_{\dd'+\dd''}^{\zeta\sst,\SP}\right)/G_{\dd',\dd''}.
\]

\subsection{Monoidal products}
\label{prodss}
In this section we define and introduce the first properties of the categorification of the quantum torus from refined DT theory.
\begin{definition}
\label{lfDef}
Let $\mathcal{F}\in\Dub(\MMHM(X))$ for $X$ a scheme.  We say that $\mathcal{F}$ is \textit{locally finite} if for each $Z\in \pi_0(X)$
\begin{enumerate}
\item
for each $n\in\mathbb{Z}$, the element $\bigoplus_{i\in\mathbb{Z}}\GrW{n}(\Ho^i(\mathcal{F})|_Z)$ belongs to $\Db(\MMHM(Z))$
\item
For $n\ll 0$, we have $\GrW{n}(\Ho^i(\mathcal{F})|_Z)=0$ for all $i\in\mathbb{Z}$.
\end{enumerate}
Denote by $\Dulf(\MMHM(X))\subset\Du(\MMHM(X))$ the full subcategory of locally finite objects.
\end{definition}

The category $\Dulf(\MMHM(\mathbb{N}^{Q_0}))$ is going to play the role of the categorification of the motivic quantum torus of \cite[Sec.6.2]{KS1}.  For now we remark that this quantum torus is a power series ring with coefficients in $\mathbb{Z}((q^{1/2}))$ and variables $x^{\dd}$ for $\dd\in\mathbb{N}^{Q_0}$, and if $\zeta$ is a generic stability condition then the subring of power series in variables $x^{\dd}$ for $\dd\in\Lambda_{\mu}^{\zeta}$ is commutative, while the whole quantum torus in general is not.  The categorification of this picture, then, should be a monoidal category, for which the monoidal product cannot be upgraded to a symmetric monoidal product, except on fixed subcategories indexed by the slope $\mu$, for which we should define a symmetric monoidal product.

\smallbreak

The moduli scheme $\Msp_{\mu}^{\zeta\sst}$ carries a symmetric monoidal structure given by the direct sum, and the product 
\[
\Msp_{\mu}^{\zeta\sst}\times \Msp_{\mu}^{\zeta\sst}\xrightarrow{\oplus}\Msp_{\mu}^{\zeta\sst}
\]
is a finite morphism of schemes (see \cite[Lem.2.1]{Meinhardt14}).  By Remark \ref{convProdDef} the category $\Dulf(\MMHM(\mathcal{M}^{\zeta\sst}))$ carries a monoidal product
\[
\mathcal{F}\boxtimes_{\oplus}\mathcal{G}\colonequals \oplus_*(\pi_1^*\mathcal{F}\otimes\pi_2^*\mathcal{G})  
\]
defined as in (\ref{simmon}).
\begin{proposition}
\label{biExp}
The monoidal product $\boxtimes_{\oplus}$ is biexact and preserves the weight filtration.  
\end{proposition}
\begin{proof}
Let $\mathcal{F}\in\MMHM(X)$ and $\mathcal{G}\in\MMHM(Y)$ for two algebraic varieties $X$ and $Y$.  We define their external tensor product 
\[
\mathcal{F}\boxtimes \mathcal{G}\colonequals (p\colon X\times Y\times\mathbb{A}^1\times\mathbb{A}^1\xrightarrow{\id_{X\times Y}\times +} X\times Y\times \mathbb{A}^1)_*(\pr_{1,3}^*\mathcal{F}\otimes\pr_{2,4}^*\mathcal{G}).
\]
as in Remark \ref{convProdDef}.  Then by \cite[Lem.1]{KS2}, there is an isomorphism in $\Dub(\MMHM(X\times Y))$
\[
p_!\left(\pr_{1,3}^*\mathcal{F}\otimes\pr_{2,4}^*\mathcal{G}\right)\rightarrow p_*\left(\pr_{1,3}^*\mathcal{F}\otimes\pr_{2,4}^*\mathcal{G}\right)
\]
and so since $p_!\colon \Dub(\MHM(X\times \mathbb{A}^1\times Y\times\mathbb{A}^1))\rightarrow \Dub(\MHM(X\times Y\times \mathbb{A}^1))$ is left exact, and decreases weights, while $p_*$ is right exact and increases weights, we deduce that the external tensor product 
\[
\boxtimes\colon \Dub(\MMHM(X))\times\Dub(\MMHM(Y))\rightarrow \Dub(\MMHM(X\times Y))
\]
is exact and preserves weights.  
\smallbreak
Since the map $\oplus\colon \Msp_{\mu}^{\zeta\sst}\times \Msp_{\mu}^{\zeta\sst}\rightarrow \Msp_{\mu}^{\zeta\sst}$ is finite, the direct image functor $\oplus_*$ is exact and preserves weights.  The result follows.
\end{proof}
The monoidal unit in $\Dulf(\MMHM(\mathcal{M}^{\zeta\sst}))$ is $\mathbb{Q}_{{\Msp_0^{\zeta\sst}}}$, the constant pure Hodge module supported on $\Msp_0^{\zeta\sst}\cong \pt$, the unit of the monoid $\Msp_{\mu}^{\zeta\sst}$.  

\smallbreak

We briefly summarize the results of Maxim, Saito and Sch\"urmann \cite{MSS11}, dealing with exterior products and symmetric group actions on mixed Hodge modules.  Let $X_i$, $i\in I$ be a finite set of algebraic varieties, let $\mathcal{L}_i$ be objects of $\Dulf(\MHM(X_i))$ for each $i\in I$, and let $f_i\colon X_i\rightarrow Y_i$ be a set of morphisms of algebraic varieties.  Set $f=\coprod_{i\in I}f_i$.  While \cite{MSS11} treats the bounded derived category, and certainly it makes no sense to apply their results at the generality of the unbounded derived category, their arguments extend without modification to the case of the categories $\Dulf(\MHM(X_i))$.  They show that external tensor product commutes with the direct image, that is, there is a natural isomorphism
\[
\boxtimes_{i\in I}f_{i,*}\mathcal{L}_i\cong f_*\left(\boxtimes_{i\in I}\mathcal{L}_i\right).
\]
Furthermore, if $X_i=X$ for all $i$, and $Y_i=Y$ for all $i$, then there is an action of the symmetric group $\SSym_I$ on both external products, which is respected by the above natural isomorphism.  
\begin{remark}
This is the usual action, incorporating the Koszul sign rule --- when we consider cohomological Hall algebras later, we will have cause to modify this action, to express commutativity results for the cohomological Hall algebra.  However, for the purposes of Theorems \ref{ThmA} and \ref{CWCT} we may equivalently use the default action of \cite{MSS11} as we are only interested in the isomorphism classes of underlying objects (see Proposition \ref{underlying_same}).
\end{remark}
Now let $\mathcal{L}_i=\mathcal{L}$ for all $i$, and let $I=\{1,\ldots,n\}$.  The authors then define $\Sym^n(\mathcal{L})\colonequals (\varpi_*\boxtimes^n\!\mathcal{L})^{\SSym_n}$, where $\varpi\colon X^n\rightarrow \Sym^nX$ is the natural map, and show that there is a canonical isomorphism 
\[
\HO(\Sym^n X,\Sym^n\mathcal{L})\cong \Sym^n(\HO(X,\mathcal{L})).
\]
Furthermore, they show that the Kunneth isomorphism
\[
\HO(X^n,\boxtimes^n\mathcal{L})\cong \bigotimes^n\HO(X,\mathcal{L})
\]
is an isomorphism of $\SSym_n$-representations.  

Now let $\mathcal{L}\in \Dulf(\MMHM(X))$.  Since there is a commutative diagram
\[
\xymatrix{
(X\times \mathbb{A}^1)^n\ar[d]_{\varpi_{X\times\mathbb{A}^1}}\ar[r]^-{\id_X^n\times +}&X^n\times\mathbb{A}^1\ar[d]^{\varpi_X\times \id_{\mathbb{A}^1}}\\
\Sym^n(X\times\mathbb{A}^1)\ar[r]^r&\Sym^nX\times\mathbb{A}^1
}
\]
we obtain a $\SSym_n$-action on $\varpi_{X,*}(\boxtimes^n\mathcal{L})$ by taking $r_*$ of the action defined in \cite{MSS11}, and we can define $\Sym^n\!\mathcal{L}$ to be the $\SSym_n$-invariant summand.  Similarly, we define the functor
\begin{align*}
\Sym^n_{\boxtimes_{\oplus}}\colon&\Dulf(\MMHM(\Msp_{\mu}^{\zeta\sst}))\rightarrow \Dulf(\MMHM(\Msp_{\mu}^{\zeta\sst}))\\
&\mathcal{F}\mapsto (\oplus_*\mathcal{F}^{\boxtimes n})^{\SSym_n}.
\end{align*}
We then define
\begin{align}
\label{symdef}
\FreeComm_{\boxtimes_{\oplus}}\colon&\Du(\MMHM(\Msp^{\zeta\sst}_{\mu}\setminus \Msp_0^{\zeta\sst}))\rightarrow \Du(\MMHM(\Msp^{\zeta\sst}_{\mu}))\\ \nonumber
&\mathcal{F}\mapsto\bigoplus_{n\geq 0}\Sym^n_{\boxtimes_{\oplus}}\mathcal{F}.
\end{align}
This functor is well defined, since 
\[
\pi_0(\oplus)\colon \pi_0(\Msp)\times\pi_0(\Msp)\rightarrow\pi_0(\Msp)
\]
has finite fibers.  The same fact implies the following lemma:
\begin{lemma}
\label{lfRem}
The functor (\ref{symdef}) restricts to a functor
\[
\FreeComm_{\boxtimes_{\oplus}}\colon\Dulf(\MMHM(\Msp^{\zeta\sst}_{\mu}\setminus \Msp^{\zeta\sst}_0))\rightarrow \Dulf(\MMHM(\Msp^{\zeta\sst}_{\mu})).
\]
\end{lemma}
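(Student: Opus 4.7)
The plan is to check the two conditions of Definition \ref{lfDef} directly on each connected component $\Msp^{\zeta\sst}_{\dd}$ of $\Msp^{\zeta\sst}_{\mu}$. Fix $\dd\in\Lambda^{\zeta}_{\mu}$ and $k\in\mathbb{Z}$. The content of the argument is entirely combinatorial: I would use the hint about $\pi_0(\oplus)$ twice, once to bound the number of contributions to a given component, and once (applied to weights) to bound the number of contributions to a given graded piece.

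First I would unravel the restriction of $\Sym^n_{\boxtimes_{\oplus}}\mathcal{F}$ to $\Msp^{\zeta\sst}_{\dd}$. Writing $\mathcal{F}_{\ee}:=\mathcal{F}|_{\Msp^{\zeta\sst}_{\ee}}$, the $n$-fold iteration of $\pi_0(\oplus)$ still has finite fibers, so
\[
\Sym^n_{\boxtimes_{\oplus}}(\mathcal{F})|_{\Msp^{\zeta\sst}_{\dd}}\cong \Bigl(\bigoplus_{\substack{\dd_1+\cdots+\dd_n=\dd\\ \dd_i\neq 0}}\oplus_*\bigl(\mathcal{F}_{\dd_1}\boxtimes\cdots\boxtimes\mathcal{F}_{\dd_n}\bigr)\Bigr)^{\Sigma_n},
\]
where the requirement $\dd_i\neq 0$ comes from $\mathcal{F}$ being supported away from $\Msp^{\zeta\sst}_0$. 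For each $n$ only finitely many ordered decompositions contribute; moreover the positivity constraint $\dd_i\neq 0$ bounds $n$ by $\sum_{i\in Q_0}\dd_i$, so only finitely many $n$ contribute in total to $\FreeComm_{\boxtimes_{\oplus}}(\mathcal{F})|_{\Msp^{\zeta\sst}_{\dd}}$.

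Next I would pass from components to weights. Since $\oplus\colon \Msp^{\zeta\sst}_{\mu}\times\Msp^{\zeta\sst}_{\mu}\to \Msp^{\zeta\sst}_{\mu}$ is a finite map of schemes, $\oplus_*$ is exact and strictly compatible with the weight filtration, and $\boxtimes$ is biexact and preserves $W$ by Remark \ref{convProdDef}. Therefore
\[
\Gr_W^k\Ho\bigl(\oplus_*(\mathcal{F}_{\dd_1}\boxtimes\cdots\boxtimes\mathcal{F}_{\dd_n})\bigr)\cong \oplus_*\!\!\bigoplus_{k_1+\cdots+k_n=k}\!\!\Gr_W^{k_1}\Ho(\mathcal{F}_{\dd_1})\boxtimes\cdots\boxtimes\Gr_W^{k_n}\Ho(\mathcal{F}_{\dd_n}).
\]
By local finiteness of $\mathcal{F}$ there are integers $c_i\in\mathbb{Z}$, depending on $\dd_i$, such that $\Gr_W^{k_i}\Ho(\mathcal{F}_{\dd_i})$ is zero for $k_i<c_i$ and lies in $\Db(\MMHM(\Msp^{\zeta\sst}_{\dd_i}))$ otherwise; hence for each fixed $k$ only finitely many compositions $(k_1,\ldots,k_n)$ survive. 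The pushforward along the finite map $\oplus$ of a finite direct sum of external products of bounded complexes is again bounded, verifying the first condition of Definition \ref{lfDef}, and the second condition follows since for $k\ll 0$ no composition satisfies $k_i\geq c_i$ for all $i$.

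Finally I would dispose of the $\Sigma_n$-invariants by noting that the ambient category is $\mathbb{Q}$-linear, so taking invariants is an exact direct summand operation and preserves both cohomological and weight boundedness. Summing over the finitely many relevant values of $n$ completes the verification. The only genuine step is the bookkeeping in paragraph one and two above; once the finite-fiber hint is pushed through the weight filtration no obstacle remains, and I expect no technical surprise beyond carefully invoking exactness of $\oplus_*$ and biexactness of $\boxtimes$ to commute with $\Gr_W$ and $\Ho$.
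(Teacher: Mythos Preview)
Your proposal is correct and is precisely the intended elaboration of the paper's one-line justification (``follows from the fact that $\pi_0(\oplus)$ has finite fibers''): you use finiteness of fibers once to bound the number of $n$ and decompositions contributing to a fixed component, and once, via exactness of $\oplus_*$ and biexactness of $\boxtimes$ with respect to the weight filtration, to bound the contributions to a fixed weight piece. There is no difference in approach, only in level of detail.
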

\begin{proposition}
\label{symmPure}
The functor $\FreeComm_{\boxtimes_{\oplus}}$ takes pure objects to pure objects.
\end{proposition}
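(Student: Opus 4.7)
The plan is to reduce purity of $\FreeComm_{\boxtimes_{\oplus}}\mathcal{F}$ to three building blocks: stability of purity under direct sums, under finite pushforward, and under the external tensor product of monodromic mixed Hodge modules. Since $\FreeComm_{\boxtimes_{\oplus}}\mathcal{F}=\bigoplus_{n\geq 0}\Sym^n_{\boxtimes_{\oplus}}\mathcal{F}$ and direct sums of pure objects are pure, it suffices to handle each summand separately. Working $\QQ$-linearly, the averaging idempotent $\tfrac{1}{n!}\sum_{\sigma\in\Sigma_n}\sigma$ realises $(\oplus_*\mathcal{F}^{\boxtimes n})^{\Sigma_n}$ as a direct summand of $\oplus_*\mathcal{F}^{\boxtimes n}$, and direct summands of pure objects are pure, so I am reduced to showing $\oplus_*\mathcal{F}^{\boxtimes n}$ is pure.

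Next I would invoke the finiteness of $\oplus\colon(\Msp_{\mu}^{\zeta\sst})^n\to\Msp_{\mu}^{\zeta\sst}$, which is recalled from \cite{Meinhardt14,DaMe4} just before Lemma \ref{lfRem}. For a finite, hence projective, map $f$ of schemes, Saito's decomposition theorem \cite{Saito90} implies that $f_*$ takes pure mixed Hodge modules to pure mixed Hodge modules; since the exact quotient $\mathcal{B}_X\to\MMHM(X)$ defined in Section \ref{MMHMs} commutes with direct images along maps of base schemes and descends the weight filtration, the same preservation of purity holds in $\MMHM$. Combined with Step 1 and an induction on $n$, the problem is reduced to showing that the external tensor product of two pure monodromic mixed Hodge modules is itself pure.

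The main obstacle is exactly this final step, since the addition map $+\colon\AA^1\times\AA^1\to\AA^1$ appearing in the definition of $\boxtimes$ in Remark \ref{convProdDef} is not proper. To handle it I would use the equivalence $(X\times\GG_m\to X\times\AA^1)_!\colon\MHM(X\times\GG_m)^{\mon}\to\MMHM(X)$ recorded in Section \ref{MMHMs}, which matches pure objects with pure objects. Under this correspondence the external tensor product $\boxtimes$ on monodromic mixed Hodge modules is computed by taking the ordinary $\MHM$-external tensor product of the associated objects on $X_1\times\GG_m$ and $X_2\times\GG_m$ -- pure by standard properties of Saito's theory -- and then pushing forward along $\id_{X_1\times X_2}\times m$ with $m\colon\GG_m\times\GG_m\to\GG_m$ the multiplication. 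After the coordinate change $(x,y)\mapsto(xy,y)$, which is an isomorphism of schemes, $m$ becomes the projection $\pr_1\colon\GG_m\times\GG_m\to\GG_m$, reducing the question to a Leray calculation with fibre $\GG_m$. Because the monodromy condition forces the relevant fibrewise cohomology to be computed by a pure admissible variation of mixed Hodge structure, the result is pure in $\MMHM(X_1\times X_2)$. The delicate bookkeeping I expect to occupy the bulk of the proof is verifying that this $\GG_m$-model computation really agrees with the $\AA^1$-additive definition of $\boxtimes$ from Remark \ref{convProdDef} modulo the Serre subcategory $\mathcal{C}$ of trivially-monodromic objects, and that weight filtrations track through all identifications; once this comparison is in hand, purity propagates through Steps 2 and 1 and completes the proof, simultaneously establishing the biexactness and weight-preservation of $\boxtimes$ promised in Remark \ref{convProdDef}.
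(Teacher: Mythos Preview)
Your overall structure matches the paper's: reduce to each $\Sym^n$, pass to the direct summand inside $\oplus_*\mathcal{F}^{\boxtimes n}$, invoke finiteness of $\oplus$, and then the only real content is that the monodromic external product $\boxtimes$ preserves purity. Where you diverge is in that last step.

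The paper does not pass through the $\GG_m$-model at all. Instead it uses the isomorphism $p_!\cong p_*$ for $p=\id_{X\times Y}\times +\colon X\times Y\times\AA^1\times\AA^1\to X\times Y\times\AA^1$, recorded as \cite[Lem.~1]{KS2}. Since $p_!$ decreases weights and is right t-exact while $p_*$ increases weights and is left t-exact, the isomorphism forces $\mathcal{F}\boxtimes\mathcal{G}$ to be both of weight $\leq$ and $\geq$ the sum of the weights, hence pure, and simultaneously gives biexactness. This is a two-line sandwich argument.

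Your route through $\MHM(X\times\GG_m)^{\mon}$ is considerably more roundabout, and the pieces you flag as ``delicate bookkeeping'' are not merely bookkeeping. Two points in particular are not obvious: first, that the equivalence $(j_!)\colon\MHM(X\times\GG_m)^{\mon}\to\MMHM(X)$ matches pure objects to pure objects---$j_!$ does not preserve purity in $\MHM$, so you would need to check that the impure graded pieces land in the Serre subcategory $\mathcal{C}_X$; second, that the additive convolution defining $\boxtimes$ on $\MMHM$ actually corresponds to the multiplicative pushforward $(\id\times m)_*$ on the $\GG_m$ side---additive and multiplicative convolution are genuinely different operations on $\AA^1$ and $\GG_m$, and the comparison only holds after the quotient by $\mathcal{C}$, which you have not verified. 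Neither obstruction is insurmountable, but each requires real work that the $p_!\cong p_*$ trick bypasses entirely. The paper's argument is the one you want.
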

\begin{proof}
Let $\mathcal{F}\in\MMHM(\Msp_{\mu}^{\zeta\sst})$ be pure.  Let 
\[
\oplus^n\colon \Msp_{\mu}^{\zeta\sst}\times\ldots\times \Msp_{\mu}^{\zeta\sst}\rightarrow \Msp_{\mu}^{\zeta\sst}
\]
be the n-fold monoid map.  Then $\oplus^n$ is finite by \cite[Lem.2.1]{Meinhardt14}, and so 
\[
\mathcal{G}=\oplus^n_*\left(\mathcal{F}\boxtimes\ldots \boxtimes\mathcal{F}\right)\in\MMHM(\Msp_{\mu}^{\zeta\sst})
\]
is pure.  On the other hand, $\FreeComm_{\boxtimes_{\oplus}}^n\mathcal{F}\subset \mathcal{G}$ is the isotrivial direct summand under the $\SSym_n$-action, and is therefore pure.  We deduce that $\FreeComm_{\boxtimes_{\oplus}}\mathcal{F}$ is a direct sum of pure objects, and is therefore pure.
\end{proof}

We define a new monoidal structure on $\Dulf(\MMHM(\Msp^{\zeta\sst}))$ by setting 
\begin{equation}
\mathcal{F}\boxtimes_{\oplus}^{\tw} \mathcal{G}\colonequals \bigoplus_{\dd',\dd''\in\mathbb{N}^{Q_0}} \LL^{\langle \dd'',\dd'\rangle/2}\otimes\left(\oplus_*(\mathcal{F}_{\dd'}\boxtimes\mathcal{G}_{\dd''})\right), 
\end{equation}
where $\mathcal{F}=\bigoplus_{\dd'\in\mathbb{N}^{Q_0}}\mathcal{F}_{\dd'}$ for $\mathcal{F}_{\dd'}\in\Dub(\MMHM(\Msp_{\dd'}^{\zeta\sst}))$ and  $\mathcal{G}=\bigoplus_{\dd''\in\mathbb{N}^{Q_0}}\mathcal{G}_{\dd''}$ for $\mathcal{G}_{\dd''}\in\Dub(\MMHM(\Msp_{\dd''}^{\zeta\sst}))$.  Since we are twisting by a pure twist, this monoidal product again preserves weights.  If $\zeta$ is $\mu$-generic, the restriction of $\boxtimes_{\oplus}^{\tw}$ to $\Du(\MMHM(\Msp^{\zeta\sst}_{\mu}))$ is naturally isomorphic to the untwisted symmetric monoidal product $\boxtimes_{\oplus}$, but in general there is no symmetrizing natural isomorphism of bifunctors making $\boxtimes_{\oplus}^{\tw}$ into a symmetric monoidal product.  

The associator natural isomorphism for the monoidal structure $\boxtimes_{\oplus}$ is the one induced from the usual monoidal structure on the derived categories of constructible sheaves and $\mathcal{D}$-modules.  We have to take some care in defining the signs for the associator for $\boxtimes_{\oplus}^{\tw}$.  Let $\mathcal{F}^{(h)}\in\Dulf(\MMHM(\Msp^{\zeta\sst}_{\dd^{(h)}}))$ for $h=1,2,3$.  Consider the composition of natural transformations
\begin{align*}
\eta_{\mathcal{F}^{(1)},\mathcal{F}^{(2)},\mathcal{F}^{(3)}}\colon &\LL^{\langle \dd^{(3)}+\dd^{(2)},\dd^{(1)}\rangle/2}\otimes\left(\mathcal{F}^{(1)}\boxtimes_{\oplus} \left(\LL^{\langle \dd^{(3)},\dd^{(2)}\rangle/2}\otimes(\mathcal{F}^{(2)}\boxtimes_{\oplus}  \mathcal{F}^{(3)})\right)\right)
\\
\xrightarrow{\cong}&\LL^{\langle \dd^{(3)}+\dd^{(2)},\dd^{(1)}\rangle/2+\langle \dd^{(3)},\dd^{(2)}\rangle}\otimes\left(\mathcal{F}^{(1)}\boxtimes_{\oplus} \left(\mathcal{F}^{(2)}\boxtimes_{\oplus}  \mathcal{F}^{(3)}\right)\right)
\\
\xrightarrow{\cong}&\LL^{\langle \dd^{(3)}+\dd^{(2)},\dd^{(1)}\rangle/2+\langle \dd^{(3)},\dd^{(2)}\rangle/2}\otimes\left(\left(\mathcal{F}^{(1)}\boxtimes_{\oplus} \mathcal{F}^{(2)}\right)\boxtimes_{\oplus}  \mathcal{F}^{(3)}\right)
\\
\xrightarrow{\cong}&\LL^{\langle \dd^{(3)},\dd^{(2)}+\dd^{(1)}\rangle/2}\otimes \left(\LL^{\langle \dd^{(2)},\dd^{(1)}\rangle/2}\otimes\left(\mathcal{F}^{(1)}\boxtimes_{\oplus} \mathcal{F}^{(2)}\right)\boxtimes_{\oplus}  \mathcal{F}^{(3)}\right)
\end{align*}
The resulting natural isomorphism $\mathcal{F}^{(1)}\boxtimes_{\oplus}^{\tw}(\mathcal{F}^{(2)}\boxtimes_{\oplus}^{\tw}\mathcal{F}^{(3)})\rightarrow (\mathcal{F}^{(1)}\boxtimes_{\oplus}^{\tw}\mathcal{F}^{(2)})\boxtimes_{\oplus}^{\tw}\mathcal{F}^{(3)})$ does \textit{not} satisfy the pentagon identity.  We define instead
\begin{equation}
\label{AssocSign}
\eta_{\mathcal{F}^{(1)},\mathcal{F}^{(2)},\mathcal{F}^{(3)}}'\colonequals (-1)^{\langle \dd^{(2)},\dd^{(3)}\rangle \chi(\dd^{(1)},\dd^{(1)})}\eta_{\mathcal{F}^{(1)},\mathcal{F}^{(2)},\mathcal{F}^{(3)}}
\end{equation}
to produce a valid associator natural isomorphism.

In a little more detail, the diagram
\begin{align}
\label{pentagon}
\xymatrix{
& ((\mathcal{F}^{(1)}\boxtimes_{\oplus}^{\tw} \mathcal{F}^{(2)}) \boxtimes_{\oplus}^{\tw}\mathcal{F}^{(3)}) \boxtimes_{\oplus}^{\tw}\mathcal{F}^{(4)}\ar[dl]^{\alpha}\ar[d]_{\delta}\\
(\mathcal{F}^{(1)} \boxtimes_{\oplus}^{\tw}(\mathcal{F}^{(2)} \boxtimes_{\oplus}^{\tw}\mathcal{F}^{(3)})) \boxtimes_{\oplus}^{\tw}\mathcal{F}^{(4)}\ar[d]^{\beta}&(\mathcal{F}^{(1)}\boxtimes_{\oplus}^{\tw} \mathcal{F}^{(2)}) \boxtimes_{\oplus}^{\tw}(\mathcal{F}^{(3)}\boxtimes_{\oplus}^{\tw} \mathcal{F}^{(4)})\ar[d]^{\epsilon}\\
\mathcal{F}^{(1)}\boxtimes_{\oplus}^{\tw} ((\mathcal{F}^{(2)}\boxtimes_{\oplus}^{\tw} \mathcal{F}^{(3)})\boxtimes_{\oplus}^{\tw} \mathcal{F}^{(4)})\ar[r]^{\gamma}&\mathcal{F}^{(1)} \boxtimes_{\oplus}^{\tw}(\mathcal{F}^{(2)}\boxtimes_{\oplus}^{\tw} (\mathcal{F}^{(3)}\boxtimes_{\oplus}^{\tw} \mathcal{F}^{(4)}))
}
\end{align}
fails to commute if the morphisms are all defined using $\eta_{\bullet,\bullet,\bullet}$.  The trouble is in the definition of $\delta$: in swapping $\LL^{\langle \dd^{(4)},\dd^{(3)}\rangle/2}$ with $(\mathcal{F}^{(1)}\boxtimes_{\oplus}^{\tw} \mathcal{F}^{(2)})$ we pick up an extra sign $(-1)^{\langle \dd^{(4)},\dd^{(3)}\rangle\langle \dd^{(2)},\dd^{(1)}\rangle}$ because of the half Tate twist in the definition of $(\mathcal{F}^{(1)}\boxtimes_{\oplus}^{\tw} \mathcal{F}^{(2)})$.  Modifying all of the morphisms in (\ref{pentagon}) via the rule (\ref{AssocSign}) introduces an overall parity change of
\begin{align*}
&\langle \dd^{(2)},\dd^{(3)}\rangle\chi(\dd^{(1)},\dd^{(1)})+\langle \dd^{(2)}+\dd^{(3)},\dd^{(4)}\rangle\chi(\dd^{(1)},\dd^{(1)})
\\
+&\langle \dd^{(3)},\dd^{(4)}\rangle\chi(\dd^{(2)},\dd^{(2)})+\langle \dd^{(3)},\dd^{(4)}\rangle\chi(\dd^{(1)}+\dd^{(2)},\dd^{(1)}+\dd^{(2)})\\
+&\langle \dd^{(2)},\dd^{(3)}+\dd^{(4)}\rangle\chi(\dd^{(1)},\dd^{(1)})=\langle \dd^{(4)},\dd^{(3)}\rangle \langle \dd^{(2)},\dd^{(1)}\rangle &(\textrm{modulo }2)
\end{align*}
as required.  The terms in the sum are the contributions from $\alpha,\beta,\gamma,\delta,\epsilon$ respectively.  We extend $\eta'_{\bullet,\bullet,\bullet}$ via linearity to triples of objects in $\Dulf(\MMHM(\Msp^{\zeta\sst}))$ to define the associator natural isomorphism for the monoidal structure $\boxtimes_{\oplus}^{\tw}$.

We define the monoidal product on $\Dulf(\MMHM(\mathbb{N}^{Q_0}))$ by setting 
\begin{equation}
\label{AssociatorSign}
\mathcal{F}\boxtimes_{+}^{\tw} \mathcal{G}\colonequals \bigoplus_{\dd',\dd''\in\mathbb{N}^{Q_0}}\LL^{\langle \dd'',\dd'\rangle/2}\otimes\left(+_*(\mathcal{F}_{\dd'}\boxtimes\mathcal{G}_{\dd''})\right)
\end{equation}
and constructing the associator natural isomorphism as in (\ref{AssociatorSign}).  The map 
\[
\dim_*\colon \Dulf(\MMHM(\mathcal{M}^{\zeta\sst}))\rightarrow \Dulf(\MMHM(\mathbb{N}^{Q_0}))
\]
is a monoidal functor, and the restriction
\[
\dim_{\mu,*}\colon \Dulf(\MMHM(\mathcal{M}_{\mu}^{\zeta\sst}))\rightarrow \Dulf(\MMHM(\Lambda^{\zeta}_{\mu}))
\]
is a symmetric monoidal functor for $\zeta$-generic $\mu$.
\begin{remark}\label{twDual}
By skew symmetry of $\langle \bullet,\bullet\rangle_Q$ and finiteness of the maps of schemes $\oplus$ and $+$, there are natural isomorphisms of bifunctors
\begin{align*}
&\DD^{\mon}(\mathcal{F}\boxtimes_{\oplus}^{\tw}\mathcal{G})\cong \DD^{\mon}\mathcal{G}\boxtimes_{\oplus}^{\tw}\DD^{\mon}\mathcal{F}\\
&\DD^{\mon}(\mathcal{F}\boxtimes_{+}^{\tw}\mathcal{G})\cong \DD^{\mon}\mathcal{G}\boxtimes_{+}^{\tw}\DD^{\mon}\mathcal{F}.
\end{align*}
Note the swap of arguments.
\end{remark}

\subsection{Framed moduli spaces}
\label{framedSec}
Framed moduli spaces will play a central role in what follows.  Their cohomology provides an approximation to the cohomology of $\Mst^{\zeta\sst}$, in a way which we will make precise in Section \ref{cohaDT}.  

Let $\ff\in\mathbb{N}^{Q_0}$ be a dimension vector (called the framing vector).  We form a new quiver $Q_{\ff}$ by setting $Q_{\ff}=(Q_0\sqcup\{\infty\}, Q_1\sqcup \{\beta_{i,l_i}\colon \infty \to i \mid i\in Q_0, \hbox{ }1\le l_i\le \ff_i \})$.  Given a stability condition $\zeta$ for $Q$, and a slope $\mu\in(-\infty,\infty)$ we extend $\zeta$ to a stability condition $\zeta^{(\mu)}_{\ff}$ for $Q_{\ff}$ by picking $\zeta^{(\mu)}_{\ff,\infty}\in\mathbb{H}_+$ so that 
\[
-\Re e(\zeta^{(\mu)}_{\ff,\infty})/\Im m(\zeta^{(\mu)}_{\ff,\infty})=\mu+\epsilon
\]
for sufficiently small $\epsilon>0$, and picking $|\zeta_{\ff,\infty}^{(\mu)}|\gg 0$.

 A $\mathbb{C} Q_{\ff}$-module $\rho$ with $\dim(\rho)_{\infty}=1$ and $\dim(\rho|_{\mathbb{C}Q})\in \Lambda^{\zeta}_{\mu}$ is $\zeta^{(\mu)}_{\ff}$-semistable if and only if the underlying $\mathbb{C} Q$-module is $\zeta$-semistable, and for every $\mathbb{C}Q_{\ff}$-submodule $\rho'\subset \rho$ such that $\dim(\rho')_{\infty}=1$, the underlying $\mathbb{C} Q$-module of $\rho'$ has slope strictly less than $\mu$.  A $\zeta^{(\mu)}_{\ff}$-semistable $\mathbb{C}Q_{\ff}$-module is automatically $\zeta^{(\mu)}_{\ff}$-stable, and we write $\Msp^{\zeta}_{\ff,\dd}$ for the fine moduli space of $\zeta^{(\mu)}_{\ff}$-semistable $\mathbb{C} Q_{\ff}$-modules of dimension $(1,\dd)\in\mathbb{N}\times\Lambda_{\mu}^{\zeta}$.  The moduli space $\Msp_{\ff,\dd}^{\zeta}$ is smooth. 

\begin{remark}
In fact $G_{\dd}$ acts freely on the smooth variety 
\begin{equation}
\label{Ydef}
Y_{\ff,\dd}^{\zeta}\colonequals X(Q_{\ff})^{\zeta_{\ff}^{(\mu)}\sst}_{(1,\dd)},
\end{equation}
and $\Msp_{\ff,\dd}^{\zeta}$ is the quotient.
\end{remark}

We denote by 
\[
\pi_{\ff,\dd}^{\zeta}\colon\Msp^{\zeta}_{\ff,\dd}\rightarrow \Msp^{\zeta\sst}_{\dd}
\]
the map given by forgetting the framing.  It is a proper map, since the other two maps in the diagram
\[
\xymatrix{
\Msp^{\zeta}_{\ff,\dd}\ar@/^1pc/[rr]\ar[r]_-{\pi^\zeta_{\ff,\dd}}&\Msp_{\dd}^{\zeta\sst}\ar[r]_-{q^\zeta_{\dd}}&\Msp_{\dd}
}
\]
are, as they are GIT quotient maps.
\smallbreak
For $\dd\in\Lambda_{\mu}^{\zeta}$ we may alternatively extend $\dd$ to a dimension vector for $Q_{\ff}$ by setting $\dd_{\infty}=0$.  There is an obvious isomorphism $X(Q_{\ff})_{(0,\dd)}^{\zeta_{\ff}^{(\mu)}\sst}\cong X(Q)_{\dd}^{\zeta\sst}$.
\smallbreak

\subsection{Jacobi algebras and potentials}
Let $W\in\mathbb{C} Q/[\mathbb{C}Q,\mathbb{C}Q]$ be a finite linear combination of equivalence classes of cyclic words in $\mathbb{C} Q$.  Such a $W$ is called a \textit{potential}.  A potential $W$ induces a function $\WWW$ on $\Mst$, defined as follows.  Firstly, assume that $W$ lifts to a single cyclic word $c=a_r\ldots a_0$ in $\mathbb{C} Q$.  Then a $\mathbb{C}Q$-module $F$ determines an endomorphism 
\[
F(a_r)\circ\ldots\circ F(a_0)\colon F(s(a_0))\rightarrow F(s(a_0)), 
\]
and $\Tr(F(a_r)\circ\ldots\circ F(a_0))$ determines a function on $X(Q)_{\dd}$, for each $\dd\in\mathbb{N}^{Q_0}$.  By cyclic invariance of the trace, this function is $G_{\dd}$-invariant, and does not depend on the lift $c$ of $W$.  Extending by linearity, we define for general $W\in\mathbb{C}Q/[\mathbb{C}Q,\mathbb{C}Q]$ the induced function 
\[
\WWW_{\dd}\colon \Mst_{\dd}\rightarrow \mathbb{C}.  
\]

We denote by $\WWW_{\dd}^{\zeta}$ the restriction of this function to $\Mst^{\zeta\sst}_{\dd}$, and by 
\[
\WW^{\zeta}_{\dd}\colon \Msp^{\zeta\sst}_{\dd}\rightarrow\mathbb{C}
\]
the unique function through which $\WWW^{\zeta}_{\dd}$ factors.  Similarly, we define 
\[
\WW_{\ff,\dd}^{\zeta}\colonequals \WW_{\dd}^{\zeta}\circ\pi_{\ff,\dd}^{\zeta}\colon \Msp_{\ff,\dd}^{\zeta}\rightarrow \mathbb{C},
\]
and we define $\WWW_{\dd',\dd''}^{\zeta}$ to be the composition 
\[
\Mst_{\dd',\dd''}^{\zeta\sst}\hookrightarrow\Mst_{\dd'+\dd''}^{\zeta\sst}\xrightarrow{\WWW_{\dd'+\dd''}^{\zeta}}\mathbb{C}.
\]

Associated to the data $(Q,W)$ is the Jacobi algebra 
\[
\Jac(Q,W)\colonequals \mathbb{C} Q/\langle \partial W/\partial a\;|\;a\in Q_1\rangle.  
\]
Here the \textit{noncommutative derivatives} $\partial W/\partial a$ are defined as follows.  First assume that $W$ lifts to a single cyclic word $c\in\mathbb{C}Q$.  Then 
\[
\partial W/\partial a\colonequals \sum_{c=c'ac''}c''c'.
\]
We then extend the definition to general $W$ by linearity.  We define $\Mst_{W}$, the stack of finite-dimensional $\Jac(Q,W)$-modules, in the same way as the stack of finite-dimensional $\mathbb{C} Q$-modules.  In particular there is a natural closed embedding of stacks $\Mst_W\subset \Mst$, and it is easy to show that $\Mst_W=\crit(\WWW)$ as substacks of $\Mst$.  

In order to keep to Assumption \ref{phicheat} we will assume that $W\in \langle \partial W/\partial a|a\in Q_1\rangle$, for then $\crit(\WWW)\subset\WWW^{-1}(0)$.  One very common set of circumstances in which this requirement is met is when there is a grading of the arrows $Q_1$ with integers such that $W$ is homogeneous of nonzero weight.  As mentioned after Assumption \ref{phicheat}, we can drop this requirement at the expense of slightly more complicated definitions.

\begin{proposition}\label{TSeq}
Let $W\in\mathbb{C}Q/[\mathbb{C}Q,\mathbb{C}Q]$ be a potential.  Then 
\[
\phim{\WW_{\mu}^{\zeta}}\colon\Dulf\left(\MHM(\Msp^{\zeta\sst}_{\mu})\right)\rightarrow\Dulf\left(\MMHM(\Msp^{\zeta\sst}_{\mu})\right)
\]
is a symmetric monoidal functor.
\end{proposition}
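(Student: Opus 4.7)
The plan is to reduce the symmetric monoidal property to two facts from Proposition \ref{basicfacts}: the base-change isomorphism $\phim{f}p_* \cong p_*\phim{fp}$ for proper $p$, and the Thom--Sebastiani isomorphism. The essential geometric input is that $\WW$ is an \emph{additive} function on the commutative monoid $(\Msp_\mu^{\zeta\sst},\oplus)$: if $\rho,\rho'$ are $\zeta$-semistable $\mathbb{C}Q$-modules of slope $\mu$, then $\WWW(\rho\oplus\rho')=\WWW(\rho)+\WWW(\rho')$, which follows from the explicit formula for $\WWW$ as a sum of traces of cyclic compositions and block-diagonality of $\rho\oplus\rho'$. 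Passing to coarse moduli, this yields the identity of regular functions
\[
\WW_\mu^{\zeta}\circ\oplus \;=\; \WW_\mu^{\zeta}\boxplus\WW_\mu^{\zeta}\colon \Msp_\mu^{\zeta\sst}\times\Msp_\mu^{\zeta\sst}\longrightarrow \mathbb{C}.
\]

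Given this, for $\mathcal{F},\mathcal{G}\in\MHM(\Msp_\mu^{\zeta\sst})$ I would compute
\[
\phim{\WW_\mu^{\zeta}}(\mathcal{F}\boxtimes_{\oplus}\mathcal{G})
=\phim{\WW_\mu^{\zeta}}\oplus_{*}(\mathcal{F}\boxtimes\mathcal{G})
\xrightarrow{\;\sim\;}\oplus_{*}\phim{\WW_\mu^{\zeta}\circ \oplus}(\mathcal{F}\boxtimes\mathcal{G})
\]
using Proposition \ref{basicfacts}(\ref{adjInt}), which is applicable because the direct sum map $\oplus$ is finite (and in particular proper) by \cite[Lem.2.1]{Meinhardt14}. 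Substituting the additivity identity and then applying Thom--Sebastiani produces a natural isomorphism
\[
\oplus_{*}\phim{\WW_\mu^{\zeta}\boxplus \WW_\mu^{\zeta}}(\mathcal{F}\boxtimes \mathcal{G})\xrightarrow{\;\sim\;}\oplus_{*}\bigl(\phim{\WW_\mu^{\zeta}}\mathcal{F}\boxtimes\phim{\WW_\mu^{\zeta}}\mathcal{G}\bigr)=\phim{\WW_\mu^{\zeta}}\mathcal{F}\boxtimes_{\oplus}\phim{\WW_\mu^{\zeta}}\mathcal{G},
\]
where Assumption \ref{phicheat} removes the restriction to $(\WW_\mu^{\zeta})^{-1}(0)\times(\WW_\mu^{\zeta})^{-1}(0)$ appearing in the statement of Thom--Sebastiani, since the critical locus is already contained in the zero fibre. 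This gives the required lax monoidal constraint. For the unit, the monoidal unit in $\MHM(\Msp_\mu^{\zeta\sst})$ is $\mathbb{Q}_{\Msp_0^{\zeta\sst}}$, the constant sheaf on the point $\Msp_0^{\zeta\sst}\cong\pt$, on which $\WW_\mu^{\zeta}$ vanishes identically; the example computation following (\ref{phimdef}) shows $\phim{0}\mathbb{Q}_{\pt}\cong\mathbb{Q}_{\pt}$ in $\MMHM(\pt)$, matching the unit on the target.

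For the symmetry, I would verify that the constraint above intertwines the swap isomorphisms on source and target. The swap on $\mathcal{F}\boxtimes_{\oplus}\mathcal{G}$ comes from the symmetry of the external product composed with symmetry of $\oplus$, and the Thom--Sebastiani isomorphism is compatible with swapping the two factors (this is part of Saito's construction in \cite{Saito10}, and can be read off from the symmetric definition via nearby-cycles on $f_1\boxplus f_2$). Since the base change in Proposition \ref{basicfacts}(\ref{adjInt}) is natural in the function $f$ and its pullback along $\oplus$, each step in the composition above is $\Sigma_2$-equivariant, giving the symmetry constraint. Associativity follows by an analogous three-fold diagram using additivity $\WW_\mu^{\zeta}\circ\oplus^{(3)}=\WW_\mu^{\zeta}\boxplus\WW_\mu^{\zeta}\boxplus\WW_\mu^{\zeta}$ and the iterated Thom--Sebastiani isomorphism.

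The main obstacle is conceptual rather than computational: checking coherence, i.e.\ that the isomorphism constructed in three disparate steps (base change, rewriting the function, Thom--Sebastiani) genuinely assembles into a symmetric monoidal structure compatible with the associator and braiding on both sides. In practice this reduces to verifying that the base-change isomorphism of Proposition \ref{basicfacts}(\ref{adjInt}) is natural in the regular function and compatible with Cartesian diagrams built out of $\oplus$, and that the Thom--Sebastiani isomorphism is symmetric and associative under $\boxplus$; both of these are standard but must be invoked carefully because $\phim{}$ takes values in the monodromic category, so the relevant $\otimes$ on the target is the twisted monoidal product (\ref{simmon}) rather than the naive one.
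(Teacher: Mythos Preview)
Your approach is essentially the same as the paper's: build the monoidal constraint from finiteness of $\oplus$ (base change), additivity of $\WW$, and Thom--Sebastiani, then check compatibility with the swap. The paper reduces the statement to commutativity of exactly the square you describe, with the vertical arrows being the Thom--Sebastiani isomorphism.

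The one substantive difference is at the point you identify as ``the main obstacle'': you assert that the Thom--Sebastiani isomorphism is $\Sigma_2$-equivariant, citing Saito's construction, whereas the paper actually \emph{proves} this. The paper first uses faithfulness of $\form{\Msp_\mu^{\zeta\sst}}$ to reduce to the perverse-sheaf level, then factors $\oplus = r\circ q$ through $\Sym^2(\Msp_\mu^{\zeta\sst})$ so that the question becomes whether
\[
\phi_{\WW_\mu^{\zeta}}\mathcal{F}\boxtimes\phi_{\WW_\mu^{\zeta}}\mathcal{F}\longrightarrow \phi_{\WW_\mu^{\zeta}\boxplus\WW_\mu^{\zeta}}(\mathcal{F}\boxtimes\mathcal{F})
\]
respects the $\Sigma_2$-structure on both sides. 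It then invokes Massey's explicit description of Thom--Sebastiani (see \cite{Ma01} and the appendix to \cite{Br12}): after embedding in a smooth $X$ with an extension $f$ of $\WW_\mu^{\zeta}$, the isomorphism is the natural map
\[
R\Gamma_{f^{-1}(\mathbb{R}_{\leq 0})\times f^{-1}(\mathbb{R}_{\leq 0})}(\mathcal{F}\boxtimes\mathcal{F})\longrightarrow R\Gamma_{(f\boxplus f)^{-1}(\mathbb{R}_{\leq 0})}(\mathcal{F}\boxtimes\mathcal{F})
\]
induced by the inclusion $f^{-1}(\mathbb{R}_{\leq 0})\times f^{-1}(\mathbb{R}_{\leq 0})\subset (f\boxplus f)^{-1}(\mathbb{R}_{\leq 0})$. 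Both closed subsets are $\Sigma_2$-stable and the $\Sigma_2$-action on both sides is induced from that on $\mathcal{F}\boxtimes\mathcal{F}$, so equivariance is manifest.

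In short, your outline is correct and matches the paper; to make it complete you should replace the appeal to ``Saito's construction'' with this explicit argument (or an equivalent one), since the $\Sigma_2$-equivariance of Thom--Sebastiani is the actual content of the proposition.
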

\begin{proof}
Let $\mathcal{F},\mathcal{G}\in\MHM(\Msp^{\zeta\sst}_{\mu})$.  Let $\pi\colon \Msp^{\zeta\sst}_{\mu}\times\Msp^{\zeta\sst}_{\mu}\rightarrow\Msp^{\zeta\sst}_{\mu}\times\Msp^{\zeta\sst}_{\mu}$ be the map swapping arguments.  The statement follows from exactness of $\phim{\WW_{\mu}^{\zeta}}$ and biexactness of $\boxtimes_{\oplus}$ and the claim that the following diagram is commutative,
\[
\xymatrix{
\phim{\WW_{\mu}^{\zeta}}\left(\mathcal{F}\boxtimes_{\oplus}\mathcal{G}\right)\ar[r]\ar[d]&\phim{\WW_{\mu}^{\zeta}}\left(\mathcal{G}\boxtimes_{\oplus}\mathcal{F}\right)\ar[d]\\
\phim{\WW_{\mu}^{\zeta}}\mathcal{F}\boxtimes_{\oplus}\phim{\WW_{\mu}^{\zeta}}\mathcal{G}\ar[r]&\phim{\WW_{\mu}^{\zeta}}\mathcal{G}\boxtimes_{\oplus}\phim{\WW_{\mu}^{\zeta}}\mathcal{F},
}
\]
where the horizontal maps are induced by the symmetrizing natural isomorphisms of Section \ref{prodss}, and the vertical maps are given by the Thom-Sebastiani natural isomorphism.  By faithfulness of the functor $\form{\Msp_{\mu}^{\zeta\sst}}$ defined in (\ref{formDef}), it is enough to prove commutativity of the diagram at the level of perverse sheaves, or constructible complexes.  Since $\oplus$ is finite, by natural commutativity of vanishing cycle functors with proper maps, it is enough to prove commutativity of the diagram
\begin{equation}
\label{liftSymTS}
\xymatrix{
\phi_{\WW_{\mu}^{\zeta}}\left(\mathcal{F}\boxtimes\mathcal{G}\right)[-1]\ar[r]\ar[d]&\phi_{\WW_{\mu}^{\zeta}}\left(\mathcal{G}\boxtimes\mathcal{F}\right)[-1]\ar[d]\\
\phi_{\WW_{\mu}^{\zeta}}\mathcal{F}[-1]\boxtimes\phi_{\WW_{\mu}^{\zeta}}\mathcal{G}[-1]\ar[r]&\phi_{\WW_{\mu}^{\zeta}}\mathcal{G}[-1]\boxtimes\phi_{\WW_{\mu}^{\zeta}}\mathcal{F}[-1].
}
\end{equation}
Let $\iota\colon\Msp_{\mu}^{\zeta}\rightarrow X$ be an embedding inside a smooth scheme, let $f$ be a function on $X$ extending $\WW_{\mu}^{\zeta}$ and consider $\mathcal{F}$ as a perverse sheaf on $X$ via the direct image.  Then we define the functor on the category of constructible sheaves on $X$
\[
\Gamma_{f^{-1}(\mathbb{R}_{\leq 0})}\mathcal{F}(U)\colonequals \ker\left(\mathcal{F}(U)\rightarrow \mathcal{F}(U\setminus f^{-1}(\mathbb{R}_{\leq 0}))\right)
\]
and we may alternatively define $\phi_f\mathcal{F}[-1]\colonequals (R\Gamma_{f^{-1}(\mathbb{R}_{\leq 0})}\mathcal{F})|_{f^{-1}(0)}$.  By \cite[Prop.A.2]{Br12} \cite{Ma01} the Thom--Sebastiani isomorphism is given by the natural morphism
\[
\alpha\colon R\Gamma_{f^{-1}(\mathbb{R}_{\leq 0})\times f^{-1}(\mathbb{R}_{\leq 0})}\left(\bullet \boxtimes\bullet\right)\rightarrow R\Gamma_{(f\boxplus f)^{-1}(\mathbb{R}_{\leq 0})}\left(\bullet\boxtimes\bullet\right)
\]
induced by the inclusion 
\[
f^{-1}(\mathbb{R}_{\leq 0})\times f^{-1}(\mathbb{R}_{\leq 0})\subset (f\boxplus f)^{-1}(\mathbb{R}_{\leq 0}),
\]
and the horizontal maps in (\ref{liftSymTS}) are induced by the natural morphism
\[
\mathcal{F}\boxtimes \mathcal{G}\xrightarrow{\pi^\#}\pi_*\left(\mathcal{G}\boxtimes\mathcal{F}\right)
\]
on both sides.  Applying the natural isomorphism $\alpha$ to this morphism, we obtain the diagram (\ref{liftSymTS}).
\end{proof}

\section{Cohomological Donaldson--Thomas invariants}
\label{CoDTSec}
\subsection{Approximation of $p^{\zeta}_{\dd}$ by proper maps}
\label{cohaDT}
Recall from (\ref{pdef}) (and the remark following it) the morphism
\[
p^{\zeta}_{\dd}\colon \Mst^{\zeta\sst}_{\dd}\rightarrow\Msp^{\zeta\sst}_{\dd}
\]
taking a $\zeta$-semistable $\dd$-dimensional $\mathbb{C}Q$-representation to the associated polystable representation.  We start this section by showing that there is a module-theoretic compactification of Totaro's construction relative to $\Msp^{\zeta\sst}_{\dd}$, which we may use as in Section \ref{TotCon} to define the direct image along $p^{\zeta}_{\dd}$ of the monodromic mixed Hodge module of vanishing cycles from the stack of $\dd$-dimensional $\zeta$-semistable representations of $\mathbb{C}Q$.  

Recall the definition of $\zeta_{\ff}^{(\mu)}$ from Section \ref{framedSec}.  Recall from (\ref{Ydef}) the notation
\begin{align*}
X_{\ff,\dd}\colonequals &X(Q_{\ff})_{(1,\dd)}\\
Y_{\ff,\dd}^{\zeta}\colonequals &X_{\ff,\dd}^{\zeta_{\ff}^{(\mu)}\sst}.
\end{align*}
We replace $\zeta_{\ff}^{(\mu)}$ by an equivalent King stability condition for $Q_{\ff}$, as we always can, by Remark \ref{BtoK}.  Then $\zeta_{\ff}^{(\mu)}$ defines a linearization of the natural $G_{(1,\dd)}$-action on $X_{\ff,\dd}$ and we have 
\begin{align*}
\Msp_{\ff,\dd}^{\zeta}\colonequals &X_{\ff,\dd}/\!\!/_{\chi}G_{(1,\dd)}
\\\cong &Y_{\ff,\dd}^{\zeta}/G_{\dd}
\end{align*}
where $G_{(1,\dd)}=\Gl_1(\mathbb{C})\times G_{\dd}$.  As a $G_{\dd}$-equivariant variety, $X_{\ff,\dd}$ admits a product decomposition $X_{\ff,\dd}=X_{\dd}\times V_{\ff,\dd}$, where $V_{\ff,\dd}\colonequals \bigoplus_{i\in Q_0}\Hom(\mathbb{C}^{\ff_i},\mathbb{C}^{\dd_i})$, and the extra $\Gl_1(\mathbb{C})$ factor acts by rescaling $V_{\ff,\dd}$.  We define $U_{\ff,\dd}\subset V_{\ff,\dd}$ to be the subspace of $Q_0$-tuples of surjective maps.  The group $G_{\dd}$ acts freely on $U_{\ff,\dd}$, and the quotient is a product of Grassmannians, and is a fibre bundle quotient in the category of schemes.  Therefore $G_{\dd}$ also acts freely on $X_{\dd}^{\zeta\sst}\times U_{\ff,\dd}$, which is a fibre bundle in the category of schemes by \cite[Prop.23]{EdGr98}.  Consider the commutative diagram
\[
\xymatrix{
(X_{\dd}^{\zeta\sst}\times U_{\ff,\dd})/G_{\dd}\ar@/^1.5pc/[rr]^-{i}\ar[drr]_{\kappa_{\ff,\dd}^{\zeta}}\ar@{^(->}[r]^-h& Y_{\ff,\dd}^{\zeta}/G_{\dd}\ar@{^(->}[r]\ar[rd]^{\pi ^{\zeta}_{\ff,\dd}}& (X_{\dd}^{\zeta\sst}\times V_{\ff,\dd})/G_{\dd}\ar[d]
\\&& \Msp_{\dd}^{\zeta\sst}.
}    
\]
All of the objects in the above diagram are algebraic varieties, with the exception of $(X_{\dd}^{\zeta\sst}\times V_{\ff,\dd})/G_{\dd}$, which is strictly an Artin stack.  The diagram obtained by deleting this stack and all arrows incident to it is a relative compactification of the map $\kappa_{\ff,\dd}^{\zeta}$.

In what follows we use the notation $\ff\gg 0$ to mean that $\ff_i\gg 0$ for every $i\in Q_0$.  
\begin{lemma}
\label{appLem}
For fixed $n$ and $\ff\gg 0$ the natural map
\[
\Ho^n\left(\pi ^{\zeta}_{\ff,\dd,*}\phim{\WW^{\zeta}_{\ff,\dd}}\mathbb{Q}_{Y_{\ff,\dd}^{\zeta}/G_{\dd}}\right)\rightarrow\Ho^n\left(\kappa^{\zeta}_{\ff,\dd,*}\phim{\WW^{\zeta}_{\ff,\dd}}\mathbb{Q}_{(X_{\dd}^{\zeta\sst}\times U_{\ff,\dd})/G_{\dd}}\right) 
\]
is an isomorphism.
\end{lemma}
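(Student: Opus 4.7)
The asserted map is induced by applying $\phim{\WW^{\zeta}_{\ff,\dd}}$ and then $\pi^{\zeta}_{\ff,\dd,*}$ to the unit $\QQ_{Y^{\zeta}_{\ff,\dd}/G_{\dd}}\to h_*h^*\QQ_{Y^{\zeta}_{\ff,\dd}/G_{\dd}}$ of the adjunction for the open inclusion $h$. Writing $\iota\colon Z\hookrightarrow Y^{\zeta}_{\ff,\dd}/G_{\dd}$ for the closed complementary inclusion, the open/closed distinguished triangle identifies the cone of the unit with $\iota_*\iota^!\QQ_{Y^{\zeta}_{\ff,\dd}/G_{\dd}}[1]$. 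Since $\iota$ and $\pi^{\zeta}_{\ff,\dd}$ are proper, Proposition \ref{basicfacts}(\ref{adjInt}) lets me rewrite the cone of the map in the lemma as
\[
(\pi^{\zeta}_{\ff,\dd}\circ\iota)_*\,\phim{\WW^{\zeta}_{\ff,\dd}\circ\iota}\,\iota^!\QQ_{Y^{\zeta}_{\ff,\dd}/G_{\dd}}[1],
\]
so it will be enough to show that this object lies in perverse cohomological degrees $\geq n+2$ once $\ff$ is sufficiently large.

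The geometric input is that $Y^{\zeta}_{\ff,\dd}$ is open in $X_{\dd}\times V_{\ff,\dd}$ and contains $X^{\zeta\sst}_{\dd}\times U_{\ff,\dd}$: indeed, when the tuple of framing maps is jointly surjective, no proper $\CC Q_{\ff}$-submodule of $(1,\dd)$-shape contains the image of the vertex $\infty$, so semistability of the framed module reduces to semistability of the underlying $\CC Q$-module. Hence $Z$ is contained in the image of $X^{\zeta\sst}_{\dd}\times(V_{\ff,\dd}\setminus U_{\ff,\dd})$, and since the non-surjective locus has codimension at least $c_{\ff}:=\min_{i\in Q_0}(\ff_i-\dd_i+1)$ in $V_{\ff,\dd}$, the closed subscheme $Z$ has codimension at least $c_{\ff}$ in the smooth variety $Y^{\zeta}_{\ff,\dd}/G_{\dd}$.

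As in the proof of Proposition \ref{stabProp}, the faithfulness of $\form{\Msp^{\zeta\sst}_{\dd}}$, followed by the forgetful functor from perverse sheaves to ordinary constructible complexes, reduces the question to a bound in the ordinary constructible $t$-structure. By smoothness of $Y^{\zeta}_{\ff,\dd}/G_{\dd}$, Verdier duality gives $\iota^!\QQ_{Y^{\zeta}_{\ff,\dd}/G_{\dd}}\in\mathcal{D}^{\geq 2c_{\ff}}_{\con}(Z)$. The functor $\phim{-}$ shifts constructible lower bounds by a bounded amount depending only on fixed ambient dimensions, and proper pushforward preserves constructible lower bounds exactly via $(f_*\mathcal{F})_y=R\Gamma(f^{-1}(y),\mathcal{F}|_{f^{-1}(y)})$. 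Thus the cone lies in $\mathcal{D}^{\geq 2c_{\ff}-K}_{\con}(\Msp^{\zeta\sst}_{\dd})$ for some constant $K$ independent of $\ff$; translating back to the perverse $t$-structure on the fixed base $\Msp^{\zeta\sst}_{\dd}$ costs only a further $\ff$-independent shift, so the desired perverse vanishing holds once $c_{\ff}$ is large enough, which is the case for $\ff\gg 0$.

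The main subtlety to watch is that the fibers of $\pi^{\zeta}_{\ff,\dd}$ are products of Grassmannians whose dimensions grow with $\ff$, so that perverse $t$-structure bounds alone would deteriorate with $\ff$ and spoil any naive estimate. The remedy, exactly as in Proposition \ref{stabProp}, is to carry out the codimension estimate in the ordinary constructible $t$-structure, where proper pushforward is insensitive to fiber dimension, and only to translate back to the perverse $t$-structure at the very end over a base ($\Msp^{\zeta\sst}_{\dd}$) whose dimension is independent of $\ff$.
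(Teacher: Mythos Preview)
Your argument has a genuine gap in the identification of the cone. You describe the map as ``applying $\phim{\WW^{\zeta}_{\ff,\dd}}$ and then $\pi^{\zeta}_{\ff,\dd,*}$ to the unit $\QQ_{Y}\to h_*h^*\QQ_{Y}$'', and then use properness of $\iota$ to rewrite the cone as $(\pi\iota)_*\phi_{\WW\circ\iota}\,\iota^!\QQ_{Y/G}[1]$. But that is not the map in the lemma: the target is $\kappa^{\zeta}_{\ff,\dd,*}\phim{\WW}\QQ_{(X\times U)/G}=\pi_*h_*h^*\phim{\WW}\QQ_{Y/G}$, so the map is the $(h^*,h_*)$-unit applied to $\phim{\WW}\QQ_{Y/G}$, \emph{after} taking vanishing cycles (this is exactly how the paper phrases it in its proof). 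Its cone is therefore $(\pi\iota)_*\,\iota^!\bigl(\phi_{\WW}\QQ_{Y/G}\bigr)[1]$. For a closed embedding one has $\phi_f\iota_*\cong\iota_*\phi_{f\iota}$, but there is no companion identity $\iota^!\phi_f\cong\phi_{f\iota}\iota^!$: already for $f=xy$ on $\AA^2$ and $\iota$ the inclusion of a coordinate axis the two sides differ (a skyscraper versus the constant sheaf, both in degree $1$).

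This matters because your codimension estimate $\iota^!\QQ_{Y/G}\in\mathcal{D}^{\geq 2c_{\ff}}_{\con}$ is applied \emph{before} $\phi$, whereas the correct cone requires bounding $\iota^!(\phi\QQ_{Y/G})$. Knowing only that $\phi\QQ_{Y/G}\in\mathcal{D}^{\geq 0}_{\con}$ gives no useful lower bound on $\iota^!$ of it (e.g.\ $\iota^!\iota_*\QQ_Z=\QQ_Z$ sits in degree $0$ regardless of codimension). The missing input is precisely the paper's key observation: since $\WW^{\zeta}_{\ff,\dd}$ factors through the projection $\rho$ to $X^{\zeta\sst}_{\dd}$ (the potential ignores the framing), the vanishing cycle sheaf on $Y$ is the restriction of $\rho^*\phi_{\overline{\WW}_{\dd}}\QQ_{X^{\zeta\sst}_{\dd}}$, i.e.\ of the form $\mathcal{G}\boxtimes\QQ_V$ restricted to $Y$ with $\mathcal{G}$ of amplitude independent of $\ff$. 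It is this constancy along the $V$-direction that forces $\iota^!$ to pick up the codimension of $V_{\ff,\dd}\setminus U_{\ff,\dd}$ and makes the argument go through.
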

\begin{proof}
As in the proof of Proposition \ref{stabProp} it is enough to show that for fixed $n\in\mathbb{Z}$, $\Ho_{\con}^n\left(\phi_{\overline{\WW}_{\ff,\dd}^{\zeta}}\mathbb{Q}_{Y_{\ff,\dd}^{\zeta}}\rightarrow\overline{h}_*\overline{h}^*\phi_{\overline{\WW}_{\ff,\dd}^{\zeta}}\mathbb{Q}_{Y_{\ff,\dd}^{\zeta}}\right)$ is an isomorphism for $\ff\gg 0$, where
\[
\overline{h}\colon X_{\dd}^{\zeta\sst}\times U_{\ff,\dd} \rightarrow Y_{\ff,\dd}^{\zeta}
\]
is the open inclusion and 
\[
\overline{\WW}_{\ff,\dd}^{\zeta}\colon Y_{\ff,\dd}^{\zeta}\rightarrow \mathbb{C}
\]
is the composition of $\WW_{\ff,\dd}^{\zeta}$ with the quotient map.  On the other hand, the complex of sheaves $\phi_{\overline{\WW}_{\ff,\dd}^{\zeta}}\mathbb{Q}_{Y_{\ff,\dd}^{\zeta}}$ is given by restriction of the pullback $\rho^*\phi_{\overline{\WW}_{\dd}}\mathbb{Q}_{X^{\zeta\sst}_{\dd}}$ of $\phi_{\overline{\WW}_{\dd}}\mathbb{Q}_{X^{\zeta\sst}_{\dd}}$ along the projection $\rho\colon X^{\zeta\sst}_{\dd}\times V_{\ff,\dd}\rightarrow X^{\zeta\sst}_{\dd}$.  It follows that for each $x\in X_{\dd}^{\zeta\sst}$, the complex of constructible sheaves $\phi_{\overline{\WW}_{\ff,\dd}^{\zeta}}\mathbb{Q}_{Y_{\ff,\dd}^{\zeta}}|_{Y_{\ff,\dd}^{\zeta}\cap\rho^{-1}(x)}$ is a constant complex, supported in degrees that are bounded independently of $x$.  The codimension of the complement of $(X_{\dd}^{\zeta\sst}\times U_{\ff,\dd})\cap\rho^{-1}(x)$ inside $Y_{\ff,\dd}^{\zeta}\cap{\rho^{-1}(x)}$ tends to infinity as $\ff\mapsto\infty$, and so for sufficiently large $\ff$
\[
\Ho_{\con}^n\left(\phi_{\overline{\WW}_{\ff,\dd}^{\zeta}}\mathbb{Q}_{Y_{\ff,\dd}^{\zeta}}\right)_{Y_{\ff,\dd}^{\zeta}\cap\rho^{-1}(x)}\rightarrow\Ho_{\con}^n\left(\overline{h}_*\overline{h}^*\phi_{\overline{\WW}_{\ff,\dd}^{\zeta}}\mathbb{Q}_{Y_{\ff,\dd}^{\zeta}}\right)_{Y_{\ff,\dd}^{\zeta}\cap\rho^{-1}(x)}
\]
is an isomorphism for all $x\in X^{\zeta\sst}_{\dd}$, and the result follows.
\end{proof}
We deduce that for fixed $\dd\in\mathbb{N}^{Q_0}$ and $n\in\mathbb{Z}$, and for $\ff\gg 0$ there are isomorphisms
\begin{equation}
\label{PhiSurj}
\Phi_{\ff,\dd,W}\colon \Ho^n\left(\pi _{\ff,\dd,*}^{\zeta}\phim{\WW_{\ff,\dd}^{\zeta}}\mathbb{Q}_{\Msp_{\ff,\dd}^{\zeta}}\right)\rightarrow \Ho^n\left(\kappa^{\zeta}_{\ff,\dd,*}\phim{\WW_{\ff,\dd}^{\zeta}}\mathbb{Q}_{(X_{\dd}^{\zeta\sst}\times U_{\ff,\dd })/G_{\dd}}\right)
\end{equation}
and 
\begin{equation}
\label{PsiSurj}
\Psi_{\ff,\dd ,W}\colon  \Ho^n\left(\kappa^{\zeta}_{\ff,\dd ,!}\phim{\WW_{\ff,\dd }^{\zeta}}\mathbb{L}^{-\ff\cdot \dd}_{(X_{\dd}^{\zeta\sst}\times U_{\ff,\dd })/G_{\dd}}\right)\rightarrow \Ho^n\left(\pi_{\ff,\dd ,!}^{\zeta}\phim{\WW_{\ff,\dd }^{\zeta}}\mathbb{L}^{-\ff\cdot\dd}_{\Msp_{\ff,\dd }^{\zeta}}\right)
\end{equation}
where the argument that $\Psi_{\ff,\dd ,W}$ is an isomorphism is as in Lemma \ref{appLem}.  On the other hand, for $\ff\gg 0$, the right hand side of (\ref{PhiSurj}) is by definition $\Ho^n(p_{\dd,*}^{\zeta}\phim{\WWW_{\dd}^{\zeta}}\mathbb{Q}_{\Mst^{\zeta\sst}_{\dd}})$, while the left hand side of (\ref{PsiSurj}) is $\Ho^n(p_{\dd,!}^{\zeta}\phim{\WWW_{\dd}^{\zeta}}\mathbb{Q}_{\Mst^{\zeta\sst}_{\dd}})$.  
\begin{remark}
Put in words, we can say that the direct image of the vanishing cycle monodromic mixed Hodge module along the non-representable map $p^{\zeta}_{\dd}\colon\Mst_{\dd}^{\zeta\sst}\rightarrow\Msp_{\dd}^{\zeta\sst}$ is approximated by the direct image along \textit{proper} maps of schemes $\pi _{\ff,\dd }^{\zeta}\colon \Msp_{\ff,\dd }^{\zeta}\rightarrow\Msp_{\dd}^{\zeta\sst}$.  
\end{remark}
For $\ff\gg 0$ we obtain isomorphisms
\begin{equation}
\label{PhiBar}
\overline{\Phi}^{\SP}_{\ff,\dd,W }\colon \HO^n(\Msp_{\ff,\dd }^{\zeta,\SP},\phim{\WW_{\ff,\dd }^{\zeta}}\mathbb{Q}_{\Msp_{\ff,\dd }^{\zeta}})\cong\HO^n(\Mst^{\zeta\sst,\SP}_{\dd},\phim{\WWW^{\zeta}_{\dd}}\mathbb{Q}_{\Mst^{\zeta\sst}_{\dd}})
\end{equation}
and
\begin{equation}
\label{PsiBar}
\overline{\Psi}^{\SP}_{\ff,\dd,W }\colon \HO^n_c(\Mst^{\zeta\sst,\SP}_{\dd},\phim{\WWW^{\zeta}_{\dd}}\mathbb{Q}_{\Mst^{\zeta\sst}_{\dd}})\cong \HO^n_c(\Msp_{\ff,\dd }^{\zeta,\SP},\phim{\WW^{\zeta}_{\ff,\dd }}\mathbb{L}^{-\ff\cdot \dd}_{\Msp_{\ff,\dd }^{\zeta}})
\end{equation}
in the same way.  

Since each map $\pi ^{\zeta}_{\ff,\dd }$ is proper, and $\phim{\WW^{\zeta}_{\dd}}$ is exact, we deduce the following proposition.
\begin{proposition}
\label{cvs}
For every $\dd\in\mathbb{N}^{Q_0}$ there are natural isomorphisms
\begin{equation}
\label{nudef}
\nu_{\dd}\colon \Ho(p^{\zeta}_{\dd,*}\phim{\WWW^{\zeta}_{\dd}}\mathbb{Q}_{\Mst_{\dd}^{\zeta\sst}})\cong \phim{\WW^{\zeta}_{\dd}}\Ho(p^{\zeta}_{\dd,*}\mathbb{Q}_{\Mst_{\dd}^{\zeta\sst}}).
\end{equation}
\end{proposition}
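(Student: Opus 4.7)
The plan is to reduce the statement to the framed moduli approximation and then combine properness of the framing projection with exactness of $\phim{}$.

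Fix a cohomological degree $n \in \mathbb{Z}$, and choose a framing vector $\ff$ sufficiently large (depending on $n$ and $\dd$). The isomorphism (\ref{PhiBar}), promoted to the relative version over $\Msp_{\dd}^{\zeta\sst}$ provided by Lemma \ref{appLem}, yields natural isomorphisms
\[
\Ho^n\!\left(p_{\dd,*}^{\zeta} \phim{\WWW_{\dd}^{\zeta}} \mathbb{Q}_{\Mst_{\dd}^{\zeta\sst}}\right) \cong \Ho^n\!\left(\pi_{\ff,\dd,*}^{\zeta} \phim{\WW_{\ff,\dd}^{\zeta}} \mathbb{Q}_{\Msp_{\ff,\dd}^{\zeta}}\right),
\]
and, applying the same result to the pair $(Q,0)$ (so that the vanishing cycle functor disappears),
\[
\Ho^n\!\left(p_{\dd,*}^{\zeta} \mathbb{Q}_{\Mst_{\dd}^{\zeta\sst}}\right) \cong \Ho^n\!\left(\pi_{\ff,\dd,*}^{\zeta} \mathbb{Q}_{\Msp_{\ff,\dd}^{\zeta}}\right).
\]
Thus it is enough to construct the isomorphism after replacing $p_{\dd}^{\zeta}$ by $\pi_{\ff,\dd}^{\zeta}$, where now we are entirely in the world of schemes.

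Since $\pi_{\ff,\dd}^{\zeta}$ is proper and $\WW_{\ff,\dd}^{\zeta} = \WW_{\dd}^{\zeta} \circ \pi_{\ff,\dd}^{\zeta}$ by construction, part (\ref{adjInt}) of Proposition \ref{basicfacts} supplies a natural isomorphism
\[
\pi_{\ff,\dd,*}^{\zeta} \phim{\WW_{\ff,\dd}^{\zeta}} \mathbb{Q}_{\Msp_{\ff,\dd}^{\zeta}} \cong \phim{\WW_{\dd}^{\zeta}} \pi_{\ff,\dd,*}^{\zeta} \mathbb{Q}_{\Msp_{\ff,\dd}^{\zeta}}.
\]
Passing to $\Ho^n$ and invoking exactness of $\phim{\WW_{\dd}^{\zeta}}$, part (\ref{exFact}) of the same proposition, to pull the vanishing cycle functor through the cohomology, one obtains
\[
\Ho^n\!\left(\pi_{\ff,\dd,*}^{\zeta} \phim{\WW_{\ff,\dd}^{\zeta}} \mathbb{Q}_{\Msp_{\ff,\dd}^{\zeta}}\right) \cong \phim{\WW_{\dd}^{\zeta}} \Ho^n\!\left(\pi_{\ff,\dd,*}^{\zeta} \mathbb{Q}_{\Msp_{\ff,\dd}^{\zeta}}\right).
\]
Composing these three isomorphisms produces the degree-$n$ component of the desired $\nu_{\dd}$.

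To assemble these degree-by-degree identifications into a single morphism in $\Du(\MMHM(\Msp_{\dd}^{\zeta\sst}))$, one has to verify compatibility with the stabilisation maps that define both sides as complexes with zero differential. This amounts to checking that the base-change isomorphism of part (\ref{adjInt}) is natural with respect to the open embeddings comparing framed moduli with different framing dimensions, which follows from the functoriality of proper pushforward and of the vanishing cycle functor. The main technical point, and essentially the only obstacle, is the bookkeeping of Tate twists and cohomological shifts implicit in the normalised comparison isomorphisms $\overline{\Phi}_{W,\ff,\dd}$ and $\overline{\Psi}_{W,\ff,\dd}$ of (\ref{PhiBar})--(\ref{PsiBar}); these cancel correctly because they appear identically in the $W$-twisted and untwisted approximations, so the levelwise isomorphisms promote to the claimed global $\nu_{\dd}$.
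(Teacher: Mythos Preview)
Your proof is correct and follows essentially the same approach as the paper: approximate $p^{\zeta}_{\dd,*}$ by the proper maps $\pi^{\zeta}_{\ff,\dd,*}$ via Lemma~\ref{appLem}/(\ref{PhiSurj}), use properness (Proposition~\ref{basicfacts}(\ref{adjInt})) to commute $\phim{}$ with pushforward, invoke exactness of $\phim{}$ to pass to cohomology, and check compatibility with the transition maps between different framings. The paper records the compatibility check as the commutativity of a single square obtained by applying the natural transformation $\phim{\WW^{\zeta}_{\dd}}\pi^{\zeta}_{\ff',\dd,*}\to \pi^{\zeta}_{\ff',\dd,*}\phim{\WW^{\zeta}_{\ff',\dd}}$ to the restriction map $\mathbb{Q}_{\Msp^{\zeta}_{\ff',\dd}}\to \iota_{\ff,\ff',*}\mathbb{Q}_{\Msp^{\zeta}_{\ff,\dd}}$; your remark about Tate twists is unnecessary here since the statement concerns the untwisted constant sheaf.
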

\begin{proof}
The isomorphisms $\nu_{\dd}$ are obtained by considering the left hand side of (\ref{PhiSurj}) and using the natural isomorphisms 
\[
 \phim{\WW^{\zeta}_{\dd}}\pi ^{\zeta}_{\ff,\dd ,*}\mathbb{Q}_{\Msp_{\ff,\dd }^{\zeta}}\cong \pi^{\zeta}_{\ff,\dd ,*}\phim{\WW^{\zeta}_{\ff,\dd }}\mathbb{Q}_{\Msp_{\ff,\dd }^{\zeta}}.
\]
The isomorphisms $\nu_{\dd}$ are well-defined by commutativity of the square
\[
\xymatrix{
\pi ^{\zeta}_{\ff',\dd,*}\phim{\WW^{\zeta}_{\ff',\dd}}\mathbb{Q}_{\Msp_{\ff',\dd}^{\zeta}}\ar[r]&\pi ^{\zeta}_{\ff',\dd,*}\iota_{\ff,\ff',*}\phim{\WW^{\zeta}_{\ff,\dd }}\mathbb{Q}_{\Msp_{\ff,\dd }^{\zeta}}\\
\phim{\WW^{\zeta}_{\dd}}\pi ^{\zeta}_{\ff',\dd,*}\mathbb{Q}_{\Msp_{\ff',\dd}^{\zeta}}\ar[u]\ar[r]& \phim{\WW^{\zeta}_{\dd}}\pi ^{\zeta}_{\ff',\dd,*}\iota_{\ff,\ff',*}\mathbb{Q}_{\Msp_{\ff,\dd }^{\zeta}}\ar[u]
}
\]
where $\ff'>\ff$ and $\iota_{\ff,\ff'}\colon\Msp^{\zeta}_{\ff,\dd }\rightarrow \Msp^{\zeta}_{\ff',\dd}$ is the natural inclusion obtained by extending a framing by zero.  The square commutes as it is obtained by applying the natural transformation 
\[
\phim{\WW_{\dd}^{\zeta}}\pi ^{\zeta}_{\ff',\dd,*}\rightarrow \pi ^{\zeta}_{\ff',\dd,*}\phim{\WW^{\zeta}_{\ff',\dd}}
\]
to the restriction map
\[
\mathbb{Q}_{\Msp_{\ff',\dd}^{\zeta}}\rightarrow \iota_{\ff,\ff',*}\mathbb{Q}_{\Msp_{\ff,\dd }^{\zeta}}.
\]
\end{proof}
Similarly, the natural isomorphisms $ \phim{\WW^{\zeta}_{\dd}}\pi ^{\zeta}_{\ff,\dd ,!}\mathbb{Q}_{\Msp_{\ff,\dd }^{\zeta}}\cong \pi ^{\zeta}_{\ff,\dd ,!}\phim{\WW^{\zeta}_{\ff,\dd }}\mathbb{Q}_{\Msp_{\ff,\dd }^{\zeta}}$, along with exactness of $\phim{\WW^{\zeta}_{\dd}}$, induce natural isomorphisms 
\begin{equation}
\nu_{c,\dd}\colon \Ho(p^{\zeta}_{\dd,!}\phim{\WWW^{\zeta}_{\dd}}\mathbb{Q}_{\Mst_{\dd}^{\zeta\sst}})\cong\phim{\WW^{\zeta}_{\dd}}\Ho(p_{\dd,!}^{\zeta}\mathbb{Q}_{\Mst_{\dd}^{\zeta\sst}}).
\end{equation}
\begin{proposition}
\label{bcprop}
With $Q$ as above a finite quiver, $\dd\in\mathbb{N}^{Q_0}$ a dimension vector, and $W$ a potential for $Q$, there are (non natural) isomorphisms 
\begin{align*}
\HO\left(\Mst^{\zeta\sst}_{\dd},\phim{\WWW_{\dd}^{\zeta}}\ICS_{\Mst_{\dd}^{\zeta\sst}}(\mathbb{Q})\right)\cong& \Ho(\dim_*\Ho(p^{\zeta}_{\dd,*}\phim{\WWW_{\dd}^{\zeta}}\ICS_{\Mst_{\dd}^{\zeta\sst}}(\QQ)))\\
\HO_c\left(\Mst^{\zeta\sst}_{\dd},\phim{\WWW_{\dd}^{\zeta}}\ICS_{\Mst_{\dd}^{\zeta\sst}}(\mathbb{Q})\right)\cong& \Ho(\dim_!\Ho(p^{\zeta}_{\dd,!}\phim{\WWW_{\dd}^{\zeta}}\ICS_{\Mst_{\dd}^{\zeta\sst}}(\QQ)))\\
\HO_c\left(\Mst^{\zeta\sst,\SP}_{\dd},\phim{\WWW_{\dd}^{\zeta}}\ICS_{\Mst_{\dd}^{\zeta\sst}}(\mathbb{Q})\right)\cong&\Ho(\dim_!\tilde{\omega}_{\dd}^{\zeta,*}\Ho(p^{\zeta}_{\dd,!}\phim{\WWW_{\dd}^{\zeta}}\ICS_{\Mst_{\dd}^{\zeta\sst}}(\QQ)))
\end{align*}
where $\tilde{\omega}^{\zeta}_{\dd}$ is as in (\ref{tomdef}).
\end{proposition}
\begin{proof}
Fix $n\in\mathbb{Z}$.  We prove the existence of the final isomorphism, the proof of the second follows from the special case $\Mst^{\zeta\sst,\SP}_{\dd}=\Mst_{\dd}^{\zeta\sst}$, and the proof of the first follows from commutativity of Verdier duality with $\phim{\WWW^{\zeta}_{\dd}}$.  For sufficiently large $\ff$ we may write the degree $n$ piece of the left hand side as
\[
\Ho^n\left(\dim_!\pi^{\zeta,\SP}_{\ff,\dd,!}\tilde{\omega}_{\ff,\dd}^{\zeta,*}\phim{\WW^{\zeta}_{\ff,\dd}}\LL_{\Msp^{\zeta}_{\ff,\dd}}^{(\dd,\dd)/2}\right)
\]
while the right hand side is written as
\[
\Ho^n\left(\dim_!\tilde{\omega}^{\zeta,*}_{\dd}\Ho\left(\pi^{\zeta}_{\ff,\dd,!}\phim{\WW^{\zeta}_{\ff,\dd}}\LL_{\Msp^{\zeta}_{\ff,\dd}}^{(\dd,\dd)/2}\right)\right).
\]
The result then follows from proper base change, and the following chain of isomorphisms, coming from properness of $\pi^{\zeta}_{\ff,\dd}$, exactness of vanishing cycles functors, and the decomposition theorem
\begin{align*}
\Ho\left(\pi^{\zeta}_{\dd,\ff,!}\phim{\WW^{\zeta}_{\ff,\dd}}\LL_{\Msp^{\zeta}_{\ff,\dd}}^{(\dd,\dd)/2}\right)\cong&\Ho\left(\phim{\WW^{\zeta}_{\ff,\dd}}\pi^{\zeta}_{\dd,\ff,!}\LL_{\Msp^{\zeta}_{\ff,\dd}}^{(\dd,\dd)/2}\right)\\
\cong&\phim{\WW^{\zeta}_{\ff,\dd}}\Ho\left(\pi^{\zeta}_{\dd,\ff,!}\LL_{\Msp^{\zeta}_{\ff,\dd}}^{(\dd,\dd)/2}\right)\\
\cong&\phim{\WW^{\zeta}_{\ff,\dd}}\pi^{\zeta}_{\dd,\ff,!}\LL_{\Msp^{\zeta}_{\ff,\dd}}^{(\dd,\dd)/2}\\
\cong&\pi^{\zeta}_{\ff,\dd,!}\phim{\WW^{\zeta}_{\ff,\dd}}\LL_{\Msp^{\zeta}_{\ff,\dd}}^{(\dd,\dd)/2}.
\end{align*}
\end{proof}
\begin{remark}
Throughout the paper we work over the complex numbers, and in the algebraic setting.  For greater generality, we could instead have considered analytic mixed Hodge modules --- by Saito's work the same fundamental facts that we have used regarding the vanishing cycle functor remain true in the category of analytic mixed Hodge modules (see \cite[Sec.2]{Saito90}).  In particular, if $W$ is a formal potential such that $\WW^{\zeta}_{\dd}$ defines an analytic function in some neighbourhood $U_{\dd}$ of $0\in\Msp^{\zeta\sst}_{\dd}$, then Proposition \ref{cvs} applies in the neighbourhood $U_{\dd}$.  If, moreover, there is a Serre subcategory $\mathcal{U}$ of the category of $\zeta$-semistable $\mathbb{C}Q$-modules of slope $\mu$ such that for all $\dd\in\Lambda_{\mu}^{\zeta}$ the inclusion $\Msp^{\zeta\sst,\mathcal{U}}_{\dd}\subset \Msp^{\zeta\sst}_{\dd}$ is an analytic open inclusion factoring through an open set $U_{\dd}$ as above, then the relative versions of Theorems \ref{ThmA}, \ref{CWCT}, \ref{qea} and \ref{strongPBW} hold with unmodified proofs.  See \cite{Toda17-2} for results defining such a subcategory $\mathcal{U}$ of the category of coherent sheaves on a Calabi--Yau 3-fold.
\end{remark}

\subsection{The integrality isomorphism --- Theorem \ref{ThmA}}
Assume that $\zeta$ is a $\mu$-generic Bridgeland stability condition.  As in \cite{DaMe4} and the introduction, for $\dd\in\Lambda_{\mu}^{\zeta}\setminus \{0\}$ we define the following elements of $\MMHM(\Msp^{\zeta\sst}_{\dd})$ and $\Db(\MMHM(\pt))$, respectively
\begin{align*}
\DTS_{W,\dd}^{\zeta}\colonequals&\begin{cases} \phim{\WW^\zeta_{\dd}}\ICS_{\Msp^{\zeta \sst}_{\dd}}(\mathbb{Q})&\textrm{if }\Msp^{\zeta \st}_{\dd}\neq\emptyset ,\\0&\textrm{otherwise}\end{cases}\\
\DT^{\zeta,\SP}_{W,\dd}\colonequals &\HO_c\left(\Msp^{\zeta\sst,\SP}_{\dd},\DTS_{W,\dd}^{\zeta}\right)^{\vee}.
\end{align*}

\begin{remark}
Note that from our shift convention on $\ICS_{\Msp^{\zeta\sst}_{\dd}}(\mathbb{Q})$, along with exactness of $\phim{\WW^{\zeta}_{\dd}}$ it follows that $\DTS_{W,\dd}^{\zeta}$ is indeed a genuine (monodromic) mixed Hodge module, instead of merely being an element of $\Db(\MMHM(\Msp^{\zeta\sst}_{\dd}))$.  One can find examples in which it is not pure.  For example consider the three loop quiver $Q$ and the potential $W=x^p+y^q+z^r+axyz$, with $a\neq 0$ and $p^{-1}+q^{-1}+r^{-1}<1$.  Then $\Msp_1=\mathbb{A}^3$, and on $\Msp_1$ there is an identity $\W=W$, and the potential has an isolated singularity at the origin.  The cohomology $\Ho(\phim{W}\mathbb{Q}_{\mathbb{A}^3})$ is not pure (see \cite[Rem.3.5]{DMSS15} \cite[Ex.7.3.5]{Kul98} \cite[Ex.9.1]{Sch85}), and so
\[
\DTS_{W,1}\colonequals \Ho(\phim{\WW_{1}}\mathbb{L}^{-3/2}_{\Msp_1})
\]
is not pure either.
\end{remark}
\begin{remark}
\label{sdrem}
By Proposition \ref{basicfacts} there are isomorphisms
\begin{align*}
\DD^{\mon}_{\Msp_{\dd}^{\zeta\sst}}\phim{\WW^{\zeta}_{\dd}}\ICS_{\Msp^{\zeta\sst}_{\dd}}(\mathbb{Q})\cong& \phim{\WW^{\zeta}_{\dd}}\DD_{\Msp_{\dd}^{\zeta\sst}}\ICS_{\Msp^{\zeta\sst}_{\dd}}(\mathbb{Q})\\
\cong&\phim{\WW^{\zeta}_{\dd}}\ICS_{\Msp^{\zeta\sst}_{\dd}}(\mathbb{Q})
\end{align*}
and so
\begin{equation}
\label{sdual}
\DD^{\mon}_{\Msp^{\zeta\sst}_{\dd}}\DTS_{W,\dd}^{\zeta}\cong\DTS_{W,\dd}^{\zeta}.
\end{equation}
\end{remark}

\begin{remark}
Recall that in the special case $\Mst^{\zeta\sst,\SP}=\Mst^{\zeta\sst}$, we omit the superscript $\SP$ from all notation.  By Remark \ref{sdrem}, the definition of $\DT^{\zeta}_{W,\dd}$ can be simplified via the isomorphism
\[
\DT^{\zeta}_{W,\dd}\colonequals \HO_c\left(\Msp^{\zeta\sst}_{\dd},\DTS_{W,\dd}^{\zeta}\right)^{\vee}\cong \HO\left(\Msp^{\zeta\sst}_{\dd},\DTS_{W,\dd}^{\zeta}\right).
\]
\end{remark}

\begin{definition}
\label{abbrev}
We make the abbreviation of symbols
\[
\ICSt_{W,\dd}^{\zeta}\colonequals \phim{\WWW^{\zeta}_{\dd}}\ICS_{\Mst_{\dd}^{\zeta\sst}}(\mathbb{Q})
\]
and
\[
\ICSt_{W,\mu}^{\zeta}\colonequals \bigoplus_{\dd\in\Lambda_{\mu}^{\zeta}}\phim{\WWW^{\zeta}_{\dd}}\ICS_{\Mst_{\dd}^{\zeta\sst}}(\mathbb{Q}).
\]
\end{definition}

\begin{theorem}[Theorem \ref{ThmA}]
\label{weakPBW}
Assume that $\zeta$ is a $\mu$-generic stability condition on the quiver $Q$.  There is an isomorphism in $\Dulf(\MMHM(\Msp^{\zeta\sst}_{\mu}))$ 
\begin{equation}
\label{weakeq}
\Ho(p^{\zeta}_{\mu,*}\ICSt_{W,\mu}^{\zeta})\cong\FreeComm_{\boxtimes_{\oplus}}\left(\HO(\BC )_{\vir}\otimes\DTS_{W,\mu}^{\zeta}\right)
\end{equation}
and an isomorphism in $\Dulf(\MMHM(\Lambda_{\mu}^{\zeta}))$
\begin{equation}
\label{vweakeq}
\bigoplus_{\dd\in\Lambda_{\mu}^{\zeta}}\HO_c\left(\Mst^{\zeta\sst,\SP}_{\dd},\ICSt_{W,\dd}^{\zeta}\right)^{\vee}\cong\FreeComm_{\boxtimes_{+}}\left(\HO(\BC )_{\vir}\otimes\DT^{\zeta,\SP}_{W,\mu}\right).
\end{equation}

\end{theorem}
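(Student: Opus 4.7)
The plan is to deduce Theorem \ref{weakPBW} from the cohomological integrality theorem in the zero-potential case, proved by Meinhardt and Reineke in \cite{Meinhardt14}, by transporting that result through the vanishing cycle functor $\phim{\WW^{\zeta}_{\mu}}$. Under $\mu$-genericity of $\zeta$, \cite{Meinhardt14} supplies an isomorphism
\begin{equation*}
\Ho\bigl(p^{\zeta}_{\mu,*} \ICS_{\Mst^{\zeta\sst}_{\mu}}(\mathbb{Q})\bigr) \cong \FreeComm_{\boxtimes_{\oplus}}\!\bigl(\mathcal{J}^{\zeta}_{\mu} \otimes \HO(\mathbb{C}\mathbb{P}^{\infty})_{\vir}\bigr)
\end{equation*}
in $\Dulf(\MMHM(\Msp^{\zeta\sst}_{\mu}))$, where $\mathcal{J}^{\zeta}_{\mu}$ is the $W=0$ analogue of $\DTS^{\zeta}_{W,\mu}$: namely $\mathcal{J}^{\zeta}_{\dd} = \ICS_{\Msp^{\zeta\sst}_{\dd}}(\mathbb{Q})$ when $\Msp^{\zeta\st}_{\dd} \neq \emptyset$ and $0$ otherwise. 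The two tools allowing us to move $\phim{\WW^{\zeta}_{\mu}}$ across this isomorphism are Proposition \ref{cvs}, which intertwines $\phim{\WW^{\zeta}_{\mu}}$ with $\Ho \circ p^{\zeta}_{\mu,*}$ (and extends from $\mathbb{Q}_{\Mst^{\zeta\sst}_{\dd}}$ to $\ICS_{\Mst^{\zeta\sst}_{\dd}}(\mathbb{Q})$ by a cohomological shift and Tate twist), and Proposition \ref{TSeq}, which asserts that $\phim{\WW^{\zeta}_{\mu}}$ is symmetric monoidal with respect to $\boxtimes_{\oplus}$.

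Executing the reduction: applying $\phim{\WW^{\zeta}_{\mu}}$ to the left-hand side and using Proposition \ref{cvs} together with exactness of $\phim{}$ yields $\Ho(p^{\zeta}_{\mu,*}\ICSt^{\zeta}_{W,\mu})$, the left-hand side of (\ref{weakeq}). On the right-hand side, Proposition \ref{TSeq} implies that $\phim{\WW^{\zeta}_{\mu}}$ commutes with each symmetric power $\Sym^{n}_{\boxtimes_{\oplus}}$, since the $\Sigma_{n}$-equivariant structure underlying the definition of that functor is preserved by the Thom--Sebastiani isomorphism; consequently, it commutes with $\FreeComm_{\boxtimes_{\oplus}}$. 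The external factor $\HO(\mathbb{C}\mathbb{P}^{\infty})_{\vir} \in \MMHM(\pt)$ passes through unaffected, as it is pulled back from a point. The resulting right-hand side is $\FreeComm_{\boxtimes_{\oplus}}\bigl(\phim{\WW^{\zeta}_{\mu}}\mathcal{J}^{\zeta}_{\mu} \otimes \HO(\mathbb{C}\mathbb{P}^{\infty})_{\vir}\bigr)$, and by the definitions $\phim{\WW^{\zeta}_{\dd}}\mathcal{J}^{\zeta}_{\dd} = \DTS^{\zeta}_{W,\dd}$, establishing the first isomorphism. For (\ref{vweakeq}), apply $\dim_{*}$ to both sides of (\ref{weakeq}): the map $\dim\colon \Msp^{\zeta\sst}_{\mu} \to \Lambda^{\zeta}_{\mu}$ is monoidal from $(\Msp^{\zeta\sst}_{\mu},\oplus)$ to $(\Lambda^{\zeta}_{\mu},+)$ and has finite fibres, so $\dim_{*}$ is monoidal and commutes with $\FreeComm$ and with tensoring by $\HO(\mathbb{C}\mathbb{P}^{\infty})_{\vir}$.

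The principal difficulty is verifying that the Meinhardt--Reineke decomposition of \cite{Meinhardt14} is natural enough to permit the term-by-term application of $\phim{\WW^{\zeta}_{\mu}}$ -- in particular, that it lifts to the category of monodromic mixed Hodge modules and is produced by constructions compatible with vanishing cycles. This naturality is precisely the point of the companion paper \cite{DaMe4}: the isomorphism is built locally on $\Msp^{\zeta\sst}_{\mu}$ out of BBD-style decomposition data, and all of the operations entering its construction (proper pushforward, smooth or affine pullback) are compatible with $\phim{}$ via Proposition \ref{basicfacts}(\ref{adjInt}) and (\ref{HomInv}). Once that compatibility is in hand, the chain of isomorphisms assembles Theorem \ref{weakPBW} cleanly, and the equivariant enhancement provided by the Totaro-style construction of Section \ref{TotCon} ensures that the ``extra'' $\HO(\mathbb{C}\mathbb{P}^{\infty})_{\vir}$ factor is interpreted consistently on both sides as the contribution of the stabiliser $B\mathbb{C}^{*}$ of simple objects.
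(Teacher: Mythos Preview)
Your approach is essentially the paper's: establish the $W=0$ case, then apply $\phim{\WW^{\zeta}_{\mu}}$ using Proposition~\ref{cvs} and Proposition~\ref{TSeq}. Two points deserve sharpening.

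First, you attribute the $W=0$ isomorphism directly to \cite{Meinhardt14}, but the paper does not simply cite it. Instead it argues that $\Ho(p^{\zeta}_{\mu,*}\ICS_{\Mst^{\zeta\sst}_{\mu}}(\mathbb{Q}))$ is \emph{pure}, using the approximation by the proper maps $\pi^{\zeta}_{\ff,\dd}$ from framed moduli (Lemma~\ref{appLem}); purity of the right-hand side is Proposition~\ref{symmPure}. Both sides being pure, they lie in a semisimple category and are isomorphic iff their classes in $\Ka_0(\Dulf(\MMHM(\Msp^{\zeta\sst}_{\mu})))$ agree; \cite{Meinhardt14} is invoked only for this $\Ka_0$-equality (in its framed, finite-$\ff$ form with $\HO(\mathbb{P}^{\ff\cdot\dd-1})_{\vir}$). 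Your gesture towards \cite{DaMe4} and ``BBD-style decomposition data'' is not quite a substitute for this purity step.

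Second, your deduction of (\ref{vweakeq}) from (\ref{weakeq}) by ``apply $\dim_*$'' contains a slip: $\dim\colon\Msp^{\zeta\sst}_{\mu}\to\Lambda^{\zeta}_{\mu}$ does \emph{not} have finite fibres (each $\Msp^{\zeta\sst}_{\dd}$ is typically positive-dimensional). Monoidality of $\dim_*$ holds for other reasons, but one must still reconcile $\dim_*\Ho(-)$ with $\Ho(\dim_*-)$. The paper handles this by returning to the framed approximation and invoking the vanishing-cycle decomposition theorem (Proposition~\ref{basicfacts}(\ref{vanDec})) to split $\pi^{\zeta}_{\ff,\dd,*}\phim{\WW^{\zeta}_{\ff,\dd}}\ICS$ before pushing forward along $\dim$.
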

\begin{proof}
We start by proving that there is an isomorphism as in (\ref{weakeq}).  By definition, the left hand side of (\ref{weakeq}) is isomorphic to its total cohomology.  On the other hand, $\HO(\BC )_{\vir}$ is also isomorphic to its total cohomology, and $\DTS_{W,\mu}^{\zeta}$ is an object of $\MMHM(\Msp_{\mu}^{\zeta\sst})\subset \Dub(\MMHM(\Msp_{\mu}^{\zeta\sst}))$, and is trivially isomorphic to its total cohomology.  It follows from the exactness of $\boxtimes_{\oplus}$ (Proposition \ref{biExp}) that the right hand side of (\ref{weakeq}) is also isomorphic to its total cohomology, and so it is sufficient to construct the isomorphism (\ref{weakeq}) at each cohomological degree.  
\smallbreak
We first show the result, under the assumption that $W=0$.  That is, we show that 
\[
\Ho(p^{\zeta}_{\mu,*}\ICS_{\Mst^{\zeta\sst}_{\mu}}(\mathbb{Q}))\cong\FreeComm_{\boxtimes_{\oplus}}(\HO(\BC )_{\vir}\otimes\DTS_{W=0,\mu}^{\zeta}).
\]
For this special case, the idea of the proof is to use the purity of all relevant complexes of monodromic mixed Hodge modules to upgrade the results of \cite{Meinhardt14}, from equalities in a Grothendieck ring, to isomorphisms.
\smallbreak
By Lemma \ref{appLem}, for fixed $n\in\mathbb{Z}$ and $\ff\gg 0$ the map 
\[
\Ho^n\left(\pi _{\ff,\dd ,*}^{\zeta}\LL^{(\dd,\dd)/2}_{\Msp_{\ff,\dd }^{\zeta}}\right)\rightarrow\Ho^n\left(p^{\zeta}_{\mu,*}\ICSt_{0,\dd}^{\zeta}\right)
\]
is an isomorphism.  It follows that $\Ho(p^{\zeta}_{\mu,*}\ICSt_{0,\mu}^{\zeta})$ is pure, since $\Ho^n(\pi _{\ff,\dd ,*}^{\zeta}\LL^{(\dd,\dd)/2}_{\Msp_{\ff,\dd }^{\zeta}})$ is a pure mixed Hodge module of weight $n$, as purity is preserved by direct image along proper maps.  It follows that $\Ho(p^{\zeta}_{\mu,*}\ICSt^{\zeta}_{0,\mu})$ is locally finite in the sense of Definition \ref{lfDef}, as is $\FreeComm_{\boxtimes_{\oplus}}(\HO(\BC )_{\vir}\otimes\DTS_{0,\mu}^{\zeta})$, by Lemma \ref{lfRem}, since 
\[
\HO(\BC )_{\vir}\otimes\DTS_{0,\mu}^{\zeta}\in\Dulf(\MMHM(\Msp_{\mu}^{\zeta\sst}\setminus\Msp^{\zeta\sst}_0)).  
\]
The element $\FreeComm_{\boxtimes_{\oplus}}(\HO(\BC )_{\vir}\otimes \DTS_{0,\mu}^{\zeta})$ is also a pure locally finite complex of monodromic mixed Hodge modules, by Proposition \ref{symmPure} and Lemma \ref{lfRem}, and so both $\Ho(p^{\zeta}_{\mu,*}\ICS_{\Mst^{\zeta\sst}_{\mu}}(\mathbb{Q}))$ and $\FreeComm_{\boxtimes_{\oplus}}(\HO(\BC )_{\vir}\otimes\DTS_{0,\mu}^{\zeta})$ are direct sums of simple pure mixed Hodge modules, and are isomorphic if and only if they have the same class in $\Ka_0(\Dulf(\MMHM(\Msp^{\zeta\sst}_{\mu})))$.  

Inside $\Ka_0(\MMHM(\Msp^{\zeta\sst}_{\dd}))$ consider the subgroup $\mathcal{I}_i$ generated by classes of monodromic mixed Hodge modules of weight greater than $i$.  Then we identify each $\Ka_0(\Dulf(\MMHM(\Msp^{\zeta\sst}_{\dd})))$ with the completion of $\Ka_0(\MMHM(\Msp^{\zeta\sst}_{\dd}))$ with respect to $\mathcal{I}_i$, allowing us to make sense of the limits below.  For an object $\mathcal{F}\in \Dulf(\MMHM(\Msp^{\zeta\sst}_{\mu}))$ we let $[\mathcal{F}]_{\Ka_0}$ denote the corresponding class in the Grothendieck group of $\Dulf(\MMHM(\Msp^{\zeta\sst}_{\mu}))$.  From the fact that the map $\Phi^n_{0,\ff,\dd }$ of equation (\ref{PhiBar}) is an isomorphism for fixed $n$ and $\ff\gg 0$ we deduce that
\begin{align*}
\left[\Ho\left(p^{\zeta}_{\mu,*}\ICS_{\Mst^{\zeta\sst}_{\mu}}(\mathbb{Q})\right)\right]_{\Ka_0}=&\lim_{\ff\mapsto \infty}[\bigoplus_{\dd\in\Lambda_{\mu}^{\zeta}}\LL^{\ff\cdot \dd/2}\otimes\pi _{\ff,\dd ,*}^{\zeta}\ICS_{\Msp_{\ff,\dd }^{\zeta\sst}}(\mathbb{Q})]_{\Ka_0}\\=&\lim_{\ff\mapsto\infty}\left[\FreeComm_{\boxtimes_{\oplus}}\left(\HO(\mathbb{P}^{\ff\cdot \dd-1})_{\vir}\otimes\LL^{\ff\cdot \dd/2}\otimes \DTS_{0,\mu}^{\zeta}\right)\right]_{\Ka_0}\\=&\left[\Sym_{\boxtimes_{\oplus}}\left(\HO(\BC)_{\vir}\otimes \DTS^{\zeta}_{0,\mu}\right)\right]_{\Ka_0}
\end{align*}
as required  --- for the second equality we have used the main result of \cite{Meinhardt14}.  
\smallbreak
For the case $W\neq 0$, by Proposition \ref{TSeq} we deduce the existence of isomorphisms
\begin{align*}
\Ho\left(p^{\zeta}_{\mu,*}\phim{\WWW^{\zeta}_{\dd}}\ICS_{\Mst^{\zeta\sst}_{\mu}}(\mathbb{Q})\right)&\cong^{\nu_{\dd}}\phim{\WW^{\zeta}_{\dd}}\Ho\left(p^{\zeta}_{\mu,*}\ICS_{\Mst^{\zeta\sst}_{\mu}}(\mathbb{Q})\right)\\
&\cong \phim{\WW^{\zeta}_{\dd}}\FreeComm_{\boxtimes_{\oplus}}\left(\HO(\BC)_{\vir}\otimes\DTS^{\zeta}_{0,\mu}\right)\\
&\cong \FreeComm_{\boxtimes_{\oplus}}\left(\HO(\BC)_{\vir}\otimes \phim{\WW^{\zeta}_{\mu}}\DTS^{\zeta}_{0,\mu}\right)\\
&= \FreeComm_{\boxtimes_{\oplus}}\left(\HO(\BC)_{\vir}\otimes \DTS^{\zeta}_{W,\mu}\right)
\end{align*}
where for the third isomorphism we have used that $\phim{\WW_{\mu}^{\zeta}}$ is a symmetric monoidal functor (Proposition \ref{TSeq}).  
\smallbreak
For the absolute case, we compose the following chain of isomorphisms, the first of which is Proposition \ref{bcprop}.
\begin{align*}
\bigoplus_{\dd\in\Lambda_{\mu}^{\zeta}}\HO_c\left(\Mst^{\zeta\sst,\SP}_{\mu},\ICSt_{W,\mu}^{\zeta}\right)^{\vee}\cong&\bigoplus_{\dd\in\Lambda_{\mu}^{\zeta}}\Ho\left(\dim_!\tilde{\omega}_{\dd}^{\zeta,*}\Ho\left(p^{\zeta}_{\dd,!}\ICSt_{W,\mu}^{\zeta}\right)\right)^{\vee}\\
\cong&\Ho\left(\dim_*\tilde{\omega}_{\mu}^{\zeta,!}\Ho\left(p^{\zeta}_{\dd,*}\ICSt_{W,\mu}^{\zeta}\right)\right)\\
\cong&\Ho\left(\dim_*\tilde{\omega}_{\mu}^{\zeta,!}\Sym_{\boxtimes_{\oplus}}\left(\HO(\BC)_{\vir}\otimes \DTS_{W,\mu}^{\zeta}\right)\right)\\
\cong&\Ho\left(\Sym_{\boxtimes_+}\left(\HO(\BC)_{\vir}\otimes \dim_*\tilde{\omega}_{\dd}^{\zeta,!}\DTS_{W,\mu}^{\zeta}\right)\right)\\
\cong&\Sym_{\boxtimes_+}\left(\bigoplus_{\dd\in\Lambda_{\mu}^{\zeta}}\left(\HO(\BC)_{\vir}\otimes\HO_c(\Msp_{\dd}^{\zeta\sst\SP},\DTS_{W,\dd}^{\zeta})^{\vee}\right)\right)
\end{align*}
to construct the isomorphism (\ref{vweakeq}).
\end{proof}

\begin{corollary}
\label{inccor}
There is a canonical split inclusion 
\begin{equation}
\label{caninc}
\Upsilon_{W,\mu}^{\zeta}\colon\HO(\BC)_{\vir}\otimes\DTS^{\zeta}_{W,\mu}\rightarrow \Ho\left(p_{\mu,*}^{\zeta}\ICSt_{W,\mu}^{\zeta}\right)
\end{equation}
\end{corollary}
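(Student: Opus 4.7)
The plan is to obtain the inclusion directly from Theorem \ref{weakPBW}. That theorem presents $\Ho(p^{\zeta}_{\mu,*}\ICSt^{\zeta}_{W,\mu})$ as $\FreeComm_{\boxtimes_{\oplus}}(\mathcal{F})$ with $\mathcal{F} = \DTS^{\zeta}_{W,\mu}\otimes\HO(\mathbb{C}\mathbb{P}^{\infty})_{\vir}$, and by definition $\FreeComm_{\boxtimes_{\oplus}}(\mathcal{F}) = \bigoplus_{n\geq 0}\Sym^n_{\boxtimes_{\oplus}}\mathcal{F}$ contains $\mathcal{F}=\Sym^1_{\boxtimes_{\oplus}}\mathcal{F}$ as a direct summand. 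Composing yields an inclusion of the required form \eqref{caninc}; the real content of the corollary is that this inclusion is independent of the (non-canonical) isomorphism provided by Theorem \ref{weakPBW}, and thus genuinely canonical.

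To prove canonicity, I would characterize the image of this inclusion intrinsically by support. For $n\geq 2$, the summand $\Sym^n_{\boxtimes_{\oplus}}\mathcal{F}$ is, by its very construction as a direct image along the monoidal sum, supported on the image of the $n$-fold sum map
\[
\oplus^{(n)}\colon \Msp^{\zeta\sst}_{\dd_1}\times\cdots\times\Msp^{\zeta\sst}_{\dd_n}\rightarrow \Msp^{\zeta\sst}_{\mu},
\]
ranging over tuples $(\dd_1,\ldots,\dd_n)$ of slope $\mu$ with every $\dd_i\neq 0$. Such images lie entirely in the strictly semistable locus of $\Msp^{\zeta\sst}_{\mu}$, because a polystable module with at least two nonzero simple summands is never stable. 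Restriction along the open inclusion $j\colon \Msp^{\zeta\st}_{\mu}\hookrightarrow \Msp^{\zeta\sst}_{\mu}$ therefore annihilates all $\Sym^n$ contributions with $n\geq 2$, leaving only the degree-$1$ summand.

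Over the stable locus the coarse moduli map $p^{\zeta}_{\mu}$ is a $B\mathbb{G}_m$-gerbe, so combining Proposition \ref{basicfacts}(\ref{adjInt}) with the K\"unneth formula for such gerbes produces a \emph{canonical} isomorphism $j^*\Ho(p^{\zeta}_{\mu,*}\ICSt^{\zeta}_{W,\mu})\cong j^*\mathcal{F}$, depending only on the gerbe structure and on the construction of $\phim{}$. The proof of Theorem \ref{weakPBW} shows that $\Ho(p^{\zeta}_{\mu,*}\ICSt^{\zeta}_{W,\mu})$ is a semisimple pure monodromic mixed Hodge module. Applying Saito's intermediate extension $j_{!*}$ to the canonical isomorphism above then singles out a canonical direct summand of $\Ho(p^{\zeta}_{\mu,*}\ICSt^{\zeta}_{W,\mu})$, isomorphic to $j_{!*}j^*\mathcal{F}\cong\mathcal{F}$; the inclusion of this summand is the desired canonical map.

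The main step to verify is that $\mathcal{F}$ itself agrees with the intermediate extension of its restriction to $\Msp^{\zeta\st}_{\mu}$. This reduces to the claim that each simple summand of $\DTS^{\zeta}_{W,\dd}$ has full support on $\Msp^{\zeta\sst}_{\dd}$ whenever $\Msp^{\zeta\st}_{\dd}\neq\emptyset$, which is immediate from the defining formula $\DTS^{\zeta}_{W,\dd}=\phim{\WW^{\zeta}_{\dd}}\ICS_{\Msp^{\zeta\sst}_{\dd}}(\QQ)$ together with the exactness and support-preservation of $\phim{}$ (Proposition \ref{basicfacts}). The remaining dimension vectors $\dd\in\Lambda^{\zeta}_{\mu}$ with $\Msp^{\zeta\st}_{\dd}=\emptyset$ contribute nothing on either side, since $\DTS^{\zeta}_{W,\dd}=0$ by definition; thus the pieces assemble over the $\mathbb{N}^{Q_0}$-grading into a single canonical inclusion \eqref{caninc}.
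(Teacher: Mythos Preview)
Your strategy for $W=0$ is essentially the paper's: identify the desired subobject as the part of the pure, semisimple object $\Ho(p^{\zeta}_{\mu,*}\ICSt^{\zeta}_{0,\mu})$ with strict support equal to the full components $\Msp^{\zeta\sst}_{\dd}$, and invoke the decomposition by strict support to get a canonical summand. Your intermediate-extension formulation of this is fine and equivalent.

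The gap is in the passage to general $W$. You assert that ``the proof of Theorem \ref{weakPBW} shows that $\Ho(p^{\zeta}_{\mu,*}\ICSt^{\zeta}_{W,\mu})$ is a semisimple pure monodromic mixed Hodge module.'' This is only established there for $W=0$; for nonzero $W$ the proof proceeds by applying $\phim{\WW^{\zeta}_{\mu}}$ to the $W=0$ isomorphism, and $\phim{}$ does \emph{not} preserve purity. Indeed, the paper gives an explicit counterexample (the remark following the definition of $\DTS^{\zeta}_{W,\dd}$, with $Q$ the three-loop quiver and $W=x^p+y^q+z^r+axyz$) in which already $\DTS^{\zeta}_{W,1}$ is impure. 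Once purity and semisimplicity fail, your intermediate-extension step breaks: $j_{!*}j^*\mathcal{F}$ is in general only a subquotient of $\Ho(p^{\zeta}_{\mu,*}\ICSt^{\zeta}_{W,\mu})$, not a canonical direct summand, and your claim that every simple constituent of $\DTS^{\zeta}_{W,\dd}$ has full support on $\Msp^{\zeta\sst}_{\dd}$ also has no justification (the functor $\phim{}$ restricts support to the critical locus and can create pieces with strictly smaller support).

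The paper's remedy is exactly to separate the two steps: first carry out your support argument for $W=0$, where purity is available, obtaining a canonical split inclusion $\DTS^{\zeta}_{0,\mu}\otimes\HO(\mathbb{C}\mathbb{P}^\infty)_{\vir}\hookrightarrow \Ho(p^{\zeta}_{\mu,*}\ICSt^{\zeta}_{0,\mu})$; then apply the exact functor $\phim{\WW^{\zeta}_{\mu}}$ to that inclusion and use the natural isomorphism $\nu_\mu$ of Proposition \ref{cvs} to identify the target with $\Ho(p^{\zeta}_{\mu,*}\ICSt^{\zeta}_{W,\mu})$. Functoriality of $\phim{}$ (and Proposition \ref{TSeq} for compatibility with $\boxtimes_{\oplus}$) makes the resulting map canonical without any semisimplicity assumption on the $W\neq 0$ side.
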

\begin{proof}
First we prove the corollary in the case $W=0$.  In this case, the left hand side of (\ref{caninc}) is precisely the summand of the right hand side with strict supports equal to $\Msp^{\zeta\sst}_{\dd}$ for $\dd\in\Lambda^{\zeta}_{\mu}$ by Theorem \ref{weakPBW}, and the result follows from the decomposition theorem.  For the general case, we apply the functor $\phim{\WW_{\mu}^{\zeta}}$ to the inclusion, and its left inverse, from the case $W=0$.
\end{proof}

\subsection{The cohomological wall crossing isomorphism --- Theorem \ref{CWCT}}
\label{cwcfSec}
In this section we prove Theorem \ref{CWCT}, which is a categorification of the motivic wall crossing formula (\ref{WCI}).  In Section \ref{CWCF} we will prove that this isomorphism can be realised in terms of multiplication in the cohomological Hall algebra.  Even without considering the CoHA we can still prove that there is \textit{some} isomorphism (\ref{gthmd}).  The mere existence of this isomorphism is enough for many applications.

Fix a dimension vector $\dd\in\mathbb{N}^{Q_0}$.  Denote by $\HN_{\dd}$ the set of Harder--Narasimhan types for $\dd$, that is, sequences $\dd^1,\ldots,\dd^s\in\mathbb{N}^{Q_0}\setminus \{0\}$ such that the slopes $\Mu^{\zeta}(\dd^1),\ldots,\Mu^{\zeta}(\dd^s)$ are strictly decreasing and $\sum_{i=1}^s \dd^i=\dd$.
Recall that by \cite[Prop.3.4]{Reineke_HN}, the moduli stack $\Mst_{\dd}$ has a stratification by locally closed substacks 
\begin{equation}
\label{HNstrat}
\Mst_{\dd}=\coprod_{\overline{\dd}\in\HN_{\dd}}\Mst_{\overline{\dd}}^{\zeta}
\end{equation}
where $\Mst_{\overline{\dd}}^{\zeta}$ is the stack of representations which have Harder--Narasimhan type $\overline{\dd}$ with respect to the stability condition $\zeta$.  

This is a stratification in the following weak sense.  We define similarly to \cite[Def.3.6]{Reineke_HN} a partial ordering $\leq'$ on $\HN_{\dd}$.  First, given $\overline{\dd}=(\dd^1,\ldots,\dd^s)\in\HN_{\dd}$ we define $P(\overline{\dd})$ to be the convex hull in $\mathbb{C}$ of the origin and the points $(\sum_{l=1}^k\lvert \dd^l\rvert,\Mu^{\zeta}(\dd^l)\lvert\dd^l\rvert)$ for $1\leq l\leq s$.  As in \cite[Prop.3.7]{Reineke_HN} it is easy to show that for a representation $\rho$ of Harder--Narasimhan type $\overline{\dd}$, the polytope $P(\overline{\dd})$ is the convex hull of $0,\zeta\cdot\dd$ and $\zeta\cdot\dim(\rho')$ for all representations $\rho'\subset \rho$ with slope greater than $\Mu^{\zeta}(\dd)$. We write $\ee\leq'\dd$ if the polygon $P(\dd)$ is contained in $P(\ee)$.  The condition on a representation $\rho$ to have a subrepresentation $\rho'\subset\rho$ of fixed dimension $\dd'$ is closed, since the morphism $\Mst_{\dd',\dd-\dd'}\rightarrow \Mst_{\dd}$ is proper.  It follows that
\[
\overline{\Mst_{\overline{\dd}}^{\zeta}}\subset \bigcup_{\substack{\overline{\ee}\in\HN_{\dd}\\ \overline{\ee}\leq' \overline{\dd}}}\Mst_{\overline{\ee}}^{\zeta}.
\]
Each of the stacks $\Mst^{\zeta}_{\overline{\dd}}$ can be written as a global quotient stack
\[
\Mst^{\zeta}_{\overline{\dd}}\cong X^{\zeta}_{\overline{\dd}}/G_{\overline{\dd}}
\]
where $X^{\zeta}_{\overline{\dd}}\subset X_{\dd}$ is the subspace of representations preserving the flag defined by $\overline{\dd}$, such that each of the associated $\dd^r$-dimensional subquotients is $\zeta$-semistable, and $G_{\overline{\dd}}\subset G_{\dd}$ is the subgroup preserving the same flag.  Each of these stacks comes with a map $p_{\overline{\dd}}\colon \Mst^{\zeta}_{\overline{\dd}}\rightarrow\Msp_{\dd}$ sending a representation to its semisimplification, and an inclusion $i_{\overline{\dd}}\colon \Mst^{\zeta}_{\overline{\dd}}\rightarrow \Mst_{\dd}$ given by forgetting the Harder-Narasimhan filtration.  The diagram
\[
\xymatrix{
\Mst_{\overline{\dd}}^{\zeta}\ar[d]^{p_{\overline{\dd}}}\ar[r]^{i_{\overline{\dd}}}&\Mst_{\dd}\ar[dl]^{p_{\dd}}\\
\Msp_{\dd}
}
\]
commutes.  In the following theorem, the map $q^{\zeta}_{\mu}$ is as defined in (\ref{qmudef}).
\begin{theorem}
\label{gcDT}
Let $\zeta$ be a Bridgeland stability condition on $Q$, not necessarily generic.  Then there are isomorphisms
\begin{equation}
\label{ngwc}
\Ho\left(p_!\phim{\WWW}\ICS_{\Mst}(\mathbb{Q})\right)\cong\Boxtimes_{\oplus, -\infty\xrightarrow{\mu} \infty}^{\tw} \Ho\left(q_{\mu,!}^{\zeta}p_{\mu,!}^{\zeta}\phim{\WWW_{\mu}^{\zeta}}\ICS_{\Mst_{\mu}^{\zeta\sst}}(\mathbb{Q})\right)
\end{equation}
and
\begin{equation}
\label{ngwcnc}
\Ho\left(p_*\phim{\WWW}\ICS_{\Mst}(\mathbb{Q})\right)\cong\Boxtimes_{\oplus, \infty\xrightarrow{\mu} -\infty}^{\tw} \Ho\left(q_{\mu,*}^{\zeta}p_{\mu,*}^{\zeta}\phim{\WWW_{\mu}^{\zeta}}\ICS_{\Mst_{\mu}^{\zeta\sst}}(\mathbb{Q})\right).
\end{equation}

Assume in addition that $\zeta$ is generic.  Then there are isomorphisms
\begin{equation}
\label{gwPBW}
\Ho\left(p_!\phim{\WWW}\ICS_{\Mst}(\mathbb{Q})\right)\cong \Boxtimes_{\oplus,-\infty \xrightarrow{\mu}\infty}^{\tw} \Sym_{\boxtimes_{\oplus}}\left(q_{\mu,!}^{\zeta}\DTS_{W,\mu}^{\zeta}\otimes\HO(\BC )^{\vee}_{\vir}\right)
\end{equation}
and
\begin{equation}
\label{gwPBWnc}
\Ho\left(p_*\phim{\WWW}\ICS_{\Mst}(\mathbb{Q})\right)\cong \Boxtimes_{\oplus, \infty\xrightarrow{\mu} -\infty }^{\tw} \Sym_{\boxtimes_{\oplus}}\left(q_{\mu,*}^{\zeta}\DTS_{W,\mu}^{\zeta}\otimes\HO(\BC )_{\vir}\right).
\end{equation}
\end{theorem}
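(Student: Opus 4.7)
The plan is to deduce all four isomorphisms from the Harder--Narasimhan stratification~(\ref{HNstrat}), feeding Theorem~\ref{weakPBW} (and its Verdier dual) into the slope factors to obtain the PBW enhancements~(\ref{gwPBW}) and~(\ref{gwPBWnc}). First I would analyse the contribution of a single HN stratum $\Mst_{\overline{\dd}}^{\zeta}\cong X^{\zeta}_{\overline{\dd}}/G_{\overline{\dd}}$, indexed by $\overline{\dd}=(\dd^{1},\ldots,\dd^{s})\in\HN^{\geq}_{\dd}$. The natural forgetful map $\Mst_{\overline{\dd}}^{\zeta}\rightarrow\prod_{r}\Mst_{\dd^{r}}^{\zeta\sst}$ factors as an affine-bundle projection (forgetting the off-block components of a flag-preserving representation) composed with the principal $G_{\overline{\dd}}/L_{\overline{\dd}}$-fibration coming from the parabolic-to-Levi quotient. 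Combining homotopy invariance of vanishing cycles (Proposition~\ref{basicfacts}(\ref{HomInv})) with Thom--Sebastiani (Proposition~\ref{basicfacts}(6)) identifies the direct image (for either $\sharp=!$ or $\sharp=*$) of $\phim{\WWW|_{\Mst^{\zeta}_{\overline{\dd}}}}\ICS_{\Mst_{\overline{\dd}}^{\zeta}}(\mathbb{Q})$ to $\Msp_{\dd}$ with the $\oplus$-pushforward of the external product of the semistable factors $q^{\zeta}_{\mu_{r},\sharp}\Ho(p^{\zeta}_{\dd^{r},\sharp}\phim{\WWW^{\zeta}_{\dd^{r}}}\ICS_{\Mst^{\zeta\sst}_{\dd^{r}}}(\mathbb{Q}))$, twisted by a Tate factor $\LL^{c}$; a direct count of fibre dimensions against the $\dim/2$ shifts in $\ICS$ gives $c=\sum_{r<r'}\langle\dd^{r'},\dd^{r}\rangle/2$, which is exactly the twist defining $\boxtimes^{\tw}_{\oplus}$.

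Next I would assemble the strata using the filtration of $\Mst_{\dd}$ by closures of HN loci, ordered by refinement. The associated open/closed distinguished triangles in $\Dub(\MMHM(\Msp_{\dd}))$, iterated over all HN types, yield a filtration on $\Ho(p_{\sharp}\phim{\WWW}\ICS_{\Mst_{\dd}}(\mathbb{Q}))$ whose associated graded was computed in the previous step. The prescribed slope order ($-\infty\to\infty$ for $p_{!}$, $\infty\to-\infty$ for $p_{*}$) reflects the direction of the boundary maps, and is compatible with Verdier duality via Remark~\ref{twDual}. To split the filtration I would argue that each graded piece is supported over the locally closed stratum of $\Msp_{\dd}$ parametrising polystable objects whose simple factors have a prescribed multiset of $\zeta$-slopes, and apply an \'etale-local (Luna-type) analysis of $\Msp_{\dd}$ around such polystable objects to deduce that no nontrivial extensions connect graded pieces attached to distinct slope-multisets. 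Once~(\ref{ngwc}) and~(\ref{ngwcnc}) are established, the PBW versions~(\ref{gwPBW}) and~(\ref{gwPBWnc}) follow by inserting Theorem~\ref{weakPBW} (respectively its Verdier dual) into each slope factor and commuting $q^{\zeta}_{\mu,\sharp}$ through $\FreeComm_{\boxtimes_{\oplus}}$, which is legitimate because $q^{\zeta}_{\mu}$ is a morphism of monoids.

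The main obstacle will be the splitting step in the non-generic setting, where one lacks the purity enjoyed by $\Ho(p^{\zeta}_{\mu,*}\ICSt^{\zeta}_{W,\mu})$ in Theorem~\ref{weakPBW}, so semisimplicity of pure complexes is not available as a ready-made tool. My tactic is to replace purity by a support-theoretic argument: around a polystable object of decomposition type $(\dd^{1},\ldots,\dd^{s})$, the \'etale neighbourhood of $\Msp_{\dd}$ splits (via a Luna slice) into a product of smaller polystable neighbourhoods, and the graded pieces of the HN filtration should be shown to correspond to disjoint \'etale-local components indexed by the slope multiset; this disjointness forces vanishing of the relevant $\Ext^{\geq 1}$-groups between graded pieces, hence the desired splitting. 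In the generic case the same conclusion should follow more cheaply by combining Proposition~\ref{symmPure} with purity of proper direct images of pure objects, reducing the isomorphism to a Grothendieck-group identity as in the last step of the proof of Theorem~\ref{weakPBW}.
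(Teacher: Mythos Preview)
Your Harder--Narasimhan set-up and stratum computation are essentially the paper's, but you have missed the key reduction that makes the splitting step straightforward: the paper first treats the case $W=0$ and only \emph{afterwards} applies $\phim{\WW_{\dd}}$ to obtain the general statement (using exactness of $\phim{}$ and its commutation with proper pushforward and the monoidal structures, Proposition~\ref{basicfacts} and Proposition~\ref{TSeq}). The point is that purity of the slope factors $\Ho\bigl(p^{\zeta}_{\dd^{r},!}\mathbb{Q}_{\Mst^{\zeta\sst}_{\dd^{r}}}\bigr)$ for $W=0$ does \emph{not} require $\zeta$ to be generic: it is obtained exactly as in the first half of the proof of Theorem~\ref{weakPBW}, via Lemma~\ref{appLem} and the decomposition theorem for the proper maps $\pi^{\zeta}_{\ff,\dd^{r}}$ from smooth framed moduli. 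Once each stratum contribution is pure, the open/closed triangles (\ref{HNtri}) split by semisimplicity of pure mixed Hodge modules, with no genericity hypothesis. Genericity enters only at the very end, to feed Theorem~\ref{weakPBW} into each slope factor and obtain (\ref{gwPBW}) and (\ref{gwPBWnc}).

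Your proposed substitute---an \'etale-local Luna-slice argument showing that graded pieces for distinct HN types live on ``disjoint'' components---is both unnecessary and problematic as stated. The pushforwards $\Ho\bigl(p_{\dd,!}i_{\overline{\dd},!}\mathbb{Q}_{\Mst^{\zeta}_{\overline{\dd}}}\bigr)$ are supported on the \emph{closures} in $\Msp_{\dd}$ of the corresponding polystable-type strata, and these closures are nested rather than disjoint, so a naive support argument does not kill the relevant $\Ext^{1}$-groups. Rework the argument by placing the reduction to $W=0$ first; then the splitting is immediate from purity, exactly as in the paper.
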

\begin{proof}
As in the proof of Theorem \ref{weakPBW} we only need to prove the case for which $W=0$ and then we can deduce the general case from the fact that monodromic vanishing cycle functors are exact, and commute with proper maps and the relevant monoidal structures.  We first prove that the isomorphism (\ref{ngwc}) exists.  Fix a $\dd\in\mathbb{N}^{Q_0}$.  Then if we complete $\geq'$ to a total ordering $\geq$ of $\HN_{\dd}$ and define, for each $\overline{\dd}\in\HN_{\dd}$,
\begin{align*}
\Mst_{\leq \overline{\dd}}^{\zeta}\colonequals &\bigcup_{\overline{\ee}\leq \overline{\dd}}\Mst_{\overline{\ee}}^{\zeta}\\
\Mst_{< \overline{\dd}}^{\zeta}\colonequals &\bigcup_{\overline{\ee}< \overline{\dd}}\Mst_{\overline{\ee}}^{\zeta}
\end{align*}
then $\Mst_{\leq \overline{\dd}}^{\zeta}\subset \Mst_{\dd}$ is a closed embedding, and $\Mst_{\overline{\dd}}^{\zeta}\subset\Mst_{\leq \overline{\dd}}^{\zeta}$ is an open embedding with complement $\Mst_{<\overline{\dd}}^{\zeta}\subset\Mst_{\leq \overline{\dd}}^{\zeta}$.  We denote by 
\begin{align}
i_{\overline{\dd}}\colon &\Mst_{\overline{\dd}}^{\zeta}\hookrightarrow\Mst_{\dd} \label{id}\\
i_{<\overline{\dd}}\colon &\Mst_{<\overline{\dd}}^{\zeta}\hookrightarrow\Mst_{\dd}\label{idd}\\
i_{\leq\overline{\dd}}\colon &\Mst_{\leq \overline{\dd}}^{\zeta}\hookrightarrow\Mst_{\dd}\label{iddd}
\end{align}
the natural inclusions.  We will show that all of the terms in the following distinguished triangle are pure, and that the connecting maps are zero, and furthermore that the triangle is split:
\begin{equation}
\label{HNtri}
\Ho\left(p_{\dd,!}i_{\overline{\dd},!}\QQ_{\Mst^{\zeta}_{\overline{\dd}}}\right)\rightarrow \Ho\left(p_{\dd,!}i_{\leq \overline{\dd},!}\QQ_{\Mst^{\zeta}_{\leq \overline{\dd}}}\right)\rightarrow \Ho\left(p_{\dd,!}i_{< \overline{\dd},!}\QQ_{\Mst^{\zeta}_{< \overline{\dd}}}\right)\rightarrow.
\end{equation}
Fix $\overline{\dd}\in\HN_{\dd}$.  Consider the following commutative diagram
\[
\xymatrix{
&\Mst_{\overline{\dd}}^{\zeta}\ar[d]^{q_{\overline{\dd}}}\ar[r]^-{i_{\overline{\dd}}}&\Mst_{\dd}\ar[ddd]^{p_{\dd}}\\
X^{\zeta}_{\overline{\dd}}/(G_{\dd^1}\times\ldots \times G_{\dd^s})\ar[r]^{q_{2,\overline{\dd}}}\ar[ur]^{q_{1,\overline{\dd}}}&\Mst_{\dd^1}^{\zeta\sst}\times\ldots\times\Mst_{\dd^s}^{\zeta\sst}\ar[d]^{p^{\zeta}_{\dd^1}\times\ldots\times p^{\zeta}_{\dd^s}}\\
&\Msp^{\zeta\sst}_{\dd^1}\times\ldots\times\Msp^{\zeta\sst}_{\dd^s}\ar[d]^{q^{\zeta}_{\dd^1}\times\ldots\times q^{\zeta}_{\dd^s}}\\
&\Msp_{\dd^1}\times\ldots\times\Msp_{\dd^s}\ar[r]^-{\oplus}&\Msp_{\dd}
}
\]
where $q_{1,\overline{\dd}}$ is an affine fibration of relative dimension 
\[
f_1(\overline{\dd})\colonequals \sum_{1\leq r<r'\leq s}\dd^r\cdot \dd^{r'}
\]
and $q_{2,\overline{\dd}}$ is an affine fibration of relative dimension 
\[
f_2(\overline{\dd})\colonequals \sum_{1\leq r<r'\leq s}\sum_{a\in Q_1}\dd^{r'}_{s(a)}\dd^{r}_{t(a)}.  
\]
We define 
\begin{equation}
\label{dddotdef}
(\overline{\dd},\overline{\dd})\colonequals f_1(\overline{\dd})-f_2(\overline{\dd}).  
\end{equation}
For $\ff\in\mathbb{N}^{Q_0}$ we define $\ff\cdot\overline{\dd}=\sum_{r\leq s}\ff\cdot\dd^r$.
Fix $n$, and let $\ff\gg 0$.  Then since there are isomorphisms
\begin{align*}
\Ho^n\left(p_{\dd,!}i_{\overline{\dd},!}\QQ_{\Mst_{\overline{\dd}}}\right)
\cong& \Ho^n\left(\oplus_!(q^{\zeta}_{\dd^1}\times\ldots\times q^{\zeta}_{\dd^s})_!(p^{\zeta}_{\dd^1}\times\ldots\times p^{\zeta}_{\dd^s})_!\LL^{-(\overline{\dd},\overline{\dd})}_{\Mst_{\dd^1}^{\zeta\sst}\times\ldots\times\Mst_{\dd^s}^{\zeta\sst}}\right)\\
\cong& \Ho^n\left(\oplus_!(q^{\zeta}_{\dd^1}\times\ldots\times q^{\zeta}_{\dd^s})_!(\pi^{\zeta}_{\ff,\dd ^1}\times\ldots\times \pi^{\zeta}_{\ff,\dd ^s})_!\LL^{-(\overline{\dd},\overline{\dd})-\ff\cdot\overline{\dd}}_{\Msp_{\ff,\dd ^1}^{\zeta}\times\ldots\times\Msp^{\zeta}_{\ff,\dd ^s}}\right)\\
\end{align*}
the complex $\Ho^n\left(p_{\dd,!}i_{\overline{\dd},!}\QQ_{\Mst_{\overline{\dd}}}\right)$ is pure, by properness of $q^{\zeta}_{\dd^r}$, $\pi^{\zeta}_{\ff,\dd^r}$ and $\oplus$.  The claim regarding the distinguished triangles (\ref{HNtri}) then follows by induction, and semisimplicity of the category of pure mixed Hodge modules.  Taking care of the twists, we calculate
\begin{align}
\label{int1}
\Ho\left(p_!\ICS_{\Mst}(\QQ)\right)=&\Ho\left(p_!\LL^{(\dd,\dd)/2}_{\Mst}\right)\\
\nonumber\cong&\bigoplus_{\overline{\dd}\in\HN}\Ho\left(p_{\dd,!}i_{\overline{\dd},!}\LL^{(\dd,\dd)/2}_{\Mst_{\overline{\dd}}}\right)\\
\nonumber\cong&\bigoplus_{(\dd^1,\ldots,\dd^s)\in\HN}q^{\zeta}_{\dd^1,!}\Ho\left(p^{\zeta}_{\dd^1,!}\QQ_{\Mst_{\dd^1}^{\zeta\sst}}\right)\boxtimes_{\oplus}\ldots\boxtimes_{\oplus} q^{\zeta}_{\dd^s,!}\Ho\left(p^{\zeta}_{\dd^s,!}\QQ_{\Mst_{\dd^s}^{\zeta\sst}}\right)\otimes\LL^{-(\overline{\dd},\overline{\dd})+(\dd,\dd)/2}\\
\nonumber\cong&\bigoplus_{(\dd^1,\ldots,\dd^s)\in\HN}q^{\zeta}_{\dd^s,!}\Ho\left(p^{\zeta}_{\dd^s,!}\ICS_{\Mst_{\dd^s}^{\zeta\sst}}(\QQ)\right)\boxtimes^{\tw}_{\oplus}\ldots\boxtimes^{\tw}_{\oplus} q^{\zeta}_{\dd^1,!}\Ho\left(p^{\zeta}_{\dd^1,!}\ICS_{\Mst_{\dd^1}^{\zeta\sst}}(\QQ)\right)\\
\nonumber \cong&\Boxtimes_{\oplus, -\infty\xrightarrow{\mu} \infty}^{\tw} q_{\mu,!}^{\zeta}\Ho\left(p_{\mu,!}^{\zeta}\ICS_{\Mst_{\mu}^{\zeta\sst}}(\mathbb{Q})\right).
\end{align}
This completes the proof of the first part of the theorem: since $p^{\zeta}_{\mu}$ is approximated by proper maps, and $q^{\zeta}_{\mu}$ is proper, there are isomorphisms
\[
q^{\zeta}_{\mu,!} \Ho(p^{\zeta}_{\mu,!}\ICS_{\Mst_{\mu}^{\zeta\sst}}(\mathbb{Q}))\cong \Ho(q^{\zeta}_{\mu,!}p^{\zeta}_{\mu,!}\ICS_{\Mst_{\mu}^{\zeta\sst}}(\mathbb{Q}))
\]
as required.

Since $q^{\zeta}_{\mu}\colon \Msp^{\zeta\sst}_{\mu}\rightarrow\Msp_{\mu}$ is a morphism of monoids, $q^{\zeta}_{\mu,!}$ is a symmetric monoidal functor.  If $\zeta$ is generic, we may then apply Theorem \ref{weakPBW} at each slope $\mu\in(-\infty,\infty)$, to deduce (\ref{gwPBW}).  Finally we prove that there are isomorphisms (\ref{ngwcnc}) and (\ref{gwPBWnc}).  Note that it is enough to construct the restrictions of these isomorphisms to each space $\Msp_{\dd}$.  Restricting to this component, there are finitely many tensor products of sheaves on the left hand side and the right hand side of (\ref{ngwcnc}) and (\ref{gwPBWnc}), involving $\Ho(p_{\dd',*}\ICS_{\Mst_{\dd'}}(\QQ))$ and $\Ho(p^{\zeta}_{\dd',*}\ICS_{\Mst_{\dd'}^{\zeta\sst}}(\mathbb{Q}))$ for $\dd'\in\Lambda_{\mu}^\zeta$ satisfying $\dd'\leq \dd$.  Then the isomorphisms are given by taking the Verdier duals of (\ref{ngwc}) and (\ref{gwPBW}) respectively, by Remarks \ref{finSD} and \ref{twDual}.
\end{proof}
Arguing as in the proof of Theorem \ref{weakPBW}, using properness of $q^{\zeta}$ and approximation of $p_{\mu}^{\zeta}$ by proper maps for the relevant base change isomorphisms, we deduce the corollary
\begin{corollary}
Let $\zeta$ be a Bridgeland stability condition for $Q$, not necessarily assumed to be generic.  Then there is an isomorphism of $\mathbb{N}^{Q_0}$-graded monodromic mixed Hodge structures
\[
\HO_c(\Mst^{\SP},\ICSt_{W})^{\vee}\cong\Boxtimes_{+, \infty\xrightarrow{\mu} -\infty}^{\tw}\HO_c\left(\Mst^{\zeta\sst,\SP}_{\mu},\ICSt_{W,\mu}^{\zeta}\right)^{\vee}
\]
if $\zeta$ is generic, we may combine this isomorphism with the absolute integrality isomorphisms for each slope $\mu$ (Theorem \ref{ThmA}) to obtain the isomorphism
\[
\HO_c(\Mst^{\SP},\ICSt_W)^{\vee}\cong\Boxtimes_{+, \infty\xrightarrow{\mu} -\infty}^{\tw}\Sym_{\boxtimes_+}\left(\HO(\BC)_{\vir}\otimes\DT^{\zeta,\SP}_{\mu}\right).
\]
\end{corollary}

\begin{remark}
In the case $W=0$, and $\SP=\mathbb{C}Q\mathrm{-mod}$ we obtain the isomorphism
\begin{equation}
\HO\left(\Mst,\ICS_{\Mst}(\mathbb{Q}) \right)\cong\Boxtimes_{+, \infty\xrightarrow{\mu} -\infty}^{\tw} \HO\left(\Mst_{\mu}^{\zeta\sst},\ICS_{\Mst_{\mu}^{\zeta\sst}}(\mathbb{Q})\right).
\end{equation}
The existence of such an isomorphism is proved by Franzen and Reineke in \cite{FrRe15} via a vanishing result for even cohomology, and before that for the case of a Dynkin quiver which is not an orientation of $E_8$ by Rimanyi in \cite{Ri13}, where this result is a corollary of the existence of a Poincar\'e--Birkhoff--Witt isomorphism for hypercohomology.
\end{remark}



\section{Cohomological Hall algebras}
\label{CoHAsec}
\subsection{The relative cohomological Hall algebra}
\label{relc}
In this section we define the relative cohomological Hall algebra $\Ho(\Coha_{W,\mu}^{\zeta})$.  For now we will not assume that the stability condition $\zeta\in\mathbb{H}_+^{Q_0}$ is generic.  The underlying cohomologically graded monodromic mixed Hodge module of $\Ho(\Coha_{W,\mu}^{\zeta})$ is $\Ho\left(p^{\zeta}_{\mu,*}\ICSt_{W,\mu}^{\zeta}\right)$ from Definition \ref{abbrev} and Theorem \ref{weakPBW}.  We will define morphisms  
\begin{equation}
\label{multint}
\Ho\left(\ms_{W,\dd',\dd''}^{\zeta}\right)\colon\Ho\left(p^{\zeta}_{\dd',*}\ICSt_{W,\dd'}^{\zeta}\right)\boxtimes^{\tw}_{\oplus} \Ho\left(p^{\zeta}_{\dd'',*}\ICSt_{W,\dd''}^{\zeta}\right)\rightarrow\\   \Ho\left(p^{\zeta}_{\dd,*} \ICSt_{W,\dd}^{\zeta}\right)
\end{equation}
for all $\dd=\dd'+\dd''$ with $\dd',\dd''\in\Lambda_{\mu}^{\zeta}$ satisfying the natural associativity condition for a monoid in the category $\Dulf(\MMHM(\Msp^{\zeta\sst}_{\mu}))$ with the twisted monoidal product $\boxtimes^{\tw}_{\oplus}$.  The result is a \textit{relative} version of the cohomological Hall algebra of Kontsevich and Soibelman \cite{KS2} in the sense explained in Section \ref{abvsrel}.
\smallbreak
We define (\ref{multint}) as the composition of two morphisms, the first of which is defined in terms of the commutative diagram
\[
\xymatrix{
&X^{\zeta\sst}_{\dd',\dd''}/\left(G_{\dd'}\times G_{\dd''}\right)\ar[dl]_{r_1}\ar[dr]^{r_2}\\
\left(X_{\dd'}^{\zeta\sst}\times X_{\dd''}^{\zeta\sst}\right)/\left(G_{\dd'}\times G_{\dd''}\right)\ar[ddr]^{p_{\dd'}^{\zeta}\times p_{\dd''}^{\zeta}}\ar[d]^{\cong} &&X_{\dd',\dd''}^{\zeta\sst}/G_{\dd',\dd''}\ar[d]^{\cong}\ar[ddl]_{p_{\dd',\dd''}^{\zeta}}\\
\Mst_{\dd'}^{\zeta\sst}\times\Mst_{\dd''}^{\zeta\sst}\ar[dr]&&\Mst_{\dd',\dd''}^{\zeta\sst}\ar[dl]\\
&\Msp^{\zeta\sst}_{\dd'}\times\Msp^{\zeta\sst}_{\dd''}\ar[r]^-{\oplus}&\Msp^{\zeta\sst}_{\dd}.
}
\]
We treat the vertical isomorphisms as identities in what follows.  We consider the following composition of isomorphisms
\begin{align*}
&\Ho\left(p^{\zeta}_{\dd',*}\phim{\WWW^{\zeta}_{\dd'}}\QQ_{\Mst_{\dd'}^{\zeta\sst}}\right)\boxtimes \Ho\left(p^{\zeta}_{\dd'',*}\phim{\WWW^{\zeta}_{\dd''}}\QQ_{\Mst^{\zeta\sst}_{\dd''}}\right)\cong^{\TS}\\
&\Ho\left((p^{\zeta}_{\dd'}\times p^{\zeta}_{\dd''})_*\phim{\WWW^{\zeta}_{\dd'}\boxplus\WWW^{\zeta}_{\dd''}}\QQ_{\Mst^{\zeta\sst}_{\dd'}\times\Mst^{\zeta\sst}_{\dd''}}\right)\cong\\
& \Ho\left(((p^{\zeta}_{\dd'}\times p^{\zeta}_{\dd''})\circ r_{1})_*\phim{\WWW^{\zeta}_{\dd'}\boxplus\WWW^{\zeta}_{\dd''}\circ r_1}\QQ_{X^{\zeta\sst}_{\dd',\dd''}/\left(G_{\dd'}\times G_{\dd''}\right)}\right)\cong\\
&\Ho\left(p^{\zeta}_{\dd',\dd'',*}\phim{\WWW^{\zeta}_{\dd',\dd''}}\QQ_{\Mst_{\dd',\dd''}^{\zeta\sst}}\right)
\end{align*}
to obtain the isomorphism
\begin{align*}
\alpha^{\zeta}_{\dd',\dd''}\colon&\oplus_*\left(\Ho\left(p^{\zeta}_{\dd',*}\phim{\WWW^{\zeta}_{\dd'}}\QQ_{\Mst_{\dd'}^{\zeta\sst}}\right)\boxtimes \Ho\left(p^{\zeta}_{\dd'',*}\phim{\WWW^{\zeta}_{\dd''}}\QQ_{\Mst^{\zeta\sst}_{\dd''}}\right)\right)\xrightarrow{\cong}\\& \oplus_*\left(\Ho\left(p^{\zeta}_{\dd',\dd'',*}\phim{\WWW^{\zeta}_{\dd',\dd''}}\QQ_{\Mst_{\dd',\dd''}^{\zeta\sst}}\right)\right).
\end{align*}
Next consider the commutative diagram of stacks
\begin{equation}
\label{smap}
\xymatrix{
\Mst^{\zeta\sst}_{\dd',\dd''}\ar[d]^{\cong}\ar[rr]^-{s^{\zeta}_{\dd',\dd''}}&&\Mst_{\dd}^{\zeta\sst}\ar[d]^{\cong}\\
X^{\zeta\sst}_{\dd',\dd''}/G_{\dd',\dd''}\ar[d]^{p^{\zeta}_{\dd',\dd''}}\ar[r]^{\iota^{\zeta}_{\dd',\dd''}}&X^{\zeta\sst}_{\dd}/G_{\dd',\dd''}\ar[r]^{c^{\zeta}_{\dd',\dd''}}&X^{\zeta\sst}_{\dd}/G_{\dd}\ar[d]^{p^{\zeta}_{\dd}}\\
\Msp^{\zeta\sst}_{\dd'}\times\Msp^{\zeta\sst}_{\dd''}\ar[rr]^{\oplus}&&\Msp^{\zeta\sst}_{\dd}.
}
\end{equation}
Note that $\iota_{\dd',\dd''}^\zeta$ is a closed inclusion, while $c_{\dd',\dd''}^\zeta$ is proper and representable, and so $s_{\dd',\dd''}^\zeta$ is a proper and representable morphism of stacks.

We define $V_N=\prod_{i\in Q_0}\Hom(\mathbb{C}^N,\mathbb{C}^{\dd_i})$, and $U_N=\prod_{i\in Q_0}\Hom^{\mathrm{surj}}(\mathbb{C}^N,\mathbb{C}^{\dd_i})$.  For $N\geq \max(\dd_i)$, the group $G_{\dd}$ acts freely on $U_N$, as does $G_{\dd',\dd''}\subset G_{\dd}$.  We define 
\begin{align}
\label{XNdef}&X^{\zeta\sst}_{\dd,N}=X_{\dd}^{\zeta\sst}\times_{G_{\dd}}U_N\\
\label{XXNdef}&X^{\zeta\sst}_{\dd',\dd'',N}=X_{\dd',\dd''}^{\zeta\sst}\times_{G_{\dd',\dd''}}U_{N},
\end{align}
and let $\W^{\zeta}_{\dd,N}$ and $\W^{\zeta}_{\dd',\dd'',N}$ denote the functions induced by $\W$ on $X^{\zeta\sst}_{\dd,N}$ and $X^{\zeta\sst}_{\dd',\dd'',N}$ respectively.
Passing to the limit of the composition of morphisms
\begin{align}\label{preLim}
\beta^{\zeta}_{\dd',\dd'',N}\colon &\oplus_*\Ho\left(p^{\zeta}_{\dd',\dd'',N,*}\phim{\W^{\zeta}_{\dd',\dd'',N}}\QQ_{X_{\dd',\dd'',N}^{\zeta\sst}}\right)\xrightarrow{\cong }
\Ho\left((p^{\zeta}_{\dd,N}\circ s^{\zeta}_{\dd',\dd'',N})_*\phim{\W^{\zeta}_{\dd',\dd'',N}}\QQ_{X_{\dd',\dd'',N}^{\zeta\sst}}\right)\xrightarrow{\cong}\\ \nonumber
&\rightarrow \Ho\left(p^{\zeta}_{\dd,N,*}\phim{\W^{\zeta}_{\dd,N}}s^{\zeta}_{\dd',\dd'',N,*}\QQ_{X^{\zeta\sst}_{\dd',\dd'',N}}\right)\rightarrow \Ho\left(p^{\zeta}_{\dd,N,*}\phim{\W^{\zeta}_{\dd,N}}\LL^{(\dd',\dd'')}_{X_{\dd,N}^{\zeta\sst}}\right)
\end{align}
we obtain
\begin{align}\label{betadef}
\beta^{\zeta}_{\dd',\dd''}\colon &\oplus_*\Ho\left(p^{\zeta}_{\dd',\dd'',*}\phim{\WWW^{\zeta}_{\dd',\dd''}}\QQ_{\Mst_{\dd',\dd''}^{\zeta\sst}}\right)\rightarrow \LL^{(\dd',\dd'')}\otimes \Ho\left(p^{\zeta}_{\dd,*}\phim{\WWW^{\zeta}_{\dd}}\QQ_{\Mst_{\dd}^{\zeta\sst}}\right).
\end{align}
Here we have used that $\oplus_*\Ho\left(p^{\zeta}_{\dd',\dd'',N,*}\phim{\W^{\zeta}_{\dd',\dd'',N}}\QQ_{X_{\dd',\dd'',N}^{\zeta\sst}}(\mathbb{Q})\right)$ is naturally isomorphic to $\Ho\left(\oplus_*p^{\zeta}_{\dd',\dd'',N,*}\phim{\W^{\zeta}_{\dd',\dd'',N}}\QQ_{X_{\dd',\dd'',N}^{\zeta\sst}}(\mathbb{Q})\right)$ since $\oplus_*$ is exact, as $\oplus$ is finite.  We define the morphism $\Ho\left(\ms_{W,\dd',\dd''}^{\zeta}\right)$ to be the composition\smallbreak
\begin{align*}
&\LL^{\langle\dd'',\dd'\rangle/2}\otimes\LL^{\chi(\dd',\dd')/2}\otimes\Ho\left(p^{\zeta}_{\dd',*}\phim{\WWW^{\zeta}_{\dd'}}\QQ_{\Mst_{\dd'}^{\zeta\sst}}\right)\boxtimes \left(\LL^{\chi(\dd'',\dd'')/2}\otimes\Ho\left(p^{\zeta}_{\dd'',*}\phim{\WWW^{\zeta}_{\dd''}}\QQ_{\Mst^{\zeta\sst}_{\dd''}}\right)\right)\xrightarrow{\cong}\\
&\LL^{\langle\dd'',\dd'\rangle/2}\otimes\LL^{\chi(\dd',\dd')/2}\otimes\LL^{\chi(\dd'',\dd'')/2}\otimes\Ho\left(p^{\zeta}_{\dd',*}\phim{\WWW^{\zeta}_{\dd'}}\QQ_{\Mst_{\dd'}^{\zeta\sst}}\right)\boxtimes \Ho\left(p^{\zeta}_{\dd'',*}\phim{\WWW^{\zeta}_{\dd''}}\QQ_{\Mst^{\zeta\sst}_{\dd''}}\right)\xrightarrow{=}\\
&\LL^{\langle\dd'',\dd'\rangle/2+\chi(\dd',\dd')/2+\chi(\dd'',\dd'')/2}\otimes\Ho\left(p^{\zeta}_{\dd',*}\phim{\WWW^{\zeta}_{\dd'}}\QQ_{\Mst_{\dd'}^{\zeta\sst}}\right)\boxtimes \Ho\left(p^{\zeta}_{\dd'',*}\phim{\WWW^{\zeta}_{\dd''}}\QQ_{\Mst^{\zeta\sst}_{\dd''}}\right)\xrightarrow{\id\otimes (\beta_{\dd',\dd''}\circ \alpha_{\dd',\dd})}\\
&\LL^{\langle\dd'',\dd'\rangle/2+\chi(\dd',\dd')/2+\chi(\dd'',\dd'')/2}\otimes\LL^{\chi(\dd',\dd'')}\otimes\Ho\left(p^{\zeta}_{\dd'+\dd'',*}\phim{\WWW^{\zeta}_{\dd'+\dd''}}\QQ_{\Mst_{\dd'+\dd''}^{\zeta\sst}}\right)\xrightarrow{=}\\
&\LL^{\chi(\dd'+\dd'',\dd'+\dd'')/2}\otimes \Ho\left(p^{\zeta}_{\dd'+\dd'',*}\phim{\WWW^{\zeta}_{\dd'+\dd''}}\QQ_{\Mst_{\dd'+\dd''}^{\zeta\sst}}\right).
\end{align*}
We define
\[
\Ho\left(\ms^{\zeta}_{W,\mu}\right)=\bigoplus_{\dd',\dd''\in\Lambda_{\mu}^{\zeta}}\Ho\left(\ms_{W,\dd',\dd''}^{\zeta}\right).
\]
Let $\dd^{(h)}$ for $h=1,2,3$ be a triple of dimension vectors, and set 
\[
\JJ_{\ee}\colonequals \Ho\left(p^{\zeta}_{\ee,*}\phim{\WWW^{\zeta}_{\ee}}\QQ_{\Mst_{\ee}^{\zeta\sst}}\right).
\]
By the standard argument (see \cite{KS2}) the diagram
\begin{align}
\label{Icom}
\xymatrix{
\JJ_{\dd^{(1)}}\boxtimes_{\oplus}\JJ_{\dd^{(2)}}\boxtimes_{\oplus}\JJ_{\dd^{(3)}}\ar[d]\ar[dr]^{\kappa}\ar[r]&\LL^{\chi(\dd^{(2)},\dd^{(3)})}\otimes\JJ_{\dd^{(1)}}\boxtimes \JJ_{\dd^{(2)}+\dd^{(3)}}\ar[d]
\\
\LL^{\chi(\dd^{(1)},\dd^{(2)})}\otimes \JJ_{\dd^{(1)}+\dd^{(2)}}\boxtimes_{\oplus}\JJ_{\dd^{(3)}}\ar[r]&\LL^{\chi(\dd^{(1)},\dd^{(2)})+\chi(\dd^{(1)},\dd^{(3)})+\chi(\dd^{(2)},\dd^{(3)})}\otimes \JJ_{\dd^{(1)}+\dd^{(2)}+\dd^{(3)}}
}
\end{align}
commutes.  It follows that we can describe both the maps
\begin{align}
\label{mapone}
&\left(\Ho\left(p^{\zeta}_{\dd^{(1)},*}\ICSt_{W,\dd^{(1)}}^{\zeta}\right)\boxtimes_{\oplus}^{\tw}\Ho\left(p^{\zeta}_{\dd^{(2)},*}\ICSt_{W,\dd^{(2)}}^{\zeta}\right)\right)\boxtimes_{\oplus}^{\tw}\Ho\left(p^{\zeta}_{\dd^{(3)},*}\ICSt_{W,\dd^{(3)}}^{\zeta}\right)\rightarrow \\&\Ho\left(p^{\zeta}_{\dd^{(1)}+\dd^{(2)}+\dd^{(3)},*}\ICSt_{W,\dd^{(1)}+\dd^{(2)}+\dd^{(3)}}^{\zeta}\right)\nonumber
\end{align}
and
\begin{align}
\label{maptwo}
&\Ho\left(p^{\zeta}_{\dd^{(1)},*}\ICSt_{W,\dd^{(1)}}^{\zeta}\right)\boxtimes_{\oplus}^{\tw}\left(\Ho\left(p^{\zeta}_{\dd^{(2)},*}\ICSt_{W,\dd^{(2)}}^{\zeta}\right)\boxtimes_{\oplus}^{\tw}\Ho\left(p^{\zeta}_{\dd^{(3)},*}\ICSt_{W,\dd^{(3)}}^{\zeta}\right)\right)\rightarrow \\&\Ho\left(p^{\zeta}_{\dd^{(1)}+\dd^{(2)}+\dd^{(3)},*}\ICSt_{W,\dd^{(1)}+\dd^{(2)}+\dd^{(3)}}^{\zeta}\right)
\end{align}
via the recipe: move all of the half Tate twists to the front, and then apply the map $\kappa$ to the final three tensor factors.  On the other hand, the natural map between the domains of (\ref{mapone}) and (\ref{maptwo}) involve commuting the $\LL^{\langle \dd^{(3)},\dd^{(2)}/2\rangle}$ twist in the second monoidal product past the $\LL^{\chi(\dd^{(1)},\dd^{(1)})/2}$ twist in the definition of $\Ho\left(p^{\zeta}_{\dd^{(1)},*}\ICSt_{W,\dd^{(1)}}^{\zeta}\right)$, introducing the sign $(-1)^{\langle \dd^{(3)},\dd^{(2)}\rangle\chi(\dd^{(1)},\dd^{(1)})}$, as in the definition (\ref{AssocSign}) of the associator natural isomorphism for the monoidal structure $\boxtimes_{\oplus}^{\tw}$.  We deduce
\begin{proposition}
The pair 
\[
\left(\Ho\left(p^{\zeta}_{\mu,*}\ICSt_{W,\mu}^{\zeta}\right),\Ho\left(\ms^{\zeta}_{W,\mu}\right)\right)
\]
is a monoid in $\Dulf(\MMHM(\Msp^{\zeta}_{\mu}))$.
\end{proposition}

\begin{definition}[Relative cohomological Hall algebra]
We denote by $\Ho(\Coha_{W,\mu}^{\zeta})$ the monoid $\left(\Ho\left(p_{\mu,*}^{\zeta}\ICSt_{W,\mu}^{\zeta}\right),\Ho(\ms^{\zeta}_{W,\mu}),\ICS_{\Msp_0^{\zeta\sst}}(\mathbb{Q})\right)$ in $\left(\Dulf(\MMHM(\Msp_{\mu}^{\zeta\sst})),\boxtimes^{\tw}_{\oplus}\right)$.
\end{definition}

Recall from Proposition \ref{cvs} the isomorphisms $\nu_{\mu}\colon\phim{\WW^{\zeta}_{\mu}}\Ho\left(p^{\zeta}_{\mu,*}\ICSt_{0,\mu}^{\zeta}\right)\cong \Ho\left(p^{\zeta}_{\mu,*}\ICSt_{W,\mu}^{\zeta}\right)$.  We will use the following technical lemma in the proof of Theorem \ref{strongPBW}.
\begin{lemma}
\label{cohacom}
The following diagram commutes:
\[
\xymatrix{
\phim{\WW_{\mu}^{\zeta}}\left(\Ho\left(p_{\mu,*}^{\zeta}\ICSt_{0,\mu}^{\zeta}\right)\boxtimes_{\oplus}^{\tw} \Ho\left(p_{\mu,*}^{\zeta}\ICSt_{0,\mu}^{\zeta}\right)\right)\ar[rr]^-{\phim{\WW^{\zeta}_{\mu}}\Ho(\ms^{\zeta}_{0,\mu})}\ar[d]^{(\nu_{\mu}\boxtimes^{\tw}_{\oplus} \nu_{\mu})\circ \TS^{-1}} &&\phim{\WW^{\zeta}_{\mu}}\Ho\left(p^{\zeta}_{\mu,*}\ICSt_{0,\mu}^{\zeta}\right)\ar[d]^{\nu_{\mu}}\\
\Ho\left(p^{\zeta}_{\mu,*}\ICSt_{W,\mu}^{\zeta}\right)\boxtimes^{\tw}_{\oplus} \Ho\left(p^{\zeta}_{\mu,*}\ICSt_{W,\mu}^{\zeta}\right)\ar[rr]^-{\Ho(\ms^{\zeta}_{W,\mu})}&& \Ho\left(p^{\zeta}_{\mu,*}\ICSt_{W,\mu}^{\zeta}\right).
}
\]
\end{lemma}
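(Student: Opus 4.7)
The plan is to decompose $\Ho(\ms^\zeta_{W,\mu})$ into the composition $\beta^\zeta_{\dd',\dd''}\circ\alpha^\zeta_{\dd',\dd''}$ (appropriately Tate-twisted) used in its definition, and verify that each of these constituents intertwines with the natural isomorphisms $\nu_\bullet$ of Proposition \ref{cvs} in the expected way. Since $\nu_\mu$ is itself built, at each finite level of Totaro's approximation, from the natural transformation $\phim{f}p_*\to p_*\phim{fp}$ of Proposition \ref{basicfacts}(\ref{adjInt}) applied to the proper maps $\pi^\zeta_{\ff,\dd}$, and since the maps appearing in the definitions of $\alpha,\beta$ all commute with these natural transformations, the whole diagram will commute as a composition of commutative subdiagrams.

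For the $\alpha^\zeta_{\dd',\dd''}$ part, there are two sub-steps. The first is the Thom--Sebastiani isomorphism, which on the zero-potential side is trivial and simply corresponds to the monoidal structure $\boxtimes$; the required compatibility is precisely the naturality of the TS isomorphism as a natural transformation between the bifunctors $\pi_1^*\phim{f_1}(\bullet)\otimes\pi_2^*\phim{f_2}(\bullet)$ and $\phim{f_1\boxplus f_2}(\pi_1^*(\bullet)\otimes\pi_2^*(\bullet))$ (Proposition \ref{basicfacts} and Proposition \ref{TSeq}). The second sub-step is pullback along the affine fibration $r_1\colon X^{\zeta\sst}_{\dd',\dd''}/(G_{\dd'}\times G_{\dd''})\to (X^{\zeta\sst}_{\dd'}\times X^{\zeta\sst}_{\dd''})/(G_{\dd'}\times G_{\dd''})$; here compatibility with $\nu$ follows from Proposition \ref{basicfacts}(\ref{HomInv}), applied in the equivariant setting at each Totaro level, since the function on the total space is the pullback of the function on the base.

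For the $\beta^\zeta_{\dd',\dd''}$ part, the key observation is that the map $s^\zeta_{\dd',\dd''}$ of diagram (\ref{smap}) is proper and representable; at each Totaro approximation level $N$, the morphism $s^\zeta_{\dd',\dd'',N}$ induced on $X^{\zeta\sst}_{\dd',\dd'',N}\to X^{\zeta\sst}_{\dd,N}$ is proper, so Proposition \ref{basicfacts}(\ref{adjInt}) yields a natural isomorphism $\phim{\W^{\zeta}_{\dd,N}}s^\zeta_{\dd',\dd'',N,*}\cong s^\zeta_{\dd',\dd'',N,*}\phim{\W^{\zeta}_{\dd',\dd'',N}}$, and this is exactly what is used to build $\beta^\zeta_{\dd',\dd''}$ (after composing with the unit $\QQ\to s_*\QQ$). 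Thus both sides of the sought-after square, at finite Totaro level $N$, coincide by the naturality of the transformation $\phim{f}p_*\to p_*\phim{fp}$ applied to the composition $p^\zeta_{\dd,N}\circ s^\zeta_{\dd',\dd'',N}$, and passing to the limit $N\to\infty$ preserves the equality.

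The main obstacle, and the only non-formal piece of bookkeeping, is to verify that the identification $\nu_\mu$, constructed in Proposition \ref{cvs} as a limit of natural transformations of framed moduli using $\pi^\zeta_{\ff,\dd}$, is compatible with the identification of $\oplus_*\Ho(p^\zeta_{\dd',\dd'',N,*}\phim{\W^\zeta_{\dd',\dd'',N}}\QQ)$ that appears in the definition (\ref{preLim}) of $\beta^\zeta_{\dd',\dd''}$ via Totaro approximations indexed by $N$ (acting on the dimension-$\dd$ side) rather than framings $\ff$. This is resolved by noting that both limits compute the same object by a cofinality argument analogous to Proposition \ref{stabProp} and Lemma \ref{appLem}: in both cases one obtains $\phim{\WW^\zeta_\dd}\Ho(p^\zeta_{\dd,*}\QQ_{\Mst^{\zeta\sst}_\dd})$ as a common limit, and the two presentations are identified by a commutative square of natural transformations induced by the obvious maps between framed and Totaro approximations. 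Once this compatibility is in place, chasing the diagram slot by slot --- TS, then $r_1^*$, then $\iota_*$, then $r^\zeta_{\dd',\dd'',*}$ --- yields the desired commutativity.
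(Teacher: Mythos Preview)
Your proposal is correct and follows exactly the same approach as the paper: decompose the horizontal maps into the constituents $\alpha^{\zeta}_{\dd',\dd''}$ and $\beta^{\zeta}_{\dd',\dd''}$ of $\Ho(\ms^{\zeta}_{W,\mu})$ and verify that each intermediate square commutes. The paper's proof is a single sentence asserting that these smaller squares are ``trivial commutativity statements''; you have supplied the details (naturality of the Thom--Sebastiani isomorphism, homotopy invariance along $r_1$, and the proper base change for $s^{\zeta}_{\dd',\dd''}$), together with the compatibility check between the framing and Totaro approximation schemes, which the paper leaves implicit.
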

\begin{proof}
Here $\TS$ is the Thom--Sebastiani isomorphism.  We break the two horizontal arrows into their constituent parts, given by the constituent morphisms of the composition $\Ho(\ms^{\zeta}_{W,\mu})$.  Then the problem reduces to several trivial commutativity statements regarding smaller squares.
\end{proof}
\begin{remark}
\label{absRem}
Applying the functor $\dim_* \tilde{\omega}_{\mu}^{\zeta,!}$ to 
\[
\Ho\left(p^{\zeta}_{\mu,*}\ICSt_{W,\mu}^{\zeta}\right)\in\Dulf(\MMHM(\Msp^{\zeta\sst}_{\mu}))
\]
we obtain an element 
\[
\HO_c\left(\Msp^{\zeta\sst,\SP}_{\mu},\Ho\left(p^{\zeta}_{\mu,!}\ICSt_{W,\mu}^{\zeta}\right)\right)^{\vee}\in\Dulf(\MMHM(\mathbb{N}^{Q_0}))
\]
that is noncanonically isomorphic to the underlying $\mathbb{N}^{Q_0}$-graded monodromic mixed Hodge module of the absolute cohomological Hall algebra $\HO(\Coha^{\zeta,\SP}_{W,\mu})$, a monoid in the monoidal category $(\Dulf(\MMHM(\mathbb{N}^{Q_0})),\boxtimes^{\tw}_+)$, defined by Kontsevich and Soibelman (we recall the definition below).  I.e. we obtain the ``ambient perverse associated graded'' monoid 
\begin{equation}
\label{premdef}
\Gr_{\Pf}(\HO(\Coha_{W,\mu}^{\zeta,\SP}))=\Ho\left(\dim_*\tilde{\omega}^{\zeta,!}_{\mu}\Ho\left(\Coha_{W,\mu}^{\zeta,\SP}\right)\right).
\end{equation}
See Proposition \ref {PCoha} for a more precise statement.  The notation in the left hand side of (\ref{premdef}) will be justified after we introduce the (ambient) perverse filtration in Section \ref{PervSec}.
\end{remark}
\subsection{The absolute cohomological Hall algebra}
We start to complete Remark \ref{absRem} by recalling the definition of the cohomological algebra structure on $\HO(\mathcal{A}^{\zeta,\SP}_{W,\mu})$ from \cite{KS2}.  Extending (\ref{XNdef}, \ref{XXNdef}) we set
\begin{align*}
X^{\zeta\sst}_{\dd'\blacksquare\dd'',N} \colonequals &X_{\dd',\dd''}^{\zeta\sst}\times_{G_{\dd'}\times G_{\dd''}}U_N\\
X^{\zeta\sst}_{\dd'\square\dd'',N} \colonequals &(X^{\zeta\sst}_{\dd'}\times X^{\zeta\sst}_{\dd''})\times _{G_{\dd'}\times G_{\dd''}}U_N.
\end{align*}
We denote by
\begin{align*}
\omega_{\blacksquare,N}\colon &X^{\zeta\sst,\SP}_{\dd'\blacksquare\dd'',N}\rightarrow X^{\zeta\sst}_{\dd'\blacksquare\dd'',N}\\
\omega_{\square,N}\colon &X^{\zeta\sst,\SP}_{\dd'\square\dd'',N}\rightarrow X^{\zeta\sst}_{\dd'\square\dd'',N}\\
\omega_{\circ,N}\colon &X^{\zeta\sst,\SP}_{\dd',\dd'',N}\rightarrow X^{\zeta\sst}_{\dd',\dd'',N}
\end{align*}
the inclusions, and we denote by $\phim{\blacksquare,\square,\circ}$ the vanishing cycle functors induced by $\Tr(W)$ on the targets of these three maps.  Then for $n\in\ZZ$ and $N\in\NN$ there is an isomorphism
\[
\Phi\colon \HO_c^n\left(r_{1,N,!}\omega_{\blacksquare,N}^*\phim{\blacksquare}\QQ_{X^{\zeta\sst}_{\dd'\blacksquare\dd'',N}}\rightarrow\omega_{\square,N}^*r_{1,N,!}\phim{\blacksquare}\QQ_{X^{\zeta\sst}_{\dd'\blacksquare\dd'',N}}\rightarrow \omega_{\square,N}^*\phim{\square}\LL^{v}_{X^{\zeta\sst}_{\dd'\square\dd'',N}}\right)^{\vee}
\]
where $v=\sum_{a\in Q_1}\dd''_{s(a)}\dd'_{t(a)}$, provided by base change and the Verdier dual of the morphism
\[
\QQ_{X^{\zeta\sst}_{\dd'\square\dd'',N}}\rightarrow r_{1,N,*}\QQ_{X^{\zeta\sst}_{\dd'\blacksquare\dd'',N}}.
\]
Similarly, we define the isomorphism
\[
\Psi\colon \HO_c^n\left(r_{2,N,!}\omega_{\blacksquare,N}^*\phim{\blacksquare}\QQ_{X^{\zeta\sst}_{\dd'\blacksquare\dd'',N}}\rightarrow\omega_{\circ,N}^*r_{2,N,!}\phim{\blacksquare}\QQ_{X^{\zeta\sst}_{\dd'\blacksquare\dd'',N}}\rightarrow \omega_{\circ,N}^*\phim{\circ}\LL^{u}_{X^{\zeta\sst}_{\dd',\dd'',N}}\right)^{\vee}
\]
where $u=\sum_{i\in Q_0}\dd'_i\dd''_i$.  Taking $N$ sufficiently large (for each cohomological degree), and composing the appropriate twist of $\Phi$ with $\Psi^{-1}$, we define the isomorphism
\begin{align*}
&\overline{\alpha}^{\zeta}_{\dd',\dd''}\colon \HO_c\left(\Mst_{\dd'}^{\zeta\sst,\SP},\phim{\WWW^{\zeta}_{\dd'}}\ICS_{\Mst^{\zeta\sst}_{\dd'}}(\mathbb{Q})\right)^{\vee}\boxtimes_+\HO_c\left(\Mst_{\dd''}^{\zeta\sst,\SP},\phim{\WWW^{\zeta}_{\dd''}}\ICS_{\Mst^{\zeta\sst}_{\dd''}}(\mathbb{Q})\right)^{\vee}\rightarrow\\&\LL^{-(\dd'',\dd')/2}\otimes\HO_c\left(\Mst_{\dd',\dd''}^{\zeta\sst,\SP},\phim{\WWW^{\zeta}_{\dd',\dd'}}\ICS_{\Mst^{\zeta\sst}_{\dd',\dd''}}(\mathbb{Q})\right)^{\vee}.
\end{align*}
Similarly, with notation as in (\ref{preLim}), applying $\HO_c\left(\omega_N^*\phim{\W_{\dd,N}^{\zeta}}\right)^{\vee}$ to 
\[
\QQ_{X_{\dd,N}^{\zeta\sst}}\rightarrow s^{\zeta}_{\dd',\dd'',N,!}\QQ_{X_{\dd',\dd'',N}^{\zeta\sst}}
\]
and passing to the limit, we obtain the morphism
\[
\overline{\beta}^{\zeta}_{\dd',\dd''}\colon\HO_c\left(\Mst_{\dd',\dd''}^{\zeta\sst,\SP},\phim{\WWW^{\zeta}_{\dd',\dd'}}\ICS_{\Mst^{\zeta\sst}_{\dd',\dd''}}(\mathbb{Q})\right)^{\vee}\rightarrow \LL^{(\dd',\dd'')/2}\otimes \HO_c\left(\Mst_{\dd}^{\zeta\sst,\SP},\phim{\WWW^{\zeta}_{\dd}}\ICS_{\Mst^{\zeta\sst}_{\dd}}(\mathbb{Q})\right)^{\vee}.
\]
We set
\[
\HO(\Coha_{W,\mu}^{\zeta,\SP})\colonequals \HO_c(\Mst^{\zeta\sst,\SP}_{\mu},\ICSt^{\zeta}_{W,\mu})^{\vee}
\]
and 
\[
\HO(\ast_{W,\mu}^{\zeta,\SP})\colonequals \bigoplus_{\dd',\dd''\in\NN^{Q_0}} (\LL^{-(\dd',\dd'')/2}\otimes\overline{\beta}^{\zeta}_{\dd',\dd''})\circ(\LL^{\langle \dd'',\dd'\rangle/2}\otimes\overline{\alpha}^{\zeta}_{\dd',\dd''})
\]
to obtain the ``absolute cohomological Hall algebra'' in the sense of Section \ref{abvsrel}.  This is precisely the CoHA defined and studied by Kontsevich and Soibelman in \cite[Sec.7]{KS2}.

\subsection{The ambient perverse filtration}
\label{PervSec}
For $\ff,\dd\in \NN^{Q_0}$ set 
\[
\gamma(\ff,\dd)=(\dd,\dd)/2-\ff\cdot\dd.
\]
Since $\Msp_{\ff,\dd }^{\zeta}$ is smooth, and $\pi^{\zeta}_{\ff,\dd }\colon \Msp_{\ff,\dd }^{\zeta}\rightarrow \Msp_{\dd}^{\zeta\sst}$ is proper, by the decomposition theorem there is an isomorphism
\[
\pi ^{\zeta}_{\ff,\dd ,!}\LL^{\gamma(\ff,\dd)}_{\Msp_{\ff,\dd}^{\zeta}}\cong\Ho\left(\pi ^{\zeta}_{\ff,\dd ,!}\LL^{\gamma(\ff,\dd)}_{\Msp_{\ff,\dd }^{\zeta}}\right).
\]
This decomposition is not canonical, but the mere existence of such a decomposition implies that the natural map 
\[
\Gamma\colon \pi ^{\zeta}_{\ff,\dd ,!}\LL^{\gamma(\ff,\dd)}_{\Msp_{\ff,\dd }^{\zeta}}\rightarrow \tau_{\geq p}\pi ^{\zeta}_{\ff,\dd ,!}\LL^{\gamma(\ff,\dd)}_{\Msp_{\ff,\dd }^{\zeta}}
\]
admits a right inverse, where $\tau_{\geq p}$ is the truncation functor with respect to the natural t-structure on the category of monodromic mixed Hodge modules.  We deduce that the morphism $\Gamma'$ given by the composition of natural maps
\[
\xymatrix{
\pi ^{\zeta}_{\ff,\dd ,!}\phim{\WW_{\ff,\dd}^{\zeta}}\LL^{\gamma(\ff,\dd)}_{\Msp_{\ff,\dd }^{\zeta}}\ar[r]^-{\cong}&\phim{\WW_{\dd}^{\zeta}}\pi ^{\zeta}_{\ff,\dd ,!}\LL^{\gamma(\ff,\dd)}_{\Msp_{\ff,\dd }^{\zeta}}\ar[dl]_>>>>>{\phim{\WW_{\dd}^{\zeta}}\Gamma}\\
\phim{\WW_{\dd}^{\zeta}}\tau_{\geq p}\pi ^{\zeta}_{\ff,\dd ,!}\LL^{\gamma(\ff,\dd)}_{\Msp_{\ff,\dd }^{\zeta}}\ar[r]^-{\cong}&\tau_{\geq p}\pi ^{\zeta}_{\ff,\dd ,!}\phim{\WW_{\ff,\dd}^{\zeta}}\LL^{\gamma(\ff,\dd)}_{\Msp_{\ff,\dd }^{\zeta}}
}
\]
admits a right inverse $\Sigma$.  With 
\begin{align*}
\tilde{\omega}^{\zeta}_{\dd}\colon &\Msp^{\zeta\sst,\SP}_{\dd}\rightarrow \Msp^{\zeta\sst}_{\dd}\\
\tilde{\omega}^{\zeta}_{\ff,\dd}\colon &\Msp^{\zeta,\SP}_{\ff,\dd}\rightarrow \Msp^{\zeta}_{\ff,\dd}
\end{align*}
the natural inclusions, there is a natural isomorphism given by base change
\[
\Omega\colon \pi^{\zeta,\SP}_{\ff,\dd,!}\tilde{\omega}^{\zeta,*}_{\ff,\dd}\phim{\WW_{\ff,\dd}^{\zeta}}\LL^{\gamma(\ff,\dd)}_{\Msp_{\ff,\dd }^{\zeta}}\rightarrow  \tilde{\omega}^{\zeta,*}_{\dd}\pi ^{\zeta}_{\ff,\dd ,!}\phim{\WW_{\ff,\dd}^{\zeta}}\LL^{\gamma(\ff,\dd)}_{\Msp_{\ff,\dd }^{\zeta}}.
\]
Setting 
\begin{align*}
\Pf^p\left(\HO_c\left(\Msp^{\zeta,\SP}_{\ff,\dd},\phim{\WW^{\zeta}_{\ff,\dd}}\LL^{\gamma(\ff,\dd)}\right)^{\vee}\right)\colonequals &\HO_c\left(\Msp^{\zeta,\SP}_{\ff,\dd},\tilde{\omega}^{\zeta,*}_{\dd}\tau_{\geq {}-p}\pi_{\ff,\dd,!}^{\zeta}\phim{\WW^{\zeta}_{\ff,\dd}}\LL^{\gamma(\ff,\dd)}\right)^{\vee}\\
\cong&\HO\left(\Msp^{\zeta,\SP}_{\ff,\dd},\tilde{\omega}^{\zeta,!}_{\dd}\tau_{\leq p}\pi_{\ff,\dd,*}^{\zeta}\phim{\WW^{\zeta}_{\ff,\dd}}\LL^{-(\dd,\dd)/2}\right)
\end{align*}
we consider the morphism $\iota_p\colonequals\HO_c\left(\tilde{\omega}_{\ff,\dd}^{\zeta,*}\Gamma'\circ \Omega\right)^{\vee}$.  This is a split injection, since it is the dual of a split surjection with right inverse $\HO_c(\Omega^{-1}\circ\tilde{\omega}_{\ff,\dd}^{\zeta,*}\Sigma)$.  We thus arrive at a canonical filtration, indexed by $p\in\mathbb{Z}$, which we call the ambient perverse filtration:
\[
\Pf^p\left(\HO_c\left(\Msp^{\zeta,\SP}_{\ff,\dd},\phim{\WW^{\zeta}_{\ff,\dd}}\LL^{\gamma(\ff,\dd)}\right)^{\vee}\right)\subset \HO_c\left(\Msp^{\zeta,\SP}_{\ff,\dd},\phim{\WW_{\ff,\dd}^{\zeta}}\LL^{\gamma(\ff,\dd)}\right)^{\vee}.
\]
\begin{warning}
\label{PervWarning}
Throughout the paper, we always consider the perverse truncation functors for $\Msp_{\dd}^{\zeta}$, and never the truncation functors for monodromic mixed Hodge modules on $\Msp_{\dd}^{\zeta,\SP}$.  As a result, the associated graded of 
\[
\HO_c\left(\Msp^{\zeta,\SP}_{\ff,\dd},\phim{\WW_{\ff,\dd}^{\zeta}}\LL^{\gamma(\ff,\dd)}\right)^{\vee}\cong \HO\left(\Msp^{\zeta,\SP}_{\dd},\tilde{\omega}^{\zeta,!}_{\dd}\pi_{\ff,\dd,*}^{\zeta}\phim{\WW^{\zeta}_{\ff,\dd}}\LL^{{}-(\dd,\dd)/2}\right)
\]
is in general \textit{not} isomorphic as a graded object to the hypercohomology of the total perverse cohomology of $\tilde{\omega}^{\zeta,!}_{\dd}\pi_{\ff,\dd,*}^{\zeta}\phim{\WW^{\zeta}_{\ff,\dd}}\LL^{{}-(\dd,\dd)/2}$, for which the grading comes from the perverse truncation functors for $\Msp_{\dd}^{\zeta,\SP}$.  For example, let $Q$ be the one loop quiver, let $W=0$, let $\Mst^{\SP}\subset \Mst(Q)$ be the reduced substack, the closed points of which correspond to nilpotent $\mathbb{C}Q$-representations, let $\dd=\ff=1$, and let $\zeta$ be any stability condition on $Q$.   Then there are isomorphisms
\begin{align*}
\Msp^{\zeta}_{\ff,\dd}\cong\Msp^{\zeta\sst}_{\dd}\cong\mathbb{A}^1\\
\Msp^{\zeta,\SP}_{\ff,\dd}\cong\Msp^{\zeta\sst,\SP}_{\dd}\cong \{0\}
\end{align*}
and we may identify the (reduced) nilpotent locus with $0\in\AA^1$, and the map $\pi^{\zeta}_{\ff,\dd}$ with the identity map.  We have $(\dd,\dd)=0$, and so the perverse cohomology of $\pi^{\zeta}_{\ff,\dd}\LL_{\Msp_{\ff,\dd}^{\zeta}}^{-(\dd,\dd)}\cong\QQ_{\AA^1}$ is concentrated in degree 1, and so
\[
\Gr_{\Pf}\left(\HO\left(\Msp^{\zeta,\SP}_{\ff,\dd},\tilde{\omega}^{\zeta,!}_{\dd}\pi_{\ff,\dd,*}^{\zeta}\phim{\WW^{\zeta}_{\ff,\dd}}\LL^{{}-(\dd,\dd)/2}\right)\right)
\]
is concentrated in degree 1.  On the other hand $\tilde{\omega}^{\zeta,!}_{\dd}\mathbb{Q}_{\AA^1}\cong \LL_{\{0\}}$ is concentrated in perverse degree 2.

Furthermore, since $\tilde{\omega}^{\zeta,!}_{\dd}$ is in general not exact, there is no obvious way to use the decomposition theorem to deduce that $\tilde{\omega}^{\zeta,!}_{\dd}\pi_{\ff,\dd,*}^{\zeta}\phim{\WW^{\zeta}_{\ff,\dd}}\LL^{-(\dd,\dd)/2}$ is isomorphic to its total perverse cohomology, for the perverse truncation functors associated to $\Msp_{\dd}^{\zeta,\SP}$.  In fact it is not hard to produce examples in which this statement is false, for instance by modifying the above example by setting $\Mst^{\SP}\subset \Mst(Q)$ to be the open substack of $Q$-representations for which the loop of $Q$ acts invertibly.
\end{warning}
\begin{remark}
The distinction between different perverse filtrations exemplified by Warning \ref{PervWarning} is the reason for our being explicit about picking the \textit{ambient} perverse filtration.  Since this is the only type of filtration we consider, we will omit the word ambient from now on.
\end{remark}
The perverse filtration is well-defined in the limit $\ff\gg0$, as for fixed $\dd\in\mathbb{N}^{Q_0}$, fixed cohomological degree $n$, and for $\ff'>\ff\gg 0$, in the commutative diagram
\[
\xymatrix{
\HO_c^n\left(\Msp_{\ff',\dd}^{\zeta,\SP},\pi ^{\zeta}_{\ff',\dd,!}\LL^{\gamma(\ff',\dd)}_{\Msp_{\ff',\dd}^{\zeta}}\right)\ar[d] &\HO_c^n\left(\Msp_{\ff,\dd }^{\zeta,\SP},\pi ^{\zeta}_{\ff,\dd ,!}\LL^{\gamma(\ff,\dd)}_{\Msp_{\ff,\dd }^{\zeta}}\right)\ar[l]\ar[d]\\
\HO_c^n\left(\Msp_{\ff',\dd}^{\zeta,\SP},\tau_{\geq p}\left(\pi ^{\zeta}_{\ff',\dd,!}\LL^{\gamma(\ff',\dd)}_{\Msp_{\ff',\dd}^{\zeta}}\right)\right)& \HO_c^n\left(\Msp_{\ff,\dd }^{\zeta,\SP},\tau_{\geq p}\left(\pi ^{\zeta}_{\ff,\dd ,!}\LL^{\gamma(\ff,\dd)}_{\Msp_{\ff,\dd }^{\zeta}}\right)\right)\ar[l],
}
\]
the horizontal maps are isomorphisms by the argument of Lemma \ref{appLem}.  

Since the constituent morphisms of $\HO(\ast_{W,\mu}^{\zeta,\SP})$ lift to morphisms of monodromic mixed Hodge modules on $\Msp^{\zeta\sst}_{\mu}$, it follows that $\HO(\ast_{W,\mu}^{\zeta,\SP})$ respects the perverse filtration, and from the definitions we obtain the first statement of the following proposition, while the second is Proposition \ref{bcprop}.
\begin{proposition}
\label{PCoha}
There is a natural isomorphism of monoids in $\Dulf(\MMHM(\mathbb{N}^{Q_0}))$
\begin{equation}\label{AGA}
\Gr_{\Pf}\left(\HO(\Coha_{W,\mu}^{\zeta,\SP}),\HO(\ast_{W,\mu}^{\zeta,\SP}),\HO(\Coha_{W,0}^{\zeta})\right)\cong\Ho\left(\dim_*\tilde{\omega}^{\zeta,!}_{\mu} \left(\Ho(\mathcal{A}^{\zeta}_{W,\mu}),\Ho(\ms^{\zeta}_{W,\mu}),\ICS_{\Msp_0}^{\zeta\sst}(\mathbb{Q})\right)\right).
\end{equation}
Furthermore, forgetting the algebra structure, there is a noncanonical isomorphism of underlying monodromic mixed Hodge modules $\Gr_{\Pf}(\HO(\Coha_{W,\mu}^{\zeta,\SP}))\cong\HO(\Coha_{W,\mu}^{\zeta,\SP})$.
\end{proposition}
The isomorphism of algebras is what justifies the notation of (\ref{premdef}).  
The following technical lemma is what will enable us to use the localised coproduct on $\HO(\Coha^{\zeta,\SP}_{W,\mu})$ to induce a Hopf algebra structure on the associated graded algebra (\ref{AGA}).  It is only a very slight variation of \cite[Prop.1.4.4]{deCat12}, but we include the proof for completeness.
\begin{lemma}
\label{HCM}
Let $V$ be a vector bundle on $\Mst_{\dd}$, and let $\eu(V)\in\HO(\Mst_{\dd},\mathbb{Q})$ be the corresponding equivariant Euler class.  Then 
\[
\eu(V)\cdot \Pf^p(\HO_c(\Mst^{\SP,\zeta\sst}_{\dd},\ICSt_{W,\dd}^{\zeta})^{\vee})\subset \Pf^{p+2\dim(V)}(\HO_c(\Mst^{\SP,\zeta\sst}_{\dd},\ICSt_{W,\dd}^{\zeta})^{\vee}).
\]
\end{lemma}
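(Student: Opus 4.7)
The strategy is to lift multiplication by $\eu(V)$ to the framed approximations $\Msp_{\ff,\dd}^\zeta$ used in Section \ref{PervSec} to define the perverse filtration, and then invoke the universal property of perverse truncation to control the shift. Set $r := \dim(V)$. Since $V$ is a vector bundle on the stack $\Mst_\dd = X_\dd/G_\dd$, it corresponds to a $G_\dd$-equivariant vector bundle on $X_\dd$, which restricts to $Y_{\ff,\dd}^\zeta\subset X_\dd^{\zeta\sst}\times V_{\ff,\dd}$ and descends to a rank $r$ vector bundle $V_\ff$ on $\Msp_{\ff,\dd}^\zeta\cong Y_{\ff,\dd}^\zeta/G_\dd$. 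Under the isomorphism of Lemma \ref{appLem} (with $W = 0$), the class $\eu(V)\in\HO^{2r}(\Mst_\dd,\QQ)$ is identified in each fixed cohomological degree, for $\ff\gg 0$, with $\eu(V_\ff)\in\HO^{2r}(\Msp_{\ff,\dd}^\zeta,\QQ)$, and this identification intertwines the two multiplication actions on cohomology.

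Next, $\eu(V_\ff)$ is represented by a morphism $\alpha_\ff\colon\QQ_{\Msp_{\ff,\dd}^\zeta}\to\QQ_{\Msp_{\ff,\dd}^\zeta}[2r]$ in $\Dub(\MMHM(\Msp_{\ff,\dd}^\zeta))$, whose direct image $\pi_{\ff,\dd,*}^\zeta\alpha_\ff\colon\pi_{\ff,\dd,*}^\zeta\QQ\to\pi_{\ff,\dd,*}^\zeta\QQ[2r]$ realises cup product by $\eu(V_\ff)$ on the hypercohomology of $\Msp_\dd^{\zeta\sst}$. Because $\tau_{\leq p}$ is right adjoint to the inclusion $\Dp^{\leq p}\hookrightarrow\Dub$, the composition $\tau_{\leq p}\pi_{\ff,\dd,*}^\zeta\QQ\hookrightarrow\pi_{\ff,\dd,*}^\zeta\QQ\xrightarrow{\pi_{\ff,\dd,*}^\zeta\alpha_\ff}\pi_{\ff,\dd,*}^\zeta\QQ[2r]$ factors uniquely through $\tau_{\leq p}(\pi_{\ff,\dd,*}^\zeta\QQ[2r])$. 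The shift identity $\tau_{\leq p}(C[2r])=\tau_{\leq p+2r}(C)[2r]$, immediate from the definition of the perverse t-structure, rewrites this target as $\tau_{\leq p+2r}(\pi_{\ff,\dd,*}^\zeta\QQ)[2r]$. Applying $\HO^n(\Msp_\dd^{\zeta\sst},-)$ then yields the desired inclusion $\eu(V_\ff)\cdot\Pf^p\HO^n\subset\Pf^{p+2r}\HO^{n+2r}$ at every finite framing level.

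It remains to pass to the limit $\ff\to\infty$, which is harmless: the perverse filtration on $\HO(\Mst_\dd^{\zeta\sst},\QQ)$ is defined in Section \ref{PervSec} as the colimit of the filtrations on $\HO(\Msp_{\ff,\dd}^\zeta,\QQ)$ in each fixed cohomological degree, and the lifts $V_\ff$ are compatible along the tower because the transition maps are induced by $G_\dd$-equivariant inclusions so pull $V_{\ff'}$ back to $V_\ff$. I anticipate no serious obstacle, since the core of the argument is a formal consequence of the $(\iota,\tau_{\leq p})$-adjunction once the Euler class has been realised on the framed moduli spaces; the argument mimics the structure of \cite[Prop.1.4.4]{deCat12} in the present monodromic mixed Hodge module setting.
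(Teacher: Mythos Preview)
Your proposal is correct and follows essentially the same strategy as the paper: lift to the framed approximations $\Msp_{\ff,\dd}^\zeta$, realise cup product by $\eu(V)$ as a morphism in $\Dub(\MMHM)$ over $\Msp_{\dd}^{\zeta\sst}$ after pushforward along $\pi_{\ff,\dd}^\zeta$, and observe that any such morphism is automatically compatible with perverse truncation (with the appropriate shift). The only difference is cosmetic: where you invoke the existence of a morphism $\alpha_\ff\colon\QQ\to\QQ[2r]$ abstractly and appeal to the $(\iota,\tau_{\leq p})$ adjunction, the paper writes the map out geometrically as the composition $i_{\ff,*}\QQ\to\QQ_{T_\ff(V)}\otimes\LL^{-\dim V}\to i_{\ff,*}\QQ\otimes\LL^{-\dim V}$ through the total space of the pulled-back bundle, and then simply remarks that its pushforward ``is a map of mixed Hodge modules on $\Msp_{\dd}^{\zeta\sst}$''.
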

\begin{proof}
Let $\pr\colon T(V)\rightarrow \Mst^{\zeta\sst}_{\dd}$ be the projection from the total space of $V$ restricted to $\Mst^{\zeta\sst}_{\dd}$, and let $i\colon \Mst^{\zeta\sst}_{\dd}\rightarrow T(V)$ be the inclusion of the zero section.  Let $T_\ff(V)$ be the total space of the bundle $V$ pulled back along the map $\Msp_{\ff,\dd }^{\zeta}\rightarrow\Mst_{\dd}^{\zeta\sst}$ defined by forgetting the framing, and let $\pr_{\ff}$ and $i_{\ff}$ denote the corresponding projections and inclusions.  Let $\WW_{V,\ff}$ be the function induced by $\Tr(W)$ on $T_{\ff}(V)$ and let $\mathcal{N}=\HO_c(\Msp_{\ff,\dd }^{\zeta,\SP},\phim{\WW_{\ff,\dd}^{\zeta}}\QQ_{\Msp_{\ff,\dd}^{\zeta}})^{\vee}$.  Then we have the equality of morphisms
\begin{equation}
\label{euup}
\cdot \eu(V)|_{\mathcal{N}}=\HO\left(\tilde{\omega}_{\ff,\dd}^{\zeta,!}\pr_{\ff,*}\phim{\WW_{V,\ff}}\left(i_{\ff,*}\mathbb{Q}_{\Msp^{\zeta}_{\ff,\dd }}\rightarrow \LL^{-\dim(V)}_{T_{\ff}(V)}\rightarrow  i_{\ff,*}\LL^{-\dim(V)}_{\Msp^{\zeta}_{\ff,\dd }}\right)\right)
\end{equation}
and the action of $\cdot\eu(V)$ on $\HO_c(\Mst^{\zeta\sst,\SP}_{\dd},\ICSt_{W,\dd}^{\zeta})^{\vee}$ is given by letting $\ff\gg 0$ in (\ref{euup}) and twisting by $\LL^{(\dd,\dd)/2}$.  This morphism respects the perverse filtration on $\HO(\Msp^{\zeta,\SP}_{\ff,\dd },\phim{\WW_{\ff,\dd}^{\zeta}}\LL^{(\dd,\dd)/2}_{\Msp^{\zeta}_{\ff,\dd }})$ (with the shift by $2\dim_{\mathbb{C}}(V)$) since 
\[
\pi^{\zeta} _{\ff,\dd ,*}\pr_{\ff,*}\phim{\WW_{V,\ff}}\left(i_{\ff,*}\LL^{(\dd,\dd)/2}_{\Msp^{\zeta}_{\ff,\dd }}\rightarrow i_{\ff,*}\LL^{-\dim(V)+(\dd,\dd)/2}_{\Msp^{\zeta}_{\ff,\dd }}\right)
\]
is a map of complexes of mixed Hodge modules on $\Msp^{\zeta\sst}_{\dd}$.  The result then follows from the definition of the perverse filtration on $\HO_c(\Mst^{\zeta\sst,\SP}_{\dd},\ICSt_{W,\dd}^{\zeta})^{\vee}$.
\end{proof}
We finish this section with the ``absolute'' version of Corollary \ref{inccor}.
\begin{corollary}
\label{Inccor}
There is a canonical inclusion 
\begin{equation}
\label{caninc2}
\Upsilon_{W,\mu}^{\zeta,\SP,\ab}\colon\HO(\BC)_{\vir}\otimes \DT^{\zeta,\SP}_{W,\mu}\rightarrow \HO\left(\mathcal{A}^{\zeta,\SP}_{W,\mu}\right)
\end{equation}
\end{corollary}
\begin{proof}
There is a natural inclusion $\LL^{1/2}\otimes\DT^{\zeta,\SP}_{W,\mu}\rightarrow \HO\left(p_{\mu,*}^{\zeta}\ICSt_{W,\mu}^{\zeta}\right)$ since $\DT^{\zeta,\SP}_{W,\mu}$ is precisely the first piece of the perverse filtration of the target.  Via the $\HO(\BC)$-action on $\HO\left(\mathcal{A}^{\zeta,\SP}_{W,\mu}\right)$, this gives rise to a map $\Upsilon_{W,\mu}^{\zeta,\SP,\ab}$, which respects the perverse filtration by Lemma \ref{HCM}.  After passing to the associated graded of the perverse filtration, it is an injection, since it is obtained by restricting the (split) injection of Corollary \ref{inccor} to the locus of points in $\SP$.  Therefore $\Upsilon_{W,\mu}^{\zeta,\SP,\ab}$ is an injection.
\end{proof}

\section{Structural results for cohomological Hall algebras}
\label{PBWsec}
\subsection{Symmetric monoidal structures, revisited}
\label{lc_signs}
Let $\zeta\in \mathbb{H}_+^{Q_0}$ be a stability condition, and let $\mu\in(-\infty,\infty)$ be a slope.  We assume that $\zeta$ is $\mu$-generic.  We define $X^{\zeta\sst}_{\dd,N}$ and $X^{\zeta\sst}_{\dd',\dd'',N}$ as in (\ref{XNdef}) and (\ref{XXNdef}) respectively.  As throughout the paper, we assume that we are given a Serre subcategory of the category of finite-dimensional $\mathbb{C}Q$-modules, and denote by
\begin{align*}
\omega_{\dd,N}\colon &X^{\zeta\sst,\SP}_{\dd,N}\rightarrow X_{\dd,N}^{\zeta\sst}&
\omega_{\dd',\dd'',N}\colon & X^{\zeta\sst,\SP}_{\dd',\dd'',N}\rightarrow X_{\dd',\dd'',N}^{\zeta\sst}
\end{align*}
the inclusions of the reduced loci corresponding to $\mathbb{C}Q$-modules in $\SP$.  Consider again the algebra from the end of Section \ref{relc}
\[
\HO\left(\mathcal{A}_{W,\mu}^{\zeta,\SP}\right)=\HO_c\left(\Mst_{\mu}^{\zeta\sst,\SP},\ICSt_{W,\mu}^{\zeta}\right)^{\vee}\cong\bigoplus_{\dd\in\Lambda_{\mu}^{\zeta}}\lim_{N\mapsto\infty}\Ho\left(\Dim_*\omega^!_{\dd,N}\phim{\W_{\dd,N}^{\zeta}}\LL^{(\dd,\dd)/2}_{X_{\dd,N}^{\zeta\sst}}\right).
\]
By \cite[Thm.5.11]{Da13} this algebra carries a localised bialgebra structure, in the sense that for all decompositions $\dd=\dd'+\dd''$, with $\dd',\dd''\in\Lambda_{\mu}^{\zeta}$, there are maps
\begin{align}
\label{coprodint}
\HO\left(\mathcal{A}_{W,\dd}^{\zeta,\SP}\right)\rightarrow \HO\left(\mathcal{A}_{W,\dd'}^{\zeta,\SP}\right)\tilde{\boxtimes}_+^{\tw} \HO\left(\mathcal{A}_{W,\dd''}^{\zeta,\SP}\right)
\end{align}
where 
\begin{align*}
\HO\left(\mathcal{A}_{W,\dd'}^{\zeta,\SP}\right)\tilde{\boxtimes}_+^{\tw} \HO\left(\mathcal{A}_{W,\dd''}^{\zeta,\SP}\right)\colonequals &\left(\HO\left(\mathcal{A}_{W,\dd'}^{\zeta,\SP}\right)\boxtimes_+^{\tw} \HO\left(\mathcal{A}_{W,\dd''}^{\zeta,\SP}\right)\right)_{(\dd,\dd'')}\\
\colonequals &\left(\HO\left(\mathcal{A}_{W,\dd'}^{\zeta,\SP}\right)\boxtimes_+^{\tw} \HO\left(\mathcal{A}_{W,\dd''}^{\zeta,\SP}\right)\right)[\mathfrak{L}_{\dd',\dd''}^{-1}]\\
\mathfrak{L}_{\dd',\dd''}\colonequals &\prod_{i,j\in Q_0}\prod_{1\leq t'\leq \dd'_i}\prod_{1\leq t''\leq \dd''_j}(y_{j,t''}-x_{i,t'})\\
\in &\HO_{G_{\dd'}}(\pt)\otimes \HO_{G_{\dd''}}(\pt)\\
=&\mathbb{C}[x_{1,1},\ldots,x_{1,\dd'_1},\ldots,x_{n,1},\ldots,x_{n,\dd'_n}]^{\Sym_{\dd'}}\\&\otimes \mathbb{C}[y_{1,1},\ldots,y_{1,\dd''_1},\ldots,y_{n,1},\ldots,y_{n,\dd''_n}]^{\Sym_{\dd''}}
\end{align*}
with $Q_0=\{1,\ldots,n\}$, satisfying some natural compatibility conditions with the multiplication, see \cite[Def.5.3]{Da13}.  We next describe the most important of these conditions, which is a localised variant of the condition that the multiplication and comultiplication together endow $\HO\left(\mathcal{A}_{W,\mu}^{\zeta,\SP}\right)$ with a bialgebra structure.

Let $\dd^1,\dd^2,\dd^3,\dd^4\in\Lambda_{\mu}^{\zeta}$.  Set 
\begin{align*}
\dd=&\dd^1+\dd^2+\dd^3+\dd^4\\
\cc^1=&\dd^1+\dd^2\\
\cc^2=&\dd^3+\dd^4\\
\ee^1=&\dd^1+\dd^3\\
\ee^2=&\dd^2+\dd^4.
\end{align*}
Let $\mathbb{I}_{\bullet}=\HO\left(\Mst_{\bullet}^{\zeta\sst,\SP},\phim{\WWW^{\zeta}_{\bullet}}\mathbb{Q}_{\Mst_{\bullet}^{\zeta\sst,\SP}}\right)$.  I.e. for now we do not impose our usual system of (half) Tate twists.  By the main result of \cite{Da13} the following diagram commutes
\[
\xymatrix{
\mathbb{I}_{\cc^1}\boxtimes_+ \mathbb{I}_{\cc^2}\ar[d]^-{b}\ar[r]^-a&\left(\mathbb{I}_{\dd^1}\boxtimes_+  \mathbb{I}_{\dd^2}\boxtimes_+  \mathbb{I}_{\dd^3}\boxtimes_+  \mathbb{I}_{\dd^4}\right)_{(\dd^1,\dd^2),(\dd^3,\dd^4)}\ar[d]^-c\\
\mathbb{I}_{\dd}\ar[r]^-d& \left(\mathbb{I}_{\ee^1}\boxtimes_+ \mathbb{I}_{\ee^2}\right)_{(\ee^1,\ee^2)}
}
\]
where we have left out the (whole) Tate twists, and the subscripts indicate which Euler class we localise with respect to.  Here, $a$ and $d$ are given by the comultiplication, while $b$ is given by multiplication.  The morphism $c$ is given by first further localising, then swapping the second and third entries (observing the Koszul sign rule), multiplying by the scalar $(-1)^{\chi(\dd^2,\dd^3)}$, then the CoHA multiplication.  The reader is encouraged to consult \cite{Da13} for the full details; our description in this section is mainly directed towards the limited aim of explaining required sign conventions, as we next explain.

We would like to have, instead, commutativity of the diagram
\begin{align}
\label{desComm}
\xymatrix{
\HO\left(\mathcal{A}_{W,\cc^1}^{\zeta,\SP}\right)\boxtimes_+\HO\left(\mathcal{A}_{W,\cc^2}^{\zeta,\SP}\right)\ar[r]^-{a'}\ar[d]^{b'}&\left(\HO\left(\mathcal{A}_{W,\dd^1}^{\zeta,\SP}\right)\boxtimes_+ \HO\left(\mathcal{A}_{W,\dd^2}^{\zeta,\SP}\right)\boxtimes_+ \HO\left(\mathcal{A}_{W,\dd^3}^{\zeta,\SP}\right)\boxtimes_+ \HO\left(\mathcal{A}_{W,\dd^4}^{\zeta,\SP}\right)\right)_{\heartsuit}\ar[d]^{c'}\\
\HO\left(\mathcal{A}_{W,\dd}^{\zeta,\SP}\right)\ar[r]^-{d'}&\left(\HO\left(\mathcal{A}_{W,\ee^1}^{\zeta,\SP}\right)\boxtimes \HO\left(\mathcal{A}_{W,\ee^2}^{\zeta,\SP}\right)\right)_{\spadesuit}
}
\end{align}
where now the map $c'$ is defined to be the map swapping the second and third terms via the symmetric monoidal structure in $\Dulf(\MMHM(\Lambda_{\mu}^{\zeta}))$, and then applying the multiplication.  Here,
\begin{align*}
\heartsuit=&(\dd^1,\dd^2),(\dd^3,\dd^4)\\
\spadesuit=&(\ee^1,\ee^2)
\end{align*}
as before.  The relation between the two diagrams is as follows.  The objects in diagram (\ref{desComm}) are tensor products of terms $\mathbb{I}_{\bullet}$ tensored on the left with half Tate twists, which we may commute to the left.  Then tensoring the morphisms $a,b,c,d$ on the left by an overall half Tate twist and commuting the required half Tate twists into the tensor product, we obtain the maps $a',b',c',d'$ respectively.  In order to arrange commutativity of (\ref{desComm}), then, we define the sign occurring in the symmetrizing morphism for $\Dulf(\MMHM(\Lambda_{\mu}^{\zeta}))$ to be given by
\[
{}^{\tau}\mathbf{sw}(\alpha\otimes\beta)=(-1)^{\chi(\dd^2,\dd^2)\chi(\dd^3,\dd^3)+\chi(\dd^2,\dd^3)}\mathbf{sw}(\alpha\otimes\beta)
\]
where $\mathbf{sw}$ is the standard symmetrizing morphism.  We pick the superscript $\tau$ since the bilinear form (\ref{half_tau}) defines the deviation from the usual sign convention.  The $\chi(\dd^2,\dd^3)$ term is as in the definition of the morphism $c$.  We denote by ${}^{\tau}\Dulf(\MMHM(\Lambda_{\mu}^{\zeta}))$ the symmetric monoidal category obtained via this modified sign convention.
\begin{proposition}
Let $(\mathcal{B},\ast)$ be a free supercommutative algebra in the category $\Dub(\MMHM(\Lambda_{\mu}^{\zeta}))$, given the above symmetric monoidal structure.  Then if we define ${}^{\psi}\!\mathcal{B}$ by modifying the product as in (\ref{psitwdef}), it is a free supercommutative algebra in $\Dub(\MMHM(\Lambda_{\mu}^{\zeta}))$ with its usual symmetric monoidal structure.
\end{proposition}

Explicitly, we define ${}^{\tau}\mathbf{sw}$ via the chain of isomorphisms
\begin{align}
\label{funct_tw}
\bigoplus_{\dd'\in\Lambda_{\mu}^{\zeta}}\mathcal{F}'_{\dd'}\boxtimes_+ \bigoplus_{\dd''\in\Lambda_{\mu}^{\zeta}}\mathcal{F}''_{\dd''}\cong&\bigoplus_{\dd',\dd''\in\Lambda_{\mu}^{\zeta}} \LL^{\left(\chi(\dd',\dd')+\chi(\dd'',\dd'')\right)/2}\otimes \left(\LL^{-\chi(\dd',\dd')/2}\otimes\mathcal{F}'_{\dd'}\right)\boxtimes_+\left(\LL^{-\chi(\dd'',\dd'')/2}\otimes\mathcal{F}''_{\dd''}\right)
\\ \cong&\bigoplus_{\dd',\dd''\in\Lambda_{\mu}^{\zeta}} \LL^{\left(\chi(\dd',\dd')+\chi(\dd'',\dd'')\right)/2}\otimes \left(\LL^{-\chi(\dd'',\dd'')/2}\otimes\mathcal{F}''_{\dd''}\right)\boxtimes_+\left(\LL^{-\chi(\dd',\dd')/2}\otimes\mathcal{F}'_{\dd'}\right)\nonumber 
\\ \cong &\bigoplus_{\dd''\in\Lambda_{\mu}^{\zeta}}\mathcal{F}''_{\dd''}\boxtimes_+ \bigoplus_{\dd'\in\Lambda_{\mu}^{\zeta}}\mathcal{F}'_{\dd'}\nonumber
\end{align}
where we have applied the usual symmetrizing natural isomorphism for the tensor product of the objects $\LL^{-\chi(\dd',\dd')/2}\otimes\mathcal{F}'_{\dd'}$ and $\LL^{-\chi(\dd'',\dd'')/2}\otimes\mathcal{F}''_{\dd''}$.  We define the twisted symmetric monoidal structure on $\Dulf(\MMHM(\Msp_{\mu}^{\zeta\sst}))$ via the same recipe as (\ref{funct_tw}), with $\boxtimes_+$ replaced with $\boxtimes_{\oplus}$.  It then follows as before from \cite{MSS11} that 
\[
\dim_*\colon{}^{\tau}\Dulf(\MMHM(\Msp_{\mu}^{\zeta\sst}))\rightarrow {}^{\tau}\Dulf(\MMHM(\Lambda_{\mu}^{\zeta}))
\]
is a symmetric monoidal functor.

The algebras $\HO\left(\mathcal{A}_{W}^{\zeta,\SP}\right)$ and $\HO(^{\psi}\!\!\mathcal{A}_{W,\mu}^{\zeta,\SP})$ differ by the above type of sign twist.  In particular, their underlying graded monodromic mixed Hodge structures are the same, and Theorem \ref{qea} implies that there is an isomorphism in $\Dulf(\MMHM(\Lambda_{\mu}^{\zeta}))$:
\[
{}^{\tau}\!\Sym_{\boxtimes_+}\left(\HO(\BC)_{\vir}\otimes \DT_{W,\mu}^{\zeta,\SP}\right)\cong{}\Sym_{\boxtimes_+}\left(\HO(\BC)_{\vir}\otimes \DT_{W,\mu}^{\zeta,\SP}\right).
\]
The next proposition shows that this is a general phenomenon.
\begin{proposition}
\label{underlying_same}
Let $\mathcal{F}\in\Dulf(\MMHM(\Msp_{\mu}^{\zeta\sst}\setminus\Msp_0^{\zeta\sst}))$.  Then there is an isomorphism
\[
{}^{\tau}\Sym_{\boxtimes_+}(\mathcal{F})\cong\Sym_{\boxtimes_+}(\mathcal{F}).
\]
\end{proposition}
\begin{proof}
Let $\mathcal{F}$ be represented by the complex $(\mathcal{G},d)$ of mixed Hodge modules on $\Msp_{\mu}^{\zeta\sst}\times\mathbb{A}^1$.  Let $\psi$ be as in (\ref{psitwdef}).  Let 
\begin{align*}
T(\mathcal{G},d)\colonequals &\left(\bigoplus_{n\geq 0}T_n(\mathcal{G}),d\right)\\
T_n(\mathcal{G})\colonequals &\bigoplus_{n\geq 0}\underbrace{\mathcal{G}\boxtimes\ldots\boxtimes \mathcal{G}}_{n \;\textrm{times}}
\end{align*}
be the algebra generated by $\mathcal{G}$ in the category 
\[
\Dulf\left(\MHM\left(\coprod_{n\geq 0}\left((\Msp_{\mu}^{\zeta\sst}\setminus\Msp_0^{\zeta\sst})\times\mathbb{A}^1\right)^{\times n}\right)\right)
\]
along with its natural differential.  Via the results of \cite{MSS11}, as recalled in Section \ref{prodss}, $T_n(\mathcal{G},d)$ carries an action of $\SSym_n$, determined by morphisms for each $\pi\in\SSym_n$ and $a_1,\ldots,a_n\in\Lambda_{\mu}^{\zeta}$
\[
\mathcal{G}_{a_1}\boxtimes\ldots\boxtimes\mathcal{G}_{a_n}\xrightarrow{\pi^{\#}_{a_1,\ldots,a_n}}\pi_*\left(\mathcal{G}_{\pi(a_1)}\boxtimes\ldots\boxtimes\mathcal{G}_{\pi(a_n)}\right).
\]
Setting
\[
\pi^{\#,\tau}_{a_1,\ldots,a_n}\colonequals \sum_{1\leq i<j\leq n|\pi(i)>\pi(j)}(-1)^{\tau(a_i,a_j)}\pi^{\#}_{a_1,\ldots,a_n}
\]
we obtain the $\tau$-twisted action.  Then the morphisms
\[
\mathcal{G}_{a_1}\boxtimes\ldots\boxtimes\mathcal{G}_{a_n}\xrightarrow{\cdot\sum_{1\leq i<j\leq n|\pi(i)>\pi(j)}(-1)^{\psi(a_i,a_j)}}\mathcal{G}_{a_1}\boxtimes\ldots\boxtimes\mathcal{G}_{a_n}
\]
commute with the differential, and intertwine the two $\SSym_n$-actions.  Passing to the invariant subcomplexes, and taking the direct image along the iterated monoid morphism
\[
\coprod_{n\geq 0}\left((\Msp_{\mu}^{\zeta\sst}\setminus\Msp_0^{\zeta\sst})\times\mathbb{A}^1\right)^{\times n}\rightarrow \Msp_{\mu}^{\zeta\sst}\times\mathbb{A}^1
\]
we deduce the result.
\end{proof}

\subsection{The perverse associated graded Hopf algebra}
Next we describe the localised comultiplication in a little more detail, for further details see \cite{Da13}.  Consider the proper map $s_{\dd',\dd''}\colon \Mst_{\dd',\dd''}^{\zeta\sst}\rightarrow\Mst_{\dd}^{\zeta\sst}$ defined as in (\ref{smap}).  We form the map 
\begin{equation}
\label{gammadef}
\gamma\colon \HO_c(\Mst^{\zeta\sst,\SP}_{\dd},\ICSt_{W,\dd}^{\zeta})^{\vee}\rightarrow \LL^{(\dd',\dd'')/2}\otimes\HO_c(\Mst^{\zeta\sst,\SP}_{\dd',\dd''},\phim{\WWW_{\dd',\dd''}^{\zeta}}\ICS_{\Mst_{\dd',\dd''}^{\zeta\sst}}(\mathbb{Q}))^{\vee}
\end{equation}
given, via Verdier duality and Proposition \ref{basicfacts}(2), as the tensor product with $\LL^{(\dd,\dd)/2}$ of the limit as $N\mapsto\infty$ of the composition of maps
\begin{align}
\label{sagain}
&\Ho\left(\Dim_*\omega^!_{\dd,N}\phim{\W_{\dd,N}^{\zeta}}\QQ_{X_{\dd,N}^{\zeta\sst}}\right)\rightarrow \Ho\left(\Dim_*\omega_{\dd,N}^!\phim{\W^{\zeta}_{\dd,N}}s_{\dd',\dd'',N,*}\QQ_{X^{\zeta\sst}_{\dd',\dd'',N}}\right)\xrightarrow{\cong}\\&\rightarrow \Ho\left(\Dim_*\omega_{\dd,N}^!s_{\dd',\dd'',N,*}\phim{\W^{\zeta}_{\dd',\dd'',N}}\QQ_{X^{\zeta\sst}_{\dd',\dd'',N}}\right)\xrightarrow{\cong}\Ho\left(\Dim_*s_{\dd',\dd'',N,*}\omega_{\dd',\dd'',N}^!\phim{\W^{\zeta}_{\dd',\dd'',N}}\QQ_{X^{\zeta\sst}_{\dd',\dd'',N}}\right).\nonumber
\end{align}
Since the morphism $\Dim$ factors through a map to the base $\Msp^{\zeta\sst}_{\dd}$, we deduce as in Section \ref{PervSec} that $\gamma$ preserves the perverse filtration, i.e.
\[
\gamma\left(\Pf_s\left(\HO_c(\Mst_{\dd}^{\zeta\sst,\SP},\ICSt_{W,\dd}^{\zeta})^{\vee}\right)\right)\subset \Pf_{s}\left(\LL^{(\dd',\dd'')/2}\otimes\HO_c\left(\Mst^{\zeta\sst,\SP}_{\dd',\dd''},\phim{\WWW^{\zeta}_{\dd',\dd''}}\ICS_{\Mst_{\dd',\dd''}^{\zeta\sst}}(\mathbb{Q})\right)^{\vee}\right).
\]
Composing with the isomorphism
\begin{align*}
\Gamma\colon &\LL^{(\dd',\dd'')/2}\otimes \HO_c\left(\Mst^{\zeta\sst,\SP}_{\dd',\dd''},\phim{\WWW^{\zeta}_{\dd',\dd''}}\ICS_{\Mst_{\dd',\dd''}^{\zeta\sst,\SP}}(\mathbb{Q})\right)^{\vee}\rightarrow \\&\LL^{(\dd',\dd'')/2+(\dd'',\dd')/2}\otimes \HO_c(\Mst^{\zeta\sst}_{\dd'},\ICSt_{W,\dd'}^{\zeta})^{\vee}\otimes\HO_c(\Mst^{\zeta\sst,\SP}_{\dd''},\ICSt_{W,\dd''}^{\zeta})^{\vee}
\end{align*}
we likewise deduce
\begin{align}
\nonumber
&\Gamma\circ\gamma\left(\Pf_s\left(\HO_c(\Mst_{\dd}^{\zeta\sst,\SP},\ICSt_{W,\dd}^{\zeta})^{\vee}\right)\right)\subset
\\\nonumber& \Pf_s\left(\LL^{(\dd',\dd'')/2+(\dd'',\dd')/2}\otimes\HO(\Mst^{\zeta\sst,\SP}_{\dd'},\ICSt_{W,\dd'}^{\zeta})\otimes\HO_c(\Mst^{\zeta\sst,\SP}_{\dd''},\ICSt_{W,\dd''}^{\zeta})^{\vee}\right)\\\nonumber
& =\Pf_s\left(\LL^{(\dd',\dd'')}\otimes \HO_c(\Mst^{\zeta\sst,\SP}_{\dd'},\ICSt_{W,\dd'}^{\zeta})^{\vee}\boxtimes_+^{\tw}\HO_c(\Mst^{\zeta\sst,\SP}_{\dd''},\ICSt_{W,\dd''}^{\zeta})^{\vee}\right)\\\label{downshift}
& =\Pf_{s-2(\dd',\dd'')}\left(\HO_c(\Mst^{\zeta\sst,\SP}_{\dd'},\ICSt_{W,\dd'}^{\zeta})^{\vee}\boxtimes_+^{\tw}\HO_c(\Mst^{\zeta\sst,\SP}_{\dd''},\ICSt_{W,\dd''}^{\zeta})^{\vee}\right),
\end{align}
since $\Gamma$ is obtained by taking the hypercohomology of a morphism of complexes of monodromic mixed Hodge modules on $\Msp_{\mu}^{\zeta\sst}$.  The final equality (\ref{downshift}) does not preserve the cohomological degree, or the mixed Hodge structure.  We define 
\[
\mathfrak{E}_{1,\dd',\dd''}\colonequals \prod_{a\in Q_1}\prod_{1\leq l'\leq \dd'_{s(a)}}\prod_{1\leq l''\leq \dd''_{t(a)}}(x_{s(a),l'}-y_{t(a),l''})
\]
and
\[
\mathfrak{E}_{0,\dd',\dd''}\colonequals \prod_{i\in Q_0}\prod_{1\leq l'\leq \dd'_i}\prod_{1\leq l''\leq \dd''_i}(x_{i,l'}-y_{i,l''}).
\]

Multiplication by $\mathfrak{E}^{-1}_{1,\dd',\dd''}\mathfrak{E}_{0,\dd',\dd''}$ defines a map
\begin{align}
\beta^*\colon &\HO(\mathcal{A}^{\zeta,\SP}_{W,\dd'})\boxtimes_+^{\tw}\HO(\mathcal{A}^{\zeta,\SP}_{W,\dd''})\rightarrow   \left(\HO(\mathcal{A}^{\zeta,\SP}_{W,\dd'})\boxtimes_+^{\tw}\HO(\mathcal{A}^{\zeta,\SP}_{W,\dd''})\right)[\mathfrak{L}^{-1}_{\dd',\dd''}].
\end{align}
The comultiplication is defined \cite[Cor.5.9]{Da13} by
\begin{align*} \Delta_{W,\dd',\dd''}^{\zeta,\SP}\colon &\HO(\mathcal{A}_{W,\dd}^{\zeta,\SP})\rightarrow \left(\HO(\mathcal{A}_{W,\dd'}^{\zeta,\SP})\boxtimes^{\tw}_+ \HO(\mathcal{A}_{W,\dd''}^{\zeta,\SP})\right)[\mathfrak{L}^{-1}_{\dd',\dd''}]
\\ 
\Delta_{W,\dd',\dd''}^{\zeta,\SP}\colonequals &(\cdot\mathfrak{E}^{-1}_{1,\dd',\dd''}\mathfrak{E}_{0,\dd',\dd''})\circ \Gamma\circ\gamma.  
\end{align*}
This comultiplication then respects cohomological degree, and lifts to a morphism in the category of cohomologically graded monodromic mixed Hodge structures.  For a dimension vector $\ee$, we define $\lvert \ee\rvert=\sum_{i\in Q_0}\ee_i$.  So, for example, $\mathfrak{L}_{\dd',\dd''}$ is of cohomological degree $2\lvert\dd'\rvert \lvert \dd''\rvert$.  We extend the perverse filtration to the right hand side of (\ref{coprodint}) by setting 
\begin{align}\label{pervExt}
&\Pf_{s}\left(\left(\HO(\mathcal{A}_{W,\dd'}^{\zeta,\SP})\boxtimes^{\tw}_+ \HO(\mathcal{A}_{W,\dd''}^{\zeta,\SP})\right)[\mathfrak{L}^{-1}_{\dd',\dd''}]\right)\colonequals\\&\sum_{n\geq 0}\Pf_{s+2\lvert\dd''\rvert\lvert\dd'\rvert n}\left(\left(\HO(\mathcal{A}_{W,\dd'}^{\zeta,\SP})\boxtimes^{\tw}_+ \HO(\mathcal{A}_{W,\dd''}^{\zeta,\SP})\right)\right)\cdot \mathfrak{L}_{\dd',\dd''}^{-n}.\nonumber
\end{align}
By \cite[Prop.4.1]{Da13}, the localisation map 
\begin{align}\label{fDef}
\left(\HO(\mathcal{A}_{W,\dd'}^{\zeta,\SP})\boxtimes^{\tw}_+ \HO(\mathcal{A}_{W,\dd''}^{\zeta,\SP})\right)\xrightarrow{f_{\dd',\dd''}} \left(\HO(\mathcal{A}_{W,\dd'}^{\zeta,\SP})\boxtimes^{\tw}_+ \HO(\mathcal{A}_{W,\dd''}^{\zeta,\SP})\right)[\mathfrak{L}^{-1}_{\dd',\dd''}]
\end{align}
is an inclusion, since the operation $\cdot\mathfrak{L}_{\dd',\dd''}$ is injective on the domain.  By definition (\ref{pervExt}), $f_{\dd',\dd''}$ preserves the perverse filtration.  By Lemma \ref{HCM}\, multiplication by $\mathfrak{E}_{1,\dd',\dd''}^{-1}\cdot\mathfrak{E}_{0,\dd',\dd''}$ induces a map
\begin{align}
\nonumber&\Pf_s\left(\left(\HO(\mathcal{A}_{W,\dd'}^{\zeta,\SP})\boxtimes^{\tw}_+ \HO(\mathcal{A}_{W,\dd''}^{\zeta,\SP})\right)[\mathfrak{L}^{-1}_{\dd',\dd''}]\right)\rightarrow \\ \label{upshift}
&\Pf_{s+2(\dd',\dd'')}\left(\left(\HO(\mathcal{A}_{W,\dd'}^{\zeta,\SP})\boxtimes^{\tw}_+ \HO(\mathcal{A}_{W,\dd''}^{\zeta,\SP})\right)[\mathfrak{L}^{-1}_{\dd',\dd''}]\right).
\end{align}
Combining the shifts in perverse degree in (\ref{upshift}) and (\ref{downshift}), we deduce that
\[
\Delta^{\zeta}_{W,\dd',\dd''}\left(\Pf_s \left(\HO\left(\mathcal{A}^{\zeta,\SP}_{W,\dd}\right)\right)\right)\subset \Pf_s\left(\left(\HO(\mathcal{A}_{W,\dd'}^{\zeta,\SP})\boxtimes^{\tw}_+ \HO(\mathcal{A}_{W,\dd''}^{\zeta,\SP})\right)[\mathfrak{L}^{-1}_{\dd',\dd''}]\right).
\]
The proof of the following is essentially the original proof of the Atiyah--Bott lemma of \cite{AtBo83}, adapted to deal with vanishing cycles and passage to the associated graded of the perverse filtration.
\begin{lemma}
\label{PAB}
The map $\Gr_{\Pf}(f_{\dd',\dd''})$ induced by the map (\ref{fDef}) is injective.
\end{lemma}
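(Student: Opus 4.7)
The map $f_{\dd',\dd''}\colon A\hookrightarrow A[\mathfrak{L}_{\dd',\dd''}^{-1}]$ is an injection by \cite[Prop.4.1]{Da13}, and preserves $\Pf$ by construction of the filtration on the target. Unwinding, via Proposition \ref{HCM2}, what it means for $\Gr_{\Pf}(f_{\dd',\dd''})$ to be injective, the assertion reduces to the following strictness statement: whenever $a\in\Pf_s(A)$ satisfies $a\cdot\mathfrak{L}_{\dd',\dd''}^N\in\Pf_{s-1+2N\lvert\dd'\rvert\lvert\dd''\rvert}(A)$ for some $N\geq 0$, then already $a\in\Pf_{s-1}(A)$. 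A short induction on $N$, using Lemma \ref{HCM} to track the shift of the perverse filtration under multiplication by $\mathfrak{L}_{\dd',\dd''}$, reduces this to the $N=1$ case: namely, that multiplication by $\mathfrak{L}_{\dd',\dd''}$ is \emph{strict} with respect to $\Pf$, i.e.\ induces an injective map on $\Gr_\Pf(A)$ shifting the grading by $2\lvert\dd'\rvert\lvert\dd''\rvert$.

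Factoring $\mathfrak{L}_{\dd',\dd''}$ into its linear pieces $\lambda_{i,j,t',t''}=y_{j,t''}-x_{i,t'}$ and applying Lemma \ref{HCM} to each, a further induction on the number of factors reduces the problem to showing that each $\cdot\lambda_{i,j,t',t''}$ is strict with respect to $\Pf$. Since the individual $\lambda_{i,j,t',t''}$ are not invariant under the Weyl groups $\Sym_{\dd'}\times\Sym_{\dd''}$, the plan is to first pull back along the natural maps $\Mst_{\dd'}\times\Mst_{\dd''}\to BG_{\dd'}\times BG_{\dd''}\to BT_{\dd'}\times BT_{\dd''}$ to a torus-equivariant setting where the individual Chern roots make sense; since $\HO(BT_\dd)$ is free of rank $\lvert W_\dd\rvert$ over $\HO(BG_\dd)$, this pullback is injective and compatible with $\Pf$, and it suffices to verify strictness for each $\cdot\lambda_{i,j,t',t''}$ over the torus.

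For the final step, the plan is to adapt the Atiyah--Bott localisation argument \cite{AtBo83} to the present setting of monodromic mixed Hodge modules and perverse filtrations. On the framed approximations $\Msp_{\ff,\dd'}^{\zeta}\times \Msp_{\ff,\dd''}^{\zeta}$ the class $\lambda_{i,j,t',t''}$ becomes the first Chern class of a genuine line bundle $L$; the proper map $\pi_{\ff,\dd'}^{\zeta}\times \pi_{\ff,\dd''}^{\zeta}$ yields a decomposition-theorem splitting of the pushforward, and cup product with $c_1(L)$ lifts to a morphism of pure monodromic mixed Hodge modules on $\Msp_{\dd'}^{\zeta\sst}\times\Msp_{\dd''}^{\zeta\sst}$. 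On each simple summand of this splitting, $c_1(L)$ acts either by zero or by an injection (by semisimplicity of pure monodromic mixed Hodge modules); the injectivity of the overall action then follows once we rule out the zero case, exploiting the fact that $\mathfrak{L}_{\dd',\dd''}$ acts invertibly on $A[\mathfrak{L}_{\dd',\dd''}^{-1}]$ and hence cannot be annihilated, through any of its linear factors, on any simple summand that survives localisation. Passing to the limit $\ff\to\infty$ via Lemma \ref{appLem} then transfers the strictness to $A$ itself.

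The main obstacle will be this last step: rigorously verifying, without any genericity hypothesis on $\zeta$ and in the presence of the vanishing cycles twist by $\phim{\WWW_\dd^\zeta}$, that no simple IC summand of the pushforward of $\mathbb{Q}$ along $\pi_{\ff,\dd'}^\zeta\times\pi_{\ff,\dd''}^\zeta$ is annihilated by the specific Chern class $c_1(L)$. This demands a careful analysis of the supports of those simple summands and of the restriction of the tautological line bundle $L$ to them, and is the technical heart of the argument.
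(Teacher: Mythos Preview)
Your initial reduction is correct and matches the paper: injectivity of $\Gr_{\Pf}(f_{\dd',\dd''})$ is equivalent to injectivity of $\Gr_{\Pf}(\cdot\mathfrak{L}_{\dd',\dd''})$ as a degree $2\lvert\dd'\rvert\lvert\dd''\rvert$ map on $\Gr_{\Pf}(A)$, and the paper carries out exactly this step.

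The divergence comes after. You try to factor $\mathfrak{L}_{\dd',\dd''}$ into its linear constituents $\lambda_{i,j,t',t''}$ over the full maximal torus and prove strictness of each factor separately. This is a strictly stronger claim than what is needed, and your sketch for it has a genuine gap. The assertion that on each simple summand of $\pi^{\zeta}_{\ff,\dd',*}\times\pi^{\zeta}_{\ff,\dd'',*}$ the class $c_1(L)$ ``acts either by zero or by an injection'' misreads what is going on: cup product with $c_1(L)$ is a degree-two map $\Ho^k\to\Ho^{k+2}$ between \emph{different} semisimple objects, given by a matrix of maps between simples; semisimplicity only tells you each matrix entry is zero or an isomorphism, not that the whole matrix has trivial kernel. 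Your proposed remedy --- that no summand can be killed because $\mathfrak{L}$ is invertible after localisation --- conflates injectivity on $A$ (which is known) with injectivity on $\Gr_{\Pf}(A)$ (which is the point). I do not see how to close this gap, and I doubt strictness of the individual $\lambda_{i,j,t',t''}$ is even true in general.

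The paper's argument sidesteps this entirely and is worth knowing. Instead of the full torus, one chooses a specific one-parameter subgroup $T\cong\mathbb{G}_m\subset G_{\dd'}\times G_{\dd''}$, namely $z\mapsto\bigl((z^{\lvert\dd''\rvert}\id)_{i\in Q_0},(z^{-\lvert\dd'\rvert}\id)_{i\in Q_0}\bigr)$, which acts trivially on $X^{\zeta\sst}_{\dd'}\times X^{\zeta\sst}_{\dd''}$. Writing $P=(G_{\dd'}\times G_{\dd''})/T$, one gets a tensor decomposition of the relevant mixed Hodge module with a factor of $\HO_T(\pt)\cong\mathbb{Q}[t]$, and hence a second filtration $D$ by cohomological degree on the $P$-equivariant factor. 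On the bifiltered associated graded $\Gr_E$ (with $E=D\cap\Pf$), multiplication by $\mathfrak{L}_{\dd',\dd''}$ preserves $D$-degree and therefore acts purely through $\HO_T(\pt)$, namely as multiplication by the $T$-equivariant Euler class of the normal bundle $X_{\dd',\dd''}\to X_{\dd'}\times X_{\dd''}$. The choice of $T$ guarantees every $T$-weight of this bundle is nonzero, so this Euler class is a nonzero monomial in $t$, hence acts injectively. Injectivity on $\Gr_E$ then gives injectivity on $\Gr_{\Pf}$. The point is that working with the whole $\mathfrak{L}$ against a single well-chosen equivariant parameter avoids the factor-by-factor analysis that trips up your approach.
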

\begin{proof}
We abbreviate $\Pf_i=\Pf_i\left(\HO(\Mst^{\zeta\sst,\SP}_{\dd'},\ICSt_{W,\dd'}^{\zeta})\boxtimes_+^{\tw} \HO(\Mst^{\zeta\sst,\SP}_{\dd''},\ICSt_{W,\dd''}^{\zeta})\right)$.  It is sufficient to show that the degree $2\lvert\dd'\rvert \lvert\dd''\rvert$ map $\Gr_{\Pf}(\cdot \mathfrak{L}_{\dd',\dd''})$ is injective.  For then if 
\[
f_{\dd',\dd''}(a)\in \left(\sum_{r\geq 0}\Pf_{i+2\lvert\dd'\rvert \lvert\dd''\rvert r}\left(\HO(\Mst^{\zeta\sst,\SP}_{\dd'},\ICSt_{W,\dd'}^{\zeta})\boxtimes_+^{\tw} \HO(\Mst^{\zeta\sst,\SP}_{\dd''},\ICSt_{W,\dd''}^{\zeta})\right)\cdot \mathfrak{L}_{\dd',\dd''}^{-r}\right)
\]
we can write $a=\sum_{r\leq n} b_r\cdot\mathfrak{L}^{-r}_{\dd',\dd''}$ for $b_r\in \Pf_{i+2\lvert\dd'\rvert \lvert\dd''\rvert r}$.  Then $a\cdot \mathfrak{L}^n_{\dd',\dd''}\in\Pf_{i+2\lvert\dd'\rvert \lvert\dd''\rvert n}$ by Lemma \ref{HCM}, and so $a\in \Pf_{i}$, by injectivity of $\Gr_{\Pf}(\cdot \mathfrak{L}_{\dd',\dd''})$.

Consider the subgroup $\GG_m\cong T\subset G_{\dd'}\times G_{\dd''}$ given by the embedding 
\[
z\mapsto \left( (z^{\dd''_i}\id_{\mathbb{C}^{\dd'_i}})_{i\in Q_0},(z^{-\dd'_i}\id_{\mathbb{C}^{\dd''_i}})_{i\in Q_0}\right)
\]
and let $P_{\dd',\dd''}\colonequals (G_{\dd'}\times G_{\dd''})/T$.  Note that $T$ acts trivially on $X^{\zeta\sst}_{\dd'}\times X^{\zeta\sst}_{\dd''}$ and furthermore $T$ acts trivially on the linearization (\ref{charDef}), so that the linearization of the $G_{\dd'}\times G_{\dd''}$-action lifts to a linearization of the $P_{\dd',\dd''}$ action.  Let 
\[
p'_{\dd',\dd''}\colon (X^{\zeta\sst}_{\dd'}\times X^{\zeta\sst}_{\dd''})/P_{\dd',\dd''}\rightarrow \Msp_{\dd'}^{\zeta\sst}\times \Msp_{\dd''}^{\zeta\sst}
\]
be the map to the GIT quotient.  Via the Cartesian square
\[
\xymatrix{
\Mst_{\dd'}^{\zeta\sst}\times \Mst_{\dd''}^{\zeta\sst}\ar[r]\ar[d]&\pt/(G_{\dd'}\times G_{\dd''})\ar[d]\\
(X^{\zeta\sst}_{\dd'}\times X^{\zeta\sst}_{\dd''})/P_{\dd',\dd''}\ar[r]&\pt/P_{\dd',\dd''}
}
\]
we obtain the isomorphism 
\begin{align*}
&\Ho\left(p^{\zeta}_{\dd',*}\ICSt_{W,\dd'}^{\zeta}\boxtimes p^{\zeta}_{\dd'',*}\ICSt_{W,\dd''}^{\zeta}\right)\cong\\& \Ho\left(p'_{\dd',\dd'',*}\phim{f}\ICS_{X^{\zeta\sst}_{\dd'}\times X^{\zeta\sst}_{\dd''}/P_{\dd',\dd''}}(\mathbb{Q})\right)\otimes_{\HO_{P_{\dd',\dd''}}(\pt)}\HO_{G_{\dd'}\times G_{\dd''}}(\pt)\otimes \LL^{1/2}.
\end{align*}
where $f$ is the function induced by $\W_{\dd'}\boxplus \W_{\dd''}$ on $(X^{\zeta\sst}_{\dd'}\times X^{\zeta\sst}_{\dd''})/P_{\dd',\dd''}$.  After picking a splitting
\[
\lambda\colon \HO_{G_{\dd'}\times G_{\dd''}}(\pt)\cong \HO_{P_{\dd',\dd''}}(\pt)\otimes \HO_{T}(\pt)
\]
we obtain an isomorphism
\begin{align*}
&\Ho\left(p^{\zeta}_{\dd',*}\ICSt_{W,\dd'}^{\zeta}\boxtimes p^{\zeta}_{\dd'',*}\ICSt_{W,\dd''}^{\zeta}\right)\cong \Ho\left(p'_{\dd',\dd'',*}\phim{f}\ICS_{X^{\zeta\sst}_{\dd'}\times X^{\zeta\sst}_{\dd''}/P_{\dd',\dd''}}(\mathbb{Q})\right)\otimes \HO(\ICS_{\pt/T}(\mathbb{Q})).
\end{align*}
We define the filtration
\begin{align*}
D^i=&\HO\left(\Msp_{\dd'}^{\zeta\sst,\SP}\times \Msp_{\dd''}^{\zeta\sst,\SP},\tilde{\omega}^!\Ho^{\geq i}\left(p'_{\dd',\dd'',*}\phim{f}\ICS_{X^{\zeta\sst}_{\dd'}\times X^{\zeta\sst}_{\dd''}/P_{\dd',\dd''}}(\mathbb{Q})\right)\otimes \HO(\ICS_{\pt/T}(\mathbb{Q})\right)\\&
\subset \HO\left(\Msp_{\dd'}^{\zeta\sst,\SP}\times \Msp_{\dd''}^{\zeta\sst,\SP}, \tilde{\omega}^!\Ho\left(p^{\zeta}_{\dd',*}\ICSt_{W,\dd'}^{\zeta}\boxtimes p^{\zeta}_{\dd'',*}\ICSt_{W,\dd''}^{\zeta}\right)\right)\\&
=\Gr_{\Pf}\left(\HO(\mathcal{A}_{W,\dd'}^{\zeta,\SP})\boxtimes^{\tw}_+ \HO(\mathcal{A}_{W,\dd''}^{\zeta,\SP})\right)=:\mathcal{B}
\end{align*}
where we have made the abbreviation 
\[
\tilde{\omega}=\tilde{\omega}^{\zeta}_{\dd'}\times\tilde{\omega}^{\zeta}_{\dd''}\colon \Msp^{\zeta\sst,\SP}_{\dd'}\times\Msp^{\zeta\sst,\SP}_{\dd''}\hookrightarrow \Msp^{\zeta\sst}_{\dd'}\times\Msp^{\zeta\sst}_{\dd''}.
\]
Consider the associated graded object $\Gr_D\mathcal{B}$.  Multiplication by $\mathfrak{L}_{\dd',\dd''}$ preserves $D$ degree, and so induces a map 
\[
\Gr_{D}(\Gr_{\Pf}(\cdot \mathfrak{L}_{\dd',\dd''}))\colon \Gr_{D}\mathcal{B}\rightarrow \Gr_{D}\mathcal{B}.
\]
Specifically, the map is given by multiplication by the $T$-equivariant Euler characteristic of $X_{\dd',\dd''}\rightarrow X_{\dd'}\times X_{\dd''}$, which is nonzero since the vector bundle has no sub-bundle with trivial $T$-weight.  It follows that $\Gr_{D}(\Gr_{\Pf}(\cdot \mathfrak{L}_{\dd',\dd''}))$ is injective, so that $\Gr_{\Pf}(\cdot \mathfrak{L}_{\dd',\dd''})$ is injective too.

\end{proof}
We deduce the following proposition.
\begin{proposition}
The triple $\left(\Gr_{\Pf}(\HO(\Coha_{W,\mu}^{\zeta,\SP})),\Gr_{\Pf}(\HO(\ms^{\zeta,\SP}_{W,\mu})),\Gr_{\Pf}(\Delta_{W,\mu}^{\zeta,\SP})\right)$ defines a localised bialgebra in the category ${}^{\tau}\Dulf(\MMHM(\Lambda_{\mu}^{\zeta}))$ in the sense of \cite[Def.5.3]{Da13}.
\end{proposition}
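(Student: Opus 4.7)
The plan is to verify that each of the three data of a localised bialgebra structure, in the sense of \cite[Def.5.3]{Da13}, descends through the perverse filtration. There are three things to check: the associated graded multiplication, the associated graded comultiplication, and the compatibility axiom relating them.

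The multiplication part is essentially already Proposition \ref{PCoha}: it asserts that $\HO(\ms^{\zeta}_{W,\mu})$ strictly preserves the perverse filtration and induces an associative multiplication on $\Gr_{\Pf}(\HO(\Coha_{W,\mu}^{\zeta}))$, realised geometrically as $\Ho(\dim_* \Ho(\ms^{\zeta}_{W,\mu}))$. For the comultiplication, I would assemble the filtration-shift computations from the paragraphs preceding Lemma \ref{PAB}: the morphism $\gamma$ respects $\Pf$ via the factorisation (\ref{alterng}), which exhibits it as the total cohomology of a morphism of monodromic mixed Hodge modules on $\Msp^{\zeta\sst}_{\mu}$; the Thom--Sebastiani-type isomorphism $\Gamma$ incurs a downward perverse shift of $2(\dd',\dd'')$ as in (\ref{downshift}); and multiplication by $\mathfrak{E}_{1,\dd',\dd''}^{-1}\mathfrak{E}_{0,\dd',\dd''}$ incurs a compensating upward shift of $2(\dd',\dd'')$ as in (\ref{upshift}), using Lemma \ref{HCM} together with Proposition \ref{HCM2} to track degrees through the localisation by $\mathfrak{L}_{\dd',\dd''}$. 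These shifts cancel, so $\Delta^{\zeta}_{W,\dd',\dd''}$ strictly preserves $\Pf_s$ and descends to a map $\Gr_{\Pf}(\Delta^{\zeta}_{W,\dd',\dd''})$ valued in the associated graded of the localised tensor product.

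The compatibility axiom is where Lemma \ref{PAB} earns its keep. That lemma guarantees that $\Gr_{\Pf}(f_{\dd',\dd''})$ is injective, so any identity between elements of the localised tensor product whose cleared-denominator form lives in the untwisted tensor product can be tested after passage to associated graded. The localised bialgebra compatibility axiom of \cite[Thm.5.11]{Da13} is precisely such an identity---a polynomial relation between products and coproducts, each term of which is built from morphisms that strictly preserve the perverse filtration---and therefore descends verbatim to give the required identity in $\Gr_{\Pf}$.

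The main obstacle I expect is the bookkeeping around the perverse filtration on the localisation: one must make sure that $\Gr_{\Pf}$ of $(\bullet)[\mathfrak{L}_{\dd',\dd''}^{-1}]$ is correctly identified as the localisation of $\Gr_{\Pf}$ of the untwisted tensor product at the image of $\mathfrak{L}_{\dd',\dd''}$ (this is essentially the content of Proposition \ref{HCM2} combined with Lemma \ref{PAB}), so that the codomain of $\Gr_{\Pf}(\Delta^{\zeta}_{W,\dd',\dd''})$ is canonically the ``right'' target. Once this identification is pinned down and the shift computations (\ref{downshift}) and (\ref{upshift}) are seen to cancel, the localised bialgebra compatibility at the associated graded level is a direct transcription of the filtered compatibility, with no further input needed.
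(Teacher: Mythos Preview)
The paper does not actually supply a proof of this proposition: it is stated without a proof environment, with the preceding discussion (Proposition \ref{PCoha} for the multiplication, the filtration computations culminating in (\ref{downshift}) and (\ref{upshift}) for the comultiplication, and Lemma \ref{PAB} together with Proposition \ref{HCM2} for the localisation) intended to serve as the justification. Your proposal correctly identifies and assembles precisely these ingredients, and your account of how the perverse shifts cancel and how Lemma \ref{PAB} controls the passage to the associated graded of the localisation is exactly the reasoning the paper has set up; so your approach matches the paper's (implicit) one.
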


\subsection{Proof of Theorem \ref{qea} and construction of the BPS Lie algebra}
\label{ThmCproof}

We start with a couple of technical lemmas for the proofs in this section.
\begin{lemma}
\label{Tech1}
Let $\mathcal{F},\mathcal{G}\in\MMHM(X)$.  Then $f\colon \mathcal{F}\rightarrow \mathcal{G}$ is an isomorphism if and only if $f_x$ is an isomorphism for every closed point $x\in X$.
\end{lemma}
\begin{proof}
One implication is trivial, so we assume that $f_x$ is an isomorphism for every $x$ and prove that $f$ is an isomorphism.  Since the functor 
\[
\form{X}\colon \MMHM(X)\rightarrow \perv(X)
\]
is faithful, it suffices to prove the lemma with the category $\MMHM(X)$ replaced by $\perv(X)$.  Write $C=\textrm{cone}(f)$.  Assume for a contradiction that $C\neq 0$.  Let $i\in\mathbb{Z}$ be the lowest number such that the constructible cohomology sheaf $\Ho_{\textrm{con}}^i(C)$ is nonzero, and set 
\[
\mathcal{R}\colonequals \Ho_{\textrm{con}}^i(C)[-i].  
\]
Denote by $\iota_x\colon x\hookrightarrow X$ the inclusion.  The functor $\iota^*_x$ is exact for the natural t-structure on the derived category of constructible sheaves, and there is an isomorphism $\iota_x^*C\cong \textrm{cone}(f_x)$, so the first term in the exact sequence
\[
\rightarrow \Ho^{i-1}_{\textrm{con}}(\iota_x^*\tau_{>i}C)\rightarrow \Ho^i_{\textrm{con}}(\iota_x^*\mathcal{R})\rightarrow \Ho^i_{\textrm{con}}(\iota^*_xC)\rightarrow.
\]
is zero, as well as the last, by supposition.  Then $\mathcal{R}[i]$ is a nonzero constructible sheaf satisfying $\iota^*_x\mathcal{R}[i]=0$ for all $x$, a contradiction.
\end{proof}
\begin{lemma}
\label{Tech2}
Let $\mathcal{F},\mathcal{G}\in\MMHM(X)$, with $\form{X}(\mathcal{F})$ and $\form{X}(\mathcal{G})$ semisimple perverse sheaves, and let $f\colon \mathcal{F}\rightarrow\mathcal{G}$ be a morphism between them.  Then $f=0$ if and only if $f_x=0$ for every closed point $\iota_x\colon x\hookrightarrow X$.
\end{lemma}
\begin{proof}
Again, one direction is trivial, and we may assume that we are working in $\perv(X)$ throughout, by faithfulness of $\form{X}$.  So assume that $f_x=0$ for all $x$.  Let $\{S_t\}_{t\in T}$ be the set of all simple perverse sheaves appearing in a direct sum decomposition of $\mathcal{F}$ and $\mathcal{G}$.  Then there is a commutative diagram
\[
\xymatrix{
\bigoplus_{t\in T} S_t\otimes_{\mathbb{C}}\Hom(S_t,\mathcal{F})\ar[r]^-{\cong}\ar[d]_{\bigoplus_t \id_t\otimes f_t}&\mathcal{F}\ar[d]_f\\
\bigoplus_{t\in T} S_t\otimes_{\mathbb{C}}\Hom(S_t,\mathcal{G})\ar[r]^-{\cong}& \mathcal{G}
}
\]
where $\id_t\colon S_t\rightarrow S_t$ is the identity map, and $f_t\colon \Hom(S_t,\mathcal{F})\rightarrow \Hom(S_t,\mathcal{G})$ is obtained from composition with $f$.  Writing
\[
S_t=\ICSn_{Y}(\mathcal{L})[\dim(Y)]
\]
for $Y\subset X$ a closed irreducible subvariety, and $\mathcal{L}$ a local system defined on $Y'\subset Y$ a regular open subvariety, and picking $x\in Y'$, we deduce that $\iota_x^*\id_t\neq 0$, and so $f_t=0$ for all $t$.
\end{proof}

With these lemmas in hand, we move back to consideration of cohomological Hall algebras.  Throughout this section we assume that $\zeta$ is $\mu$-generic.  By Proposition \ref{inccor} there is a canonical embedding 
\[
\iota=\Ho(\dim_*\tilde{\omega}^{\zeta,!}_{\mu}\Upsilon_{W,\mu}^{\zeta})
\]
of $\HO(\BC )_{\vir}\otimes \DT_{W,\mu}^{\zeta,\SP}$ inside $\Gr_{\Pf}(\HO(\Coha^{\zeta,\SP}_{W,\mu}))$.
\begin{proposition}
\label{primitivity}
The subspace $\iota(\HO(\BC )_{\vir}\otimes \DT_{W,\mu}^{\zeta,\SP})$ is a primitive subspace in $\Gr_{\Pf}(\HO(\Coha_{W,\mu}^{\zeta,\SP}))$, i.e. the following composition of maps is zero:
\begin{align*}
&\bigoplus_{\dd\in \Lambda_{\mu}^{\zeta}\setminus\{0\}}\HO(\BC )_{\vir}\otimes\DT_{W,\dd}^{\zeta,\SP}\xrightarrow{\iota}\Gr_{\Pf}(\HO(\Coha^{\zeta,\SP}_{W,\mu}))\xrightarrow{\Gr_{\Pf}(\Delta^{\zeta,\SP}_{W,\mu})}\\&\bigoplus_{\dd',\dd''\in\Lambda_{\mu}^{\zeta}}\Gr_{\Pf}\left((\HO(\Coha^{\zeta,\SP}_{W,\dd'})\boxtimes^{\tw}_+\HO(\Coha^{\zeta,\SP}_{W,\dd''}))[\mathfrak{L}_{\dd',\dd''}^{-1}]\right)
\twoheadrightarrow \bigoplus_{\dd',\dd''\in\Lambda_{\mu}^{\zeta}\setminus\{0\}}\Gr_{\Pf}\left((\Coha^{\zeta,\SP}_{W,\dd'}\boxtimes^{\tw}_+\Coha^{\zeta,\SP}_{W,\dd''})[\mathfrak{L}_{\dd',\dd''}^{-1}]\right).
\end{align*}
\end{proposition}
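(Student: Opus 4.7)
The strategy is to reduce the primitivity to a vanishing statement about a morphism of monodromic mixed Hodge modules on $\Msp^{\zeta\sst}_\dd$ whose target is supported on $\mathrm{Im}(\oplus)$, and then exploit the full strict support of $\ICS_{\Msp^{\zeta\sst}_\dd}(\QQ)$ together with the exactness of $\phim{\WW^\zeta_\dd}$ to deduce the required vanishing.

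First, I reduce to the unlocalised side. Writing $\Delta^\zeta_{W,\dd',\dd''}=(\cdot\mathfrak{E}^{-1}_{1,\dd',\dd''}\mathfrak{E}_{0,\dd',\dd''})\circ f_{\dd',\dd''}\circ \Gamma\circ\gamma$, with $\Gamma\circ\gamma$ landing in the un-localised tensor product, Lemma \ref{PAB} gives injectivity of $\Gr_{\Pf}(f_{\dd',\dd''})$, while multiplication by $\mathfrak{E}^{-1}_{1,\dd',\dd''}\mathfrak{E}_{0,\dd',\dd''}$ is injective as multiplication by a non-zero element in the relevant localised equivariant cohomology ring. Hence it is enough to show that $\Gr_{\Pf}(\Gamma\circ\gamma)$ vanishes on $\DT^\zeta_{W,\dd}\otimes\HO(\mathbb{C}\mathbb{P}^\infty)_{\vir}$. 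I then relativise $\Gr_{\Pf}(\Gamma\circ\gamma)$ to a morphism
\[
\tilde\gamma\colon \oplus^*\Ho\!\left(p^\zeta_{\dd,*}\ICSt^\zeta_{W,\dd}\right)\to \Ho\!\left(p^\zeta_{\dd',*}\ICSt^\zeta_{W,\dd'}\right)\boxtimes\Ho\!\left(p^\zeta_{\dd'',*}\ICSt^\zeta_{W,\dd''}\right)\otimes\LL^{((\dd',\dd'')+(\dd'',\dd'))/2}
\]
in $\Du(\MMHM(\Msp^{\zeta\sst}_{\dd'}\times\Msp^{\zeta\sst}_{\dd''}))$, built by mimicking (\ref{sagain}) at the relative level and using that every morphism in diagram (\ref{smap}) lives over the finite map $\oplus$. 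Proposition \ref{PCoha} identifies $(\dim\times\dim)_*\tilde\gamma$ with $\Gr_{\Pf}(\Gamma\circ\gamma)$. By finiteness of $\oplus$ and adjunction, it then suffices to show that the corresponding absolute morphism $\varrho\colon \Ho(p^\zeta_{\dd,*}\ICSt^\zeta_{W,\dd})\to \oplus_*(\text{target})\otimes\LL^{((\dd',\dd'')+(\dd'',\dd'))/2}$ in $\Du(\MMHM(\Msp^{\zeta\sst}_\dd))$ vanishes on the summand $\DTS^\zeta_{W,\dd}\otimes\HO(\mathbb{C}\mathbb{P}^\infty)_{\vir}$.

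Second, by naturality of Thom--Sebastiani together with the commutativity of $\phim{\WW^\zeta_\dd}$ with the proper pushforward $\oplus_*$ (Proposition \ref{basicfacts}(\ref{adjInt}),(\ref{vanDec})), a diagram chase analogous to Lemma \ref{cohacom} shows that $\varrho=\phim{\WW^\zeta_\dd}(\varrho_0)$ for a morphism $\varrho_0$ in $\Du(\MMHM(\Msp^{\zeta\sst}_\dd))$ constructed from the same diagram in the $W=0$ case. Moreover, by the construction in the proof of Corollary \ref{inccor}, the inclusion $\DTS^\zeta_{W,\dd}\otimes\HO(\mathbb{C}\mathbb{P}^\infty)_{\vir}\hookrightarrow\Ho(p^\zeta_{\dd,*}\ICSt^\zeta_{W,\dd})$ is obtained from the analogous $W=0$ inclusion $\ICS_{\Msp^{\zeta\sst}_\dd}(\QQ)\otimes\HO(\mathbb{C}\mathbb{P}^\infty)_{\vir}\hookrightarrow\Ho(p^\zeta_{\dd,*}\ICS_{\Mst^{\zeta\sst}_\dd}(\QQ))$ by applying $\phim{\WW^\zeta_\dd}$. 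Hence vanishing of $\varrho$ on $\DTS^\zeta_{W,\dd}\otimes\HO(\mathbb{C}\mathbb{P}^\infty)_{\vir}$ follows from vanishing of $\varrho_0$ on $\ICS_{\Msp^{\zeta\sst}_\dd}(\QQ)\otimes\HO(\mathbb{C}\mathbb{P}^\infty)_{\vir}$, and this is the remaining thing to prove.

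Third, the target of $\varrho_0$ is supported on $\mathrm{Im}(\oplus)\subset\Msp^{\zeta\sst}_\dd$. Since $\dd',\dd''\in\Lambda^\zeta_\mu\setminus\{0\}$, every polystable module in $\mathrm{Im}(\oplus)$ decomposes non-trivially and so is not stable; therefore $\mathrm{Im}(\oplus)\cap\Msp^{\zeta\st}_\dd=\emptyset$. If $\Msp^{\zeta\st}_\dd=\emptyset$ then $\DTS^\zeta_{W,\dd}=0$ by definition and the statement is trivial. Otherwise $\Msp^{\zeta\st}_\dd$ is dense open in $\Msp^{\zeta\sst}_\dd$, and $\ICS_{\Msp^{\zeta\sst}_\dd}(\QQ)\otimes\HO(\mathbb{C}\mathbb{P}^\infty)_{\vir}$ is a direct sum of shifted, Tate-twisted simple IC mixed Hodge modules, each with strict support an irreducible component of $\Msp^{\zeta\sst}_\dd$ meeting the stable locus. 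Such an object admits no non-zero morphism to any MMHM supported on the closed complement $\Msp^{\zeta\sst}_\dd\setminus\Msp^{\zeta\st}_\dd$, so $\varrho_0$ vanishes on this summand, as required. The main obstacle is the diagram chase in the second step, where one must verify carefully that $\varrho=\phim{\WW^\zeta_\dd}(\varrho_0)$ under all the Thom--Sebastiani and adjunction isomorphisms entering the definition of $\gamma$; this is a direct analogue of the commutativity established in Lemma \ref{cohacom} for the multiplication.
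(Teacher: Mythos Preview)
Your proposal is correct and follows essentially the same route as the paper: reduce the vanishing of $\Gr_{\Pf}(\Delta^\zeta_{W,\dd',\dd''})$ on $\DT^\zeta_{W,\dd}\otimes\HO(\mathbb{C}\mathbb{P}^\infty)_{\vir}$ to a morphism of monodromic mixed Hodge modules on $\Msp^{\zeta\sst}_\dd$, factor out $\phim{\WW^\zeta_\dd}$ to reduce to $W=0$, and then observe that the source has strict support meeting $\Msp^{\zeta\st}_\dd$ while the target is supported in the complement, so the map vanishes.

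Two small remarks on streamlining. First, the appeal to Lemma~\ref{PAB} and to injectivity of multiplication by $\mathfrak{E}_{1,\dd',\dd''}^{-1}\mathfrak{E}_{0,\dd',\dd''}$ is unnecessary: once $\Gr_{\Pf}(\Gamma\circ\gamma)$ vanishes on the subspace, so does any subsequent composition, in particular $\Gr_{\Pf}(\Delta^\zeta_{W,\dd',\dd''})$. Second, the paper avoids your relativise--then--adjoin detour through $\Msp^{\zeta\sst}_{\dd'}\times\Msp^{\zeta\sst}_{\dd''}$ and $\oplus_*$: it observes directly that $\Gr_{\Pf}(\gamma)$ is obtained by applying $\Ho(\dim_*\phim{\WW^\zeta_\dd})$ to the restriction map $\Ho(p^\zeta_{\mu,*}\QQ)\to\Ho(p^\zeta_{\mu,*}s_{\dd',\dd'',*}\QQ)$ on $\Msp^{\zeta\sst}_\dd$ (this is the commutative square with vertical isomorphisms $\nu_\dd$, $\nu_{\dd',\dd''}$), and that the composition with the inclusion $\ICS_{\Msp^{\zeta\sst}_\dd}(\QQ)\otimes\HO(\mathbb{C}\mathbb{P}^\infty)_{\vir}\hookrightarrow\Ho(p^\zeta_{\mu,*}\QQ)$ is a map between \emph{pure} complexes with distinct strict supports, hence zero. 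This is exactly your third step, reached more quickly.
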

\begin{proof}
We need to show that for decompositions $\dd=\dd'+\dd''$, with $\dd'\neq 0\neq \dd''$, the image of $\iota(\HO(\BC )_{\vir}\otimes\DT_{W,\dd}^{\zeta,\SP})$ under the map
\begin{align}
\label{int4}
&\Gr_{\Pf}(\Delta^{\zeta,\SP}_{\dd',\dd''})\colon \HO\left(\Msp_{\dd}^{\zeta\sst,\SP},\tilde{\omega}_{\dd}^!\Ho\left(p^{\zeta}_{\dd,*}\phim{\WWW^{\zeta}_{\dd}}\ICS_{\Mst_{\dd}^{\zeta\sst}}\right)\right)\rightarrow \\ \nonumber&\left(\HO\left(\Msp_{\dd'}^{\zeta\sst,\SP},\Ho\left(p^{\zeta}_{\dd',*}\phim{\WWW^{\zeta}_{\dd'}}\ICS_{\Mst_{\dd'}^{\zeta\sst}}\right)\right)\boxtimes_{+}^{\tw} \HO\left(\Msp^{\zeta\sst,\SP}_{\dd''},\Ho\left(p^{\zeta}_{\dd'',*}\phim{\WWW^{\zeta}_{\dd''}}\ICS_{\Mst_{\dd''}^{\zeta\sst}}\right)\right)\right)[\mathfrak{L}_{\dd',\dd''}^{-1}]
\end{align}
is zero.  We assume that $\Msp^{\zeta\st}_{\dd}\neq \emptyset$, or the statement is trivial, for then by definition $\DT_{W,\dd}^{\zeta,\SP}=0$.  Denote by $R\Gamma$ the derived global sections functor on $\Msp_{\dd}^{\zeta\sst,\SP}$.  In terms of the commutative diagram
\begin{equation}
\label{int5}
\xymatrix{
R\Gamma\left(\tilde{\omega}^{\zeta,!}_{\dd}\Ho\left(p^{\zeta}_{\mu,*}\phim{\WWW_{\dd}^{\zeta}}\ICS_{\Mst_{\dd}^{\zeta\sst}}(\QQ)\right)\right)\ar[r]^-{\Gr_{\Pf}(\gamma)} &R\Gamma\left(\tilde{\omega}^{\zeta,!}_{\dd}\Ho\left(p^{\zeta}_{\mu,*}s_{\dd',\dd'',*}\phim{\WWW^{\zeta}_{\dd',\dd''}}\LL^{(\dd,\dd)/2}_{\Mst^{\zeta\sst}_{\dd',\dd''}}\right)\right)\\
R\Gamma\left(\tilde{\omega}^{\zeta,!}_{\dd}\phim{\WW_{\dd}^{\zeta}}\Ho\left(p^{\zeta}_{\mu,*}\ICS_{\Mst_{\dd}^{\zeta\sst}}(\QQ)\right)\right)\ar[r]^-{\gamma'} \ar[u]^{\LL^{(\dd,\dd)/2}\otimes R\Gamma\left( \tilde{\omega}^{\zeta,!}_{\dd}\nu_{\dd}\right)}& R\Gamma\left(\tilde{\omega}^{\zeta,!}_{\dd}\phim{\WW_{\dd}^{\zeta}}\Ho\left(p^{\zeta}_{\mu,*}s_{\dd',\dd'',*}\LL^{(\dd,\dd)/2}_{\Mst^{\zeta\sst}_{\dd',\dd''}}\right)\right)\ar[u]^{\LL^{(\dd,\dd)/2}\otimes R\Gamma\left(\tilde{\omega}^{\zeta,!}_{\dd}\nu_{\dd',\dd''}\right)}\\
R\Gamma\left( \tilde{\omega}^{\zeta,!}_{\dd}\phim{\WW^{\zeta}_{\dd}}\ICS_{\Msp^{\zeta\sst}_{\dd}}(\QQ)\otimes\HO(\BC )_{\vir}\right)\ar[u]^{\iota}
}
\end{equation}
with $\gamma$ as in (\ref{gammadef}), it is sufficient to show that 
\[
\Gr_{\Pf}(\gamma)\circ(\LL^{(\dd,\dd)/2}\otimes R\Gamma(\tilde{\omega}^{\zeta,!}_{\dd}\nu_{\dd}))\circ\iota=0.
\]
The maps $\nu_{\dd}$ and $\nu_{\dd',\dd''}$ are as in Proposition \ref{cvs}.  On the other hand, the map $\gamma'\iota$ is obtained by applying $R\Gamma\left(\tilde{\omega}^{\zeta,!}_{\dd}\phim{\WW_{\dd}^{\zeta}}\right)$ to a map
\begin{align}
\label{nearmiss}
\HO(\BC )_{\vir}\otimes\ICS_{\Msp^{\zeta\sst}_{\dd}}(\QQ)\rightarrow \Ho\left(p^{\zeta}_{\mu,*}s_{\dd',\dd'',*}\LL^{(\dd,\dd)}_{\Mst^{\zeta\sst}_{\dd',\dd''}}\right)
\end{align}
which preserves perverse degree, and so is necessarily zero, since for every cohomological degree, the domain and the target of (\ref{nearmiss}) are pure complexes of mixed Hodge modules with distinct strict support.
\end{proof}
Let $\mathcal{P}^{\zeta,\SP}_{W,\mu}\subset \Gr_{\Pf}(\HO(\Coha^{\zeta,\SP}_{W,\mu}))$ be the subalgebra generated by 
\[ 
\bigoplus_{\dd\in\Lambda_{\mu}^{\zeta}}\HO(\BC )_{\vir}\otimes\DT_{W,\dd}^{\zeta,\SP}.  
\]
Since $\mathcal{P}^{\zeta,\SP}_{W,\mu}$ is generated by primitive elements of the localised Hopf algebra, it follows from Lemma \ref{PAB} that the localised bialgebra structure on $\mathcal{P}^{\zeta,\SP}_{W,\mu}$ lifts to an honest cocommutative bialgebra structure.  Here, cocommutativity is with respect to the symmetrizing morphism ${}^{\tau}\mathbf{sw}$ defined in Section \ref{lc_signs}.  We denote by $\overline{\Delta}_{W,\mu}^{\zeta,\SP}\colon \mathcal{P}^{\zeta,\SP}_{W,\mu}\rightarrow \mathcal{P}^{\zeta,\SP}_{W,\mu}\boxtimes_+ \mathcal{P}^{\zeta,\SP}_{W,\mu}$ the induced coproduct.
\begin{corollary}
\label{trueHopf}
Assume that $\zeta$ is $\mu$-generic.  The triple 
\[
\left(\mathcal{P}_{W,\mu}^{\zeta,\SP},\Ho\left(\dim_*\Ho(\ms_{W,\mu}^{\zeta,\SP})\right), \overline{\Delta}^{\zeta,\SP}_{W,\mu}\right).
\]
extends to a Hopf algebra.
\end{corollary}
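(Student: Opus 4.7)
My plan is first to confirm the bialgebra structure (already essentially established in the discussion preceding the corollary) and then to construct the antipode by exploiting the grading and connectedness of $\mathcal{P}_{W,\mu}^{\zeta}$.

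For the bialgebra structure: the generators $\DT_{W,\dd}^{\zeta}\otimes \HO(\mathbb{C}\mathbb{P}^{\infty})_{\vir}$ of $\mathcal{P}^{\zeta}_{W,\mu}$ (for $\dd\in\Lambda_{\mu}^{\zeta}\setminus\{0\}$) are primitive for $\Gr_{\Pf}(\Delta^{\zeta}_{W,\mu})$ by Proposition \ref{primitivity}, so their coproducts land in the non-localised tensor product $\mathcal{P}^{\zeta}_{W,\mu}\boxtimes_+\mathcal{P}^{\zeta}_{W,\mu}$. Using the compatibility of the coproduct with the product (inherited from the localised bialgebra) together with Lemma \ref{PAB} (injectivity of the localisation map after taking associated graded), I would induct on word length in the primitive generators to conclude that $\overline{\Delta}^{\zeta}_{W,\mu}$ is well-defined on all of $\mathcal{P}^{\zeta}_{W,\mu}$ with image in the non-localised tensor product, and that coassociativity and the bialgebra compatibility are inherited directly from the localised bialgebra structure on $\Gr_{\Pf}(\HO(\Coha_{W,\mu}^{\zeta}))$.

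The remaining task is to exhibit the antipode. The decisive observation is that $\mathcal{P}^{\zeta}_{W,\mu}$ is an $\mathbb{N}^{Q_0}$-graded \emph{connected} bialgebra in $(\Dulf(\MMHM(\mathbb{N}^{Q_0})),\boxtimes_+)$: connectedness holds because the degree-zero piece equals $\mathcal{P}_{W,0}=\HO(\Coha^{\zeta}_{W,0})\cong \mathbb{Q}_{\{0\}}$, the monoidal unit, since $\Mst_0\cong\pt$, and both product and coproduct strictly respect the grading. For any such connected graded bialgebra the antipode $S$ is produced by the standard recursion: set $S=\id$ on $\mathcal{P}_{W,0}$, and for $x$ of degree $\dd\neq 0$ write the reduced coproduct $\overline{\Delta}^{\zeta}_{W,\mu}(x)-x\boxtimes 1-1\boxtimes x=\sum x_{(1)}\boxtimes x_{(2)}$ (a finite sum, with each factor of strictly smaller dimension) and put $S(x)=-x-\sum S(x_{(1)})\cdot x_{(2)}$; the antipode axioms then follow by induction on $|\dd|$. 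The only point requiring care is that this recursion must take place in $\Dulf(\MMHM(\mathbb{N}^{Q_0}))$ rather than in vector spaces, but since all of the structure morphisms lift to morphisms of monodromic mixed Hodge modules and only finitely many combinations arise on each graded piece, no additional obstruction appears.
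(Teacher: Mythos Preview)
Your proposal is correct and follows essentially the same approach as the paper. The paper's proof is a single sentence observing that the bialgebra structure is already in place from the preceding discussion and that existence (and uniqueness) of the antipode is a formal consequence of connectedness; you have simply spelled out both of these points in detail, including the standard recursive construction of the antipode that the paper leaves implicit.
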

\begin{proof}
All that is missing is a compatible antipode, but existence and uniqueness of an antipode is a formal consequence of connectedness of the algebra.
\end{proof}

We define $\Ho(\Gamma_{W,\mu}^{\zeta})$ to be the composition of maps in the following diagram
\[
\xymatrix{
{}^{\tau}\!\Sym_{\boxtimes_{\oplus}}(\HO(\BC )_{\vir}\otimes\DTS_{W,\mu}^{\zeta})\ar@{^(->}[r]& \Free_{\boxtimes_{\oplus}}(\HO(\BC )_{\vir}\otimes\DTS_{W,\mu}^{\zeta})\ar[dl]^-{\;\;\;\Free_{\boxtimes_{\oplus}}(\Upsilon_{W,\mu}^{\zeta})}\\
 \Free_{\boxtimes_{\oplus}}\left(\Ho(\mathcal{A}_{W,\mu}^{\zeta})\right)\ar[r]&\Ho\left(\mathcal{A}_{W,\mu}^{\zeta}\right),
}
\]
where the upper horizontal arrow comes from the natural inclusion 
\[
{}^{\tau}\!\Sym^n_{\boxtimes_{\oplus}}(\HO(\BC )_{\vir}\otimes \DTS_{W,\mu}^{\zeta})\subset \underbrace{\left(\HO(\BC )_{\vir}\otimes\DTS_{W,\mu}^{\zeta}\right)\boxtimes_{\oplus}\ldots\boxtimes_{\oplus}\left(\HO(\BC )_{\vir}\otimes\DTS_{W,\mu}^{\zeta}\right)}_{n \;\textrm{times}},
\]
the lower horizontal arrow is given by iterated relative CoHA multiplication, and $\Upsilon_{W,\mu}^{\zeta}$ is as in (\ref{caninc}).  The relative version of Theorem \ref{qea} amounts to the following theorem.
\begin{theorem}
\label{zPBW}
The map $\Ho(\Gamma_{W,\mu}^{\zeta})$ is an isomorphism.
\end{theorem}
\begin{proof}
It is enough to show that $\Xi=\DD^{\mon}_{\Msp_{\mu}^{\zeta\sst}}\Ho(\Gamma^{\zeta}_{W,\mu})$ is an isomorphism, and for this it is enough to show that $\Xi|_{x}$ is an isomorphism for each $x$ a point in $\Msp_{\dd}^{\zeta\sst}$, with $\dd\in\Lambda^{\zeta}_{\mu}$, by Lemma \ref{Tech1}.  Let $x$ represent the polystable $\dd$-dimensional $\mathbb{C}Q$-representation
\[
\bigoplus_i V_i\otimes_{\mathbb{C}} \rho_i
\]
where the $\rho_i$ are pairwise distinct $\zeta$-stable $\mathbb{C}Q$-representations of slope $\mu$, and $V_i$ are finite-dimensional complex vector spaces.  Let $\Mst^{x}\subset\Mst^{\zeta\sst}_{\mu}$ be the reduced substack, of which the closed points represent $\mathbb{C}Q$-representations that are objects of the Serre subcategory for which the objects are all of the iterated extensions of the representations $\rho_i$.  Let $\Msp^{x}\subset \Msp^{\zeta\sst}_{\mu}$ be the discrete subscheme for which the points represent finite direct sums of the $\rho_i$.  Then 
\[
\Mst^{x}\cap \Mst^{\zeta\sst}_{\dd}\supset (p^{\zeta}_{\mu})^{-1}(x)
\]
and $\Msp_{\dd}^x\subset \Msp_{\dd}^{\zeta\sst}$ is a discrete set of points containing $x$.  So $\Xi\lvert_x$ is an isomorphism if and only if $\Xi\lvert_{{\Msp}^x_{\dd}}$ is.  By Theorem \ref{ThmA} and Proposition \ref{underlying_same}, there is an isomorphism of $\mathbb{N}^{Q_0}\oplus\ZZ$-graded vector spaces (the extra $\ZZ$-factor is the cohomological grading)
\begin{equation}
\label{fout}
{}^{\tau}\!\Sym_{\boxtimes_+}\left(\HO(\BC )_{\vir}\otimes \DT^{\zeta,x}_{W,\mu}\right)\cong \HO_c(\Mst^x,\ICSt_{W,\mu}^{\zeta})^{\vee}.
\end{equation}
On the other hand, by Proposition \ref{primitivity} the subspaces 
\[
\HO(\BC )_{\vir}\otimes \DT^{\zeta,x}_{W,\mu}\subset \Gr_{\Pf}\left(\HO_c(\Mst^x,\ICSt_{W,\mu}^{\zeta})^{\vee}\right)
\]
are primitive for the induced coproduct, so they form a Lie algebra under the commutator Lie bracket.  Letting $\mathfrak{g}$ be the space of primitives for $\Gr_{\Pf}\left(\HO_c(\Mst^x,\ICSt_{W,\mu}^{\zeta})^{\vee}\right)$, and letting $U(\mathfrak{g})$ be the universal enveloping algebra of $\mathfrak{g}$,then by the Milnor--Moore theorem, there is an isomorphism
\[
U(\mathfrak{g})\cong \Gr_{\Pf}\left(\HO_c(\Mst^x,\ICSt_{W,\mu}^{\zeta})^{\vee}\right).
\]
Setting $\upsilon=\Ho(\dim_*\tilde{\omega}_{\mu}^!\Ho(\Gamma_{W,\mu}))$ there is a commutative diagram
\[
\xymatrix{
{}^{\tau}\!\Sym_{\boxtimes_+}\left(\HO(\BC )_{\vir}\otimes \DT^{\zeta,x}_{W,\mu}\right)\ar@{^{(}->}[d]\ar[r]^-{\cong}\ar[dr]^-{\upsilon} &U\left(\HO(\BC )_{\vir}\otimes \DT^{\zeta,x}_{W,\mu}\right)\ar[d]
\\{}^{\tau}\!\Sym_{\boxtimes_+}\left(\mathfrak{g}\right)\ar[r]^-{\cong}&\Gr_{\Pf}\left(\HO_c(\Mst^x,\ICSt_{W,\mu}^{\zeta})^{\vee}\right)
}
\]
where the top and bottom horizontal arrows are the PBW isomorphism and the Milnor--Moore isomorphism, respectively.  It follows that $\upsilon$ is injective, and hence an isomorphism, since the domain and the codomain have the same graded dimensions, by (\ref{fout}).  The dual of the restriction of this map to the degree $\dd$ piece is $\Xi|_{{\Msp}^x_{\dd}}$, which is thus an isomorphism, as required.
\end{proof}
We construct the map
\[
\HO(\Gamma_{W,\mu}^{\zeta,\SP})\colon {}^{\tau}\!\Sym_{\boxtimes_+}\left(\HO(\BC )_{\vir}\otimes\DT^{\zeta,\SP}_{W,\mu}\right)\rightarrow\HO\left(\mathcal{A}^{\zeta,\SP}_{W,\mu}\right)
\]
in the same way as $\Ho(\Gamma_{W,\mu}^{\zeta})$, using Corollary \ref{Inccor} instead of Corollary \ref{inccor}.  Then this map preserves the perverse filtration, and
\[
\Gr_{\Pf}\left(\HO(\Gamma_{W,\mu}^{\zeta,\SP})\right)=\Ho(\dim_*\tilde{\omega}^{\zeta,!}_{\mu}\Ho(\Gamma_{W,\mu}^{\zeta}))
\]
is an isomorphism, so that $\HO(\Gamma_{W,\mu}^{\zeta,\SP})$ is at least an injection.  The domain and target of this map are graded vector spaces of the same dimension (finite in every graded degree) by Theorem \ref{ThmA}.  It follows that $\HO(\Gamma_{W,\mu}^{\zeta,\SP})$ is an isomorphism.  This completes all but the free commutativity statements of Theorem \ref{qea}, which we now deduce as a corollary.
\begin{corollary}
\label{twSC}
The (twisted) relative cohomological Hall algebra $\Ho(\mathcal{A}^{\zeta,\SP}_{W,\mu})$ is a free commutative algebra (for the symmetrizing morphism ${}^{\tau}\!\textbf{sw}$), as is the algebra $\Gr_{\Pf}\left(\HO(\mathcal{A}^{\zeta,\SP}_{W,\mu})\right)$, and so the twisted algebra $\Gr_{\Pf}\left(\HO(^{\psi}\!\mathcal{A}^{\zeta,\SP}_{W,\mu})\right)$ is a free supercommutative algebra (for the usual symmetrizing morphism $\textbf{sw}$).
\end{corollary}
\begin{proof}
The result regarding the relative cohomological Hall algebra follows from Theorem \ref{zPBW} and the claim that $\Ho(\mathcal{A}^{\zeta}_{W,\mu})$ is commutative.  This claim in turn can be checked at fibres as in the proof of Theorem \ref{zPBW}, this time using Lemma \ref{Tech2}, and so it is sufficient to prove commutativity for $\Gr_{\Pf}\left(\HO(\mathcal{A}^{\zeta,\SP}_{W,\mu})\right)$ (as in the same proof).  Let $\mathcal{P}\subset \mathcal{K}=\Gr_{\Pf}\left(\HO(\mathcal{A}^{\zeta,\SP}_{W,\mu})\right)$ be the subspace of primitives.  Then the algebra generated by $\mathcal{P}$ is a cocommutative sub bialgebra of $\mathcal{K}$, and so there is a chain of inclusions 
\[
{}^{\tau}\!\Sym\left(\HO(\BC)_{\vir}\otimes \DT^{\zeta,\SP}_{W,\mu}\right)\subset {}^{\tau}\!\Sym(\mathcal{P})\subset \mathcal{K}
\]
where the first inclusion is given by Proposition \ref{primitivity} and the second by the Milnor--Moore theorem.  Since by Theorem \ref{ThmA} the first and last graded vector spaces have the same graded dimensions, all of the inclusions are equalities, and in particular $\mathcal{P}=\DT^{\zeta,\SP}_{W,\mu}\otimes\HO(\BC)_{\vir}$, and $\mathcal{K}$ is the universal enveloping algebra of $\mathcal{P}$, considered as a Lie sub-algebra of $\mathcal{K}$.  On the other hand, the Lie bracket on $\mathcal{P}$ is zero, since it respects the perverse grading, and $\mathcal{P}$ is concentrated entirely in odd perverse degree.  The statement regarding the $\psi$-twisted algebra then follows.
\end{proof}

Finally, we are in a position to define the BPS Lie algebra:
\begin{corollary}
Let $\psi$ be as in (\ref{pcocdef}).  The subspace 
\[
\LL^{1/2}\otimes \DT^{\zeta,\SP}_{W,\mu}=\Pf_{\leq 1}\left(\HO(\mathcal{A}^{\zeta,\SP}_{W,\mu})\right)\subset \HO(\mathcal{A}^{\zeta,\SP}_{W,\mu})
\]
is closed under the commutator Lie bracket in $\HO(^{\psi}\!\mathcal{A}^{\zeta,\SP}_{W,\mu})$, and so carries the structure of a Lie algebra, called the \textbf{BPS Lie algebra}.
\end{corollary}
The proof is an immediate consequence of Corollary \ref{twSC}, as explained in Section \ref{BPSLA}.
\subsection{Proof of Theorem \ref{strongPBW}}
\label{CWCF}
We finish by proving Theorem \ref{strongPBW}.  Fix a generic stability condition $\zeta$.  By Corollary \ref{inccor} there are canonical split inclusions 
\[
\HO(\BC )_{\vir}\otimes \DTS^{\zeta}_{W,\mu}\rightarrow \Ho\left(p_{\mu,*}^{\zeta}\ICSt_{W,\mu}^{\zeta}\right)
\]
giving rise to canonical split inclusions $\HO(\BC )_{\vir}\otimes q^{\zeta}_{\mu,*}\DTS^{\zeta}_{W,\mu}\rightarrow q_{\mu,*}^{\zeta}\Ho\left(p^{\zeta}_{\mu,*}\ICSt_{W,\mu}^{\zeta}\right)$ and thus split inclusions
\[
\iota'_{\mu}\colon \HO(\BC )_{\vir}\otimes q^{\zeta}_{\mu,*}\DTS_{W,\mu}^{\zeta}\hookrightarrow \Ho\left(q^{\zeta}_{\mu,*}p^{\zeta}_{\mu,*}\ICSt^{\zeta}_{W,\mu}\right)
\]
by the decomposition theorem and properness of $q^{\zeta}_{\mu}$.  Via the proof of Theorem \ref{gcDT} we can pick a right inverse $\eta_{\mu,*}$ to the restriction map in cohomology 
\[
\eta_{\mu}^*\colon\Ho\left(p_{\mu,*}\ICSt_{W,\mu}\right)\rightarrow\Ho\left(q^{\zeta}_{\mu,*}p^{\zeta}_{\mu,*}\ICSt^{\zeta}_{W,\mu}\right)
\]
for each $\mu$.  We set 
\[
\iota_{\mu}=\eta_{\mu,*}\iota'_{\mu}\colon \HO(\BC )_{\vir}\otimes q^{\zeta}_{\mu,*}\DTS_{W,\mu}^{\zeta} \rightarrow \Ho\left(p_{\mu,*}\ICSt_{W,\mu}\right).
\]
\begin{theorem}
\label{sPBW}
Let $\zeta$ be a generic stability condition, and consider the morphism
\begin{equation}
\label{finalPBW}
\boxtimes_{\oplus,\infty \xrightarrow{\mu}-\infty}^{\tw} \left({}^{(\tau)}\!\Sym_{\boxtimes_{\oplus}}\left(\HO(\BC )_{\vir}\otimes q_{\mu,*}^{\zeta}\DTS_{W,\mu}^{\zeta}\right)\right)\xrightarrow{^{(\neg\psi)}\! \Ho(\ms_W)} \Ho\left(p_*\ICSt_W\right)
\end{equation}
induced by the inclusions $\iota_{\mu}$ and the Hall algebra structure on $\Ho({}^{(\psi)}\!\!\mathcal{A}_W)$, where the bracketed superscripts indicate that if we are not taking the symmetrizing morphism ${}^{\tau}\mathbf{tw}$ to define the functor ${}^{\tau}\!\Sym_{\boxtimes_{\oplus}}$, then we take the $\psi$-twisted multiplication, for $\psi$ a bilinear form as in (\ref{pcocdef}).  Then (\ref{finalPBW}) is an isomorphism.
\end{theorem}

\begin{proof}

We consider dual compactly supported cohomology, to match Theorem \ref{gcDT}.  In addition, we will assume that $W=0$ to ease the notation a little ---  applying the vanishing cycles functor to all of our arguments yields the general case by Lemma \ref{cohacom}.

Let $\overline{\dd}\in\HN_{\dd}$ be a Harder--Narasimhan type.  Extending the notation $\Mst_{\dd',\dd''}$, we let $\Mst_{\overline{\dd}}$ be the stack of all $\dd$-dimensional modules equipped with a filtration by submodules with the dimensions of successive subquotients given by $\overline{\dd}$.  We define
\[
s_{\overline{\dd}}\colon\Mst_{\overline{\dd}}\rightarrow \Mst_{\dd}
\]
to be the map of stacks given by forgetting the filtration.  In common with the map $s_{\dd',\dd''}$ of diagram (\ref{smap}), which corresponds to the special case $\overline{\dd}=(\dd',\dd'')$, the map $s_{\overline{\dd}}$ factors as a proper surjection and a closed inclusion, and is therefore proper.  

The Harder--Narasimhan type of the underlying $\dd$-dimensional module of a filtered module parametrised by a point of $\Mst_{\overline{\dd}}$ need not be $\overline{\dd}$.  On the other hand, by the proof of \cite[Prop.3.7]{Reineke_HN}, the proper map $s_{\overline{\dd}}$ factors through the closed inclusion of stacks
\[
i_{\leq \overline{\dd}}\colon\Mst^{\zeta}_{\leq\overline{\dd}}\hookrightarrow \Mst_{\dd}.
\]

By abuse of notation, for each dimension vector $\dd$ we continue to denote by $\eta_{\dd,*}$ the right inverse to the map
\[
\DD^{\mon}_{\Msp}\Ho\left(p_{\dd,!}\ICS_{\Mst_{\dd}}(\QQ)\right)\rightarrow \DD^{\mon}_{\Msp}\Ho\left(q_{\dd,!}^{\zeta}p_{\dd,!}^{\zeta}\ICS_{\Mst^{\zeta\sst}_{\dd}}(\QQ)\right)
\]
defined by the map already denoted $\eta_{\dd,*}$.  Given a stack $\mathfrak{N}\xrightarrow{r} \Msp_{\dd}$ over $\Msp_{\dd}$, we denote by $\IS(\mathfrak{N})=\DD^{\mon}_{\Msp_{\dd}}\Ho(r_!\QQ_{\mathfrak{N}})$ the induced complex of mixed Hodge modules on $\Msp_{\dd}$.  Consider the commutative diagram
\[
\xymatrix{
&\Mst_{\leq \overline{\dd}}^{\zeta}\ar[dr]^{i_{\leq\overline{\dd}}}\\
\Mst_{\overline{\dd}}^{\zeta}\ar[d]_{r_{\overline{\dd}}^{\zeta}}\ar[ur]^{\beta}\ar[r]^{\gamma} & \Mst_{\overline{\dd}} \ar[u]^{\alpha}\ar[r]_{s_{\overline{\dd}}} \ar[d]_{r_{\overline{\dd}}}& \Mst_{\dd}\\
\Mst_{\dd^1}^{\zeta\sst}\times\ldots\times\Mst_{\dd^s}^{\zeta\sst}\ar[r]^{\delta} & \Mst_{\dd^1}\times\ldots\times\Mst_{\dd^s}
}
\]
where the map $\alpha$ exists due to the comments above.

We consider the diagram of complexes of mixed Hodge modules
\[
\xymatrix{
&\IS(\Mst_{\leq \overline{\dd}}^{\zeta})\ar[dl]^{\beta^*}\ar[dr]^{i_{\leq\overline{\dd},*}}\\
\IS(\Mst_{\overline{\dd}}^{\zeta})\ar@/^/@{.>}[ur]^{\beta_*} \ar[d]_{r^{\zeta}_{\overline{\dd},*}}^{\cong} \ar@/_/@{.>}[r]_{\gamma_*}&\ar[u]^{\alpha_*} \ar[l]_{\gamma^*}\IS(\Mst_{\overline{\dd}})\ar[r]_{s_{\overline{\dd},*}}\ar[d]_{r_{\overline{\dd},*}}^{\cong} & \IS(\Mst_{\dd})\\
\LL^{(\overline{\dd},\overline{\dd})}\otimes\IS(\Mst_{\dd^1}^{\zeta\sst}\times\ldots\times\Mst_{\dd^s}^{\zeta\sst}) \ar@/_/@{.>}[r]_-{\delta_*}& \LL^{(\overline{\dd},\overline{\dd})}\otimes\IS(\Mst_{\dd^1}\times\ldots\times\Mst_{\dd^s})\ar[l]_-{\delta^*}
}
\]
with $(\overline{\dd},\overline{\dd})$ defined as in (\ref{dddotdef}).  The map $\delta_*\colonequals \LL^{(\overline{\dd},\overline{\dd})}\otimes\left(\oplus_*(\eta_{\dd^1,*}\boxtimes\ldots\boxtimes\eta_{\dd^s,*})\right)$ is a right inverse to the natural restriction map $\delta^*$.  Then we define
\[
\gamma_*\colonequals r_{\overline{\dd},*}^{-1}\delta_*r^{\zeta}_{\overline{\dd},*},
\]
a right inverse to $\gamma^*$.  Then set $\beta_*\colonequals \alpha_*\gamma_*$.  It can easily be verified that $\gamma^*=\beta^*\alpha_*$, and so $\beta_*$ is a right inverse to $\beta^*$.

The map $\eta_{\dd^1,*}(\mathcal{H}(\mathcal{A}_{\dd^1}^{\zeta}))\boxtimes_{\oplus}^{\tw}\ldots \boxtimes_{\oplus}^{\tw}\eta_{\dd^s,*}(\mathcal{H}(\mathcal{A}_{\dd^1}^{\zeta}))\rightarrow \mathcal{H}(\mathcal{A}_{\dd})$ has image in the piece $\LL^{(\dd,\dd)/2}\otimes \IS(\Mst^{\zeta}_{\leq\overline{\dd}})$ of the filtration of $\mathcal{H}(p_*\ICSt_0)$ by Harder--Narasimhan types.  By the commutativity of the above diagram, the resulting map
\[
\gamma^*\left(\eta_{\dd^1,*}(\mathcal{H}(\mathcal{A}_{\dd^1}^{\zeta}))\boxtimes_{\oplus}^{\tw}\ldots \boxtimes_{\oplus}^{\tw}\eta_{\dd^s,*}(\mathcal{H}(\mathcal{A}_{\dd^1}^{\zeta}))\right)\rightarrow  \LL^{(\dd,\dd)/2}\otimes\beta^*\IS(\Mst^{\zeta}_{\leq\overline{\dd}})
\]
is an isomorphism.  Filtering the domain and the target of the map
\[
\Psi\colon \boxtimes_{\oplus,\infty \xrightarrow{\mu}-\infty}^{\tw}\left(\eta_{\mu,*}(\mathcal{H}(\mathcal{A}_{\mu}^{\zeta}))\right)\rightarrow \mathcal{H}(\mathcal{A})
\]
by the filtration indexed by the Harder--Narasimhan filtration, it follows that the associated graded morphism is an isomorphism between pure complexes of mixed Hodge modules, and hence $\Psi$ is an injection, and so an isomorphism, since by Theorem \ref{CWCT} the domain and image are isomorphic. 

Since $\eta_{\mu}^*$ is a morphism of monoids, the diagram 
\[
\xymatrix{
\eta_{\mu}^* {}^{(\tau)}\!\Sym_{\boxtimes_{\oplus}}\left(\eta_{\mu,*}\DTS_{\mu}^{\zeta}\otimes\HO(\BC )_{\vir}\right)\ar[d]^{\cong}\ar[r]& \eta_{\mu}^*\mathcal{H}({}^{(\neg\psi)}\!\!\mathcal{A}_{\mu})\ar[d]^{\cong}\\
{}^{(\tau)}\!\Sym_{\boxtimes_{\oplus}}\left(\DTS_{\mu}^{\zeta}\otimes\HO(\BC )_{\vir}\right)\ar[r]& \mathcal{H}({}^{(\neg\psi)}\!\!\mathcal{A}_{\mu}^{\zeta})
}
\]
commutes, with the horizontal maps defined via the relative multiplications on $\mathcal{H}({}^{(\psi)}\!\!\mathcal{A}_{\mu})$ and $\mathcal{H}({}^{(\psi)}\!\!\mathcal{A}_{\mu}^{\zeta})$ respectively.  We deduce as above that the theorem is true after passing to the associated graded with respect to the Harder--Narasimhan filtration, and so is injective, and hence an isomorphism since the domain and target are isomorphic pure monodromic mixed Hodge modules by Theorems \ref{ThmA} and \ref{CWCT}.
\end{proof}
\begin{corollary}
For a generic stability condition $\zeta$ there exist embeddings $\DT^{\zeta,\SP}_{W,\mu}\otimes\HO(\BC )_{\vir}\subset \HO({}^{(\psi)}\!\!\Coha_{W})$ such that the induced morphism
\[
\boxtimes_{\oplus,\infty \xrightarrow{\mu}-\infty}^{\tw}\left({}^{(\tau)}\!\Sym_{\boxtimes_+}\left(\DT^{\zeta,\SP}_{W,\mu}\otimes\HO(\BC )_{\vir}\right)\right)\xrightarrow{{}^{(\neg\psi)}\!\!\HO(\ms_W)}\HO(\Coha^{\SP}_W)
\]
is an isomorphism.
\end{corollary}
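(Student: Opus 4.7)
The strategy is to deduce the absolute statement from the relative isomorphism of Theorem \ref{sPBW} by applying the direct image along the map $\dim\colon \Msp\to\mathbb{N}^{Q_0}$. As explained at the end of Subsection \ref{prodss}, $\dim_*$ is a monoidal functor from $(\Dulf(\MMHM(\Msp)),\boxtimes^{\tw}_{\oplus})$ to $(\Dulf(\MMHM(\mathbb{N}^{Q_0})),\boxtimes^{\tw}_+)$, so that it sends the iterated $\boxtimes^{\tw}_{\oplus}$-product over slopes in Theorem \ref{sPBW} to an iterated $\boxtimes^{\tw}_+$-product over slopes. The embeddings $\DT^\zeta_{W,\mu}\otimes\HO(\mathbb{C}\mathbb{P}^\infty)_{\vir}\subset \HO(\Coha_W)$ will then be produced by applying $\Ho\circ\dim_*$ to the fixed embeddings of Theorem \ref{sPBW}, and the desired isomorphism will be the $\dim_*$-image of the relative PBW isomorphism after identifying the two sides.

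Two compatibility statements need to be checked. First, one has a natural isomorphism $\dim_*\Sym_{\boxtimes_{\oplus}}\mathcal{F}\cong \Sym_{\boxtimes_+}(\dim_*\mathcal{F})$ for every $\mathcal{F}\in\Dulf(\MMHM(\Msp))$: this follows from the monoidality of $\dim_*$ combined with the fact that $\Sigma_n$-invariants in characteristic zero are exact (being a direct summand projection) and so commute with any additive functor. Second, one must identify $\Ho(\dim_*q^\zeta_{\mu,*}\DTS^\zeta_{W,\mu})\cong \DT^\zeta_{W,\mu}$; this holds by the definition $\DT^\zeta_{W,\mu}:=\Ho(\dim_*\DTS^\zeta_{W,\mu})$ once one observes that $\dim\circ q^\zeta_\mu$ is the structure map of the coarse moduli space to $\mathbb{N}^{Q_0}$, together with the fact that $q^\zeta_{\mu,*}\DTS^\zeta_{W,\mu}$ is isomorphic to its total cohomology by Proposition \ref{basicfacts}(\ref{vanDec}) applied to the proper map $q^\zeta_\mu$ and the vanishing cycle input $\phim{\WW^\zeta_\mu}\ICS_{\Msp^{\zeta\sst}_\mu}(\mathbb{Q})$.

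With these two ingredients in hand, one applies $\dim_*$ to the isomorphism of Theorem \ref{sPBW} and takes total cohomology. The right-hand side becomes $\Ho(\dim_*\Ho(p_*\ICSt_W))=\HO(\Coha_W)$ by construction. On the left-hand side, using the two compatibilities, the factor at slope $\mu$ reads $\Sym_{\boxtimes_+}(\DT^\zeta_{W,\mu}\otimes\HO(\mathbb{C}\mathbb{P}^\infty)_{\vir})$, and the outer $\boxtimes^{\tw}_{\oplus}$-product is sent to the outer $\boxtimes^{\tw}_+$-product ordered in descending slope. The induced multiplicative map is the $\dim_*$-pushforward of the relative multiplication, which is precisely $\HO(\ast_W)$ by definition, and split injectivity of the embeddings is preserved by $\dim_*$ and by passage to the heart.

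The main obstacle is the commutation of $\Ho$ with the iterated constructions $\Sym_{\boxtimes_+}(-)$ and $\boxtimes^{\tw}_+$ in order to move cohomology past them. This reduces to verifying that the generating objects $\dim_*q^\zeta_{\mu,*}\DTS^\zeta_{W,\mu}\otimes\HO(\mathbb{C}\mathbb{P}^\infty)_{\vir}$ have zero differentials (i.e.\ are canonically isomorphic to their total cohomology), which, combined with the biexactness of the external tensor product (Remark \ref{convProdDef}) and exactness of $\Sigma_n$-invariants, propagates the property to all symmetric and outer products. Given this, the corollary follows immediately from Theorem \ref{sPBW} by functoriality.
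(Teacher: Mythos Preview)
Your argument proves the statement with $\HO(\Coha_W)$ replaced by its perverse associated graded, but not the statement as written. The key misidentification is the claim that $\Ho(\dim_*\Ho(p_*\ICSt_W))=\HO(\Coha_W)$ ``by construction''. By definition $\HO(\Coha_W)=\Ho\left((\dim\circ p)_*\ICSt_W\right)=\Ho\left(\dim_*p_*\ICSt_W\right)$, whereas what you obtain after applying $\dim_*$ to Theorem~\ref{sPBW} and taking total cohomology is $\Ho\left(\dim_*\Ho(p_*\ICSt_W)\right)$. These are not equal; by Proposition~\ref{PCoha} the latter is precisely $\Gr_{\Pf}(\HO(\Coha_W))$, and the multiplication you inherit from pushing forward $\Ho(\ms_W)$ is $\Gr_{\Pf}(\HO(\ast_W))$, not $\HO(\ast_W)$. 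So your proof establishes the PBW isomorphism for the associated graded algebra, not for $\HO(\Coha_W)$ itself.

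The paper's proof supplies exactly the missing step. Having the isomorphism at the level of $\Gr_{\Pf}(\HO(\Coha_W))$ via Theorem~\ref{sPBW} and Proposition~\ref{PCoha}, one then chooses an arbitrary lift of each embedding $\DT^\zeta_{W,\mu}\otimes\HO(\mathbb{C}\mathbb{P}^\infty)_{\vir}\hookrightarrow\Gr_{\Pf}(\HO(\Coha_W))$ to an embedding into $\HO(\Coha_W)$; such lifts exist because the perverse filtration is (non-canonically) split, as recorded in the second part of Proposition~\ref{PCoha}. The map induced by $\HO(\ast_W)$ from these lifted embeddings is then a filtered map whose associated graded is an isomorphism, hence is itself an isomorphism. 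Your argument, with this lifting step appended and the identification corrected to $\Gr_{\Pf}(\HO(\Coha_W))$, becomes complete.
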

The meaning of the bracketed superscripts is as in Theorem \ref{sPBW}.
\begin{proof}
By Theorem \ref{sPBW} and Proposition \ref{PCoha}, the corollary is true after replacing $\HO(\Coha^{\SP}_W)$ with the perverse associated graded $\Gr_{\Pf}(\HO(\Coha^{\SP}_{W}))=\HO_c(\Msp^{\SP},\dim_!\Ho(p_*\ICSt_W))$.  Then for any lift of the embedding $\DT^{\zeta,\SP}_{W,\mu}\otimes\HO(\BC )_{\vir}\subset \Gr_{\Pf}(\HO(\Coha^{\SP}_{W}))$ to an embedding $\DT^{\zeta,\SP}_{W,\mu}\otimes\HO(\BC )_{\vir}\subset \HO(\Coha^{\SP}_{W})$, the result follows.
\end{proof}

\bibliographystyle{plain}
\bibliography{Literatur}

\vfill

\textsc{\small B. Davison: School of Mathematics, University of Edinburgh, James Clerk Maxwell Building, Peter Guthrie Tait Road, King's Buildings, Edinburgh EH9 3FD, United Kingdom}\\
\textit{\small E-mail address:} \texttt{\small ben.davison@ed.ac.uk}\\
\\

\textsc{\small S. Meinhardt: School of Mathematics and Statistics, Hicks Building, Hounsfield Road, Sheffield S3 7RH, United Kingdom}\\
\textit{\small E-mail address:} \texttt{\small s.meinhardt@shef.ac.uk}\\

\end{document}